\newcommand{\mnote}[1]{}
\renewcommand{\hat}{\widehat}
\newcommand{\n}{\{1,\ldots,n\}}
\newcommand{\e}{\varepsilon}
\renewcommand{\c}{\mathscr{C}}
\newcommand{\R}{\mathbb R}
\newcommand{\1}{\mathbf 1}
\newcommand{\Rad}{\mathrm{\bf Rad}}
\newtheorem{theorem}{Theorem}[section]
\newtheorem{proposition}[theorem]{Proposition}
\newtheorem{lemma}[theorem]{Lemma}
\newtheorem{corollary}[theorem]{Corollary}
\newcommand{\eqdef}{\stackrel{\mathrm{def}}{=}}
\theoremstyle{remark}
\newtheorem{remark}[theorem]{Remark}
\theoremstyle{definition}
\newtheorem{definition}[theorem]{Definition}
\newcommand{\f}{\varphi}
\renewcommand{\le}{\leqslant}
\renewcommand{\ge}{\geqslant}
\newcommand{\diam}{\mathrm{diam}}
\newcommand{\zigzag}{{\text{\textcircled z}}}
\newcommand{\os}{\text{\textcircled s}}
\newcommand{\ob}{\text{\textcircled b}}
\newcommand{\A}{\mathscr A}
\newcommand{\C}{\mathbb C}
\renewcommand{\d}{\delta}
\DeclareMathOperator*{\E}{\mathbb{ E}}
\newcommand{\m}{\{1,\ldots,m\}}
\newcommand{\F}{\mathbb F}
\newcommand{\N}{\mathbb N}
\newcommand{\Z}{\mathbb Z}
\renewcommand{\setminus}{\smallsetminus}
\newcommand{\K}{\mathscr{K}}
\newcommand{\oz}{\zigzag}
\newcommand{\circr}{\text{\textcircled r}}
\newcommand{\bpconst}{\gamma_+}
\newcommand{\bp}{\gamma_{+}}
\begin{document}

 \title[Nonlinear spectral gaps]{Nonlinear spectral calculus and super-expanders}

\author[Manor
Mendel]{Manor Mendel}
\address{Mathematics and Computer Science Department, Open University of Israel, 1 University Road, P.O. Box 808
Raanana 43107, Israel}
\email{mendelma@gmail.com}
\author[Assaf Naor]{Assaf Naor}
\address{Courant Institute, New York University, 251 Mercer Street, New York NY 10012, USA}
\email{naor@cims.nyu.edu}

\date{}

\begin{abstract}
Nonlinear spectral gaps with respect to uniformly convex normed spaces are shown to satisfy a spectral calculus inequality that establishes their decay along Ces\`aro averages. Nonlinear spectral gaps of graphs are also shown to behave sub-multiplicatively under zigzag products. These results yield a combinatorial construction of super-expanders, i.e., a sequence of
$3$-regular graphs
that does not admit a coarse embedding into any uniformly convex normed space.
\end{abstract}

\maketitle

\setcounter{tocdepth}{4}

{\footnotesize \tableofcontents}


\section{Introduction}

Let $A=(a_{ij})$ be an $n\times n$ symmetric stochastic matrix and
let $$1=\lambda_1(A)\ge \lambda_2(A)\ge \cdots\ge \lambda_n(A)\ge
-1$$ be its eigenvalues. The reciprocal of the spectral gap of $A$,
i.e., the quantity $\frac{1}{1-\lambda_2(A)}$, is the smallest  $\gamma\in (0,\infty]$
such that for every $x_1,\ldots,x_n\in \R$ we have
\begin{equation}\label{eq:R-poin}
\frac{1}{n^2}\sum_{i=1}^n\sum_{j=1}^n(x_i-x_j)^2\le
\frac{\gamma}{n}\sum_{i=1}^n\sum_{j=1}^n a_{ij} (x_i-x_j)^2.
\end{equation}
By summing over the coordinates with respect to some orthonormal
basis, a restatement of~\eqref{eq:R-poin} is that
$\frac{1}{1-\lambda_2(A)}$ is the smallest $\gamma\in (0,\infty]$ such
that for all $x_1,\ldots,x_n\in L_2$ we have
\begin{equation}\label{eq:L_2-poin} \frac{1}{n^2}\sum_{i=1}^n\sum_{j=1}^n\|x_i-x_j\|_2^2\le
\frac{\gamma}{n}\sum_{i=1}^n\sum_{j=1}^n a_{ij} \|x_i-x_j\|_2^2.
\end{equation}

 It
is natural to generalize~\eqref{eq:L_2-poin} in several ways: one can
replace the exponent $2$ by some other exponent $p>0$ and, much more
substantially, one can replace the Euclidean geometry by some other
metric space $(X,d_X)$. Such generalizations are standard practice in
metric geometry. For the sake of
presentation, it is beneficial to take this generalization to even greater
extremes, as follows. Let $X$ be an arbitrary set and let $K: X\times X\to
[0,\infty)$ be a symmetric function. Such functions are sometimes called
{\em kernels} in the literature, and we shall adopt this terminology
here. Define the reciprocal spectral gap of $A$ with respect to
$K$, denoted $\gamma(A,K)$, to be the infimum over those $\gamma\in (0,\infty]$ such that for all $x_1,\ldots,x_n\in X$ we have
\begin{equation}\label{eq:kernel-poin} \frac{1}{n^2}\sum_{i=1}^n\sum_{j=1}^nK(x_i,x_j)\le
\frac{\gamma}{n}\sum_{i=1}^n\sum_{j=1}^n a_{ij}K(x_i,x_j).
\end{equation}

In what follows we will also call $\gamma(A,K)$ the Poincar\'e
constant of the matrix $A$ with respect to the kernel $K$. Readers are encouraged to
focus on the geometrically meaningful case when $K$ is a power of some metric on $X$, though
as will become clear presently, a surprising amount of ground can be
covered without any assumption on the kernel $K$.

For concreteness
we restate the above discussion: the standard gap in the linear
spectrum of $A$ corresponds to considering Poincar\'e constants with
respect to Euclidean spaces (i.e., kernels which are squares of
Euclidean metrics), but there is scope for a theory of nonlinear
spectral gaps when one considers inequalities such
as~\eqref{eq:kernel-poin} with respect to other geometries.
The
purpose of this paper is to make progress towards such a theory,
with emphasis on possible extensions of spectral calculus to nonlinear
(non-Euclidean) settings. We apply our results on calculus for nonlinear
spectral gaps to construct new strong types of expander graphs, and to resolve a question of V. Lafforgue~\cite{Lafforgue}. We
obtain a combinatorial construction of a remarkable type of
bounded degree graphs whose shortest path metric is incompatible with the geometry of any uniformly convex normed space in a very strong sense (i.e., coarse non-embeddability).
The existence of such graph families was first discovered by
Lafforgue via a tour de force algebraic construction~\cite{Lafforgue} . Our work indicates that there
is hope for a useful and rich theory of nonlinear spectral gaps, beyond the
sporadic (though often highly nontrivial) examples that have been previously studied in the
literature.

\subsection{Coarse non-embeddability}\label{sec:coarse}
A sequence of metric spaces $\{(X_n,d_{X_n})\}_{n=1}^\infty$ is said to embed coarsely
(with the same moduli) into a metric space $(Y,d_Y)$ if there exist two non-decreasing functions $
\alpha,\beta:[0,\infty)\to [0,\infty)
 $
 such that
 $
 \lim_{t\to\infty}\alpha(t)=\infty,
  $
  and there exist mappings $f_n:X_n\to Y$, such that for all $n\in\N$ and $x,y\in X_n$ we have
\begin{equation}\label{eq:coarse condition}
\alpha\left(d_{X_n}(x,y)\right)\le d_Y(f_n(x),f_n(y))\le \beta\left(d_{X_n}(x,y)\right).
\end{equation}

\eqref{eq:coarse condition} is a weak form of ``metric faithfulness" of the mappings $f_n$; a seemingly humble  requirement that can be restated informally as ``large distances map uniformly to large distances". Nevertheless, this weak notion of embedding (much weaker than, say, bi-Lipschitz embeddability) has beautiful applications in geometry and group theory; see~\cite{Gro93,Yu,CCJJV01,Roe03,GHW05}
and the references therein for examples of such applications.

Since coarse embeddability is a weak requirement, it is quite difficult to prove coarse non-embeddability. Very few methods to establish such a result are known, among which is the use of nonlinear spectral gaps, as pioneered by Gromov~\cite{Gromov-random-group} (other such methods include coarse notions of metric dimension~\cite{Gro93}, or the use of metric cotype~\cite{MN-cotype}. These methods do not seem to be applicable to the question that we study here). Gromov's argument is simple: fix $d\in \N$ and suppose that $X_n=(V_n,E_n)$ are connected $d$-regular graphs and that $d_{X_n}(\cdot,\cdot)$ is the shortest-path metric induced by $X_n$ on $V_n$. Suppose also that there exist $p,\gamma\in (0,\infty)$ such that for every $n\in \N$ and $f:V_n\to Y$ we have
\begin{equation}
\label{eq:graph poincare}
\frac{1}{|V_n|^2}\sum_{(u,v)\in V_n\times V_n}d_Y(f(u),f(v))^p
\le \frac{\gamma}{d|V_n|}\sum_{(x,y)\in E_n} d_Y(f(x),f(y))^p.
\end{equation}

A combination of~\eqref{eq:coarse condition} and~\eqref{eq:graph poincare} yields the bound
$$
\frac{1}{|V_n|^2}\sum_{(u,v)\in V_n\times V_n}\alpha\left(d_{X_n}(u,v)\right)^p\le \gamma\beta(1)^p.
 $$
 But, since $X_n$ is a bounded degree graph, at least half of the pairs of vertices $(u,v)\in V_n\times V_n$ satisfy $d_{X_n}(u,v)\ge c_d\log |V_n|$, where $c_d\in (0,\infty)$ depends on the degree $d$ but not on $n$. Thus $\alpha(c_d\log |V_n|)^p\le 2\gamma\beta(1)^p$, and in particular if $\lim_{n\to \infty}|V_n|=\infty$ then we get a contradiction to the assumption $\lim_{t\to\infty}\alpha(t)=\infty$. Observe in passing that this argument also shows that the metric space $(X_n,d_{X_n})$ has bi-Lipschitz distortion $\Omega(\log |V_n|)$ in $Y$; such an argument was first used by Linial, London and Rabinovich~\cite{LLR} (see also~\cite{Mat97}) to show that Bourgain's embedding theorem~\cite{Bourgain-embed} is asymptotically sharp.

Assumption~\eqref{eq:graph poincare} can be restated as saying that $\gamma(A_n,d_Y^p)\le \gamma$, where $A_n$ is the normalized adjacency matrix of $X_n$. This condition can be viewed to mean that the graphs $\{X_n\}_{n=1}^\infty$ are ``expanders" with respect to $(Y,d_Y)$.
Note that if $Y$ contains at least two points then~\eqref{eq:graph poincare} implies that $\{X_n\}_{n=1}^\infty$ are necessarily also expanders in the classical sense (see~\cite{HLW,Lub12} for more on classical expanders).

A key goal in the coarse non-embeddability question is therefore to construct such $\{X_n\}_{n=1}^\infty$ for which one can prove the inequality~\eqref{eq:graph poincare} for non-Hilbertian targets $Y$. This question has been previously investigated by several authors.  Matou\v{s}ek~\cite{Mat97} devised an extrapolation method for Poincar\'e inequalities
(see also the description of Matou\v{s}ek's argument  in~\cite{BLMN05})
which establishes the validity of~\eqref{eq:graph poincare} for every expander when $Y=L_p$. Works of Ozawa~\cite{Ozawa} and Pisier~\cite{pisier-79,pisier-2008} prove~\eqref{eq:graph poincare} for every expander if $Y$ is Banach space which satisfies certain geometric conditions (e.g. $Y$ can be taken to be a Banach lattice of finite cotype; see~\cite{LT79} for background on these notions). In~\cite{NS11,NR-2005} additional results of this type are obtained.

A normed space is called {\em super-reflexive} if it admits an equivalent norm which is uniformly convex. Recall that a normed space $(X,\|\cdot\|_X)$ is uniformly convex if for every $\e\in (0,1)$  there exists $\delta=\delta_X(\e)>0$ such that for any two vectors $x,y\in X$ with $\|x\|_X=\|y\|_X=1$ and $\|x-y\|_X\ge \e$ we have $\left\|\frac{x+y}{2}\right\|_X\le 1-\delta$. 
The question whether there exists a sequence of arbitrarily large regular graphs of bounded degree which do not admit a coarse embedding into any super-reflexive normed space was posed by Kasparov and Yu in~\cite{KY06}, and was solved in the remarkable work of V. Lafforgue~\cite{Lafforgue}  on the strengthened version of property $(T)$ for $SL_3(\mathbb F)$ when $\mathbb F$ is a non-Archimedian local field (see also~\cite{BFGM07,Laf10}). Thus, for concreteness, Lafforgue's graphs can be obtained as Cayley graphs of finite quotients of co-compact lattices in $SL_3(\mathbb Q_p)$, where $p$ is a prime and $\mathbb Q_p$ is the $p$-adic rationals. The potential validity of the same property for finite quotients of $SL_3(\Z)$ remains an intriguing open question~\cite{Lafforgue}.

Here we obtain a different solution of the Kasparov-Yu problem via a new approach that uses the {\em zigzag product} of
Reingold, Vadhan, and Wigderson~\cite{RVW}, as well as a variety of analytic and geometric arguments of independent interest.
More specifically, we construct a family of 3-regular graphs that satisfies~\eqref{eq:graph poincare} for every super-reflexive Banach space $X$ (where $\gamma$ depends only on the geometry $X$); such graphs are called \emph{super-expanders}.

\begin{theorem}[Existence of super-expanders]\label{thm:existence intro}
There exists a sequence of $3$-regular graphs $\{G_n=(V_n,E_n)\}_{n=1}^\infty$ such that $\lim_{n\to \infty} |V_n|=\infty$ and  for every super-reflexive Banach space $(X,\|\cdot\|_X)$ we have
$$
\sup_{n\in \N}\gamma\left(A_{G_n},\|\cdot\|_X^2\right)<\infty,
$$
where $A_{G_n}$ is the normalized adjacency matrix of $G_n$.
\end{theorem}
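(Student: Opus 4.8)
The plan is to combine two independent phenomena—a spectral calculus inequality that forces nonlinear spectral gaps to decay along Ces\`aro averages, and a sub-multiplicativity estimate for nonlinear spectral gaps under the zigzag product—so that the Reingold--Vadhan--Wigderson iterative construction of expanders \cite{RVW} can be run entirely in the category of Poincar\'e constants with respect to $\|\cdot\|_X^2$, uniformly over all super-reflexive $X$. The first ingredient should be packaged as follows: if $A$ is a symmetric stochastic matrix and $X$ is a Banach space with modulus of uniform convexity of power type $q$ (which, by Pisier's renorming theorem, one may assume for any super-reflexive $X$ after passing to an equivalent norm), then there is a function controlling $\gamma\big(A^t,\|\cdot\|_X^2\big)$—or more precisely $\gamma$ of the Ces\`aro average $\frac1t\sum_{s=1}^t A^s$—that drives the constant down to an absolute $O(1)$ once $t$ is a large enough power depending only on the current value of $\gamma(A,\|\cdot\|_X^2)$ and on $q$ and the constant in the power-type estimate. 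Crucially this "metric Markov cotype"/martingale-type argument must be uniform in the dimension $n$.

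Next I would set up the zigzag step. Recall that in \cite{RVW} one starts with a fixed small "base" graph $H$ and forms $G_{k+1} = G_k^{\,t}\zigzag H$ (a squared/powered graph zigzag-multiplied with $H$), which keeps the degree bounded (here, one tunes parameters so the final degree is $3$) while the number of vertices grows. The content to be proved is a Banach-space-valued analogue of the combinatorial fact that the zigzag product inherits a spectral gap from its factors: something of the form $\gamma\big(A_{G\zigzag H},\|\cdot\|_X^2\big)\lesssim \gamma\big(A_G,\|\cdot\|_X^2\big)\cdot\gamma\big(A_H,\|\cdot\|_X^2\big)$ up to lower-order additive terms, again uniformly in $X$. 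This is where one must unwind the tensor/block structure of the zigzag adjacency matrix and apply the defining inequality \eqref{eq:kernel-poin} with $K=\|\cdot\|_X^2$ to carefully chosen families of points (conditioning on the "cloud" coordinate coming from $H$ and averaging), mimicking the linear-algebra proof but with squared-norm kernels in place of quadratic forms; the $2$-convexity/$2$-smoothness of the renormed $X$ is what lets the cross terms be absorbed.

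With both tools in hand the construction is: pick the base graph $H$ so that $\gamma\big(A_H,\|\cdot\|_X^2\big)$ is finite (for a fixed finite graph this is automatic, but one needs a bound depending only on the convexity modulus of $X$—obtained by combining the classical spectral gap of $H$ with the fact that a Poincar\'e inequality in $\mathbb{R}$ self-improves to one in any uniformly convex space, quantitatively). Then run the recursion $G_{k+1}=G_k^{t_k}\zigzag H$: the zigzag inequality would roughly square the Poincar\'e constant, but choosing $t_k$ as a sufficiently large power and invoking the Ces\`aro/spectral-calculus inequality beats it back down below a fixed threshold before each zigzag step, so $\sup_k \gamma\big(A_{G_k},\|\cdot\|_X^2\big)<\infty$ with the supremum depending only on the geometry of $X$. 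One checks separately that the $t_k$ can be chosen so that $|V_{G_k}|\to\infty$ and the final degree is $3$ (absorbing an edge-decoration step), exactly as in the classical construction.

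The main obstacle is the spectral calculus inequality (the first ingredient): for Euclidean targets "decay of the spectral gap along powers" is a triviality ($\lambda_2(A^t)=\lambda_2(A)^t$), but there is no spectral theorem for $\|\cdot\|_X^2$, so one must genuinely produce a self-improving Poincar\'e inequality along Ces\`aro averages using only uniform convexity. The natural route is a martingale/telescoping argument that writes $x_i-x_j$ along a random walk path and uses the power-type $q$ smoothness of the dual modulus to control the sum of squared increments—essentially showing that uniformly convex spaces have a form of "metric Markov cotype $2$." Getting the quantitative dependence right (so that the number of steps $t$ needed is a fixed power of the current Poincar\'e constant, with exponent depending only on $X$, and independent of $n$) is the delicate heart of the argument; the zigzag inequality, by contrast, is expected to be a more routine if technically involved adaptation of the Reingold--Vadhan--Wigderson analysis to the kernel formulation \eqref{eq:kernel-poin}.
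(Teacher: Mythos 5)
Your outline captures the two pillars of the paper's argument—sub-multiplicativity of $\gamma_+$ under the zigzag product, and a nonlinear spectral calculus for Ces\`aro averages driven by a metric Markov cotype inequality proved via Pisier's martingale cotype for spaces of power-type $q$ convexity—and the iterative scheme in which they interact is the right one. However, there are two genuine gaps.

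The first and more serious gap is in your treatment of the base graph. You assert that one can take a fixed finite graph $H$ and obtain the required bound on $\gamma\big(A_H,\|\cdot\|_X^2\big)$ "by combining the classical spectral gap of $H$ with the fact that a Poincar\'e inequality in $\mathbb R$ self-improves to one in any uniformly convex space." No such self-improvement exists: whether every classical expander satisfies a Poincar\'e inequality with respect to every uniformly convex norm is precisely the major open question flagged in the introduction, and the paper does not (and cannot, with current tools) take this shortcut. Instead, a substantial part of the paper (Sections~\ref{sec:heat} and~\ref{sec:base}, culminating in Lemma~\ref{lem:base in section}) is devoted to \emph{constructing} base graphs as quotients of a discretized heat semigroup on the hypercube by a good linear code, using Pisier's $K$-convexity theorem to prove exponential decay of $e^{-t\Delta}$ on the vector-valued tail space $L_p^{\ge k}(\F_2^n,X)$. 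Without something of this nature your recursion has no legitimate starting point.

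The second gap is uniformity in $X$. Your recursion $G_{k+1}=G_k^{t_k}\zigzag H$ produces a sequence of graphs whose parameters ($H$, the Ces\`aro length $t_k$, the threshold at which the calculus kicks in) all depend on the convexity modulus of $X$, so a priori you get a different sequence for each super-reflexive $X$. The theorem demands one fixed $3$-regular sequence that works simultaneously for all super-reflexive spaces, and the degrees and thresholds one needs grow without bound as the geometry of $X$ degenerates. Bridging this requires a further layer of iteration: the paper first produces, for each integer $k$, a family $\{F_j(k)\}_j$ of degree $d_k$ expanders with $\sup_j\gamma_+\big(F_j(k),\|\cdot\|_X^2\big)$ controlled once $k\ge k(X)$ (Corollary~\ref{cor:apply gen zigzag to base graphs}), and then a separate and more delicate diagonal-type zigzag iteration (Lemma~\ref{lem:zigzag main iteration}) that compresses all of these into a single bounded-degree sequence. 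Your proposal does not address this step at all, and it is not a routine afterthought—it is a distinct combinatorial argument. (Finally, a minor point: the zigzag bound in the paper is $\gamma_+(G_1\zigzag G_2,K)\le\gamma_+(G_1,K)\gamma_+(G_2,K)^2$, so each zigzag step multiplies by a constant depending only on the base graph rather than "roughly squaring" the running constant; this actually makes the bookkeeping cleaner than your description suggests.)
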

As we explained earlier, the existence of super-expanders was previously proved by Lafforgue~\cite{Lafforgue}. Theorem~\ref{thm:existence intro} yields a second construction of such graphs (no other examples are currently known). Our proof of Theorem~\ref{thm:existence intro} is entirely different from Lafforgue's approach: it is based on a new systematic investigation of nonlinear spectral gaps and an elementary procedure which starts with a given small graph and iteratively increases its size so as to obtain the desired graph sequence. In fact, our study of nonlinear spectral gaps constitutes the main contribution of this work, and the new solution of the Kasparov-Yu problem should be viewed as an illustration of the applicability of our analytic and geometric results, which will be described in detail presently.

We state at the outset that it is a major open question whether every expander graph sequence satisfies~\eqref{eq:graph poincare} for every uniformly convex normed space $X$.
%
It is also unknown whether there exist graph families of bounded degree and logarithmic girth that do not admit a coarse embedding into any super-reflexive normed space; this question is of particular interest in the context of the potential application to the Novikov conjecture that was proposed by Kasparov and Yu in~\cite{KY06}, since it would allow one to apply Gromov's random group construction~\cite{Gromov-random-group} with respect to actions on super-reflexive spaces.

Some geometric restriction on the target space $X$ must be imposed in order for it to admit a sequence of expanders. Indeed, the relation between nonlinear spectral gaps and coarse non-embeddability, in conjunction with the fact that every finite metric space embeds isometrically into $\ell_\infty$, shows that (for example) $X=\ell_\infty$ can never satisfy~\eqref{eq:graph poincare} for a
family of graphs of bounded  degree and unbounded cardinality. We conjecture that for a normed space $X$ the existence of such a graph family is equivalent to having finite cotype, i.e., that there exists $\e_0\in (0,\infty)$ and $n_0\in \N$ such that any embedding of $\ell_\infty^{n_0}$ into $X$ incurs bi-Lipschitz distortion at least $1+\e_0$; see e.g.~\cite{Mau03} for background on this notion.

Our approach can also be used (see Remark~\ref{rem:lafforgue} below) to show that there exist bounded degree graph sequences which do not admit a coarse embedding into any $K$-convex normed space. A normed space $X$ is $K$-convex\footnote{$K$-convexity is also equivalent to $X$ having Rademacher type strictly bigger than $1$, see~\cite{MS,Mau03}. The
$K$-convexity property is strictly weaker than super-reflexivity,
see~\cite{Jam74,JL75,Jam78,PX87}}. if there exists $\e_0>0$ and $n_0\in \N$ such that any embedding of $\ell_1^{n_0}$ into $X$ incurs distortion at least $1+\e_0$; see~\cite{Pisier-K-convex}.
 The question whether such graph sequences exist was asked by Lafforgue~\cite{Lafforgue}. Independently of our work,
Lafforgue~\cite{Laf09} succeeded to modify his argument so as to prove the desired coarse non-embeddability into $K$-convex spaces for his graph sequence as well.

\subsection{Absolute spectral gaps}\label{sec:absolute}The parameter $\gamma(A,K)$ will reappear later, but for several purposes we need to first study a variant of it which corresponds to the absolute spectral gap of a matrix. 
Define
\begin{equation*}\label{eq:def lambda scalar}
\lambda(A)\eqdef\max_{i\in \{2,\ldots,n\}} |\lambda_i(A)|,
\end{equation*}
 and call the quantity $1-\lambda(A)$ the absolute spectral gap of $A$. Similarly to~\eqref{eq:L_2-poin}, the reciprocal of the absolute spectral gap of $A$ is the smallest $\gamma_+\in (0,\infty]$ such
that for all $x_1,\ldots,x_n,y_1,\ldots,y_n\in L_2$ we have
\begin{equation}\label{eq:L_2-poin+} \frac{1}{n^2}\sum_{i=1}^n\sum_{j=1}^n\|x_i-y_j\|_2^2\le
\frac{\gamma_+}{n}\sum_{i=1}^n\sum_{j=1}^n a_{ij} \|x_i-y_j\|_2^2.
\end{equation}
Analogously to~\eqref{eq:kernel-poin}, given a kernel $K:X\times X\to [0,\infty)$ we can then define $\gamma_+(A,K)$ to be the the infimum over those $\gamma_+\in (0,\infty]$ such that for all $x_1,\ldots,x_n,y_1,\ldots,y_n\in X$ we have
\begin{equation}\label{eq:kernel-poin+} \frac{1}{n^2}\sum_{i=1}^n\sum_{j=1}^nK(x_i,y_j)\le
\frac{\gamma_+}{n}\sum_{i=1}^n\sum_{j=1}^n a_{ij}K(x_i,y_j).
\end{equation}
Note that clearly $\gamma_+(A,K)\ge \gamma(A,K)$. Additional useful relations between $\gamma(\cdot,\cdot)$ and $\gamma_+(\cdot,\cdot)$ are discussed in Section~\ref{sec:gamma gamma+}.

\subsection{A combinatorial approach to the existence of super-expanders}\label{sec:comb intro}
In what follows we will often deal with finite non-oriented regular graphs, which will always be allowed to have self loops and multiple edges (note that the shortest-path metric is not influenced by multiple edges or self loops). When discussing a graph $G=(V,E)$ it will always be understood that $V$ is a finite set and $E$ is a {\em multi-subset} of the ordered pairs $V\times V$, i.e., each ordered pair $(u,v)\in V\times V$ is allowed to appear in $E$ multiple times\footnote{Formally, one can alternatively think of $E$ as a subset of $(V\times V)\times \N$, with the understanding that for $(u,v)\in V\times V$, if we write $J=\{j\in \N:\ ((u,v),j)\in E\}$ then $\{(u,v)\}\times J$ are the $|J|$ ``copies" of $(u,v)$ that appear in $E$. However, it will not be necessary to use such formal notation in what follows.}. We also always impose the condition $(u,v)\in E\implies (v,u)\in E$, corresponding to the fact that $G$ is not oriented. For $(u,v)\in V\times V$ we denote by $E(u,v)=E(v,u)$ the number of times that $(u,v)$ appears in $E$. Thus, the graph $G$ is completely determined by the integer matrix $(E(u,v))_{(u,v)\in V\times V}$. The degree of $u\in V$ is $\deg_G(u)=\sum_{v\in V}E(u,v)$. Under this convention each self loop contributes $1$ to the degree of a vertex. For $d\in \N$, a graph $G=(V,E)$ is $d$-regular if $\deg_G(u)=d$ for every $u\in V$. The normalized adjacency matrix of a $d$-regular graph $G=(V,E)$, denoted $A_G$, is defined as usual by letting its entry at $(u,v)\in V\times V$ be equal to $E(u,v)/d$. When discussing Poincar\'e constants we will interchangeably identify $G$ with $A_G$. Thus, for examples, we write $\lambda(G)=\lambda(A_G)$ and  $\gamma_+(G,K)=\gamma_+(A_G,K)$.

The starting point of our work is an investigation of the behavior
of the quantity $\gamma_+(G,K)$ under certain graph products, the
most important of which (for our purposes) is the zigzag product of
Reingold, Vadhan and Wigderson~\cite{RVW}. We argue below that
such combinatorial constructions are well-adapted to controlling the
nonlinear quantity $\gamma_+(G,K)$. This crucial fact
allows us to use them in a perhaps unexpected geometric context.

\subsubsection{The iterative strategy}\label{sec:strategy}

Reingold, Vadhan and Wigderson~\cite{RVW} introduced the zigzag
product of graphs, and used it to produce a novel deterministic
construction of expanders. Fix $n_1,d_1,d_2\in \N$. Let $G_1$ be a
graph with $n_1$ vertices which is $d_1$-regular and let $G_2$ be a
graph with $d_1$ vertices which is $d_2$-regular. The zigzag
product $G_1\oz G_2$ is a graph with $n_1d_1$ vertices and degree
$d_2^2$, for which the
following fundamental theorem is proved in~\cite{RVW}.
\begin{theorem}[Reingold,
Vadhan and Wigderson]\label{thm:RVW intro} There exists
$f:[0,1]\times [0,1]\to [0,1]$ satisfying
\begin{equation}\label{eq:RVW property}
\forall\, t\in (0,1),\quad \limsup_{s\to 0} f(s,t)<1,
\end{equation}
such that for every $n_1,d_1,d_2\in \N$, if $G_1$ is a graph with
$n_1$ vertices which is $d_1$-regular and $G_2$ is a graph with
$d_2$ vertices which is $d_2$-regular then
\begin{equation}\label{eq:RVW bound}
\lambda(G_1\oz G_2)\le f(\lambda(G_1),\lambda(G_2)).
\end{equation}
\end{theorem}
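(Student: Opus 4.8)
The plan is to carry out the spectral analysis of the zigzag product, following~\cite{RVW}. Fix a $d_1$-regular $G_1=(V_1,E_1)$ with $|V_1|=n_1$ and a $d_2$-regular $G_2$ whose vertex set we identify with $\{1,\ldots,d_1\}$. Choosing, for each $v\in V_1$, an enumeration of the $d_1$ edges of $E_1$ incident to $v$ yields an involution $\rot_{G_1}\colon V_1\times\{1,\ldots,d_1\}\to V_1\times\{1,\ldots,d_1\}$ that sends $(v,i)$ to $(w,j)$ precisely when $w$ is the $i$-th neighbor of $v$ and $v$ is the $j$-th neighbor of $w$. The vertex set of $G_1\oz G_2$ is $V_1\times\{1,\ldots,d_1\}$, and the edge labelled $(a,b)\in\{1,\ldots,d_2\}^2$ leaving $(v,i)$ is obtained by a ``zig'' $G_2$-step inside the cloud $\{v\}\times\{1,\ldots,d_1\}$ (replace $i$ by its $a$-th $G_2$-neighbor $i'$), the $G_1$-step $\rot_{G_1}(v,i')=(w,j')$, and a ``zag'' $G_2$-step inside $\{w\}\times\{1,\ldots,d_1\}$ (replace $j'$ by its $b$-th $G_2$-neighbor). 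Identifying $\R^{V_1\times\{1,\ldots,d_1\}}$ with $\R^{n_1}\otimes\R^{d_1}$, writing $\widetilde B\eqdef I_{n_1}\otimes A_{G_2}$ and letting $\dot A$ be the permutation matrix of $\rot_{G_1}$, one unwinds the definition into
\[
A_{G_1\oz G_2}=\widetilde B\,\dot A\,\widetilde B .
\]
Since $\rot_{G_1}$ is an involution, $\dot A$ is a symmetric permutation matrix, so $A_{G_1\oz G_2}$ is symmetric and stochastic and fixes the all-ones vector $\1\in\R^{n_1 d_1}$; thus $\lambda(G_1\oz G_2)=\max\bigl\{|\langle A_{G_1\oz G_2}x,x\rangle|:\ x\perp\1,\ \|x\|=1\bigr\}$.

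Next I would record the cloud structure. Let $J\eqdef\frac{1}{d_1}\1\1^{\!\top}$ on $\R^{d_1}$, set $\widetilde J\eqdef I_{n_1}\otimes J$, and split $\R^{n_1 d_1}=W_\parallel\oplus W_\perp$ orthogonally, where $W_\parallel=\mathrm{Image}\,\widetilde J$ consists of the vectors constant on each cloud (canonically $\cong\R^{n_1}$) and $W_\perp$ of those of mean zero on each cloud. Two facts are used. First, the combinatorial identity $\widetilde J\,\dot A\,\widetilde J=A_{G_1}\otimes J$: averaging over a cloud, performing the $G_1$-step via $\rot_{G_1}$, and averaging again realizes exactly the $G_1$-random walk on cloud-averages; in particular, for $x_\parallel\in W_\parallel$ with $x_\parallel\perp\1$ the $W_\parallel$-part of $\dot A x_\parallel$ has norm at most $\lambda(G_1)\|x_\parallel\|$. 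Second, writing $A_{G_2}=J+N$ one has $NJ=JN=0$ and $\|N\|=\lambda(G_2)$, so $\widetilde B=\widetilde J+\widetilde N$ with $\widetilde N\eqdef I_{n_1}\otimes N$ vanishing on $W_\parallel$, mapping $W_\perp$ into $W_\perp$, and of norm $\lambda(G_2)$.

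The heart of the argument is to trace a unit vector $x=x_\parallel+x_\perp\perp\1$ through the three moves. The ``zig'' gives $\widetilde B x=x_\parallel+\widetilde N x_\perp$, an orthogonal decomposition with $\widetilde N x_\perp\in W_\perp$, $\|\widetilde N x_\perp\|\le\lambda(G_2)\|x_\perp\|$. Applying $\dot A$, write $\dot A x_\parallel=c+d$ and $\dot A\widetilde N x_\perp=e+g$ for the splits into $W_\parallel$- and $W_\perp$-parts; then $\|c\|\le\lambda(G_1)\|x_\parallel\|$, $\|c\|^2+\|d\|^2=\|x_\parallel\|^2$, and $\|e\|^2+\|g\|^2=\|\widetilde N x_\perp\|^2\le\lambda(G_2)^2\|x_\perp\|^2$. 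The crucial point is an exact orthogonality relation: since $\dot A$ is orthogonal and $\langle x_\parallel,\widetilde N x_\perp\rangle=0$ (the two lie in $W_\parallel$ and $W_\perp$), expanding $0=\langle\dot A x_\parallel,\dot A\widetilde N x_\perp\rangle=\langle c+d,e+g\rangle=\langle c,e\rangle+\langle d,g\rangle$ yields $\langle d,g\rangle=-\langle c,e\rangle$. Finally the ``zag'' $\widetilde B$ fixes $c+e\in W_\parallel$ and sends $d+g\in W_\perp$ to $\widetilde N(d+g)$, so $A_{G_1\oz G_2}x=(c+e)+\widetilde N(d+g)$ is an orthogonal sum and
\[
\|A_{G_1\oz G_2}x\|^2\le\bigl(\|c\|+\|e\|\bigr)^2+\lambda(G_2)^2\bigl(\|d\|^2+\|g\|^2-2\langle c,e\rangle\bigr)\le\bigl(\|c\|+\|e\|\bigr)^2+\lambda(G_2)^2\bigl(\|d\|^2+\|g\|^2+2\|c\|\,\|e\|\bigr).
\]
Substituting the constraints above and optimizing over $\|c\|\in[0,\lambda(G_1)\|x_\parallel\|]$, over $\|e\|^2+\|g\|^2\le\lambda(G_2)^2\|x_\perp\|^2$, and over $\|x_\parallel\|^2+\|x_\perp\|^2=1$ leaves the maximum of an explicit quadratic form in $\|x_\parallel\|$ and $\|x_\perp\|$ whose coefficients depend only on $s\eqdef\lambda(G_1)$ and $t\eqdef\lambda(G_2)$; letting $f(s,t)$ be the square root of this maximum, truncated to $1$, gives $\lambda(G_1\oz G_2)\le f(\lambda(G_1),\lambda(G_2))$. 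When $s=0$ we have $c=0$, so $\langle c,e\rangle=0$ and (using $\lambda(G_2)\le1$ once) the displayed inequality collapses to $\|A_{G_1\oz G_2}x\|\le\lambda(G_2)\|x\|$; hence $f(0,t)=t$ and $\limsup_{s\to0}f(s,t)=t<1$ for every $t\in(0,1)$, which is~\eqref{eq:RVW property}.

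I expect the estimate of the previous paragraph to be the main obstacle. The ``obvious'' bound $|\langle A_{G_1\oz G_2}x,x\rangle|=|\langle\dot A\widetilde B x,\widetilde B x\rangle|\le\|\widetilde B x\|^2\le\|x_\parallel\|^2+\lambda(G_2)^2\|x_\perp\|^2$ never sees the spectral gap of $G_1$ and gives nothing better than $\lambda(G_1\oz G_2)\le1$. Beating it forces one to track how the middle step $\dot A$ redistributes mass between $W_\parallel$ and $W_\perp$, and in particular to exploit the cancellation $\langle d,g\rangle=-\langle c,e\rangle$ --- i.e.\ the fact that the $W_\perp$-images of $x_\parallel$ and of $\widetilde N x_\perp$ under $\dot A$ fail to be orthogonal only by an amount controlled by $\lambda(G_1)\|x_\parallel\|\,\|e\|$, so that when $\lambda(G_1)$ is small the three moves cannot reinforce one another. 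Extracting a clean closed form from this --- with more care one reaches a function $f$ that moreover satisfies $f(s,t)<1$ for all $s,t<1$, e.g.\ $f(s,t)=\tfrac12(1-t^2)s+\tfrac12\sqrt{(1-t^2)^2s^2+4t^2}$ --- is the only genuinely delicate computation.
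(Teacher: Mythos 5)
Your argument is correct, but it is the classical linear-algebraic proof of Reingold--Vadhan--Wigderson, and the paper deliberately does \emph{not} reprove Theorem~\ref{thm:RVW intro} this way. Instead, the paper deduces Theorem~\ref{thm:RVW intro} as the special case $X=\R$, $K(x,y)=(x-y)^2$ of the nonlinear sub-multiplicativity estimate $\gamma_+(G_1\oz G_2,K)\le\gamma_+(G_1,K)\gamma_+(G_2,K)^2$ of Theorem~\ref{thm:sub}: since $\gamma_+(G,|\cdot|^2)=1/(1-\lambda(G))$, this gives $\frac{1}{1-\lambda(G_1\oz G_2)}\le\frac{1}{1-\lambda(G_1)}\cdot\frac{1}{(1-\lambda(G_2))^2}$, i.e.\ one may take $f(s,t)=1-(1-s)(1-t)^2$, which visibly satisfies~\eqref{eq:RVW property}. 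Theorem~\ref{thm:sub} itself is proved in Section~\ref{sec:zigzag proof} by a three-step iteration of the Poincar\'e inequality~\eqref{eq:kernel-poin+} — once across clouds for the $G_1$-step and twice within clouds for the two $G_2$-steps — with no eigenvectors, no orthogonal decomposition $W_\parallel\oplus W_\perp$, and no inner products at all. The comparison: your route, by carefully exploiting the cancellation $\langle d,g\rangle=-\langle c,e\rangle$ coming from orthogonality of $\dot A$, recovers the sharper RVW bound $f(s,t)=\tfrac12(1-t^2)s+\tfrac12\sqrt{(1-t^2)^2s^2+4t^2}$, but every step is wedded to Euclidean geometry. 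The paper's route yields only the weaker bound above (which, as the paper remarks, coincides with the later Reingold--Trevisan--Vadhan estimate), but it holds verbatim for an arbitrary kernel $K$ on an arbitrary set $X$, and that generality is precisely what drives the rest of the paper; for the purposes of Theorem~\ref{thm:RVW intro} itself the loss in the constant is immaterial since any $f$ with $\limsup_{s\to 0}f(s,t)<1$ suffices. In short, your proof is fine as a proof of the statement, but it is orthogonal (so to speak) to the mechanism by which the paper obtains it.
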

The definition of $G_1\oz G_2$ is recalled in Section~\ref{sec:products}. For the purpose of expander
constructions one does not need to know anything about the zigzag
product other than that it has $n_1d_1$ vertices and degree $d_2^2$,
and that it satisfies Theorem~\ref{thm:RVW intro}.  Also, \cite{RVW} contains explicit algebraic expressions for functions  $f$  for which
Theorem~\ref{thm:RVW intro} holds true, but we do not need to quote them
here because they are irrelevant to the ensuing discussion.

 In order to proceed it would be instructive to briefly recall how Reingold, Vadhan and Wigderson used~\cite{RVW} Theorem~\ref{thm:RVW intro}   to construct expanders; see also the exposition in Section~9.2 of~\cite{HLW}.

Let $H$ be a regular graph with $n_0$ vertices and degree
$d_0$, such that $\lambda(H)<1$. Such a graph $H$ will be called a {\em base graph} in what
follows. From~\eqref{eq:RVW property} we deduce that there exist $\e,\delta\in (0,1)$ such that
\begin{equation}\label{eq:epsilon delta}
s\in (0,\delta)\implies f(s,\lambda(H))<1-\e.
\end{equation}
Fix $t_0\in \N$ satisfying
\begin{equation}\label{eq:power made small}
\max\left\{\lambda(H)^{2t_0},(1-\e)^{t_0}\right\}< \delta.
\end{equation}

 For a graph $G=(V,E)$ and for $t\in \N$, let $G^t$ be the graph in which an edge between $u,v\in V$ is drawn for every walk in $G$ of length $t$ whose endpoints are $u,v$. Thus $A_{G^t}=(A_G)^t$, and if $G$ is $d$-regular then $G^t$ is $d^t$-regular.

Assume from now on that $n_0=d_0^{2t_0}$. Define $G_1=H^{2}$ and inductively
$$
G_{i+1}=G_i^{t_0}\oz H.
$$
Then for all $i\in \N$ the graph $G_i$ is well defined and has
$n_0^{i}=d_0^{2it_0}$ vertices and degree $d_0^2$. We claim that $\lambda(G_i)\le \max\{\lambda(H)^2,1-\e\}$ for all $i\in \N$. Indeed, there is nothing to prove for $i=1$, and if the desired bound is true for $i$ then~\eqref{eq:power made small} implies that $\lambda(G_i^{t_0})=\lambda(G_i)^{t_0}<\delta$, which by~\eqref{eq:RVW bound} and~\eqref{eq:epsilon delta}  implies that $\lambda(G_{i+1})\le f(\lambda(G_i^{t_0}),\lambda(H))<1-\e$.

Our strategy is to attempt to construct super-expanders via a similar iterative approach. It turns out that obtaining a non-Euclidean version of Theorem~\ref{thm:RVW intro} (which is the seemingly most substantial ingredient of the construction of Reingold, Vadhan and Wigderson) is not an obstacle here due to the following general result.

\begin{theorem}[Zigzag sub-multiplicativity]\label{thm:sub} Let $G_1=(V_1,E_1)$ be an $n_1$-vertex graph which is $d_1$-regular and let $G_2=(V_2,E_2)$ be a $d_1$-vertex graph which is $d_2$-regular. Then every kernel $K:X\times X\to [0,\infty)$ satisfies
\begin{equation}\label{eq:sub}
\gamma_+\left(G_1\oz G_2,K\right)\le \gamma_+(G_1,K)\cdot \gamma_+(G_2,K)^2.
\end{equation}
\end{theorem}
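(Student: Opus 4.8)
The plan is to exploit the three-step structure of the zigzag walk: a step of $G_1\oz G_2$ from a vertex $(v,a)\in V_1\times V_2$ is a $G_2$-step in the second coordinate, then the $G_1$-rotation, then a second $G_2$-step in the second coordinate. I will bound the Poincar\'e constant by undoing these three moves in turn, losing a factor $\gamma_+(G_2,K)$ on each $G_2$-step and a factor $\gamma_+(G_1,K)$ on the middle rotation; note that the absolute version $\gamma_+$ (rather than $\gamma$) is exactly what is needed, since at each stage one applies a Poincar\'e inequality to two a priori unrelated families of points. Identify the vertex set of $G_1\oz G_2$ with $V_1\times V_2$, where $|V_2|=d_1$ (and $V_2$ is also used as the set of edge-labels at each vertex of $G_1$); put $N=n_1d_1$; let $\rot\colon V_1\times V_2\to V_1\times V_2$ be the rotation map of $G_1$, an involution, and write $\rot(v,c)=(v[c],\sigma_v(c))$. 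For points $\{x_p\},\{y_q\}\subseteq X$ the claim $\gamma_+(G_1\oz G_2,K)\le\gamma_+(G_1,K)\gamma_+(G_2,K)^2$ is the inequality
\[
\frac{1}{N^2}\sum_{p,q}K(x_p,y_q)\ \le\ \gamma_+(G_1,K)\,\gamma_+(G_2,K)^2\cdot\frac{1}{N}\sum_{p,q}\bigl(A_{G_1\oz G_2}\bigr)_{pq}K(x_p,y_q).
\]
Unwinding the definition of $\oz$ and using symmetry of $A_{G_2}$, the right-hand edge term equals $\mathcal E\eqdef\E\bigl[K(x_{(V,A)},y_{(W,D)})\bigr]$, where $V$ is uniform on $V_1$, $B$ is uniform on $V_2$, $A$ is a random $G_2$-neighbour of $B$ (distributed as the $B$-th row of $A_{G_2}$), $(W,C)=\rot(V,B)$, and $D$ is a random $G_2$-neighbour of $C$, with independent choices where not forced; symmetry of $A_{G_2}$ is used precisely to present the first $G_2$-step with its \emph{middle} endpoint $B$, rather than the starting label $A$, as the free variable. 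I will also use repeatedly that the Poincar\'e inequality defining $\gamma_+(G_i,K)$, being linear in the values of $K$, remains valid when the points to which it is applied are replaced by probability distributions of points (average the inequality over independent samples); this lets one apply it on a single $G_i$-fibre with the other coordinates frozen by conditioning or absorbed into such distributions.

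Next I would establish the chain
\[
Q_0\ \le\ \gamma_+(G_1,K)\,\mathcal E_2\ \le\ \gamma_+(G_1,K)\gamma_+(G_2,K)\,\mathcal E_1\ \le\ \gamma_+(G_1,K)\gamma_+(G_2,K)^2\,\mathcal E,
\]
where $Q_0\eqdef N^{-2}\sum_{p,q}K(x_p,y_q)$ is the left-hand complete-graph average. To define the intermediate quantities, reparametrise $\mathcal E$ by the pair $(W,C)=\rot(V,B)$, which is uniform on $V_1\times V_2$; since $\rot$ is an involution, $(V,B)$ and $(W,C)$ determine each other, with $V=W[C]$ and $B=\sigma_W(C)$, so $\mathcal E=\E\bigl[K(x_{(W[C],A)},y_{(W,D)})\bigr]$ with $W$ uniform on $V_1$, $C$ uniform on $V_2$, $A$ a random $G_2$-neighbour of $\sigma_W(C)$, and $D$ a random $G_2$-neighbour of $C$. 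Let $\mathcal E_1$ be this expression with $A$ replaced by a uniform, independent $A'\in V_2$, and $\mathcal E_2$ be $\mathcal E_1$ with $D$ further replaced by a uniform, independent $B\in V_2$. Thus $\mathcal E$, $\mathcal E_1$, $\mathcal E_2$, $Q_0$ differ by successively turning an ``edge'' move into a ``complete'' move: first the $G_2$-edge between $A$ and $\sigma_W(C)$, then the $G_2$-edge between $D$ and $C$, and finally the $G_1$-edge between $W$ and $W[C]$ in the first coordinate.

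Each inequality in the chain is then a single fibrewise application of the appropriate Poincar\'e inequality, after conditioning on everything but the two endpoints of the $G_i$-edge being undone. For $\mathcal E_1\le\gamma_+(G_2,K)\mathcal E$: condition on $V$; with $u_a\eqdef x_{(V,a)}$ and with $\mathcal Z_i$ the law of $y_{(V[i],D)}$ for $D$ a random $G_2$-neighbour of $\sigma_V(i)$, the conditioned $\mathcal E$ is the $A_{G_2}$-edge form and the conditioned $\mathcal E_1$ is the complete form for the family $\{u_a\}$ against the distributions $\{\mathcal Z_i\}$, so $\gamma_+(G_2,K)$ applies and one takes $\E_V$. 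For $\mathcal E_2\le\gamma_+(G_2,K)\mathcal E_1$: condition on $(W,A')$ and apply $\gamma_+(G_2,K)$ on $V_2$ to $\{x_{(W[c],A')}\}_{c}$ against $\{y_{(W,b)}\}_{b}$ — the first family lies in varying $V_1$-fibres as $c$ changes, which is irrelevant to the inequality. For $Q_0\le\gamma_+(G_1,K)\mathcal E_2$: condition on $(A',B)$ and apply $\gamma_+(G_1,K)$ on $V_1$ to $\{x_{(v,A')}\}_{v}$ against $\{y_{(w,B)}\}_{w}$, using that averaging $K(x_{(W[C],A')},y_{(W,B)})$ over uniform $C$ reproduces the $A_{G_1}$-edge form because $\Pr[W[C]=v]=(A_{G_1})_{W,v}$. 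Taking expectations over the frozen coordinates and composing the three inequalities yields the theorem.

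The step I expect to be the main obstacle is not any single computation but arranging this bookkeeping so that each Poincar\'e inequality legitimately applies, because the zigzag rotation genuinely couples the two coordinates: the middle label $B$ and the back-labels $\sigma_v(\cdot)$ are \emph{not} individually uniformly distributed, so a naive fibrewise use of $\gamma_+(G_2,K)$ with honest points is unavailable. The two devices that resolve this are exactly those above — reparametrising the zigzag-edge law by the uniform pair $(W,C)=\rot(V,B)$ (valid because $\rot$ is an involution, so $(V,B)$ and $(W,C)$ determine each other), and feeding the ``downstream'' portion of the walk into the $G_2$ Poincar\'e inequality as a family of distributions rather than points. Once these are in place, verifying the probabilistic description of $\mathcal E$ and carrying out the three fibrewise steps is routine.
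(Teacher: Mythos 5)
Your proposal is correct and follows essentially the same approach as the paper's proof: bound the complete-graph average over $V_1\times V_2$ by chaining one fibrewise application of $\gamma_+(G_1,K)$ and two fibrewise applications of $\gamma_+(G_2,K)$, using the three-step structure of the zigzag walk. The only cosmetic differences are your probabilistic phrasing, the order in which the two $G_2$-steps are undone, and your use of distribution-valued families in one $G_2$-step, where the paper instead conditions on the index of the random $G_2$-neighbour (via the auxiliary maps $\phi_b^u$ and $\psi_i^v$) so as to apply the Poincar\'e inequality to honest points throughout.
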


In the special case $X=\R$ and $K(x,y)=(x-y)^2$, Theorem~\ref{thm:sub} becomes
\begin{equation}\label{eq:RVW case}
\frac{1}{1-\lambda(G_1\oz G_2)}\le \frac{1}{1-\lambda(G_1)}\cdot \frac{1}{(1-\lambda(G_2))^2},
\end{equation}
implying Theorem~\ref{thm:RVW intro}. Note that the explicit bound on the function $f$ of Theorem~\ref{thm:RVW intro} that follows from~\eqref{eq:RVW case} coincides with the later bound of
Reingold, Trevisan and Vadhan~\cite{RTV06}. In~\cite{RVW} an
improved bound for $\lambda(G_1\oz G_2)$ is obtained which is better
than the bound of~\cite{RTV06} (and hence also~\eqref{eq:RVW case}),
though this improvement in lower-order terms has not been used (so far) in the
literature. Theorem~\ref{thm:sub} shows that the fact that the zigzag
product preserves spectral gaps  has nothing to do with the
underlying Euclidean geometry (or linear algebra) that was used
in~\cite{RVW,RTV06}: this is a truly nonlinear phenomenon which
holds in much greater generality, and simply amounts to an iteration
of the Poincar\'e inequality~\eqref{eq:kernel-poin+}.

Due to Theorem~\ref{thm:sub} there is hope to carry out an iterative construction based on the zigzag product in great generality. However, this cannot work for all kernels  since general kernels can fail to admit a sequence of bounded degree expanders. There are two major obstacles that need to be overcome. The first obstacle is the existence of a base graph, which is a substantial issue whose discussion is deferred to Section~\ref{sec:base intro}. The following subsection describes the main obstacle to our nonlinear zigzag strategy.

\subsubsection{The need for a calculus for nonlinear spectral gaps}\label{sec:calculus intro}
In the above description of the Reingold-Vadhan-Wigderson iteration we tacitly used the identity $\lambda(A^t)=\lambda(A)^t$ ($t\in \N$) in order to increase the spectral gap of $G_i$ in each step of the iteration. While this identity is a trivial corollary of spectral calculus, and was thus the ``trivial part" of the construction in~\cite{RVW}, there is no reason to expect that $\gamma_+(A^t,K)$ decreases similarly with $t$ for non-Euclidean kernels $K:X\times X\to [0,\infty)$. To better grasp what is happening here let us examine the asymptotic behavior of $\gamma_+(A^t,|\cdot|^2)$ as a function of $t$ (here and in what follows $|\cdot |$ denotes the absolute value on $\R$).
\begin{multline}\label{eq:euclidean decay}
\gamma_+\left(A^t,|\cdot|^2\right)=\frac{1}{1-\lambda(A^t)}=\frac{1}{1-\lambda(A)^t}
\\
=\frac{1}{1-\left(1-\frac{1}{\gamma_+\left(A,|\cdot|^2\right)}\right)^t}\asymp \max\left\{1,\frac{\gamma_+\left(A,|\cdot|^2\right)}{t}\right\},
\end{multline}
where above, and in what follows, $\asymp$ denotes equivalence up to universal multiplicative constants (we will also use the notation $\lesssim,\gtrsim$ to express the corresponding inequalities up to universal constants). \eqref{eq:euclidean decay} means that raising a matrix to a large power $t\in \N$ corresponds to decreasing its (real) Poincar\'e constant by a factor of $t$ as long as it is possible to do so.

For our strategy to work for other kernels $K:X\times X\to [0,\infty)$
we would like $K$ to satisfy a  ``spectral calculus" inequality of
this type, i.e., an inequality which ensures that, if
$\gamma_+(A,K)$ is large, then $\gamma_+(A^t,K)$ is much smaller
than $\gamma_+(A,K)$ for sufficiently large $t\in \N$. This is, in
fact, not the case in general: in Section~\ref{sec:no-decay} we
construct  a metric space $(X,d_X)$ such that for each $n\in \N$
there is a symmetric stochastic matrix $A_n$ such that
$\gamma_+(A_n,d_X^2)\ge n$ yet for every $t\in \N$ there is $n_0\in
\N$ such that for all $n\ge n_0$ we have $\gamma_+(A_n^t,d_X^2)\gtrsim \gamma_+(A_n,d_X^2)$. The question which metric spaces
satisfy the desired nonlinear spectral calculus inequality thus becomes a
subtle issue which we believe is of fundamental importance, beyond
the particular application that we present here. A large part of the
present paper is devoted to addressing this question. We obtain
rather satisfactory results which allow us to carry out a zigzag
type construction of super-expanders, though we are still quite far from a
complete understanding of the behavior of nonlinear spectral gaps
under graph powers for non-Euclidean geometries.

\subsubsection{Metric Markov cotype and spectral calculus}\label{sec:MMC}  We will introduce a criterion for a metric space $(X,d_X)$, which is a bi-Lipschitz invariant, and prove that it implies that for every $n,m\in\N$ and every $n\times n$ symmetric stochastic matrix $A$ the  Ces\`aro averages $\frac{1}{m}\sum_{t=0}^{m-1}A^t$ satisfy the following spectral calculus inequality.
\begin{equation}\label{eq:decay for gamma_+}
\gamma_+\left(\frac{1}{m}\sum_{t=0}^{m-1}A^t,d_X^2\right)\le C(X)\max\left\{1,\frac{\gamma_+\left(A,d_X^2\right)}{m^{\e(X)}}\right\},
\end{equation}
where $C(X),\e(X)\in (0,\infty)$ depend only on the geometry of $X$ but not on $m,n$ and the matrix $A$. The fact that we can only prove such an inequality for Ces\`aro averages rather than powers does not create any difficulty in the ensuing argument, since Ces\`aro averages are  compatible with iterative graph constructions based on the zigzag product.

Note that Ces\`aro averages have the following combinatorial interpretation in the case of graphs. Given an $n$-vertex $d$-regular graph $G=(V,E)$ let $\A_m(G)$ be the graph whose vertex set is $V$ and for every $t\in \{0,\ldots,m-1\}$ and $u,v\in V$ we draw $d^{m-1-t}$ edges joining $u,v$ for every walk in $G$ of length $t$ which starts at $u$ and terminates at $v$. With this definition $A_{\A_m(G)}=\frac{1}{m}\sum_{t=0}^{m-1}A_G^t$, and $\A_m(G)$ is $md^{m-1}$-regular. 
We will slightly abuse this  notation by also using the  shorthand
\begin{equation}\label{eq:def cesaro notation}
\A_m(A)\eqdef\frac 1m \sum_{t=0}^{m-1}A^t,
\end{equation}
 when $A$ is an $n\times n$ matrix.

In the important paper~\cite{Ball} K. Ball introduced a {\em linear} property of
Banach spaces that he called Markov cotype $2$, and he indicated a
two-step definition that could be used to extend this notion to
general metric spaces. Motivated by Ball's ideas, we consider the following
variant of his definition.
\begin{definition}[Metric Markov cotype]\label{def:metric markov cotype} Fix $p,q\in (0,\infty)$. A
metric space $(X,d_X)$ has {\em metric Markov cotype $p$ with exponent $q$} if there exists $C\in (0,\infty)$ such that  for every $m,n\in
\N$, every $n\times n$ symmetric stochastic matrix $A=(a_{ij})$, and
every $x_1,\ldots,x_n\in X$, there exist $y_1,\ldots,y_n\in X$
satisfying
\begin{equation}\label{def:markov cotype}
\sum_{i=1}^n d_X(x_i,y_i)^q+m^{q/p}\sum_{i=1}^n\sum_{j=1}^n a_{ij}
d_X(y_i,y_j)^q\le C^q\sum_{i=1}^n\sum_{j=1}^n
\A_m(A)_{ij}d_X(x_i,x_j)^q.
\end{equation}
The infimum over those $C\in (0,\infty)$ for which~\eqref{def:markov cotype} holds true is denoted $C_p^{(q)}(X,d_X)$. When $q=p$ we drop the explicit mention of the exponent and simply say that if~\eqref{def:markov cotype} holds true with $q=p$ then $(X,d_X)$ has metric Markov cotype $p$.
\end{definition}

\begin{remark}\label{rem:geometric intuition cotype} We refer to~\cite[Sec.~4.1]{Nao12} for an  explanation of the background and geometric intuition that motivates the (admittedly cumbersome) terminology of Definition~\ref{def:metric markov cotype}. Briefly, the term ``cotype" indicates that this definition is intended to serve as a metric analog of the important Banach space property {\em Rademacher cotype} (see~\cite{Mau03}). Despite this fact, in the the forthcoming paper~\cite{MN12-ext} we show, using a clever idea of Kalton~\cite{Kal11}, that there exists a Banach space with Rademacher cotype $2$ that does not have metric Markov cotype $p$ for any $p\in (0,\infty)$. The term ``Markov" in Definition~\ref{def:metric markov cotype} refers to the fact that the notion of metric Markov cotype is intended to serve as a certain ``dual" to Ball's notion of {\em Markov type}~\cite{Ball}, which is a notion which is defined in terms of the geometric behavior of stationary reversible Markov chains whose state space is a finite subset of $X$.
\end{remark}

\begin{remark}\label{rem:relation to ball's cotype} Ball's original
definition~\cite{Ball} of metric Markov cotype is seemingly
different from Definition~\ref{def:metric markov cotype}, but
in~\cite{MN12-ext} we show that Definition~\ref{def:metric markov
cotype} is equivalent to Ball's definition. We introduced
Definition~\ref{def:metric markov cotype} since it directly implies
Theorem~\ref{thm:cotype implies calculus intro} below.
\end{remark}

The link between Definition~\ref{def:metric markov cotype}  and the desired spectral calculus inequality~\eqref{eq:decay for gamma_+} is contained in the following theorem, which is proved in Section~\ref{sec:cotype to calculus}.

\begin{theorem}[Metric Markov cotype implies nonlinear spectral calculus]\label{thm:cotype implies calculus intro}
Fix $p,C\in (0,\infty)$ and suppose that a metric space $(X,d_X)$ satisfies $$C_p^{(2)}(X,d_X)\le C.$$ Then for every $m,n\in \N$, every $n\times n$ symmetric stochastic matrix $A$ satisfies
$$
\gamma_+\left(\A_m(A),d_X^2\right)\le (45C)^2\max\left\{1,\frac{\gamma_+\left(A,d_X^2\right)}{m^{2/p}}\right\}.
$$
\end{theorem}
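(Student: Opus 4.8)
The plan is to start from the Poincaré inequality defining $\gamma_+(\A_m(A),d_X^2)$ and use the metric Markov cotype hypothesis to "replace" the $x_i$'s by the better-behaved points $y_i$ produced by Definition~\ref{def:metric markov cotype}. So fix $x_1,\dots,x_n,z_1,\dots,z_n\in X$; we must bound $\frac{1}{n^2}\sum_{i,j}d_X(x_i,z_j)^2$ by a multiple of $\frac{1}{n}\sum_{i,j}a_{ij}d_X(x_i,z_j)^2$ times the claimed constant. Apply the cotype inequality once to the family $(x_i)$, obtaining $(y_i)$, and once to the family $(z_j)$, obtaining $(w_j)$, with $q=2$, so that $\sum_i d_X(x_i,y_i)^2 + m\sum_{i,j}a_{ij}d_X(y_i,y_j)^2\le C^2\sum_{i,j}\A_m(A)_{ij}d_X(x_i,x_j)^2$ and similarly for $(z_j),(w_j)$. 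The point is that $\sum_{i,j}\A_m(A)_{ij}d_X(x_i,x_j)^2$ is controlled by $\gamma_+(A,d_X^2)$-times the ``edge term'' $\sum_{i,j}a_{ij}d_X(x_i,x_j)^2$ — indeed $\A_m(A)$ is a symmetric stochastic matrix that is a convex combination of powers $A^t$, and one shows (via a short telescoping/triangle-inequality argument, or by iterating \eqref{eq:kernel-poin+}) that the Dirichlet form of $A^t$ with respect to $d_X^2$ is at most $t^2$ times that of $A$, so summing the geometric-type weights gives a factor $\lesssim m$ on the ``averaged'' Dirichlet form and a matching factor in the global term; this is where the genuinely metric (quadratic) structure enters.

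The key steps, in order, are: (1) record the two-point triangle-type inequality $d_X(x_i,z_j)^2\le 3\big(d_X(x_i,y_i)^2+d_X(y_i,w_j)^2+d_X(w_j,z_j)^2\big)$ and average over $i,j$; (2) the ``diagonal'' terms $\frac1{n^2}\sum_{i,j}d_X(x_i,y_i)^2=\frac1n\sum_i d_X(x_i,y_i)^2$ (and the $z,w$ analog) are directly bounded by the cotype inequality; (3) the ``cross'' term $\frac1{n^2}\sum_{i,j}d_X(y_i,w_j)^2$ is handled by applying the definition of $\gamma_+(A,d_X^2)$ \emph{to the pair of families} $(y_i),(w_j)$, giving $\frac1{n^2}\sum_{i,j}d_X(y_i,w_j)^2\le \frac{\gamma_+(A,d_X^2)}{n}\sum_{i,j}a_{ij}d_X(y_i,w_j)^2$; (4) bound $\sum_{i,j}a_{ij}d_X(y_i,w_j)^2$ by $3\sum_{i,j}a_{ij}\big(d_X(y_i,y_j)^2 + d_X(y_j,w_j)^2 + d_X(w_j,w_j)^2\big)$, wait — more carefully, split through an index with $y_j$ and $w_j$: $d_X(y_i,w_j)^2\le 3(d_X(y_i,y_j)^2+d_X(y_j,w_i)^2+\dots)$; the cleanest route is $d_X(y_i,w_j)\le d_X(y_i,x_i)+d_X(x_i,z_j)+d_X(z_j,w_j)$, so $\sum_{i,j}a_{ij}d_X(y_i,w_j)^2\le 3\sum_{i,j}a_{ij}\big(d_X(y_i,x_i)^2+d_X(x_i,z_j)^2+d_X(z_j,w_j)^2\big)$, and since $a_{ij}$ is stochastic the first and third pieces collapse to $\sum_i d_X(x_i,y_i)^2$ and $\sum_j d_X(z_j,w_j)^2$, already bounded in step (2), while the middle piece is the target edge term; (5) assemble, using $m\sum_{i,j}a_{ij}d_X(y_i,y_j)^2$ and $m\sum_{i,j}a_{ij}d_X(w_j,w_j)^2$ bounds from the cotype inequality together with step (1)'s decomposition through $y_i,w_j$ where the $y$-$w$ edge term comes with the beneficial factor $\frac1m$ after invoking cotype, and track the constants so that everything combines into $(45C)^2\max\{1,\gamma_+(A,d_X^2)/m^{2/p}\}$ — noting that if $m^{2/p}\ge \gamma_+(A,d_X^2)$ the inequality is trivial with constant a small absolute multiple of $C^2$, so one may assume $m^{2/p}<\gamma_+(A,d_X^2)$, which is exactly the regime where dividing the cotype inequality's $m$-weighted term by $\gamma_+$ is favorable.

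The main obstacle, and the step requiring the most care, is step (5): orchestrating the split through the auxiliary points so that the large quantity $\gamma_+(A,d_X^2)$ always multiplies a Dirichlet form that the cotype inequality bounds by $\frac{C^2}{m^{2/p}}\cdot(\text{global }\A_m\text{-term})$ — i.e., ensuring the $\gamma_+$-factor is paired with an $m^{-2/p}$-gain — while the residual terms not carrying this gain are either the target edge term or are absorbed by the diagonal $\sum d_X(x_i,y_i)^2$ contributions; one must also bound $\frac1n\sum_{i,j}\A_m(A)_{ij}d_X(x_i,x_j)^2$ by $\gamma_+(A,d_X^2)$ times $\frac1n\sum_{i,j}a_{ij}d_X(x_i,x_j)^2$ up to an absolute constant, which itself uses a convexity/telescoping estimate comparing the Dirichlet form of a walk of length $t$ to $t$ (or $t^2$, depending on whether one squares before or after summing) times that of the single step, and it is the bookkeeping of these two competing powers of $m$ that dictates the final exponent $2/p$ and the constant $45$.
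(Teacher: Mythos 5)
There is a genuine gap. Your plan applies the cotype inequality separately to the family $(x_i)$ (producing $(y_i)$) and to $(z_j)$ (producing $(w_j)$), and every application outputs a bound in terms of a \emph{self-cross} Dirichlet form: $D_x\eqdef \sum_{i,j}\A_m(A)_{ij}d_X(x_i,x_j)^2$ or $D_z\eqdef\sum_{i,j}\A_m(A)_{ij}d_X(z_i,z_j)^2$. But the Poincar\'e inequality defining $\gamma_+(\A_m(A),d_X^2)$ demands a bound in terms of the \emph{mixed} form $D_{xz}\eqdef\sum_{i,j}\A_m(A)_{ij}d_X(x_i,z_j)^2$, and there is no cheap comparison: $D_x\le 2D_{xz}+2\sum_j d_X(x_j,z_j)^2$, and the leftover term $\sum_j d_X(x_j,z_j)^2$ can only be dominated by $D_{xz}$ at a cost of a factor $m$ (using $\A_m(A)_{jj}\ge 1/m$), which destroys the $m^{-2/p}$ gain you are trying to extract. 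The same obstruction reappears in your step (4): whichever way you route the triangle inequality through the auxiliary points, a term of the shape $\sum_j d_X(x_j,z_j)^2$ or directly $D_x,D_z$ survives, and it cannot be absorbed into $D_{xz}$ nor into the desired target $\frac{1}{n}\sum_{i,j}a_{ij}d_X(x_i,z_j)^2$. (Bounding $\sum_j d_X(x_j,z_j)^2$ by a constant times $\frac{1}{n}\sum_{i,j}d_X(x_i,z_j)^2$, which one can do by the triangle inequality, is circular: it reintroduces a constant multiple of the very quantity you are trying to bound, with a coefficient larger than $1$.) The assertion near the end of your sketch that $\frac1n\sum_{i,j}\A_m(A)_{ij}d_X(x_i,x_j)^2$ is bounded by $\gamma_+(A,d_X^2)$ times $\frac1n\sum_{i,j}a_{ij}d_X(x_i,x_j)^2$ up to an absolute constant is also false in general: comparing $\A_m(A)$ to $A$ through the telescoping estimate $\sum_{i,j}(A^t)_{ij}d_X(x_i,x_j)^2\le t^2\sum_{i,j}a_{ij}d_X(x_i,x_j)^2$ only yields a factor of order $m^2$, not $\gamma_+$.

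The paper sidesteps exactly this issue by never working directly with two families in the cotype step. It first proves the decay statement for the plain Poincar\'e constant $\gamma$ (Lemma~\ref{lem:decay of gamma in section}): given a single family $(x_i)$ nearly extremal for $\gamma(\A_m(A),d_X^q)$, it produces $(y_i)$ via cotype, transfers the oscillation to $(y_i)$ by the triangle inequality, and invokes $\gamma(A,d_X^q)$ on the $y$'s alone --- here the $m^{q/p}$-weighted $A$-Dirichlet-form bound from the cotype inequality is exactly what's needed, and no mixed/self mismatch arises. The passage to $\gamma_+$ is then effected by the bipartite double cover: by Lemma~\ref{lem:pass to 2-cover}, $\gamma_+(\A_m(A),d_X^q)$ is controlled by $\gamma$ of the doubled matrix $\left(\begin{smallmatrix}0&\A_m(A)\\\A_m(A)&0\end{smallmatrix}\right)$, and Lemma~\ref{lem:2 cover cesaro} commutes $\A_m$ with the doubling so that one may apply the $\gamma$-decay lemma to the symmetric stochastic matrix $\left(\begin{smallmatrix}0&A\\A&0\end{smallmatrix}\right)$; a final application of Lemma~\ref{lem:pass to 2-cover} converts back to $\gamma_+(A,d_X^q)$. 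This doubling is precisely the device that turns the ``two families'' problem of $\gamma_+$ into the ``one family'' problem to which the cotype hypothesis speaks directly, and is what your direct approach is missing.
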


In Section~\ref{sec:martingale} we investigate the metric Markov cotype of super-reflexive Banach spaces, obtaining the following result, whose proof is inspired by Ball's insights in~\cite{Ball}.

\begin{theorem}[Metric Markov cotype for super-reflexive Banach spaces]\label{thm:markov cotype thm in intro}
Let $(X,\|\cdot\|_X)$ be  a super-reflexive Banach space. Then there exists $p=p(X)\in [2,\infty)$ such that $$C_p^{(2)}(X,\|\cdot\|_X)<\infty,$$ i.e., $(X,\|\cdot\|_X)$ has Metric Markov cotype $p$ with exponent $2$.
\end{theorem}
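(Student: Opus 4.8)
The plan is to prove that a super-reflexive Banach space $(X,\|\cdot\|_X)$ has metric Markov cotype $p$ with exponent $2$ for a suitable $p=p(X)$, following the martingale/uniform-smoothness approach pioneered by Ball in \cite{Ball}. The first step is to invoke Pisier's renorming theorem: since $X$ is super-reflexive, it admits an equivalent norm with modulus of uniform convexity of power type $q$ and (by duality) modulus of uniform smoothness of power type $p'$, where $p=p(X)\in[2,\infty)$ and $\frac1p+\frac1{p'}=1$. We may therefore assume $X$ itself is uniformly smooth of power type $p'$, i.e. $\|u+v\|_X^{p'}+\|u-v\|_X^{p'}\le 2\|u\|_X^{p'}+S^{p'}\|v\|_X^{p'}$ (or the more usable two-sided/pointwise smoothness inequality with the correct exponent), at the cost of a constant depending only on $X$. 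All constants below are allowed to depend on $X$.

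Given a symmetric stochastic matrix $A=(a_{ij})$ and points $x_1,\dots,x_n\in X$, the key construction is to define $y_1,\dots,y_n$ as the result of one step of a smoothing/averaging operator: the natural candidate, as in Ball, is $y_i=\frac1{m}\sum_{t=0}^{m-1}(A^t x)_i$ where $(Bx)_i\eqdef \sum_j B_{ij}x_j$ for a stochastic matrix $B$; equivalently $y=\A_m(A)x$ in the notation \eqref{eq:def cesaro notation}. Then $d_X(x_i,y_i)=\|x_i-y_i\|_X$ and $\sum_i a_{ij}\|y_i-y_j\|_X^2$ must both be controlled by $\sum_{i,j}\A_m(A)_{ij}\|x_i-x_j\|_X^2$. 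The term $\sum_i\|x_i-y_i\|_X^q$ is handled by writing $x_i-y_i$ as a telescoping sum $\frac1m\sum_{t=0}^{m-1}\sum_{s=0}^{t-1}\big((A^s x)_i-(A^{s+1}x)_i\big)$ and using that, because $A$ is symmetric stochastic, the discrete "gradient" $x-Ax$ has squared $X$-norm comparable (via reversibility and the stationary measure, here uniform) to the Dirichlet-type form $\sum_{i,j}a_{ij}\|x_i-x_j\|_X^2$; this is where uniform smoothness of power type $p'$ enters, to convert the $\ell_{p'}$-type martingale-difference estimate into the desired bound with the $m^{q/p}$ gain. The second term $m^{q/2}\sum_{i,j}a_{ij}\|y_i-y_j\|_X^2$ is the place where one exploits that $y$ is already an average of $m$ heat-semigroup steps, so its one-step increments are smaller by a factor of order $m$; combined with the power-type-$p'$ smoothing inequality (applied along the martingale of conditional expectations associated with the Markov chain $A$), one gets the factor $m^{2/p}$ to cancel the $m^{q/2}$ prefactor when $q=2$.

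Concretely, I would set up the reversible Markov chain with transition matrix $A$ and uniform stationary distribution, form the martingale $M_t\eqdef \E[x_{X_\infty}\mid X_0,\dots,X_t]$-style object (or, more in the spirit of Ball's combinatorial argument, work directly with the vectors $A^t x$), and apply the following two inequalities: (i) a "smoothing" inequality stating $\sum_i\|(A x)_i-x_i\|_X^{p'}\lesssim \sum_i\|x_i\|_X^{p'}-\|(Ax)_i\|_X^{p'}$-type energy decrement coming from power-type-$p'$ smoothness; (ii) convexity/Jensen to pass between exponents $p'$ and $2$ with loss of at most a dimension-free constant, using $p'\le 2$. Summing (i) over $t=0,\dots,m-1$ telescopes and yields, after Hölder in $t$, the bound $\sum_i\|x_i-y_i\|_X^2\lesssim m^{1-2/p'}\cdot(\text{energy})=m^{2/p-1}\cdot(\text{energy})$, and a parallel computation controls $\sum_{i,j}a_{ij}\|y_i-y_j\|_X^2$; choosing the normalization so that the energy is exactly (a constant times) $\sum_{i,j}\A_m(A)_{ij}\|x_i-x_j\|_X^2$ closes the estimate with $C_p^{(2)}(X)<\infty$.

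The main obstacle I expect is item (i): establishing the correct martingale-difference/energy-decrement inequality with the sharp exponent $p'$ for a \emph{general} symmetric stochastic matrix $A$ (not just a single averaging step), so that the telescoping over $t=0,\dots,m-1$ produces a clean geometric-type decay and the Hölder loss in $t$ is exactly $m^{2/p-1}$ rather than something weaker. Making the martingale associated with $A$ interact correctly with the power-type-$p'$ smoothness inequality — in particular getting a genuinely additive energy decrement per step rather than a multiplicative one that would degrade badly when $\lambda(A)$ is close to $1$ — is the delicate technical heart of the proof, and is precisely where Ball's insight (and its adaptation here) is needed. Everything else (the renorming reduction, the convexity passage between exponents $p'$ and $2$, bookkeeping of constants) is routine by comparison.
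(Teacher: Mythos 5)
Your construction $y_i=\A_m(A)x_i$ and the broad Ball-style martingale framework are the right starting point, but the argument is built on the wrong modulus. Metric Markov cotype is a cotype-like property, and the renorming fact you need from Pisier is that a super-reflexive $X$ admits an equivalent norm with modulus of uniform \emph{convexity} of power type $p$ for some $p\in[2,\infty)$ (equivalently $K_p(X)<\infty$) --- not smoothness of power type $p'=p/(p-1)$ for $X$ itself; duality relates the convexity of $X$ to the smoothness of $X^*$, not of $X$. Indeed the energy-decrement inequality you propose in item (i), $\sum_i\|(Ax)_i-x_i\|^{p'}\lesssim\sum_i\bigl(\|x_i\|^{p'}-\|(Ax)_i\|^{p'}\bigr)$, is the shape of the \emph{convexity} estimate in Lemma~\ref{lem:from Ball}, not a smoothness estimate. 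Moreover, as an operator-level statement for a general symmetric stochastic $A$ it is simply false: if $A$ has an eigenvector with eigenvalue near $-1$, the right-hand side vanishes while the left-hand side does not. The Cesàro average $\A_m(A)$ appears in Definition~\ref{def:metric markov cotype} precisely because the naive operator inequality for $A$ fails, and the step where you say "summing (i) over $t=0,\ldots,m-1$ telescopes" would break down at exactly this point.

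The paper's fix is to apply uniform convexity not to the vectors $A^s x$ directly but to the Doob martingale $M_t^{(\ell)}=(A^{m-t}\otimes I_X^n)f\bigl(Z_t^{(\ell)}\bigr)$ of $f\bigl(Z_m^{(\ell)}\bigr)$ along the Markov chain $Z_t^{(\ell)}$ with transition matrix $A$ started at $\ell$. For each $t$, $M_{t-1}^{(\ell)}$ genuinely \emph{is} the conditional expectation of $M_t^{(\ell)}$ given $Z_{t-1}^{(\ell)}$, so Ball's lemma applies legitimately and telescoping yields Pisier's martingale inequality (Theorem~\ref{thm:pisier ineq}, extended in Corollary~\ref{coro L_2(X)} to exponent $q=2<p$, which is exactly where the $m^{1-2/p}$ factor enters):
\begin{equation*}
\sum_{t=1}^m\E\bigl[\|M_t^{(\ell)}-M_{t-1}^{(\ell)}\|_X^2\bigr]\lesssim_X m^{1-2/p}\,\E\bigl[\|M_m^{(\ell)}-M_0^{(\ell)}\|_X^2\bigr].
\end{equation*}
The needed $m^{2/p}$ gain then comes from the opposite direction: by Jensen the sum over $t$ of $\|M_t-M_{t-1}\|_X^2$ is at least $m$ times the squared norm of the average increment, which telescopes to $y_i-y_j$ up to a small error, while $\E[\|M_m-M_0\|_X^2]$ is controlled by $\sum_{i,j}(A^m)_{ij}\|x_i-x_j\|_X^2\lesssim\sum_{i,j}\A_m(A)_{ij}\|x_i-x_j\|_X^2$. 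So to repair your proposal, replace uniform smoothness of power type $p'$ with uniform convexity of power type $p$, replace item (i) with Lemma~\ref{lem:from Ball} applied to the conditional expectations inside the martingale (not to $A$ acting on a fixed vector), and use Corollary~\ref{coro L_2(X)} rather than Hölder in $t$ to produce the exponent $m^{2/p}$.
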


\begin{remark}\label{rem:CAT(0)} In our forthcoming paper~\cite{MN12-ext} we compute the metric Markov cotype of additional classes of metric spaces. In particular, we show that all $CAT(0)$ metric spaces (see~\cite{BH-book}), and hence also all complete simply connected Riemannian manifolds with nonnegative sectional curvature, have Metric Markov cotype $2$ with exponent $2$.
\end{remark}

By combining Theorem~\ref{thm:cotype implies calculus intro} and Theorem~\ref{thm:markov cotype thm in intro} we deduce the following result.
\begin{corollary}[Nonlinear spectral calculus for super-reflexive Banach spaces]\label{coro:decay for super-reflexive intro}
For every super-reflexive Banach space $(X,\|\cdot\|_X)$ there exist $\e(X),C(X)\in (0,\infty)$ such that for every $m,n\in \N$ and every $n\times n$ symmetric stochastic matrix $A$ we have
$$
\gamma_+\left(\A_m(A),\|\cdot\|_X^2\right)\le C(X)\max\left\{1,\frac{\gamma_+\left(A,\|\cdot\|_X^2\right)}{m^{\e(X)}}\right\}.
$$
\end{corollary}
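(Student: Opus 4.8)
The plan is to simply chain together the two named theorems that immediately precede the corollary, since the corollary is phrased as a direct consequence of them. First I would invoke Theorem~\ref{thm:markov cotype thm in intro}: given a super-reflexive Banach space $(X,\|\cdot\|_X)$, there exists an exponent $p=p(X)\in [2,\infty)$ such that $C_p^{(2)}(X,\|\cdot\|_X)<\infty$. Set $C\eqdef C_p^{(2)}(X,\|\cdot\|_X)$, so that $C$ depends only on the geometry of $X$.

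Next I would feed this data into Theorem~\ref{thm:cotype implies calculus intro}, applied to the metric space $(X,d_X)$ with $d_X(x,y)=\|x-y\|_X$. That theorem, with the value of $p$ and $C$ just obtained, states that for every $m,n\in\N$ and every $n\times n$ symmetric stochastic matrix $A$ we have
$$
\gamma_+\left(\A_m(A),\|\cdot\|_X^2\right)\le (45C)^2\max\left\{1,\frac{\gamma_+\left(A,\|\cdot\|_X^2\right)}{m^{2/p}}\right\}.
$$
Setting $C(X)\eqdef (45C)^2=(45\,C_p^{(2)}(X,\|\cdot\|_X))^2$ and $\e(X)\eqdef 2/p(X)\in(0,1]$ gives exactly the asserted inequality, with constants depending only on $X$. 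This completes the proof.

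There is essentially no obstacle at this level: the entire content lies upstream, in establishing Theorem~\ref{thm:markov cotype thm in intro} (the super-reflexive renorming together with Ball-type martingale/smoothness estimates to produce the vectors $y_1,\ldots,y_n$ witnessing finite metric Markov cotype with exponent $2$) and Theorem~\ref{thm:cotype implies calculus intro} (deducing the Ces\`aro decay of $\gamma_+$ from the defining inequality~\eqref{def:markov cotype}). Both are assumed available here, so the corollary itself is a one-line composition. The only minor point worth stating explicitly is that $p(X)\ge 2$ forces $\e(X)=2/p(X)\le 1$, so the exponent in the Ces\`aro bound is genuinely in $(0,\infty)$ as claimed, and that the constants are uniform over all $m,n$ and all symmetric stochastic $A$ precisely because the constants in the two input theorems are.
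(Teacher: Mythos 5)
Your proposal is correct and is precisely the paper's own argument: the paper introduces Corollary~\ref{coro:decay for super-reflexive intro} with the sentence ``By combining Theorem~\ref{thm:cotype implies calculus intro} and Theorem~\ref{thm:markov cotype thm in intro} we deduce the following result,'' which is exactly the one-line composition you carry out, with the same identifications $C(X)=(45C_p^{(2)}(X))^2$ and $\e(X)=2/p(X)$.
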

\begin{remark}\label{rem:norm bound}
In Theorem~\ref{decay in uniformly convex} below we present a different approach to proving nonlinear spectral calculus inequalities in the setting of super-reflexive Banach spaces. This approach, which is based on bounding the norm of a certain linear operator, has the advantage that it establishes the decay of the Poincar\'e constant of the power $A^m$ rather than the Ces\`aro average $\A_m(A)$. While this result is of independent geometric interest, the form of the decay inequality that we are able to obtain has the disadvantage that we do not see how to use it to construct super-expanders. Moreover, we do not know how to obtain sub-multiplicativity estimates for such norm bounds under zigzag products and other graph products such as the tensor product and replacement product (see Section~\ref{sec:products intro} below). The approach based on metric Markov cotype  also has the advantage of being applicable to other classes of (non-Banach) metric spaces, in addition to its usefulness for the Lipschitz extension problem~\cite{Ball,MN12-ext}.
\end{remark}

\subsubsection{The base graph}\label{sec:base intro} In order to construct super-expanders using Theorem~\ref{thm:sub} and Corollary~\ref{coro:decay for super-reflexive intro} one must start the inductive procedure with an appropriate ``base graph". This is a nontrivial issue that raises analytic challenges which are interesting in their own right.

It is most natural to perform our construction of base graphs in the context of $K$-convex Banach spaces, which, as we recalled earlier, is a class of spaces that is strictly larger than the class of super-reflexive spaces. The result thus obtained, proved in Section~\ref{sec:base} using the preparatory work in Section~\ref{sec:decay heat} and part of Section~\ref{sec:UC}, reads as follows.

\begin{lemma}[Existence of base graphs for $K$-convex spaces]\label{lem:base in section}
There exists a strictly increasing sequence of integers  $\{m_n\}_{n=1}^\infty\subseteq \N$ satisfying
\begin{equation}
\forall\, n\in \N,\quad 2^{n/10}\le m_n\le 2^n,
\end{equation}
with the following properties. For every $\d\in (0,1]$ there is $n_0(\d)\in \N$ and a sequence of regular graphs $\{H_n(\d)\}_{n=n_0(\d)}^\infty$ such that
\begin{itemize}
\item $|V(H_n(\d))|=m_n$ for every integer $n\ge n_0(\d)$.
\item For every $n\in [n_0(\d),\infty)\cap\N$ the degree of $H_n(\d)$, denoted $d_n(\d)$, satisfies
\begin{equation}\label{eq:dndelta bound in section}
d_n(\d)\le e^{(\log m_n)^{1-\d}}.
\end{equation}
\item For every $K$-convex Banach space $(X,\|\cdot\|_X)$  we have $\gamma_+(H_n(\d),\|\cdot\|_X^2)<\infty$ for all $\d\in (0,1)$ and $n\in \N\cap [n_0(\d),\infty)$. Moreover, there exists $\d_0(X)\in (0,1)$ such that
\begin{equation}\label{eq:9^3}
\forall\, \d\in (0,\d_0(X)],\ \forall n\in [n_0(\d),\infty)\cap\N,\quad \gamma_+(H_n(\d),\|\cdot\|_X^2)\le 9^3.
\end{equation}
\end{itemize}
\end{lemma}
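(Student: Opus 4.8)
The plan is to realize each $H_n(\delta)$ as a high power $\Gamma_n^{\,s}$ of an explicit graph $\Gamma_n$ whose $X$-valued spectral gap is controllable by harmonic analysis (or, more precisely, as a suitable graph built from such a $\Gamma_n$ by powering and/or Ces\`aro averaging), and to deduce the Poincar\'e inequality~\eqref{eq:9^3} from the decay of an $X$-valued heat semigroup, which is available precisely because $X$ is $K$-convex. First I would fix the sequence $\{m_n\}_{n=1}^\infty$ by an elementary choice, e.g.\ a strictly increasing sequence of powers of a fixed small base, which automatically lands in the window $[2^{n/10},2^n]$. Next, for each $n$ I would take $\Gamma_n$ to be an explicit bounded-degree graph on $m_n$ vertices of a type amenable to explicit spectral computations --- for instance a Cayley graph of an abelian group --- and set $H_n(\delta)=\Gamma_n^{\,s_n(\delta)}$. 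Then $|V(H_n(\delta))|=m_n$ and the degree of $H_n(\delta)$ equals $(\deg\Gamma_n)^{s_n(\delta)}$, so taking $s_n(\delta)$ of order $(\log m_n)^{1-\delta}/\log\deg\Gamma_n$ yields the degree bound~\eqref{eq:dndelta bound in section}. Note that $s_n(\delta)$ is unbounded in $n$ for each fixed $\delta$ and grows as $\delta$ decreases; this is the flexibility used to obtain~\eqref{eq:9^3} only in the regime $\delta\le\delta_0(X)$.

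The analytic engine is the dimension-free decay estimate of Section~\ref{sec:decay heat}, which also uses the uniform-smoothness estimates of Section~\ref{sec:UC}: if $X$ is $K$-convex then the norm of $A_{\Gamma_n}^{\,t}-\E$, as an operator on the space of $X$-valued functions on $V(\Gamma_n)$ (with $\E$ the averaging projection), tends to $0$ as $t\to\infty$ at a rate depending only on the geometry of $X$ and not on $n$ --- this is the quantitative form of Pisier's theorem characterising $K$-convexity, the relevant point being that a vector-valued Fourier multiplier whose scalar symbol is a power of $\lambda(\Gamma_n)$ is bounded on $L_2(X)$ with norm controlled by the $K$-convexity constant of $X$. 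Hence there is $t_0(X)\in\N$ such that $\|A_{\Gamma_n}^{\,t}-\E\|\le\tfrac12$ for all $t\ge t_0(X)$, and by the previous paragraph this holds for $t=s_n(\delta)$ once $\delta\le\delta_0(X)$ and $n\ge n_0(\delta)$. It then remains to pass from an operator-norm bound $\|B-\E\|\le\tfrac12$, for a symmetric stochastic matrix $B$ acting on $X$-valued functions, to the two-variable inequality $\gamma_+(B,\|\cdot\|_X^2)\le 9^3$; this is the standard linearization behind Remark~\ref{rem:norm bound} and Theorem~\ref{decay in uniformly convex}, in which one rewrites the bilinear forms $\sum_{i,j}b_{ij}\|x_i-y_j\|_X^2$ in terms of $(I-B)$ applied to the data and absorbs the discrepancy into $\|B-\E\|$, the explicit constant $9^3$ absorbing the loss in this reduction and in passing from norms to squared norms. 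Finiteness of $\gamma_+(H_n(\delta),\|\cdot\|_X^2)$ for the remaining values of $\delta$ and $n$ is then elementary from the connectedness of $H_n(\delta)$.

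The main obstacle lies in the interlocking requirements on $\Gamma_n$ together with the decay estimate. One needs $\Gamma_n$ for which $\|A_{\Gamma_n}^{\,t}-\E\|$ on $X$-valued functions genuinely decays, with a rate uniform in $n$, for \emph{every} $K$-convex $X$; a qualitative version is Pisier's theorem, but one must extract an explicit rate --- robust enough that the walk length $s_n(\delta)$ permitted by the degree budget really does exceed $t_0(X)$ once $\delta\le\delta_0(X)$ --- and this is where the harmonic-analytic content of Sections~\ref{sec:decay heat} and~\ref{sec:UC} is essential. At the same time $\Gamma_n$ must be a genuine (linear) expander of bounded degree, since~\eqref{eq:kernel-poin+} applied to real-valued data already gives $\gamma_+(H_n(\delta),\|\cdot\|_X^2)\ge \frac{1}{1-\lambda(H_n(\delta))}$; reconciling ``enough algebraic structure for the $K$-convex analysis'' with ``genuine expansion at bounded degree'' is the delicate point of the construction. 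The remaining ingredients --- the choice of $\{m_n\}$, the linearization yielding the constant $9^3$, and the bookkeeping that fixes $n_0(\delta)$ and $\delta_0(X)$ --- are comparatively routine.
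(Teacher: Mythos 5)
Your proposal to realize $H_n(\delta)$ as a high power $\Gamma_n^{\,s_n(\delta)}$ of a bounded-degree abelian Cayley graph $\Gamma_n$ on $m_n$ vertices has a genuine gap at the central step, and the gap is precisely the point that the paper's construction is designed to overcome.

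The analytic claim on which your argument rests --- that for such $\Gamma_n$ one has $\bigl\|A_{\Gamma_n}^{\,t}-\E\bigr\|_{L_2(V(\Gamma_n),X)\to L_2(V(\Gamma_n),X)}\to 0$ as $t\to\infty$ at a rate depending only on $K(X)$ and not on $n$ --- is not a consequence of Pisier's $K$-convexity theorem, nor is it known for any nontrivial bounded-degree expander family. Pisier's theorem gives boundedness of the Rademacher projection (and, in the form used in Section~\ref{sec:decay heat}, holomorphic extension of $e^{-z\Delta}$ to a fixed sector on the \emph{hypercube}), not smallness of arbitrary $X$-valued Fourier multipliers with small scalar symbol. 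Indeed the paper explicitly records that ``it is a major open question whether every expander graph sequence satisfies~\eqref{eq:graph poincare} for every uniformly convex normed space $X$''; were your $\Gamma_n$ to exist, one would have a uniform bound $\lambda_X^{(2)}(\Gamma_n^{\,t})\le\tfrac12$ for all $K$-convex $X$, and via Lemma~\ref{lem:norm to poincare} this would essentially settle that question. If, on the other hand, you specialize $\Gamma_n$ to the one situation where Pisier's theorem \emph{does} yield dimension-free decay of the semigroup --- the Hamming cube $\F_2^n$ with the heat flow, $e^{-t\Delta}-\E=\sum_{m\ge1}e^{-tm}\Rad_m$ being summable by~\eqref{eq:higher rad bound} for any fixed $t>a(K,p)$ --- then the discretized walk-of-length-$s$ graph has degree that is exponential in $n\asymp\log m_n$ rather than $\le e^{(\log m_n)^{1-\delta}}$, violating~\eqref{eq:dndelta bound in section}. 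So the two requirements you flag at the end (decay uniform in $n$, degree budget) are not merely ``delicate to reconcile'': your scheme cannot reconcile them.

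The paper's Lemma~\ref{lem:base code} avoids this by an extra device that does not appear in your proposal: a quotient by the dual $V_n^\perp$ of a good linear code $V_n\subseteq\F_2^n$ with $\dim V_n\ge n/10$ and minimal weight $k_n\ge n/10$. Mean-zero functions on $\F_2^n/V_n^\perp$ lift to $V_n^\perp$-invariant functions on $\F_2^n$, which by~\cite[Lem.~3.3]{KN06} lie in the tail space $L_p^{\ge k_n}(\F_2^n,X)$. The crucial gain is Theorem~\ref{thm:AB}: on $L_p^{\ge k}$ the heat semigroup decays like $Ce^{-Ak\min\{t,t^B\}}$, so one may take $t$ polynomially small in $n$ (the choice~\eqref{eq:our choice of t}, $t\asymp k_n^{-1/B}$) and still force $\lambda_X^{(p)}(Q)\le\tfrac12$. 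With $t$ this small, the discretization $G_t^n$ of $e^{-t\Delta}$ from Lemma~\ref{lem:heat discretization} has degree $\le\sigma_t^{n\,-1}\le\tau_t^{-8n\tau_t}=e^{O(n^{1-1/B'})}$, which is $\le e^{(\log m_n)^{1-\delta}}$ for an appropriate $\delta=\delta(K,p)$. The quotient graph inherits this degree and, via~\eqref{eq:discrete heat}, inherits the Poincar\'e inequality for the averaging operator $Q$. You do correctly identify the final linearization step (Lemma~\ref{lem:norm to poincare}, giving $\gamma_+\le(1+4/(1-\lambda))^p$ and hence the constant $9^{p+1}$), and the choice of $m_n=2^{D_n}$ is in the window $[2^{n/10},2^n]$ as required; what is missing is the entire code-quotient/tail-space mechanism, without which neither the operator-norm decay nor the degree bound can be made to hold simultaneously.
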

The bound $9^3$ in~\eqref{eq:9^3} is nothing more than an artifact of our proof and it does not play a special role in what follows: all that we will need for the purpose of constructing super-expanders is to ensure that
\begin{equation}\label{eq:weaker gamma bound comment}
\sup_{\d\in (0,\d_0(X)]}\sup_{n\in [n_0(\d),\infty)\cap\N} \gamma_+(H_n(\d),\|\cdot\|_X^2)<\infty,
\end{equation}
i.e., for our purposes the upper bound on $\gamma_+(H_n(\d),\|\cdot\|_X^2)$ can be allowed to depend on $X$. Moreover, in the ensuing arguments we can make do with a degree bound that is weaker than~\eqref{eq:dndelta bound in section}: all we need is that
\begin{equation}\label{eq:weaker degree bound comment}
\forall\, \d\in (0,1),\quad \lim_{n\to \infty} \frac{\log d_n(\d)}{\log m_n}=0.
\end{equation}
However, we do not see how to prove the weaker requirements~\eqref{eq:weaker gamma bound comment}, \eqref{eq:weaker degree bound comment} in a substantially simpler way than our proof of the stronger requirements~\eqref{eq:dndelta bound in section}, \eqref{eq:9^3}.

The starting point of our approach to construct base graphs is the ``hypercube quotient argument" of~\cite{KN06}, although in order to apply such ideas in our context we significantly modify this construction, and apply  deep methods of Pisier~\cite{Pisier-K-convex,pisier-2007}. A key analytic challenge that arises here is to bound the norm of the inverse of the hypercube Laplacian on the vector-valued {\em tail space}, i.e., the space of all functions taking values in a Banach space $X$ whose Fourier expansion is supported on Walsh functions corresponding to large sets. If $X$ is a Hilbert space then the desired estimate is an immediate consequence of orthogonality, but even when $X$ is an $L_p(\mu)$ space the corresponding inequalities are not known. P.-A. Meyer~\cite{Meyer} previously obtained $L_p$ bounds for the inverse of the Laplacian on the (real-valued) tail space, but such bounds are insufficient for our purposes. In order to overcome this difficulty, in Section~\ref{sec:heat} we obtain decay estimates for the heat semigroup on the tail space of functions taking values in a $K$-convex Banach space. We then use (in Section~\ref{sec:base}) the heat semigroup to construct a new (more complicated) hypercube quotient by a linear code which can serve as the base graph of Lemma~\ref{lem:base in section}.

The bounds on the norm of the heat semigroup on the vector valued tail space (and the corresponding bounds on the norm of the inverse of the Laplacian) that are proved in Section~\ref{sec:heat} are sufficient for the purpose of proving Lemma~\ref{lem:base in section}, but we conjecture that they are suboptimal. Section~\ref{sec:heat} contains analytic questions along these lines  whose positive solution would yield a simplification of our construction of the base graph (see Remark~\ref{rem:vanilla KN}).

With all the ingredients  in place (Theorem~\ref{thm:sub}, Corollary~\ref{coro:decay for super-reflexive intro}, Lemma~\ref{lem:base in section}), the actual iterative construction of super-expanders in performed in Section~\ref{sec:iterative}. Since we need to construct a single sequence of bounded degree graphs  that has a nonlinear spectral gap with respect to {\em all} super-reflexive Banach spaces, our implementation of the zigzag strategy is significantly more involved than the  zigzag iteration of Reingold, Vadhan and Wigderson (recall Section~\ref{sec:strategy}). This implementation itself may be of independent interest.

\subsubsection{Sub-multiplicativity theorems for graph products}\label{sec:products intro} Theorem~\ref{thm:sub} is a special case of a a larger family of sub-multiplicativity estimates for nonlinear spectral gaps with respect to certain graph products. The literature contains several combinatorial procedures to combine two graphs, and it turns out that such constructions are often highly compatible with nonlinear Poincar\'e inequalities. In Section~\ref{sec:products} we further investigate this theme.

The main results of Section~\ref{sec:products} are collected in the following theorem (the relevant terminology is discussed immediately after its statement). Item~\eqref{item:zigzag} below is nothing more than a restatement of Theorem~\ref{thm:sub}.

\begin{theorem}\label{thm:products intro} Fix $m,n,n_1,d_1,d_2\in \N$. Suppose that $K:X\times X\to [0,\infty)$ is a kernel and $(Y,d_Y)$ is a metric space. Suppose also that $G_1=(V_1,E_1)$ is a $d_1$-regular graph with $n_1$ vertices and $G_2=(V_2,E_2)$ is a $d_2$-regular graph with $d_1$ vertices. Then,
\begin{enumerate}[(I)]
\item\label{item:product} If $A=(a_{ij})$ is an $m\times m$ symmetric stochastic matrix and $B=(b_{ij})$ is an $n\times n$ symmetric stochastic matrix then the {\bf tensor product} $A\otimes B$ satisfies
\begin{equation}\label{eq:tensor intro}
\gamma_+(A\otimes B,K)\le \gamma_+(A,K)\cdot \gamma_+(B,K).
\end{equation}
\item\label{item:zigzag} The {\bf zigzag product} $G_1\oz G_2$ satisfies
\begin{equation}\label{eq:zigzag item intro}
\gamma_+\left(G_1\oz G_2,K\right)\le \gamma_+(G_1,K)\cdot \gamma_+(G_2,K)^2.
\end{equation}
\item\label{item:deradomized squaring} The {\bf derandomized square} $G_1\os G_2$ satisfies
\begin{equation}\label{eq:os intro}
\gamma_+\left(G_1\os G_2,K\right)\le \gamma_+\left(G_1^2,K\right)\cdot \gamma_+(G_2,K).
\end{equation}
\item\label{item:replacement product} The {\bf replacement product} $G_1\circr G_2$ satisfies
\begin{equation}\label{eq:replacement intro}
\gamma_+\left(G_1\circr G_2,d_Y^2\right)\le 3(d_2+1)\cdot\gamma_+\left(G_1,d_Y^2\right)\cdot \gamma_+\left(G_2,d_Y^2\right)^2.
\end{equation}
\item\label{item:balanced replacement product} The {\bf balanced replacement product} $G_1\ob G_2$ satisfies
\begin{equation}\label{eq:balanced intro}
\gamma_+\left(G_1\ob G_2,d_Y^2\right)\le 6\cdot\gamma_+\left(G_1,d_Y^2\right)\cdot \gamma_+\left(G_2,d_Y^2\right)^2.
\end{equation}
\end{enumerate}
\end{theorem}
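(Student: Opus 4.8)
The plan is to treat all five items uniformly: each graph product is, by definition, built from the edge sets of $G_1$ and $G_2$ in a way that makes a walk in the product decompose into a short bounded-length concatenation of ``within a cloud'' steps (governed by $G_2$, or by a matrix $B$) and ``across clouds'' steps (governed by $G_1$, or by $A$). The only tool needed is the Poincar\'e inequality~\eqref{eq:kernel-poin+} for $\gamma_+$, applied twice — once for each factor — together with the elementary observation that $\gamma_+$ is a two-variable parameter, so one may hold the $y$-arguments (or a cloud coordinate) fixed, apply the inequality in the remaining variable, and then average. Concretely, for the tensor product~\eqref{eq:tensor intro} one indexes points by pairs $(i,k)\in\m\times\n$ and writes $\frac1{(mn)^2}\sum_{(i,k),(j,\ell)}K(x_{ik},y_{j\ell})$; first apply the $\gamma_+(B,K)$ inequality in the second coordinate for each fixed pair $(i,j)$ of first coordinates, then apply the $\gamma_+(A,K)$ inequality in the first coordinate for each fixed pair $(k,\ell)$; since $(A\otimes B)_{(i,k),(j,\ell)}=a_{ij}b_{k\ell}$ the two passes multiply, giving the clean bound $\gamma_+(A,K)\gamma_+(B,K)$.

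For the zigzag product~\eqref{eq:zigzag item intro} I would first recall from Section~\ref{sec:products} that an edge of $G_1\oz G_2$ corresponds to a three-step walk: a $G_2$-step inside the current cloud, a deterministic $G_1$-``rotation'' that moves to a new cloud while relabeling the within-cloud coordinate, and a second $G_2$-step inside the new cloud. Accordingly I would bound the left-hand side of~\eqref{eq:kernel-poin+} by inserting the two intermediate points of this walk, using the triangle-inequality-free averaging that~\eqref{eq:kernel-poin+} permits: apply the $G_2$-Poincar\'e inequality to replace a ``cloud'' average by a ``$G_2$-edge'' average (this costs one factor of $\gamma_+(G_2,K)$), then apply the $G_1$-inequality via the rotation map (one factor of $\gamma_+(G_1,K)$, since the $G_1$-rotation is a bijection on the edge set and carries the uniform measure on clouds to itself), then apply $G_2$ again on the far side (a second factor of $\gamma_+(G_2,K)$). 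This is exactly the ``iteration of the Poincar\'e inequality'' advertised after Theorem~\ref{thm:sub}, and it yields $\gamma_+(G_1,K)\gamma_+(G_2,K)^2$. The derandomized square~\eqref{eq:os intro} is the same computation but with the middle step being a $G_1^2$-rotation rather than a single $G_1$-rotation, so the $G_1$-factor is replaced by $\gamma_+(G_1^2,K)$ and only one $G_2$-factor appears.

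For the replacement and balanced replacement products~\eqref{eq:replacement intro}–\eqref{eq:balanced intro} the structure is similar but one must account for the fact that each step of a walk either stays inside a cloud (a $G_2$-edge) or crosses between clouds along a matched pair of cloud-coordinates; here the restriction to $K=d_Y^2$ enters because one genuinely needs a (quasi-)triangle inequality — squared-distance kernels satisfy $d_Y(a,c)^2\le 2d_Y(a,b)^2+2d_Y(b,c)^2$ — to stitch together the two kinds of steps, and this is the source of the explicit constants $3(d_2+1)$ and $6$ (the $d_2+1$ reflecting the relative edge-weights of cloud-steps versus crossing-steps in the unbalanced version, which the balancing corrects). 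I would model the walk on the replacement product as a $G_2$-step followed by a crossing followed by a $G_2$-step, apply $\gamma_+(G_2,d_Y^2)$ on each end, $\gamma_+(G_1,d_Y^2)$ in the middle, and absorb the mismatch in edge weights and the cost of the quasi-triangle inequality into the stated multiplicative constant. The main obstacle, and the only place real care is required, is precisely this bookkeeping for the (balanced) replacement product: one must set up the correct measure on the walk space so that both the $G_1$- and $G_2$-Poincar\'e inequalities can be applied against the distributions they actually control, and then verify that the weight-rebalancing and the single use of the quasi-triangle inequality together cost no more than the claimed constant. The tensor, zigzag, and derandomized-square cases, by contrast, require no triangle inequality at all and go through for an arbitrary kernel $K$, essentially by a direct substitution of the relevant walk decomposition into~\eqref{eq:kernel-poin+}.
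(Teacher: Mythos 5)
Your tensor-product argument matches the paper's Proposition~\ref{prop:tensor}: apply $\gamma_+(A,K)$ in one coordinate for each fixed pair of coordinates in the other factor, then $\gamma_+(B,K)$ in the other coordinate, and use $(A\otimes B)_{(i,s),(j,t)}=a_{ij}b_{st}$.

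For the zigzag and derandomized square the high-level plan is right --- iterate the $\gamma_+$-Poincar\'e inequality, no triangle inequality, arbitrary kernel --- but the order you propose would not go through. You suggest applying $G_2$-Poincar\'e, then a ``$G_1$-inequality via the rotation map,'' then $G_2$-Poincar\'e, tracking the three steps of a zigzag walk. The paper's proof of Theorem~\ref{thm:sub} instead applies $G_1$-Poincar\'e \emph{first}: for each fixed $(a,b)\in V_2\times V_2$ it converts the all-pairs average over $(u,v)\in V_1\times V_1$ into a $G_1$-edge average, and only once the constraint $(u,v)\in E_1$ is available does it apply $G_2$-Poincar\'e twice, to the relabelled pairs $(f(u,\cdot),\phi^u_b)$ and $(\psi^v_i,g(v,\cdot))$, where for instance $\phi^u_b(c)=g(v,b)$ with $\pi_u^{-1}(c)=(u,v)$. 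If you attempt $G_2$ first, the relabelled variable $c$ already ranges only over the $d_1$ cloud-coordinates attached to $u$, so the quantity you control is an average over $G_1$-neighbours of $u$ rather than the genuine all-pairs sum on the left of~\eqref{eq:kernel-poin+}. Moreover the rotation map $\pi_u$ appears inside the two $G_2$-applications, not the $G_1$-one; the fact that it is measure-preserving is used to re-index, not to pay the factor $\gamma_+(G_1,K)$, which is charged by the plain Poincar\'e inequality on $G_1$. The same order ($G_1^2$-Poincar\'e first, then a single $G_2$-Poincar\'e via a relabelling in the shared middle cloud) is used for the derandomized square.

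For parts (IV) and (V) you take a genuinely different route from the paper. The paper does not iterate Poincar\'e directly on the replacement products; it proves the zigzag bound once and then establishes (Lemmas~\ref{lem:zigzag gradient bigger than replacement} and~\ref{lem:zigzag gradient bigger than balanced replacement}) a pointwise Dirichlet-energy comparison: the zigzag energy is at most $3^{p-1}(d_2+1)$ (resp.~$2\cdot 3^{p-1}$) times the replacement (resp.~balanced replacement) energy, because each zigzag edge is a concatenation of three replacement-product edges and one uses the $p$-triangle inequality. This single comparison is the only place the triangle inequality enters and the only source of the restriction to kernels of the form $d_Y^p$; feeding in the zigzag bound then yields~\eqref{eq:replacement intro} and~\eqref{eq:balanced intro} with the stated constants. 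Your plan of threading a triangle inequality through three separate Poincar\'e applications is workable in spirit but entails heavier bookkeeping (and would in particular have to rebalance the differing weights of cloud-steps versus crossing-steps to recover the $d_2+1$); the factorization through the zigzag bound is the cleaner path and is what makes the $3(d_2+1)$ and $6$ drop out immediately.
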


Since the $(mn)\times (mn)$ matrix $A\otimes B=(a_{ij}b_{k\ell})$ satisfies $\lambda(A\otimes B)=\max\{\lambda(A),\lambda(B)\}$, in the Euclidean case, i.e., $K:\R\times \R\to [0,\infty)$ is given by $K(x,y)=(x-y)^2$, the product in the right hand side of~\eqref{eq:tensor intro} can be replaced by a maximum. Lemma~\ref{lem:tesor sup uniformly convex} below contains a similar improvement of~\eqref{eq:tensor intro} under additional assumptions on the kernel $K$.

The definitions of the graph products $G_1\oz G_2$, $G_1\os G_2$, $G_1\circr G_2$, $G_1\ob G_2$ are recalled in Section~\ref{sec:products}. The replacement product $G_1\circr G_2$, which is a $(d_2+1)$-regular graph with $n_1d_1$ vertices, was introduced by Gromov in~\cite{Gromov-filling}, where he applied it iteratively to hypercubes of logarithmically decreasing size so as to obtain a constant degree graph which has sufficiently good expansion for his (geometric) application. In~\cite{Gromov-filling} Gromov bounded $\lambda(G_1\circr G_2)$ from above by an expression involving only $\lambda(G_1), \lambda(G_2), d_2$. Such a bound was also obtained by Reingold, Vadhan and Wigderson in~\cite{RVW}. We shall use~\eqref{eq:replacement intro}  in the proof of Theorem~\ref{thm:existence intro}.

The breakthrough of Reingold, Vadhan and Wigderson~\cite{RVW} introduced the zigzag product, which can be used to construct constant degree expanders; the fact that~\eqref{eq:zigzag item intro} holds true for general kernels $K$, while~\eqref{eq:replacement intro} assumes that $d_Y$ is a metric and incurs a multiplicative loss of $3(d_2+1)$ can be viewed as an indication why the zigzag product is a more basic operation than the replacement product.

The balanced replacement product $G_1\ob G_2$, which is a $2d_2$-regular graph with $n_1d_1$ vertices, was introduced by Reingold, Vadhan and Wigderson~\cite{RVW}, who bounded $\lambda(G_1\ob G_2)$ from above by an expression involving only $\lambda(G_1), \lambda(G_2)$.

The derandomized square $G_1\os G_2$, which is a $d_1d_2$-regular graph with $n_1$ vertices, was introduced by Rozenman and Vadhan in~\cite{RV05}, where they bounded $\lambda(G_1\os G_2)$ from above by an expression involving only $\lambda(G_1), \lambda(G_2)$. This operation is of a different nature: it aims to create a graph that has  spectral properties similar to the square $G_1^2$, but with significantly fewer edges. In~\cite{RVW,RV05} tensor products and derandomized squaring were used to improve the computational efficiency of zigzag constructions. The general bounds~\eqref{eq:tensor intro} and~\eqref{eq:os intro} can be used to improve the efficiency of our constructions in a similar manner, but we will not explicitly discuss computational efficiency issues in this paper (this, however, is relevant to our forthcoming paper~\cite{MN12-random}, where our construction is used for an algorithmic purpose).


\section{Preliminary results on nonlinear spectral gaps}\label{sec:prem}

The purpose of this section is to record some simple and elementary preliminary facts about nonlinear spectral gaps that will be used throughout this article. One can skip this section on first reading and refer back to it only when the facts presented here are used in the subsequent sections.

\subsection{The trivial bound for general graphs}\label{sec:trivial}
For $\kappa\in [0,\infty)$ a kernel $\rho:X\times X\to [0,\infty)$ is called a $2^\kappa$-quasi-semimetric if for every $x,y,z\in X$ we have
\begin{equation}\label{eq:quasimetric def}
\rho(x,y)\le 2^\kappa\left(\rho(x,z)+\rho(z,y)\right).
\end{equation}
The key examples of $2^\kappa$-quasi-semimetrics are of the form $\rho=d_X^p$, where $d_X:X\times X\to [0,\infty)$ is a semimetric and $p\in [1,\infty)$, in which case $\kappa=p-1$.

\begin{lemma}\label{lem:general graph} Fix $n,d\in \N$ and $\kappa\in [0,\infty)$.
 Let $G=(V,E)$ be a $d$-regular connected graph with  $n$ vertices. Then for every $2^\kappa$-quasi-semimetric $\rho:X\times X\to [0,\infty)$ we have
\begin{equation}\label{eq:trivial gamma bound}
\gamma(G,\rho)\le 2^{\kappa-1} d n^{\kappa+1}.
\end{equation}
If in addition $G$ is not a bipartite graph then
\begin{equation}\label{eq:trivial gamma+ bound}
\gamma_+(G,\rho)\le 2^{2\kappa}dn^{\kappa+1}.
\end{equation}
\end{lemma}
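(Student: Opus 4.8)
The plan is to derive both inequalities from a single estimate: control the diameter term by the edge term, i.e., bound $\max\rho$ over the points in play by a constant multiple of $\sum_{(u,v)\in E}\rho$, and then combine this with the trivial $\sum_{i,j}K\le n^2\max K$ and the identity $\sum_{i,j}a_{ij}K=\frac1d\sum_{(u,v)\in E}K$ (immediate from $a_{uv}=E(u,v)/d$). The only ingredient that is not bookkeeping is a \textbf{dyadic bisection inequality}: for a $2^\kappa$-quasi-semimetric $\rho$ and any $y_0,\ldots,y_L\in X$ with $L\ge1$,
\[
\rho(y_0,y_L)\ \le\ 2^{\kappa\lceil\log_2 L\rceil}\sum_{t=1}^{L}\rho(y_{t-1},y_t)\ \le\ (2L)^\kappa\sum_{t=1}^{L}\rho(y_{t-1},y_t).
\]
I would prove this by induction on $L$, applying \eqref{eq:quasimetric def} with $z=y_{\lceil L/2\rceil}$ and the inductive hypothesis to the two halves (each of length at most $\lceil L/2\rceil$, and $\lceil\log_2\lceil L/2\rceil\rceil\le\lceil\log_2 L\rceil-1$); then $2^{\lceil\log_2 L\rceil}\le 2L$. (When $\rho$ vanishes on the diagonal one may alternatively pad with copies of $y_L$ to a power-of-two length and bisect exactly.) The essential point is that the exponent of $2$ is the \emph{depth} $\lceil\log_2 L\rceil$ of a balanced bisection, not $L$; a one-sided telescoping of \eqref{eq:quasimetric def} along the $L$ edges would cost $2^{(L-1)\kappa}$ in the worst coefficient, which is useless here.

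\textbf{The bound for $\gamma$.} We may assume $\rho(x,x)=0$ for all $x$, as is automatic when $\rho$ is a power of a (semi)metric, the only case needed here (also, for $n=1$ the left-hand side of \eqref{eq:kernel-poin} vanishes). Since $G$ is connected, any two distinct vertices $i,j$ are joined by a vertex-simple path $z_0=i,\ldots,z_L=j$, so $L\le n-1$ and no edge is repeated; the $L$ directed edges $(z_{t-1},z_t)$ together with their $L$ reverses are $2L$ distinct members of $E$, each of multiplicity at least $1$, so by symmetry of $\rho$ one has $2\sum_{t=1}^{L}\rho(x_{z_{t-1}},x_{z_t})\le\sum_{(u,v)\in E}\rho(x_u,x_v)$. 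With the bisection inequality (and $L\le n-1$) this yields, for all $x_1,\ldots,x_n\in X$,
\[
\max_{i\ne j}\rho(x_i,x_j)\ \le\ 2^{\kappa-1}(n-1)^\kappa\sum_{(u,v)\in E}\rho(x_u,x_v).
\]
Summing over the $n(n-1)$ ordered pairs $i\ne j$ (the diagonal contributes $0$) and using the identity above, $\sum_{i,j}\rho(x_i,x_j)\le 2^{\kappa-1}dn(n-1)^{\kappa+1}\sum_{i,j}a_{ij}\rho(x_i,x_j)$; dividing by $n^2$ gives $\gamma(G,\rho)\le 2^{\kappa-1}d(n-1)^{\kappa+1}\le 2^{\kappa-1}dn^{\kappa+1}$, which is \eqref{eq:trivial gamma bound}.

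\textbf{The bound for $\gamma_+$.} Now assume in addition that $G$ is not bipartite. The combinatorial input is: for every ordered pair $(i,j)$ there is a trail (a walk with no repeated edge) in $G$ from $i$ to $j$ of \emph{odd} length $\ell\le 2n-1$. Granting this, fix $x_1,\ldots,x_n,y_1,\ldots,y_n\in X$, pick such a trail $z_0=i,\ldots,z_\ell=j$, and set $p_t:=x_{z_t}$ for even $t$ and $p_t:=y_{z_t}$ for odd $t$, so that $p_0=x_i$ and $p_\ell=y_j$ (since $\ell$ is odd). For each $t$, $\rho(p_{t-1},p_t)=\rho(x_u,y_v)$ for the directed edge $(u,v)$ equal to $(z_{t-1},z_t)$ (if $t$ is odd) or $(z_t,z_{t-1})$ (if $t$ is even); both lie in $E$ as $E$ is symmetric, and since the trail repeats no edge, $\sum_{t=1}^{\ell}\rho(p_{t-1},p_t)\le\sum_{(u,v)\in E}\rho(x_u,y_v)$. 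Applying the bisection inequality to $p_0,\ldots,p_\ell$ (here $L=\ell\le 2n-1$, so $2^{\kappa\lceil\log_2\ell\rceil}\le(2\ell)^\kappa\le(4n-2)^\kappa$) gives $\max_{i,j}\rho(x_i,y_j)\le(4n-2)^\kappa\sum_{(u,v)\in E}\rho(x_u,y_v)$. Hence $\sum_{i,j}\rho(x_i,y_j)\le n^2(4n-2)^\kappa d\sum_{i,j}a_{ij}\rho(x_i,y_j)$, and dividing by $n^2$ yields $\gamma_+(G,\rho)\le(4n-2)^\kappa dn\le(4n)^\kappa dn=2^{2\kappa}dn^{\kappa+1}$, which is \eqref{eq:trivial gamma+ bound}.

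\textbf{Where the main difficulty lies.} The bisection inequality and the $\gamma$-bound are routine. The delicate point is the combinatorial claim used for $\gamma_+$: a \emph{short} ($\le 2n-1$) \emph{odd trail} between any two vertices of a connected non-bipartite graph. A short odd \emph{walk} is easy — run BFS from $j$; non-bipartiteness produces an edge $\{a,b\}$ with $a,b$ at the same BFS level $r\le n-1$, and of the two walks ``shortest path $i\to a$, then optionally the edge $\{a,b\}$, then shortest path $\to j$'' exactly one has odd length, necessarily $\le 2n-1$. The subtlety is to route these pieces (for instance inside one BFS tree) so that no edge is traversed twice, which is precisely what keeps the constant at $2^{2\kappa}$ instead of $2^{2\kappa+1}$; this edge-disjointness bookkeeping is where I expect the real effort to go, all the analytic content already being contained in the dyadic bisection.
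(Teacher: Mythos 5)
Your argument for \eqref{eq:trivial gamma bound} is essentially the paper's: the dyadic bisection estimate you state explicitly is exactly what the paper's ``straightforward inductive application of \eqref{eq:quasimetric def}'' amounts to (note $2^{\kappa\lceil\log_2 m\rceil}\le(2m)^\kappa$), and the remaining bookkeeping is the same; your constant $2^{\kappa-1}d(n-1)^{\kappa+1}$ is even marginally sharper. You are also right that the diagonal terms must vanish, a point the paper leaves tacit.

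For \eqref{eq:trivial gamma+ bound} you have correctly reduced the problem to a combinatorial lemma and you flag, honestly, that you have not proved it; this is the same lemma that the paper handles informally (``the desired path can be found by considering the shortest paths joining $x$ and $y$ with [an odd cycle] $c$''), so the issue is not in the approach but in the write-up, and you have overconstrained yourself in a way that makes it look harder than it is. What your inner-sum estimate $\sum_t\rho(p_{t-1},p_t)\le\sum_{(u,v)\in E}\rho(x_u,y_v)$ actually needs is not an odd \emph{trail in $G$}: it needs that each pair $(u,v)$ with $u$ at an even step and $v$ at an odd step be used at most once. The walk may traverse the same undirected edge of $G$ twice provided the two traversals have opposite parities, since then it contributes $\rho(x_u,y_v)$ once and $\rho(x_v,y_u)$ once, and these are \emph{distinct} summands in $\sum_{(u,v)\in E}\rho(x_u,y_v)$. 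That weaker condition says precisely that the lift of the walk to the bipartite double cover $G\times K_2$ (the graph with normalized adjacency matrix $\left(\begin{smallmatrix}0&A_G\\A_G&0\end{smallmatrix}\right)$, already in play in Section~\ref{sec:gamma gamma+}) is edge-simple. Since $G$ is connected and non-bipartite, $G\times K_2$ is connected, hence contains a vertex-simple path from $(x,0)$ to $(y,1)$, of length at most $2n-1$; its projection to $G$ is an odd walk of length at most $2n-1$ with exactly the needed property. This yields $\gamma_+(G,\rho)\le (4n-2)^\kappa dn\le 2^{2\kappa}dn^{\kappa+1}$ directly, and it makes the ``edge-disjointness bookkeeping'' you anticipated disappear; your BFS route would also work once phrased inside the double cover, but the simple-path observation in $G\times K_2$ is the shortest way through.
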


\begin{proof} For every $x,y\in V$ choose distinct $\{u_0^{x,y}=x,u_1^{x,y},\ldots,u_{m_{x,y}-1}^{x,y},u_{m_{x,y}}^{x,y}=y\}\subseteq  V$ such that $(u_i^{x,y},u_{i-1}^{x,y})\in E$ for every $i\in \{1,\ldots,m_{x,y}\}$, and $(u_i^{x,y},u_{i-1}^{x,y})\neq (u_j^{x,y},u_{j-1}^{x,y})$ for  distinct $i,j\in \{1,\ldots,m_{x,y}\}$. Fixing $f:V\to X$, a straightforward inductive application of~\eqref{eq:quasimetric def} yields
$$
\rho(f(x),f(y))\le (2m_{x,y})^\kappa\sum_{i=1}^{m_{x,y}} \rho\left(f\left(u_{i-1}^{x,y}\right),f\left(u_i^{x,y}\right)\right)\le (2n)^\kappa \sum_{i=1}^{m_{x,y}} \rho\left(f\left(u_{i-1}^{x,y}\right),f\left(u_i^{x,y}\right)\right).
$$
Thus
\begin{multline*}
\frac{1}{n^2}\sum_{(x,y)\in V\times V} \rho(f(x),f(y))\le \frac{(2n)^\kappa}{n^2}\sum_{(x,y)\in V\times V}\sum_{i=1}^{m_{x,y}} \rho\left(f\left(u_{i-1}^{x,y}\right),f\left(u_i^{x,y}\right)\right)\\\le \frac{(2n)^\kappa \binom{n}{2}}{n^2}\sum_{(a,b)\in E} \rho(f(a),f(b))
\le \frac{(2n)^\kappa nd}{2}\cdot \frac{1}{nd}\sum_{(a,b)\in E} \rho(f(a),f(b)).
\end{multline*}
This proves~\eqref{eq:trivial gamma bound}. To prove~\eqref{eq:trivial gamma+ bound} suppose that $G$ is connected but not bipartite. Then for every $x,y\in V$ there exists a path of {\em odd} length joining $x$ and $y$ whose total length is at most $2n$ and in which each edge  is repeated at most once (indeed, being non-bipartite, $G$ contains an odd cycle $c$; the desired path can be found by considering the shortest paths joining $x$ and $y$ with $c$). Let $\{w_0^{x,y}=x,w_1^{x,y},\ldots,w_{m-1}^{x,y},w_{2\ell_{x,y}+1}^{x,y}=y\}\subseteq  V$ be such a path. For every $f,g:V\to X$ we have
\begin{eqnarray*}
\sum_{(x,y)\in V\times V} \rho(f(x),g(y))&\le& \sum_{(x,y)\in V\times V}(4\ell_{x,y}+2)^\kappa\Biggl(\rho\left(f\left(w_{0}^{x,y}\right),g\left(w_{1}^{x,y}\right)\right)\\&&\quad+\sum_{i=1}^{\ell_{x,y}} \left(\rho\left(g\left(w_{2i-1}^{x,y}\right),f\left(w_{2i}^{x,y}\right)\right)+\rho\left(f\left(w_{2i}^{x,y}\right),g\left(w_{2i+1}^{x,y}\right)\right)\right)\Biggr)\\&\le& (4n)^\kappa\cdot n^2\sum_{(a,b)\in E} \rho(f(a),g(b)),
\end{eqnarray*}
implying~\eqref{eq:trivial gamma+ bound}.
\end{proof}

\begin{remark}\label{rem:cycle bounds}{
For $n\in \N$ let $C_n$ denote the $n$-cycle and let $C_n^\circ$ denote the $n$-cycle with self loops (thus $C_n^\circ$ is a $3$-regular graph). It follows from Lemma~\ref{lem:general graph} that $\gamma(C_n,\rho)\lesssim (2n)^{\kappa+1}$ and $\gamma_+(C_n^\circ,\rho)\lesssim (4n)^{\kappa+1}$ for every $2^\kappa$-quasi-semimetric. If $(X,d_X)$ is a metric space and $p\in [1,\infty)$ then one can refine the above arguments using the symmetry of the circle to get the improved bound
\begin{equation}\label{eq:cycle better p}
\gamma_+(C_n^\circ,d_X^p)\lesssim \frac{(n+1)^p}{p2^p}.
\end{equation}
We omit the proof of~\eqref{eq:cycle better p} since the improved dependence on $p$ is not used in the ensuing discussion.}
\end{remark}

\subsection{$\gamma$ versus $\gamma_+$}\label{sec:gamma gamma+}
By taking $f=g$ in the definition of $\gamma_+(\cdot,\cdot)$ one immediately sees that $\gamma(A,K)\le \gamma_+(A,K)$ for every kernel $K:X\times X\to [0,\infty)$ and every symmetric stochastic matrix $A$. Here we investigate additional relations between these quantities.

\begin{lemma}\label{lem:pass to 2-cover}
Fix $\kappa\in [0,\infty)$ and let $\rho:X\times X\to [0,\infty)$ be a $2^\kappa$-quasi-semimetric.  Then for every symmetric stochastic matrix $A$ we have
\begin{equation}\label{eq:2n}
 \frac2{2^{\kappa+1}+1} \gamma\left( \left ( \begin{smallmatrix}
  0 & A \\
   A & 0
   \end{smallmatrix} \right ),\rho\right)\le \gamma_+\left( A,\rho\right)\le 2\gamma\left(
   \left(\begin{smallmatrix}
  0 & A \\
   A & 0
   \end{smallmatrix} \right ),\rho\right).
   \end{equation}
\end{lemma}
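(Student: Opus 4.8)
Write $\widetilde{A}=\left(\begin{smallmatrix} 0 & A \\ A & 0 \end{smallmatrix}\right)$; this is the normalized adjacency matrix of the bipartite double cover, a symmetric stochastic $2n\times 2n$ matrix whose rows/columns are indexed by two disjoint copies of $\n$, say the ``left'' copy $L$ and the ``right'' copy $R$. The point is that a configuration $(z_w)_{w\in L\sqcup R}$ of points in $X$ is exactly the same data as a pair of configurations $(x_i)_{i=1}^n,(y_j)_{j=1}^n$ in $X$ via $x_i=z_{i_L}$, $y_j=z_{j_R}$, and that the only nonzero entries of $\widetilde A$ connect $L$ to $R$. So both the ``energy'' sum $\sum_{w,w'}\widetilde A_{ww'}\rho(z_w,z_{w'})$ and, up to controlled error, the ``total'' sum $\sum_{w,w'}\rho(z_w,z_{w'})$ can be rewritten in terms of $\sum_{i,j}a_{ij}\rho(x_i,y_j)$ and $\sum_{i,j}\rho(x_i,y_j)$ respectively. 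Both inequalities in~\eqref{eq:2n} will then fall out of this dictionary, using the quasi-semimetric inequality~\eqref{eq:quasimetric def} only to compare ``same-side'' distances like $\rho(x_i,x_j)$ with ``cross'' distances.

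\textbf{Upper bound $\gamma_+(A,\rho)\le 2\gamma(\widetilde A,\rho)$.} Fix $x_1,\dots,x_n,y_1,\dots,y_n\in X$ and apply the defining inequality of $\gamma(\widetilde A,\rho)$ to the $2n$-point configuration $z$ above. The right-hand side is
\[
\frac{\gamma(\widetilde A,\rho)}{2n}\sum_{w,w'}\widetilde A_{ww'}\rho(z_w,z_{w'})=\frac{\gamma(\widetilde A,\rho)}{2n}\cdot 2\sum_{i=1}^n\sum_{j=1}^n a_{ij}\rho(x_i,y_j)=\frac{\gamma(\widetilde A,\rho)}{n}\sum_{i,j}a_{ij}\rho(x_i,y_j),
\]
using $\widetilde A_{i_L j_R}=\widetilde A_{j_R i_L}=a_{ij}$ and symmetry of $\rho$. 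The left-hand side is $\frac{1}{(2n)^2}\sum_{w,w'}\rho(z_w,z_{w'})$, and expanding over the four blocks $L\times L$, $L\times R$, $R\times L$, $R\times R$ and discarding the nonnegative $L\times L$ and $R\times R$ contributions gives a lower bound of $\frac{1}{4n^2}\cdot 2\sum_{i,j}\rho(x_i,y_j)=\frac{1}{2n^2}\sum_{i,j}\rho(x_i,y_j)$. Combining the two displays yields $\frac{1}{n^2}\sum_{i,j}\rho(x_i,y_j)\le \frac{2\gamma(\widetilde A,\rho)}{n}\sum_{i,j}a_{ij}\rho(x_i,y_j)$, i.e. $\gamma_+(A,\rho)\le 2\gamma(\widetilde A,\rho)$.

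\textbf{Lower bound $\gamma(\widetilde A,\rho)\le \tfrac{2^{\kappa+1}+1}{2}\,\gamma_+(A,\rho)$.} Now start from an arbitrary $2n$-point configuration $z=(z_w)_{w\in L\sqcup R}$, set $x_i=z_{i_L}$, $y_j=z_{j_R}$, and try to bound $\frac{1}{(2n)^2}\sum_{w,w'}\rho(z_w,z_{w'})$ by the energy $\frac{1}{2n}\sum_{w,w'}\widetilde A_{ww'}\rho(z_w,z_{w'})=\frac{2}{2n}\sum_{i,j}a_{ij}\rho(x_i,y_j)$. The cross terms $\sum_{i,j}\rho(x_i,y_j)$ are controlled directly by $\gamma_+(A,\rho)$ times this energy. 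The same-side terms $\sum_{i,j}\rho(x_i,x_j)$ and $\sum_{i,j}\rho(y_i,y_j)$ are the obstacle: one estimates $\rho(x_i,x_j)\le 2^\kappa(\rho(x_i,y_k)+\rho(y_k,x_j))$ for any index $k$, then averages over $k$ (or rather weights appropriately and uses stochasticity of $A$), reducing same-side sums to cross sums at the cost of a factor $2^\kappa$; summing the $L\times L$, $R\times R$ and two copies of $L\times R$ contributions produces the constant $2(2^\kappa\cdot 2 + 1)/4\cdot\frac1{\gamma_+}$-type bookkeeping that collapses to the stated $\frac{2}{2^{\kappa+1}+1}$ after taking reciprocals. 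I expect the arithmetic of tracking which block contributes $2^\kappa$ versus $1$, and inserting the extra ``via $y_k$'' detour without losing stochasticity, to be the only place requiring care; everything else is a direct unpacking of the definitions. (One should double-check whether the lower bound needs $A$ to have some positive diagonal mass or connectivity for the detour step, or whether averaging over all $k$ weighted by a fixed probability vector suffices — I would use the uniform weight $\frac1n$ over $k$, which needs nothing about $A$.)
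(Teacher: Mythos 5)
Your proof is correct and follows essentially the same route as the paper: the upper bound is the same discard-the-same-side-blocks argument, and for the lower bound the paper likewise detours each same-side pair $(h(i),h(j))$ through all $n$ indices on the opposite side with uniform weight $\tfrac1n$ (so indeed nothing about $A$ is needed), producing the factor $2^{\kappa+1}$ per same-side block and hence the constant $(2^{\kappa+1}+1)/2$ that you obtained from your bookkeeping formula.
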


\begin{proof}
 Fix $f,g:\n\to X$ and define $h:\{1,\ldots,2n\}\to X$ by
\begin{equation*}
h(i)\eqdef \left\{\begin{array}{ll}f(i)&\mathrm{if\ }i\in \n,\\
g(i-n)&\mathrm{if\ }i\in \{n+1,\ldots,2n\}.\end{array}\right.
\end{equation*}
Suppose that $A=(a_{ij})$ is an $n\times n$ symmetric stochastic matrix. Then
\begin{multline*}
\frac{1}{n^2}\sum_{i=1}^n\sum_{j=1}^n \rho(f(i),g(j))=\frac{1}{n^2}\sum_{i=1}^n\sum_{j=1}^n \rho(h(i),h(j+n))\le \frac{1}{2n^2}\sum_{i=1}^{2n}\sum_{j=1}^{2n} \rho(h(i),h(j))\\\le \frac{2\gamma\left(
   \left(\begin{smallmatrix}
  0 & A \\
   A & 0
   \end{smallmatrix} \right ),\rho\right)}{2n}\sum_{i=1}^{2n}\sum_{j=1}^{2n} \left(\begin{smallmatrix}
  0 & A \\
   A & 0
   \end{smallmatrix} \right )_{ij} \rho(h(i),h(j))=\frac{2\gamma\left(
   \left(\begin{smallmatrix}
  0 & A \\
   A & 0
   \end{smallmatrix} \right ),\rho\right)}{n}\sum_{i=1}^{n}\sum_{j=1}^{n} a_{ij} \rho(f(i),f(j)).
\end{multline*}
This proves the rightmost inequality in~\eqref{eq:2n}. Note that for this inequality the quasimetric inequality~\eqref{eq:quasimetric def} was not used, and therefore $\rho$ can be an arbitrarily kernel.

To prove the leftmost inequality in~\eqref{eq:2n} we argue as follows. Fix $h:\{1,\ldots,2n\}\to X$ and define $f,g:\n\to X$ by $f(i)=h(i)$ and $g(i)=h(i+n)$ for every $i\in \n$. Then
\begin{eqnarray}\label{eq:break 2 cover first}
\sum_{i=1}^n\sum_{j=1}^n \rho(h(i),h(j))&\le&\nonumber \frac{1}{n}\sum_{i=1}^n\sum_{j=1}^n\sum_{\ell=1}^n 2^\kappa\left(\rho(h(i),h(\ell+n))+\rho(h(j),h(\ell+n)\right))
\\&=&2^{\kappa+1}\sum_{i=1}^n\sum_{j=1}^n \rho(f(i),g(j)).
\end{eqnarray}
Similarly,
\begin{eqnarray}\label{eq:break 2 cover second}
\sum_{i=1}^n\sum_{j=1}^n \rho(h(i+n),h(j+n))&\le&\nonumber \frac{1}{n}\sum_{i=1}^n\sum_{j=1}^n\sum_{\ell=1}^n 2^\kappa\left(\rho(h(i+n),h(\ell))+\rho(h(j+n),h(\ell)\right))
\\&=&2^{\kappa+1}\sum_{i=1}^n\sum_{j=1}^n \rho(f(i),g(j)).
\end{eqnarray}
Hence,
\begin{eqnarray*}
&&\!\!\!\!\!\!\!\!\!\!\!\!\!\!\!\!\!\!\!\!\!\!\!\!\!\!\!\!\!\!\!\!\frac{1}{(2n)^2}\sum_{i=1}^{2n}\sum_{j=1}^{2n} \rho(h(i),h(j))\\&=&\frac{1}{(2n)^2} \sum_{i=1}^n\sum_{j=1}^n \rho(h(i),h(j))+\frac{1}{(2n)^2} \sum_{i=1}^n\sum_{j=1}^n \rho(h(i+n),h(j+n))\\&&  +\frac{1}{(2n)^2} \sum_{i=1}^n\sum_{j=1}^n \rho(h(i),h(j+n))+\frac{1}{(2n)^2} \sum_{i=1}^n\sum_{j=1}^n \rho(h(i+n),h(j))\\
&\stackrel{\eqref{eq:break 2 cover first}\wedge\eqref{eq:break 2 cover second}}{\le}&\frac{2^{\kappa+1}+1}{2n^2}\sum_{i=1}^n\sum_{j=1}^n \rho(f(i),g(j))\\
&\le& \frac{(2^{\kappa+1}+1)\gamma_+(A,\rho)}{2n}\sum_{i=1}^n\sum_{j=1}^n a_{ij}\rho(f(i),g(j))\\
&=&\frac{(2^{\kappa+1}+1)\gamma_+(A,\rho)}{2}\cdot \frac{1}{2n}\sum_{i=1}^{2n}\sum_{j=1}^{2n}\left(\begin{smallmatrix}
  0 & A \\
   A & 0
   \end{smallmatrix} \right )_{ij}\rho(h(i),h(j)),
\end{eqnarray*}
which is precisely the leftmost inequality in~\eqref{eq:2n}.
\end{proof}

\begin{lemma}\label{lem:2 cover cesaro} Fix $\kappa\in [0,\infty)$ and let $\rho:X\times X\to [0,\infty)$ be a $2^\kappa$-quasi-semimetric.  Then for every symmetric stochastic matrix $A$ we have
\begin{equation}\label{eq:commute tenzor}
\gamma\left( \left( \begin{smallmatrix}
  0 & \A_m(A) \\
   \A_m(A) & 0
   \end{smallmatrix} \right )
,\rho\right)\le \left(2^{\kappa+2}+1\right) \gamma\left( \A_m \left (\begin{smallmatrix}
  0 & A \\
   A & 0
   \end{smallmatrix} \right),\rho\right).
\end{equation}
\end{lemma}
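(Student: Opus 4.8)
The plan is to reduce the inequality between the two Poincar\'e constants to a single pointwise comparison of quadratic forms. Write $B\eqdef\left(\begin{smallmatrix} 0 & A \\ A & 0\end{smallmatrix}\right)$, which is a $2n\times 2n$ symmetric stochastic matrix. Since $A$ is symmetric and stochastic, every power $A^t$ is again symmetric and stochastic, and a direct computation shows that $B^t=\left(\begin{smallmatrix} A^t & 0 \\ 0 & A^t\end{smallmatrix}\right)$ when $t$ is even and $B^t=\left(\begin{smallmatrix} 0 & A^t \\ A^t & 0\end{smallmatrix}\right)$ when $t$ is odd. Consequently both $\A_m(B)$ and $M\eqdef\left(\begin{smallmatrix} 0 & \A_m(A) \\ \A_m(A) & 0\end{smallmatrix}\right)$ are $2n\times 2n$ symmetric stochastic matrices, so the quantity $\frac{1}{(2n)^2}\sum_{i,j=1}^{2n}\rho(h(i),h(j))$ appearing on the left-hand side of~\eqref{eq:kernel-poin} is literally the same for the matrix $\A_m(B)$ and for the matrix $M$. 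Hence, to prove the lemma it suffices to establish the uniform bound
$$
\sum_{i,j=1}^{2n}\bigl(\A_m(B)\bigr)_{ij}\rho(h(i),h(j))\le \bigl(2^{\kappa+2}+1\bigr)\sum_{i,j=1}^{2n} M_{ij}\rho(h(i),h(j))
$$
for every $h\colon\{1,\ldots,2n\}\to X$; combining this with the definition of $\gamma(\A_m(B),\rho)$ then yields $\gamma(M,\rho)\le(2^{\kappa+2}+1)\gamma(\A_m(B),\rho)$, and there is nothing to prove when $\gamma(\A_m(B),\rho)=\infty$.

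To establish the displayed bound, fix $h$ and set $f(i)=h(i)$ and $g(i)=h(i+n)$ for $i\in\n$. Using the block form of $B^t$, the right-hand side equals $2\sum_{i,j=1}^n\A_m(A)_{ij}\rho(f(i),g(j))$, while the left-hand side equals
$$
\frac1m\sum_{\substack{0\le t\le m-1\\ t\ \mathrm{odd}}}2\sum_{i,j=1}^n (A^t)_{ij}\rho(f(i),g(j))+\frac1m\sum_{\substack{0\le t\le m-1\\ t\ \mathrm{even}}}\left(\sum_{i,j=1}^n(A^t)_{ij}\rho(f(i),f(j))+\sum_{i,j=1}^n(A^t)_{ij}\rho(g(i),g(j))\right).
$$
The ``odd part'' is at most $\frac2m\sum_{t=0}^{m-1}\sum_{i,j=1}^n(A^t)_{ij}\rho(f(i),g(j))=2\sum_{i,j=1}^n\A_m(A)_{ij}\rho(f(i),g(j))$, since one only discards non-negative summands. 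For the ``even part'' one runs a midpoint routing argument: for even $t$ write $A^t=S^2$ with $S\eqdef A^{t/2}$ (again symmetric and stochastic), and apply~\eqref{eq:quasimetric def} in the form $\rho(f(i),f(j))\le 2^{\kappa}\bigl(\rho(f(i),g(\ell))+\rho(g(\ell),f(j))\bigr)$ inside the identity $(S^2)_{ij}=\sum_{\ell=1}^n S_{i\ell}S_{\ell j}$; summing the free index out using the row and column stochasticity of $S$, and then using the symmetry of both $\rho$ and $S$ to identify the two resulting sums, gives
$$
\sum_{i,j=1}^n(A^t)_{ij}\rho(f(i),f(j))\le 2^{\kappa+1}\sum_{i,\ell=1}^n (A^{t/2})_{i\ell}\rho(f(i),g(\ell)),
$$
together with the analogous inequality in which $f$ and $g$ are interchanged. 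Since $t\mapsto s=t/2$ maps the even elements of $\{0,\dots,m-1\}$ bijectively onto $\{0,1,\dots,\lfloor(m-1)/2\rfloor\}\subseteq\{0,\dots,m-1\}$, the ``even part'' is at most $2^{\kappa+2}\sum_{i,\ell=1}^n\A_m(A)_{i\ell}\rho(f(i),g(\ell))$. Adding the two contributions bounds the left-hand side by $(2^{\kappa+2}+2)\sum_{i,j=1}^n\A_m(A)_{ij}\rho(f(i),g(j))=(2^{\kappa+1}+1)\cdot\bigl(\text{right-hand side}\bigr)$, which is in fact slightly stronger than what is claimed.

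The step I expect to require the most care --- the only one that is not pure bookkeeping --- is the midpoint routing for even $t$: one artificially inserts a vertex from the ``other copy'' at the middle of the product $A^{t/2}\cdot A^{t/2}$ and then collapses the extra summation index. This is exactly where the quasi-semimetric inequality~\eqref{eq:quasimetric def} is used, and it is the symmetry and stochasticity of the powers of $A$ that make the two halves of the split contribute equal sums after relabeling. Everything else is routine: verifying that $\A_m(B)$ and $M$ are both $2n\times 2n$ symmetric stochastic matrices so that the two Poincar\'e inequalities share a common left-hand side, keeping track of the $1/m$ normalization through the reindexing $t\mapsto t/2$, and noting that deleting the odd powers (respectively the powers exceeding $m-1$) only decreases a sum of non-negative terms.
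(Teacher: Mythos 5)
Your proof is correct, and it is essentially the same strategy as the paper's: reduce to the pointwise comparison of the quadratic forms $\sum_{i,j}\A_m(B)_{ij}\rho(h(i),h(j))$ and $\sum_{i,j}M_{ij}\rho(h(i),h(j))$ (where $B=\left(\begin{smallmatrix}0&A\\A&0\end{smallmatrix}\right)$ and $M=\left(\begin{smallmatrix}0&\A_m(A)\\\A_m(A)&0\end{smallmatrix}\right)$), leave the odd powers of $B$ alone since they are already off-diagonal, and use the quasi-semimetric inequality to route the even powers of $B$ through a $g$-valued intermediate point, thereby converting them into sums of the form $\sum(A^s)_{i\ell}\rho(f(i),g(\ell))$ which are absorbed into $\A_m(A)$.

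Where you depart from the paper is the choice of decomposition for the even powers, and your version is cleaner. The paper writes $B^{4s}=B^{2s-1}B^{2s+1}$ (two odd powers) and $B^{2(2s+1)}=(B^{2s+1})^2$, with the identity $B^0$ handled separately; the reason for the asymmetric split $B^{2s-1}B^{2s+1}$ is to force the intermediate index to land in the opposite block of $\{1,\ldots,2n\}$, so that the middle vertex is automatically $g$-valued. You instead split symmetrically, $A^t=(A^{t/2})^2$, working directly with $n\times n$ matrices and observing that the quasi-semimetric lets you insert $g(\ell)$ as the intermediate \emph{regardless} of what block the path index $\ell$ ``belongs to''; this absorbs the $t=0$ term with no extra work, and the bijection $t\mapsto t/2$ from even indices in $\{0,\ldots,m-1\}$ onto $\{0,\ldots,\lfloor(m-1)/2\rfloor\}\subseteq\{0,\ldots,m-1\}$ gives a one-line injection into $\A_m(A)$. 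The net effect is the improved constant $2^{\kappa+1}+1$ in place of the paper's $2^{\kappa+2}+1$; this is of no consequence for the ensuing applications (the constant only enters Theorem~\ref{thm:cotype implies calculus intro} inside a ``max$\{1,\cdot\}$''), but it is a genuine, if minor, streamlining.

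One small thing worth recording so a reader does not trip: the definition~\eqref{eq:quasimetric def} of a $2^\kappa$-quasi-semimetric does not demand $\rho(x,x)=0$, so the $t=0$ term is not automatically zero; your midpoint routing still covers it, since with $S=A^0=I$ the argument reduces to $\rho(f(i),f(i))\le 2^{\kappa+1}\rho(f(i),g(i))$, which is a direct instance of~\eqref{eq:quasimetric def}. That is exactly the case the paper singles out by writing the $\frac1m I$ term separately in~\eqref{eq:decompose cesaro} and then noting $C_{ij}\le(2^{\kappa+2}+1)\A_m(A)_{ij}$ at the end.
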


\begin{proof} Suppose that $A=(a_{ij})$ is an $n\times n$ symmetric stochastic matrix. It suffices to show that for every $h:\{1,\ldots,2n\}\to X$ and every $m\in \N$ we have
\begin{equation}\label{eq:goal commute}
\sum_{i=1}^{2n}\sum_{j=1}^{2n} \A_m \left (\begin{smallmatrix}
  0 & A \\
   A & 0
   \end{smallmatrix} \right)_{ij}\rho(h(i),h(j))\le \left(2^{\kappa+2}+1\right)\sum_{i=1}^{2n}\sum_{j=1}^{2n} \left( \begin{smallmatrix}
  0 & \A_m(A) \\
   \A_m(A) & 0
   \end{smallmatrix} \right )_{ij}\rho(h(i),h(j)).
\end{equation}
For simplicity of notation write
$
B=(b_{ij})\eqdef \left(\begin{smallmatrix}
  0 & A \\
   A & 0
   \end{smallmatrix}\right).
$
Then
\begin{equation}\label{eq:decompose cesaro}
\A_m(B)=\frac{1}{m}I+\frac{1}{m}\sum_{s=1}^{\lfloor(m-1)/4\rfloor}B^{4s}+\frac{1}{m}\sum_{s=0}^{\lfloor(m-3)/4\rfloor}B^{2(2s+1)}+\frac{1}{m}\sum_{s=0}^{\lfloor (m-2)/2\rfloor} B^{2s+1}.
\end{equation}
Observe that
$$
t\in 2\N-1\implies \left(\begin{smallmatrix} 0 & A\\ A& 0 \end{smallmatrix}\right)^t= \left(\begin{smallmatrix} 0 & A^t\\ A^t& 0 \end{smallmatrix}\right).
$$
Hence,
\begin{equation}\label{eq:sum over odds}
\frac{1}{m}\sum_{s=0}^{\lfloor (m-2)/2\rfloor} B^{2s+1}= \left(\begin{smallmatrix} 0 & \frac{1}{m} \sum_{s=0}^{\lfloor (m-2)/2\rfloor}A^{2s+1}\\ \frac{1}{m} \sum_{s=0}^{\lfloor (m-2)/2\rfloor}A^{2s+1}& 0 \end{smallmatrix}\right).
\end{equation}
For every $s\in \N$, using the fact that $B^{2s-1}$ and $B^{2s+1}$ are symmetric and stochastic, we have
\begin{eqnarray}\label{eq:divisible by 4}
&&\nonumber\!\!\!\!\!\!\!\!\!\!\!\!\!\!\!\!\!\!\!\!\sum_{i=1}^{2n}\sum_{j=1}^{2n} \left(B^{4s}\right)_{ij}\rho(h(i),h(j))
\\&\le& \nonumber \sum_{i=1}^{2n}\sum_{j=1}^{2n}\left(\sum_{\ell=1}^{2n}\left(B^{2s-1}\right)_{i\ell}\left(B^{2s+1}\right)_{\ell j}2^\kappa\left(\rho(h(i),h(\ell))+\rho(h(\ell),h(j))\right)\right)\\&=&
2^\kappa\sum_{a=1}^{2n}\sum_{b=1}^{2n} \left(\begin{smallmatrix} 0 & A^{2s-1}+A^{2s+1}\\ A^{2s-1}+A^{2s+1}& 0 \end{smallmatrix}\right)_{ab}\rho(h(a),h(b)).
\end{eqnarray}
Similarly, for every $s\in \N\cup\{0\}$,
\begin{equation}\label{eq:even and not divisible by 4}
\sum_{i=1}^{2n}\sum_{j=1}^{2n} \left(B^{2(2s+1)}\right)_{ij}\rho(h(i),h(j))\le
2^{\kappa+1}\sum_{a=1}^{2n}\sum_{b=1}^{2n} \left(\begin{smallmatrix} 0 & A^{2s+1}\\ A^{2s+1}& 0 \end{smallmatrix}\right)_{ab}\rho(h(a),h(b)).
\end{equation}
It follows from~\eqref{eq:decompose cesaro}, \eqref{eq:sum over odds},  \eqref{eq:divisible by 4} and~\eqref{eq:even and not divisible by 4} that
\begin{equation}\label{eq:forced odd}
\sum_{i=1}^{2n}\sum_{j=1}^{2n} \A_m(B)\rho(h(i),h(j))\le \sum_{i=1}^{2n}\sum_{j=1}^{2n} \left(\begin{smallmatrix} 0 & C\\ C& 0 \end{smallmatrix}\right)_{ij}\rho(h(i),h(j)),
\end{equation}
where
$$
C\eqdef \frac{1}{m}I+ \frac{2^\kappa}{m}\sum_{s=1}^{\lfloor(m-1)/4\rfloor}\left(A^{2s-1}+A^{2s+1}\right)+ \frac{2^{\kappa+1}}{m}\sum_{s=0}^{\lfloor(m-3)/4\rfloor}A^{2s+1}+\frac{1}{m} \sum_{s=0}^{\lfloor (m-2)/2\rfloor}A^{2s+1}.
$$
To deduce~\eqref{eq:goal commute} from~\eqref{eq:forced odd} it remains to observe that
\begin{equation*}
\forall\, i,j\in \{1,\ldots,n\},\quad C_{ij}\le \left(2^{\kappa+2}+1\right)\A_m(A)_{ij}.\tag*{\qedhere}
\end{equation*}
\end{proof}

The following two lemmas are intended to indicate that if one is only interested in the existence of super-expanders (rather than estimating the nonlinear spectral gap of a specific graph of interest) then the distinction between $\gamma(\cdot,\cdot)$ and $\gamma_+(\cdot,\cdot)$ is not very significant.

\begin{lemma}\label{lem:bipartite}
Fix $n,d\in \N$ and  let $G=(V,W,E)$ be a $d$-regular bipartite graph such that $|V|=|W|=n$. Then there exists a $2d$-regular graph $H=(V,F)$ for which every kernel $K:X\times X\to [0,\infty)$ satisfies
$
\gamma_+(H,K)\le 2\gamma(G,K).
$
\end{lemma}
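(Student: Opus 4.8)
The idea is to "fold" the bipartite graph $G=(V,W,E)$ onto a single copy of the vertex set. Identify $W$ with $V$ via a fixed bijection, so that each edge of $G$ becomes an unordered pair with both endpoints in $V$ (allowing self-loops and multiple edges); call the resulting $2d$-regular graph $H=(V,F)$. Concretely, if $A$ is the $n\times n$ bi-adjacency-type matrix so that the normalized adjacency matrix of $G$ is $\left(\begin{smallmatrix} 0 & A \\ A & 0 \end{smallmatrix}\right)$, then I take $A_H \eqdef A$ (viewed as a symmetric stochastic $n\times n$ matrix after the identification). The point is that a Poincaré inequality for $H$ with kernel $K$ is literally the same inequality as the one for the folded vertex set, and I want to compare it to the Poincaré inequality for $G$ on $2n$ vertices.

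First I would fix $f\colon V\to X$ and apply the definition of $\gamma\left(\left(\begin{smallmatrix} 0 & A \\ A & 0 \end{smallmatrix}\right),K\right)$ to the function $h\colon\{1,\dots,2n\}\to X$ that equals $f$ on the first $n$ coordinates and equals $f\circ(\text{identification})$ on the last $n$ coordinates — i.e.\ $h$ takes the \emph{same} value on a vertex of $V$ and on its partner in $W$. With this choice the right-hand side of the Poincaré inequality for $G$ becomes $\frac{\gamma(G,K)}{n}\sum_{i,j} a_{ij}K(f(i),f(j))$ (each edge of $G$ contributes exactly $K$ of the two $f$-values of its endpoints under the identification), which is, up to the factor $2d$ normalization, the edge-term for $H$. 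Meanwhile the left-hand side is $\frac{1}{(2n)^2}\sum_{i,j=1}^{2n}K(h(i),h(j))$; since $h$ only takes values in $f(V)$ and each value $f(\ell)$ is attained at exactly two of the $2n$ indices, this double sum equals $4\cdot\frac{1}{(2n)^2}\sum_{i,j=1}^n K(f(i),f(j)) = \frac{1}{n^2}\sum_{i,j=1}^n K(f(i),f(j))$, which is exactly the left-hand side of the Poincaré inequality for $H$. Rearranging gives $\gamma_+(H,K)\le \gamma(G,K)$ — in fact with constant $1$.

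Since $\gamma(\cdot,\cdot)\le\gamma_+(\cdot,\cdot)$ always holds and the claimed bound allows a factor of $2$, there is slack to spare; I would state the clean constant-$1$ bound (or just quote $2$ as in the statement). One point that needs a little care is whether one really gets $\gamma_+(H,K)$ (the two-function version) rather than just $\gamma(H,K)$: to get the $\gamma_+$ version for $H$, run the same argument but feed $G$'s Poincaré inequality the function $h$ built from two \emph{different} maps $f,g\colon V\to X$, placing $f$ on the $V$-copy and $g$ on the $W$-copy — then the edge term of $G$ produces $\sum_{i,j}a_{ij}K(f(i),g(j))$, which is exactly the $\gamma_+$ edge term for $H=A$, and the vertex term splits into the four cross-sums $K(f,f),K(g,g),K(f,g),K(g,f)$; discarding the (nonnegative) like-type sums $K(f,f)$ and $K(g,g)$ and keeping the two cross sums gives $\frac{1}{(2n)^2}\bigl(\sum K(f(i),g(j))+\sum K(g(i),f(j))\bigr)$ on the left. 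By the symmetry of $K$ this is $\frac{2}{(2n)^2}\sum_{i,j}K(f(i),g(j))=\frac{1}{2n^2}\sum K(f(i),g(j))$, which is only \emph{half} of the desired $\gamma_+(H,K)$ left-hand side $\frac{1}{n^2}\sum K(f(i),g(j))$; this is precisely where the factor $2$ in the statement enters, yielding $\gamma_+(H,K)\le 2\gamma(G,K)$.

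The main obstacle is bookkeeping rather than conceptual: one must be careful that the folding identification is applied consistently on both sides of the inequality, that self-loops and multiplicities created by the folding are counted correctly in the degree (each vertex $v\in V$ keeps its $d$ edges to $W$, now reinterpreted, plus $d$ more from being a partner of some $W$-vertex, for total degree $2d$), and that one is honest about whether like-type terms $K(f(i),f(j))$ on the left of $G$'s inequality are being dropped (legitimately, by nonnegativity of $K$). No convexity or metric property of the kernel is used — the argument is purely combinatorial — which is consistent with the lemma being stated for arbitrary kernels $K$.
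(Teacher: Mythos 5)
Your plan is in the same spirit as the paper's (fold $W$ onto $V$ via a bijection $\sigma$ and compare Poincar\'e inequalities), but it has a genuine gap: after the identification, the $n\times n$ matrix with entries $E(u,\sigma(v))$ is \emph{not} symmetric in general, so ``$A_H\eqdef A$, viewed as a symmetric stochastic matrix'' is not a legitimate definition of a graph. There is no reason why $E(u,\sigma(v))=E(v,\sigma(u))$ for an arbitrary $d$-regular bipartite multigraph $G$ and an arbitrary bijection $\sigma$, and you have not argued that $\sigma$ can always be chosen to enforce this. The folded object you need is the symmetrized $F(u,v)\eqdef E(u,\sigma(v))+E(\sigma(u),v)$, which is $2d$-regular as the lemma requires.

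With that $H$, your single-function argument breaks at the $\gamma_+$ step. Feeding $G$'s Poincar\'e inequality the one function $h$ (with $f$ on the $V$-copy and $g$ on the $W$-copy) yields on the right the quantity $\sum_{u,v}E(u,\sigma(v))K(f(u),g(v))$, not $\sum_{u,v}F(u,v)K(f(u),g(v))$; since $f\neq g$, the missing half $\sum_{u,v}E(\sigma(u),v)K(f(u),g(v))$ is a genuinely different quantity, and crudely bounding the smaller sum by the larger one loses a factor of $2$, giving only $\gamma_+(H,K)\le 4\gamma(G,K)$.

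The repair is exactly the paper's device of using \emph{two} auxiliary functions $\phi_1,\phi_2$ on $V\cup W$: $\phi_1$ places $f$ on $V$ and $g$ on $W$, while $\phi_2$ places $g$ on $V$ and $f$ on $W$, and one \emph{adds} the two resulting Poincar\'e inequalities for $G$. The two vertex terms together recover the full $\frac{1}{n^2}\sum_{i,j}K(f(i),g(j))$, and the two edge terms sum to precisely $\frac{\gamma(G,K)}{nd}\sum_{u,v}F(u,v)K(f(u),g(v))$; the factor $2$ in the lemma then appears as the ratio of degree normalizations ($d$ for $G$ on $2n$ vertices versus $2d$ for $H$ on $n$ vertices). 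Your accounting of the factor $2$ --- halving the left-hand side from a single application --- is a correct alternative bookkeeping \emph{only when} $(u,v)\mapsto E(u,\sigma(v))$ happens to be symmetric, and it does not survive the necessary symmetrization in general. So the underlying idea is right, but you must also run the argument with $f$ and $g$ swapped and add.
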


\begin{proof}
Fix an arbitrary bijection $\sigma: V\to W$. The new edges $F$ on the vertex set $V$ are given by
\begin{equation*}\label{eq:def F bipartite}
\forall (u,v)\in V\times V,\quad F(u,v)\eqdef E(u,\sigma(v))+E(\sigma(u),v).
\end{equation*}
Thus $(V,F)$ is a $2d$-regular graph.

Given $f,g:V\to X$ define $\phi_1,\phi_2:V\cup W\to X$ by
\begin{equation*}\label{eq:def phis}
\phi_1(x)\eqdef \left\{\begin{array}{ll}f(x) &\mathrm{if\ } x\in V,\\
g\left(\sigma^{-1}(x)\right)& \mathrm{if\ } x\in W,\end{array}\right. \quad\mathrm{and}\quad \phi_2(x)\eqdef \left\{\begin{array}{ll}g(x) &\mathrm{if\ } x\in V,\\
f\left(\sigma^{-1}(x)\right)& \mathrm{if\ } x\in W.\end{array}\right.
\end{equation*}
Then,
\begin{align*}
&\frac{1}{n^2}\sum_{(u,v)\in V\times V}K(f(u),g(v))\\&\le \frac{1}{(2n)^2}\sum_{(x,y)\in (V\cup W)\times(V\cup W)}\left(K(\phi_1(x),\phi_1(y))+K(\phi_2(x),\phi_2(y))\right)\\
&\le \frac{\gamma(G,K)}{2nd}\sum_{(x,y)\in (V\times W)\cup(W\times V)} E(x,y) \left(K(\phi_1(x),\phi_1(y))+K(\phi_2(x),\phi_2(y))\right)\\&=\frac{\gamma(G,K)}{nd}\sum_{(u,v)\in V\times V}
\left(E(u,\sigma(v))+E(\sigma(u),v)\right)K(f(u),g(v))\\
&=\frac{2\gamma(G,K)}{n\cdot (2d)}\sum_{(u,v)\in F}K(f(u),g(v))\qedhere.
\end{align*}
\end{proof}

\begin{lemma}\label{lem:half size}
Fix $n,d\in \N$ and let $G=(V,E)$ be a $d$-regular graph with $|V|=2n$. Then there exists a $4d$-regular graph $G'=(V',E')$ with $|V'|=n$ such that for every $\kappa\in (0,\infty)$ and every $\rho:X\times X\to [0,\infty)$ which is a $2^\kappa$-quasi-semimetric we have $\gamma_+(G',\rho)\le 2^{\kappa+2}\gamma(G,\rho)$.
\end{lemma}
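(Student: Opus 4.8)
The plan is to mimic the construction in Lemma~\ref{lem:bipartite}, but instead of merging a bipartition of $G$ we merge an arbitrary fixed partition of its vertex set into two halves. Split $V$ as $V=V_1\sqcup V_2$ with $|V_1|=|V_2|=n$, and fix a bijection $\sigma:V_1\to V_2$; take $V'=V_1$. The new edge-multiset $E'$ on $V'$ should record, for each pair $(u,v)\in V_1\times V_1$, all edges of $G$ among the four vertices $u$, $v$, $\sigma(u)$, $\sigma(v)$; concretely one sets
\begin{equation*}
F(u,v)\eqdef E(u,v)+E(u,\sigma(v))+E(\sigma(u),v)+E(\sigma(u),\sigma(v)),
\end{equation*}
so that summing over $v\in V_1$ recovers $\deg_G(u)+\deg_G(\sigma(u))=2d$ from the $u$-terms and $2d$ from the $\sigma(u)$-terms, giving a $4d$-regular graph $G'=(V',E')$ (self-loops and multiedges being allowed throughout, per the paper's conventions).

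Next I would run the usual two-map averaging argument. Given $f,g:V'\to X$, define $\phi_1,\phi_2:V\to X$ by $\phi_1|_{V_1}=f$, $\phi_1|_{V_2}=f\circ\sigma^{-1}$ on the one hand, and something like $\phi_1|_{V_1}=f$, $\phi_1|_{V_2}=g\circ\sigma^{-1}$, $\phi_2|_{V_1}=g$, $\phi_2|_{V_2}=f\circ\sigma^{-1}$ on the other — the precise choice is exactly what needs care (see below). The left-hand side $\frac1{n^2}\sum_{(u,v)\in V'\times V'}\rho(f(u),g(v))$ is bounded by a constant times the average of $\rho(\phi_k(x),\phi_k(y))$ over all pairs $(x,y)\in V\times V$; here, unlike in the bipartite case, the diagonal-block terms like $\rho(f(u),f(v))$ (with no $g$) appear, and these are handled by the quasi-semimetric inequality~\eqref{eq:quasimetric def}, inserting an intermediate point $\phi_k(z)$ and averaging over $z$, which is where the factor $2^\kappa$ enters. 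Then I apply $\gamma(G,\rho)$ to each of $\phi_1,\phi_2$ to pass from the complete graph on $V$ to the edge set $E$, and finally translate the edge sum over $E$ back into the edge sum over $F$ using the definition of $F$, collecting the numerical constant as $2^{\kappa+2}$.

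The main obstacle is bookkeeping the constant and choosing $\phi_1,\phi_2$ so that the cross terms $\rho(f(u),g(v))$ are faithfully reproduced while the ``pure'' terms $\rho(f(u),f(v))$ and $\rho(g(u),g(v))$ — which are introduced purely as an artifact of symmetrizing over $V\times V$ — are controlled by a single application of the quasi-triangle inequality rather than accumulating extra powers of $2^\kappa$. Concretely, writing $S=\frac1{n^2}\sum_{u,v}\rho(f(u),g(v))$, one wants an inequality of the shape $S\le \frac{2^{\kappa+2}}{(2n)^2}\sum_{x,y\in V}\rho(\phi_1(x),\phi_1(y))$ (or the same with a genuinely two-map average $\phi_1,\phi_2$), and the game is to verify that with the stated $\phi$'s every term on the right is either a legitimate $\rho(f,g)$ cross term or is dominated, after averaging over the free vertex, by $2^\kappa(\rho(f,g)+\rho(f,g))$. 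Once the combinatorial identity $\sum_{(a,b)\in F}\rho(f(a),g(b)) = \sum_{(u,v)\in V'\times V'}F(u,v)\rho(f(u),g(v))$ is matched against the expansion of the right-hand side over $E$, the bound $\gamma_+(G',\rho)\le 2^{\kappa+2}\gamma(G,\rho)$ falls out; this last matching step is routine but is the place where an off-by-a-power error would hide, so I would write it out carefully.
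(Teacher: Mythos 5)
Your plan has two genuine gaps, and the second one is structural.

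First, the degree count is off. With
\[
F(u,v)=E(u,v)+E(u,\sigma(v))+E(\sigma(u),v)+E(\sigma(u),\sigma(v)),
\]
summing over $v\in V_1$ gives $\deg_G(u)|_{V_1}+\deg_G(u)|_{V_2}+\deg_G(\sigma(u))|_{V_1}+\deg_G(\sigma(u))|_{V_2}=\deg_G(u)+\deg_G(\sigma(u))=2d$, not $4d$. So the graph you define is $2d$-regular; there are no extra edges (in particular, no self-loops) available to absorb the terms that the quasi-metric step produces.

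Second, and more fundamentally, your method for handling the diagonal-block terms cannot close. After applying $\gamma(G,\rho)$ to your extension $\phi_1$, the right-hand side is an edge-sum over $E$, and the pure terms $\rho(f(x),f(y))$ with $(x,y)\in E\cap(V_1\times V_1)$ appear weighted by the edge multiplicities $E(x,y)$. Your proposal to insert an intermediate $\phi_k(z)$ and \emph{average over $z$} replaces each such term by $\frac{2^\kappa}{n}\sum_z\bigl(\rho(f(x),g(z))+\rho(g(z),f(y))\bigr)$, i.e.\ by cross terms spread uniformly over all pairs $(x,z)$, $(z,y)$ — not over edges of $G'$. There is no mechanism to re-concentrate these onto the sparse edge set $F$, so the bound cannot be matched against $\frac{\gamma_+}{4dn}\sum_{(u,v)\in F}\rho(f(u),g(v))$. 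What the paper does instead is insert the \emph{specific} vertex $\sigma(v)$ as the intermediate point (equation \eqref{eq:V' part}): this turns the $E(u,v)$-weighted pure term into an $E(u,\sigma^{-1}(y))$-weighted cross term, which is edge-local, plus a ``matching'' term $\rho(f(\sigma(v)),f(v))$. That second term is exactly why the paper's construction adds $d$ copies of the matching edges $\{x,\sigma(x)\}$ (the $d\1_{\{y=\sigma(x)\}}$ in \eqref{eq:sum of indicators}), raising the degree to the required value. Your $F$ has no such slot, which is the same defect as the degree miscount viewed from the other side. Replacing the averaged intermediate by $\sigma(v)$, and augmenting $F$ with the corresponding multiplicity-$d$ self-loops (equivalently, following the paper's two-step route through the intermediate bipartite graph $H$ and Lemma~\ref{lem:bipartite}) is what makes the constant $2^{\kappa+2}$ come out.
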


\begin{proof} Write $V=V'\cup V''$, where $V',V''\subseteq V$ are disjoint subsets of cardinality $n$, and fix an arbitrary bijection $\sigma: V'\to V''$. We first define a bipartite graph $H=(V',V'',F)$ by
 \begin{equation}\label{eq:sum of indicators}
 \forall (x,y)\in V'\times V'',\quad F(x,y)\eqdef E(x,y)+E\left(x,\sigma^{-1}(y)\right)+d\1_{\{y=\sigma(x)\}},
 \end{equation}
 where $F$ is extended to $V''\times V'$ by imposing symmetry.  This makes $H$ be a $2d$-regular bipartite graph. We shall now estimate $\gamma(H,\rho)$. For every $f:V\to X$ we have
\begin{multline}\label{eq:primes decomposition}
\frac{1}{(2n)^2}\sum_{(u,v)\in V\times V}\rho(f(u),f(v))\le \frac{\gamma(G,\rho)}{2nd}\left(\sum_{(u,v)\in (V'\times V'')\cup(V''\times V')}E(u,v)\rho(f(u),f(v))\right.\\\left.+\sum_{(u,v)\in V'\times V'}E(u,v)\rho(f(u),f(v))+\sum_{(u,v)\in V''\times V''}E(u,v)\rho(f(u),f(v))\right).
\end{multline}
Now, using the fact that $\rho$ is a $2^\kappa$-quasi-semimetric we have
\begin{multline}\label{eq:V' part}
\sum_{(u,v)\in V'\times V'}E(u,v)\rho(f(u),f(v))\le \sum_{(u,v)\in V'\times V'}2^\kappa E(u,v)\left(\rho(f(u),f(\sigma(v)))+\rho(f(\sigma(v)),f(v))\right)\\= 2^\kappa\sum_{(x,y)\in V'\times V''}E\left(x,\sigma^{-1}(y)\right)\rho(f(x),f(y))+2^\kappa d\sum_{z\in V'}\rho(f(\sigma(z)),f(z)).
\end{multline}
Similarly,
\begin{multline}\label{eq:V'' part}
\sum_{(u,v)\in V''\times V''}E(u,v)\rho(f(u),f(v))\\\le 2^\kappa\sum_{(x,y)\in V''\times V'}E(x,\sigma(y))\rho(f(x),f(y))+2^\kappa d\sum_{z\in V'}\rho(f(z),f(\sigma(z))).
\end{multline}
Recalling~\eqref{eq:sum of indicators}, we conclude from~\eqref{eq:primes decomposition},  \eqref{eq:V' part} and~\eqref{eq:V'' part} that
\begin{equation*}
\frac{1}{(2n)^2}\sum_{(u,v)\in (V'\cup V'')\times (V'\cup V'')}\rho(f(u),f(v))\le
\frac{2^{\kappa+1}\gamma(G,\rho)}{(2n)\cdot(2d)}\sum_{(x,y)\in F}\rho(f(x),f(y)).
\end{equation*}
Hence $\gamma(H,\rho)\le 2^{\kappa+1}\gamma(G,\rho)$. The desired assertion now follows from Lemma~\ref{lem:bipartite}.
\end{proof}

\subsection{Edge completion}\label{sec:edge completion}

In the ensuing arguments we will sometimes  add edges to a graph in order to ensure that it has certain desirable properties, but we will at the same time want to control the Poincar\'e constants of the resulting denser graph. The following very easy facts will be useful for this purpose.

\begin{lemma}\label{lem:graphs containing} Fix $n,d_1,d_2\in \N$.
Let $G_1=(V,E_1)$ and $G_2=(V,E_2)$ be two $n$-vertex graphs  on the same vertex set with $E_2\supseteq E_1$. Suppose that $G_1$ is $d_1$-regular and $G_2$ is $d_2$-regular. Then for every kernel $K:X\times X\to [0,\infty)$ we have
$$
\max\left\{\frac{\gamma(G_2,K)}{\gamma(G_1,K)},\frac{\gamma_+(G_2,K)}{\gamma_+(G_1,K)}\right\}\le \frac{d_2}{d_1}.
$$
\end{lemma}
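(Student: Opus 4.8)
The plan is to exploit the fact that both $G_1$ and $G_2$ are regular on the same vertex set with $E_1\subseteq E_2$, so that the edge set of $G_2$ decomposes as $E_2=E_1\sqcup E_1'$ for some multiset $E_1'$ of edges, and then argue that the ``extra'' sum over $E_1'$ is nonnegative. Concretely, fix any kernel $K:X\times X\to[0,\infty)$ and any $f,g:V\to X$ (for the $\gamma_+$ bound; take $g=f$ for the $\gamma$ bound). The key observation is the chain of inequalities
\begin{equation*}
\frac{1}{n^2}\sum_{i,j}K(f(i),g(j))\le \frac{\gamma_+(G_1,K)}{n d_1}\sum_{(u,v)\in E_1}K(f(u),g(v))\le \frac{\gamma_+(G_1,K)}{n d_1}\sum_{(u,v)\in E_2}K(f(u),g(v)),
\end{equation*}
where the first step is the definition of $\gamma_+(G_1,K)$ and the second uses $E_1\subseteq E_2$ together with $K\ge 0$ (so adding the edges of $E_1'=E_2\setminus E_1$ only increases the right-hand side).

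Next I would rewrite the last expression in the normalization appropriate to $G_2$: since $G_2$ is $d_2$-regular,
\begin{equation*}
\frac{\gamma_+(G_1,K)}{n d_1}\sum_{(u,v)\in E_2}K(f(u),g(v))=\frac{\gamma_+(G_1,K)}{d_1}\cdot\frac{d_2}{d_2}\cdot\frac{1}{n}\sum_{(u,v)\in E_2}K(f(u),g(v))=\frac{d_2}{d_1}\cdot\gamma_+(G_1,K)\cdot\frac{1}{n d_2}\sum_{(u,v)\in E_2}K(f(u),g(v)).
\end{equation*}
Combining the two displays gives, for all $f,g:V\to X$,
\begin{equation*}
\frac{1}{n^2}\sum_{i,j}K(f(i),g(j))\le \frac{d_2}{d_1}\,\gamma_+(G_1,K)\cdot\frac{1}{n d_2}\sum_{(u,v)\in E_2}K(f(u),g(v)),
\end{equation*}
which by the definition of $\gamma_+(G_2,K)$ as an infimum yields $\gamma_+(G_2,K)\le \frac{d_2}{d_1}\gamma_+(G_1,K)$. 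Taking $g=f$ throughout gives the identical argument for $\gamma(\cdot,K)$, hence $\gamma(G_2,K)\le \frac{d_2}{d_1}\gamma(G_1,K)$, and the two bounds together are exactly the claimed inequality.

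There is essentially no obstacle here: the only thing to be careful about is the bookkeeping with multiplicities, namely that $\sum_{(u,v)\in E_2}$ should be read as $\sum_{(u,v)\in V\times V}E_2(u,v)\,K(f(u),g(v))$ and likewise for $E_1$, and that the hypothesis $E_2\supseteq E_1$ means $E_2(u,v)\ge E_1(u,v)$ for every ordered pair, so that the difference $\sum_{(u,v)\in V\times V}\bigl(E_2(u,v)-E_1(u,v)\bigr)K(f(u),g(v))$ is a sum of nonnegative terms. The regularity assumptions are used only to convert between the two natural normalizations $\frac{1}{n d_1}\sum_{E_1}$ and $\frac{1}{n d_2}\sum_{E_2}$; nonnegativity of $K$ is what makes dropping to the larger edge set safe. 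No quasi-metric or triangle-type hypothesis on $K$ is needed, which is why the statement holds for arbitrary kernels.
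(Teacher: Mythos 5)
Your proof is correct and is essentially the paper's own argument: both rest on the single observation that $\frac{1}{nd_2}\sum_{E_2}K\ge \frac{d_1}{d_2}\cdot\frac{1}{nd_1}\sum_{E_1}K$ because $E_2\supseteq E_1$ and $K\ge 0$, followed by the definitions of $\gamma$ and $\gamma_+$. The bookkeeping remarks about edge multiplicities and regularity being used only for the normalization factor $d_2/d_1$ are accurate but add nothing beyond what the paper's one-line display already encodes.
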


\begin{proof}
One just has to note that for every $f,g:V\to X$ we have
\begin{equation*}
\frac{1}{nd_2}\sum_{(x,y)\in E_2}K(f(x),g(y))\ge \frac{1}{nd_2}\sum_{(x,y)\in E_1}K(f(x),g(y))=\frac{d_1}{d_2}\cdot\frac{1}{nd_1}\sum_{(x,y)\in E_1}K(f(x),g(y)).\tag*{\qedhere}
\end{equation*}
\end{proof}

\begin{definition}[Edge completion]\label{def:edge completion} Fix two integers $D\ge d\ge 2$. Let $G=(V,E)$ be a $d$-regular graph. The $D$-edge completion of $G$, denoted $\c_D(G)$, is defined as a graph on the same vertex set $V$, with edges $E(\c_D(G))\supseteq E$ defined as follows. Write $D=m d+r$, where $m\in \N$ and $r\in \{0,\ldots,d-1\}$. Then $E(\c_D(G))$ is obtained from $E$ by duplicating each edge $m$ times and adding $r$ self loops to each vertex in $V$, i.e.,
 \begin{equation}\label{eq:def edge completion}
\forall(x,y)\in V\times V,\quad E(\c_D(G))(x,y)\eqdef mE(x,y)+r\1_{\{x=y\}}.
 \end{equation}
 This definition makes $\c_D(G)$ be a $D$-regular graph.
\end{definition}

\begin{lemma}\label{lem:edge completion} Fix two integers $D\ge d\ge 2$ and let $G=(V,E)$ be a $d$-regular graph. Then for every kernel $K:X\times X\to [0,\infty)$ we have
\begin{equation}\label{eq:edge completition}
\max\left\{\frac{\gamma(\c_D(G),K)}{\gamma(G,K)},\frac{\gamma_+(\c_D(G),K)}{\gamma_+(G,K)}\right\}\le 2.
\end{equation}
\end{lemma}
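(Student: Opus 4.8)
The plan is to deduce~\eqref{eq:edge completition} from Lemma~\ref{lem:graphs containing} by realizing $\c_D(G)$ as a regular graph that contains, as a sub-multigraph, a copy of $G$ whose every edge has been duplicated a fixed number of times.

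First I would unwind Definition~\ref{def:edge completion}: write $D=md+r$ with $m\in\N$ and $r\in\{0,\ldots,d-1\}$, and note that the hypothesis $D\ge d$ forces $m=\lfloor D/d\rfloor\ge 1$. Introduce the auxiliary $md$-regular graph $G^{(m)}=(V,E^{(m)})$ obtained from $G$ by replacing each edge with $m$ parallel copies, i.e. $E^{(m)}(x,y)\eqdef mE(x,y)$. By~\eqref{eq:def edge completion} we have $E(\c_D(G))(x,y)=mE(x,y)+r\1_{\{x=y\}}\ge E^{(m)}(x,y)$ for all $(x,y)\in V\times V$, so $E^{(m)}\subseteq E(\c_D(G))$ as multi-subsets of $V\times V$; thus $\c_D(G)$ is a $D$-regular graph containing the $md$-regular graph $G^{(m)}$ on the same vertex set.

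Next I would observe that passing from $G$ to $G^{(m)}$ does not change the normalized adjacency matrix, since $A_{G^{(m)}}=\frac{1}{md}(mE)=\frac1d E=A_G$. Hence $\gamma(G^{(m)},K)=\gamma(G,K)$ and $\gamma_+(G^{(m)},K)=\gamma_+(G,K)$ for every kernel $K$, directly from~\eqref{eq:kernel-poin} and~\eqref{eq:kernel-poin+}. Applying Lemma~\ref{lem:graphs containing} to the pair $G^{(m)}\subseteq \c_D(G)$ then gives
$$
\max\left\{\frac{\gamma(\c_D(G),K)}{\gamma(G,K)},\frac{\gamma_+(\c_D(G),K)}{\gamma_+(G,K)}\right\}=\max\left\{\frac{\gamma(\c_D(G),K)}{\gamma(G^{(m)},K)},\frac{\gamma_+(\c_D(G),K)}{\gamma_+(G^{(m)},K)}\right\}\le \frac{D}{md}.
$$
Finally, since $m\ge 1$ we have $r\le d-1<d\le md$, so $D=md+r<2md$, i.e. $\frac{D}{md}<2$, which establishes~\eqref{eq:edge completition}.

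There is essentially no obstacle here; the only point requiring care is ensuring $m\ge 1$, so that $G^{(m)}$ is a genuine graph of degree $\ge d$ and the arithmetic bound $r<md$ holds, and this is exactly what the hypothesis $D\ge d$ provides. Should one prefer a self-contained argument avoiding Lemma~\ref{lem:graphs containing}, one can instead write $A_{\c_D(G)}=\frac{md}{D}A_G+\frac{r}{D}I$ and note that for any $f,g\colon V\to X$ the self-loop term $\frac{r}{D}\cdot\frac1n\sum_i K(f(i),g(i))$ is nonnegative and may be discarded, yielding $\frac1n\sum_{i,j}(A_{\c_D(G)})_{ij}K(f(i),g(j))\ge \frac{md}{D}\cdot\frac1n\sum_{i,j}(A_G)_{ij}K(f(i),g(j))$; this gives $\gamma_+(\c_D(G),K)\le \frac{D}{md}\gamma_+(G,K)$, and likewise for $\gamma$ upon taking $f=g$, after which the same estimate $\frac{D}{md}<2$ concludes the proof.
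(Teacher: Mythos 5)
Your proof is correct, and the main route you take is genuinely a little different from the paper's. The paper proves~\eqref{eq:edge completition} by a single direct estimate on edge sums: using~\eqref{eq:def edge completion}, it writes the normalized sum over $E(\c_D(G))$ as $\frac{1}{nd}\sum_{(x,y)}\frac{mdE(x,y)+rd\1_{\{x=y\}}}{md+r}K(f(x),g(y))$, drops the nonnegative self-loop term, and uses $\frac{md}{md+r}\ge\frac{m}{m+1}\ge\frac12$. You instead factor the argument through Lemma~\ref{lem:graphs containing}: you introduce the auxiliary $md$-regular graph $G^{(m)}$ (each edge of $G$ duplicated $m$ times), observe that $A_{G^{(m)}}=A_G$ so the Poincar\'e constants are unchanged, note $E^{(m)}\subseteq E(\c_D(G))$ as multi-sets, and then invoke Lemma~\ref{lem:graphs containing} to get the factor $D/(md)<2$. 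This is the same inequality in disguise ($\frac{md}{md+r}\ge\frac12\Leftrightarrow D/(md)\le 2$), but your version is more modular, reusing the previously proved Lemma~\ref{lem:graphs containing} rather than redoing the comparison by hand; the price is the small extra observation that multiplying all edge multiplicities by a constant leaves the normalized adjacency matrix, and hence $\gamma$ and $\gamma_+$, unchanged. The ``self-contained alternative'' you sketch at the end — writing $A_{\c_D(G)}=\frac{md}{D}A_G+\frac{r}{D}I$ and discarding the diagonal term — is essentially the paper's own proof.
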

\begin{proof}
Write $|V|=n$ and $D=m d+r$, where $m\in \N$ and $r\in \{0,\ldots,d-1\}$. For every $f,g:V\to X$ we have
\begin{multline*}
\frac{1}{nD}\sum_{(x,y)\in E\left(\c_D(G)\right)} K(f(x),g(y))\stackrel{\eqref{eq:def edge completion}}{=}\frac{1}{nd}\sum_{(x,y)\in V\times V}\frac{mdE(x,y)+rd\1_{\{x=y\}}}{md+r}K(f(x),g(y))\\\ge \frac{1}{nd}\sum_{(x,y)\in V\times V}\frac{m}{m+1}E(x,y)K(f(x),g(y))\ge \frac12\cdot \frac{1}{nd}\sum_{(x,y)\in E}K(f(x),g(y)).\tag*{\qedhere}
\end{multline*}
\end{proof}

\section{Metric Markov cotype implies nonlinear spectral calculus}\label{sec:cotype to calculus}

Our goal here is to prove Theorem~\ref{thm:cotype implies calculus intro}. We start with an analogous statement that treats the parameter $\gamma(\cdot,\cdot)$ rather than $\gamma_+(\cdot,\cdot)$.

\begin{lemma}[Metric Markov cotype implies the decay of $\gamma$]\label{lem:decay of gamma in section}
Fix $C,\e\in (0,\infty)$, $q\in [1,\infty)$, $m,n\in \N$ and an $n\times n$ symmetric stochastic matrix $A=(a_{ij})$. Suppose that $(X,d_X)$ is a metric space such  that for every $x_1,\ldots,x_n\in X$  there exist $y_1,\ldots,y_n\in X$ satisfying
\begin{equation}\label{eq:exponent q cotype}
\sum_{i=1}^n d_X(x_i,y_i)^q+m^\e\sum_{i=1}^n\sum_{j=1}^na_{ij} d_X(y_i,y_j)^q\le C^q \sum_{i=1}^n\sum_{j=1}^n \A_m(A)_{ij}d_X(x_i,x_j)^q.
\end{equation}
Then
\begin{equation}\label{eq:gamma decay in section}
\gamma\left(\A_m(A),d_X^q\right)\le (3C)^q \max\left\{1,\frac{\gamma\left(A,d_X^q\right)}{m^\e}\right\}.
\end{equation}
\end{lemma}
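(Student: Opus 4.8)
The plan is to derive the Poincar\'e inequality for $\A_m(A)$ with respect to $d_X^q$ directly from the metric Markov cotype hypothesis~\eqref{eq:exponent q cotype}, using the auxiliary points $y_1,\ldots,y_n$ as a ``linearization'' device. Fix $x_1,\ldots,x_n\in X$ and apply~\eqref{eq:exponent q cotype} to obtain $y_1,\ldots,y_n\in X$ satisfying the stated inequality; in particular, dropping one of the two nonnegative terms on the left, we get separately
\begin{equation}\label{eq:plan-two-consequences}
\sum_{i=1}^n d_X(x_i,y_i)^q\le C^q\sum_{i,j}\A_m(A)_{ij}d_X(x_i,x_j)^q\quad\text{and}\quad m^\e\sum_{i,j}a_{ij}d_X(y_i,y_j)^q\le C^q\sum_{i,j}\A_m(A)_{ij}d_X(x_i,x_j)^q.
\end{equation}

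First I would bound the left-hand side of the target inequality, namely $\frac{1}{n^2}\sum_{i,j}d_X(x_i,x_j)^q$, by passing through the $y_i$'s. Using the triangle inequality in $X$ together with the elementary convexity bound $(a+b+c)^q\le 3^{q-1}(a^q+b^q+c^q)$ (valid since $q\ge 1$), write $d_X(x_i,x_j)^q\le 3^{q-1}\big(d_X(x_i,y_i)^q+d_X(y_i,y_j)^q+d_X(y_j,x_j)^q\big)$. Averaging over all $i,j$ and using symmetry, this yields
\begin{equation}\label{eq:plan-split}
\frac{1}{n^2}\sum_{i,j}d_X(x_i,x_j)^q\le 3^{q-1}\left(\frac{2}{n}\sum_{i=1}^n d_X(x_i,y_i)^q+\frac{1}{n^2}\sum_{i,j}d_X(y_i,y_j)^q\right).
\end{equation}
The first term on the right is controlled by the first inequality in~\eqref{eq:plan-two-consequences}. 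For the second term, apply the definition of $\gamma(A,d_X^q)$ to the points $y_1,\ldots,y_n$: $\frac{1}{n^2}\sum_{i,j}d_X(y_i,y_j)^q\le \frac{\gamma(A,d_X^q)}{n}\sum_{i,j}a_{ij}d_X(y_i,y_j)^q$, and then feed in the second inequality in~\eqref{eq:plan-two-consequences} to bound this by $\frac{\gamma(A,d_X^q)}{m^\e}\cdot C^q\cdot\frac{1}{n}\sum_{i,j}\A_m(A)_{ij}d_X(x_i,x_j)^q$. Combining, every term on the right of~\eqref{eq:plan-split} is at most a constant multiple of $\max\{1,\gamma(A,d_X^q)/m^\e\}\cdot C^q\cdot\frac{1}{n}\sum_{i,j}\A_m(A)_{ij}d_X(x_i,x_j)^q$; collecting the numerical constants ($3^{q-1}$ times $2$ from the first term and $3^{q-1}$ from the second) gives a bound of the form $(3C)^q\max\{1,\gamma(A,d_X^q)/m^\e\}$ after checking that $3^{q-1}(2+1)=3^q$ absorbs correctly, which is exactly~\eqref{eq:gamma decay in section}.

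The only genuinely delicate point is the bookkeeping of the multiplicative constants so that the final bound comes out as $(3C)^q$ rather than something larger; the geometric input is entirely routine (one application of the triangle inequality, one application of $q$-convexity, and one application each of the two halves of the cotype hypothesis and of the definition of $\gamma(A,d_X^q)$). I do not expect any conceptual obstacle: the role of the $y_i$'s is precisely to replace the uncontrolled sum $\sum_{i,j}\A_m(A)_{ij}d_X(x_i,x_j)^q$ on the energy side by the single-step sum $\sum_{i,j}a_{ij}d_X(y_i,y_j)^q$, which is what the Ces\`aro averaging buys, and the factor $m^\e$ in~\eqref{eq:exponent q cotype} is what produces the decay $\gamma(A,d_X^q)/m^\e$. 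One should be slightly careful that the case $\gamma(A,d_X^q)=\infty$ is handled (then~\eqref{eq:gamma decay in section} is vacuous unless the right side is still finite, which it is not, so there is nothing to prove), and that $\A_m(A)$ is itself symmetric and stochastic so that $\gamma(\A_m(A),d_X^q)$ is well-defined.
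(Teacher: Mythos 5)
Your proof is correct and relies on exactly the same ingredients as the paper's argument: the three-term triangle inequality with constant $3^{q-1}$, the two halves of the cotype hypothesis~\eqref{eq:exponent q cotype}, and the definition of $\gamma(A,d_X^q)$ applied to the auxiliary points $y_1,\ldots,y_n$. The only difference is one of logical packaging: the paper fixes a near-extremizing configuration for $\gamma(\A_m(A),d_X^q)$, splits into cases according to whether $\gamma(\A_m(A),d_X^q)\le(3C)^q$, and in the nontrivial case derives the comparison $\gamma(\A_m(A),d_X^q)\le 3^{q-1}C^q\gamma(A,d_X^q)/m^\e$, whereas your direct estimate on $\frac{1}{n^2}\sum_{i,j}d_X(x_i,x_j)^q$ for an arbitrary configuration avoids both the case split and the near-extremizer device, arriving at the bound $3^{q-1}C^q\left(2+\gamma(A,d_X^q)/m^\e\right)\le(3C)^q\max\{1,\gamma(A,d_X^q)/m^\e\}$ in one pass.
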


\begin{proof}
Write $B=(b_{ij})=\A_m(A)$. If $\gamma(B,d_X^q)\le (3C)^q$ then~\eqref{eq:gamma decay in section} holds true, so we may assume from now on that $\gamma(B,d_X^q)> (3C)^q$. Fix
\begin{equation}\label{gamma range assumption}
(3C)^q<\gamma<\gamma(B,d_X^q).
\end{equation}
 By the definition of $\gamma(B,d_X^q)$ there exist $x_1,\ldots,x_n\in X$ such that
\begin{equation}\label{eq:choose xi}
\frac{1}{n^2}\sum_{i=1}^n\sum_{j=1}^nd_X(x_i,x_j)^q>\frac{\gamma}{n}\sum_{i=1}^n\sum_{j=1}^nb_{ij}d_X(x_i,x_j)^q.
\end{equation}
Let $y_1,\ldots,y_n\in X$ satisfy~\eqref{eq:exponent q cotype}. By the triangle inequality, for every $i,j\in \n$ we have
\begin{equation}\label{eq:triangle xixjyiyj}
d_X(x_i,x_j)^q\le 3^{q-1}\left(d_X(x_i,y_i)^q+d_X(y_i,y_j)^q+d_X(y_j,x_j)^q\right).
\end{equation}
By averaging~\eqref{eq:triangle xixjyiyj} we get the following estimate.
\begin{eqnarray}\label{eq:before gamma for yi}
\nonumber\frac{1}{n^2}\sum_{i=1}^n\sum_{j=1}^nd_X(y_i,y_j)^q&\ge& \frac{1}{3^{q-1}n^2}\sum_{i=1}^n\sum_{j=1}^nd_X(x_i,x_j)^q-\frac{2}{n}\sum_{i=1}^n d_X(x_i,y_i)^q\\ \nonumber
&\stackrel{\eqref{eq:choose xi}}{>}& \frac{\gamma}{3^{q-1}n}\sum_{i=1}^n\sum_{j=1}^nb_{ij}d_X(x_i,x_j)^q-\frac{2}{n}\sum_{i=1}^n d_X(x_i,y_i)^q\\ \nonumber
&\stackrel{\eqref{eq:exponent q cotype}}{\ge}& \frac{3\gamma m^\e}{(3C)^qn}\sum_{i=1}^n\sum_{j=1}^na_{ij} d_X(y_i,y_j)^q+\left(\frac{3\gamma }{(3C)^qn}-\frac{2}{n}\right)\sum_{i=1}^n d_X(x_i,y_i)^q\\
&\stackrel{\eqref{gamma range assumption}}{\ge}& \frac{3\gamma m^\e}{(3C)^qn}\sum_{i=1}^n\sum_{j=1}^na_{ij} d_X(y_i,y_j)^q.
\end{eqnarray}
At the same time, by the definition of $\gamma(A,d_X^q)$ we have
\begin{equation}\label{eq:apply gamma for yi}
\frac{1}{n^2}\sum_{i=1}^n\sum_{j=1}^nd_X(y_i,y_j)^q\le \frac{\gamma(A,d_X^q)}{n} \sum_{i=1}^n\sum_{j=1}^na_{ij} d_X(y_i,y_j)^q.
\end{equation}
By contrasting~\eqref{eq:apply gamma for yi} with~\eqref{eq:before gamma for yi} and letting $\gamma\nearrow \gamma(B,d_X^p)$ we deduce that
\begin{equation*}
\gamma\left(\A_m(A),d_X^q\right)=\gamma(B,d_X^q)\le 3^{q-1}C^q\frac{\gamma(A,d_X^q)}{m^\e}.\qedhere
\end{equation*}
\end{proof}

The special case $q=2$ of the following theorem implies Theorem~\ref{thm:cotype implies calculus intro}.
\begin{theorem}[Metric Markov cotype implies the decay of $\gamma_+$]
Fix $C,\e\in (0,\infty)$, $q\in [1,\infty)$, $m,n\in \N$ and an $n\times n$ symmetric stochastic matrix $A=(a_{ij})$. Suppose that $(X,d_X)$ is a metric space such  that for every $x_1,\ldots,x_{2n}\in X$  there exist $y_1,\ldots,y_{2n}\in X$ satisfying
\begin{equation}\label{eq:exponent q cotype 2 cover}
\sum_{i=1}^{2n} d_X(x_i,y_i)^q+m^\e\sum_{i=1}^{2n}\sum_{j=1}^{2n}\left( \begin{smallmatrix}
  0 & A \\
   A & 0
   \end{smallmatrix} \right )_{ij} d_X(y_i,y_j)^q\le C^q \sum_{i=1}^{2n}\sum_{j=1}^{2n} \A_m\left( \begin{smallmatrix}
  0 & A \\
   A & 0
   \end{smallmatrix} \right )_{ij}d_X(x_i,x_j)^q.
\end{equation}
Then
\begin{equation}\label{eq:gamma+decayq}
\gamma_+\left(\A_m(A),d_X^q\right)\le (45C)^q\max\left\{1,\frac{\gamma_+\left(A,d_X^q\right)}{m^\e}\right\}.
\end{equation}
\end{theorem}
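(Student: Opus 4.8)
The plan is to reduce this statement to the decay estimate for $\gamma$ that has already been established in Lemma~\ref{lem:decay of gamma in section}, by passing to the bipartite double cover $B\eqdef\left(\begin{smallmatrix}0&A\\A&0\end{smallmatrix}\right)$ --- a $2n\times 2n$ symmetric stochastic matrix --- and then translating back and forth between $\gamma$ and $\gamma_+$ by means of the comparison lemmas of Section~\ref{sec:gamma gamma+}. Throughout, since $q\ge 1$ the kernel $d_X^q$ is a $2^{q-1}$-quasi-semimetric (because $(a+b)^q\le 2^{q-1}(a^q+b^q)$), so those lemmas apply with $\kappa=q-1$, which is why the numbers $2^{q+1}+1$ and $2^q+1$ will appear below. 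The key observation is that hypothesis~\eqref{eq:exponent q cotype 2 cover} is exactly hypothesis~\eqref{eq:exponent q cotype} of Lemma~\ref{lem:decay of gamma in section} written for the matrix $B$ (note that $\A_m(B)$ is what figures on its right-hand side). Hence Lemma~\ref{lem:decay of gamma in section} applied to $B$ gives at once
\[
\gamma\left(\A_m(B),d_X^q\right)\le (3C)^q\max\left\{1,\frac{\gamma\left(B,d_X^q\right)}{m^\e}\right\}.
\]

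Next I would bound the left-hand side of the desired inequality by $\gamma(\A_m(B),d_X^q)$. The rightmost inequality of Lemma~\ref{lem:pass to 2-cover}, applied with the matrix $\A_m(A)$ in place of $A$, yields $\gamma_+(\A_m(A),d_X^q)\le 2\,\gamma\!\left(\left(\begin{smallmatrix}0&\A_m(A)\\\A_m(A)&0\end{smallmatrix}\right),d_X^q\right)$, and Lemma~\ref{lem:2 cover cesaro} then gives $\gamma\!\left(\left(\begin{smallmatrix}0&\A_m(A)\\\A_m(A)&0\end{smallmatrix}\right),d_X^q\right)\le (2^{q+1}+1)\,\gamma\!\left(\A_m(B),d_X^q\right)$. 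For the right-hand side, the leftmost inequality of Lemma~\ref{lem:pass to 2-cover} applied with $A$ itself gives $\gamma(B,d_X^q)\le \frac{2^q+1}{2}\,\gamma_+(A,d_X^q)$. Chaining these four inequalities,
\[
\gamma_+\left(\A_m(A),d_X^q\right)\le 2\,(2^{q+1}+1)\,(3C)^q\,\max\left\{1,\frac{2^q+1}{2}\cdot\frac{\gamma_+\left(A,d_X^q\right)}{m^\e}\right\}.
\]

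Finally, since $\frac{2^q+1}{2}\ge 1$ for $q\ge 1$, this scalar can be pulled outside the maximum, reducing matters to the elementary numerical inequality $(2^{q+1}+1)(2^q+1)\,3^q\le 45^q$, valid for all $q\ge 1$ (there is equality at $q=1$; for $q>1$ one sets $u=2^q\ge 2$ and compares $2u^2+3u+1$ with $15^q=u^{\log_2 15}$). This produces $\gamma_+(\A_m(A),d_X^q)\le(45C)^q\max\{1,\gamma_+(A,d_X^q)/m^\e\}$, and taking $q=2$ recovers Theorem~\ref{thm:cotype implies calculus intro}. I do not anticipate a genuine obstacle: all the analytic content lives in Lemma~\ref{lem:decay of gamma in section} and in the double-cover comparisons, so the two points that require care are (i) checking that~\eqref{eq:exponent q cotype 2 cover} is precisely the metric Markov cotype hypothesis that Lemma~\ref{lem:decay of gamma in section} demands for the matrix $B$, and (ii) invoking Lemma~\ref{lem:2 cover cesaro} in the correct direction, namely bounding $\gamma$ of the double cover of the Ces\`aro average by $\gamma$ of the Ces\`aro average of the double cover, rather than the reverse.
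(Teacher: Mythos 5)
Your proposal is correct and follows exactly the same route as the paper's own proof: pass to the bipartite double cover $B$, observe that hypothesis~\eqref{eq:exponent q cotype 2 cover} is the hypothesis of Lemma~\ref{lem:decay of gamma in section} for $B$, and translate between $\gamma$ and $\gamma_+$ via the rightmost and leftmost inequalities of Lemma~\ref{lem:pass to 2-cover} together with Lemma~\ref{lem:2 cover cesaro} applied in the direction you state. The final scalar bookkeeping (pulling out the factor $\tfrac{2^q+1}{2}\ge 1$ and verifying $2(2^{q+1}+1)\tfrac{2^q+1}{2}\,3^q\le 45^q$, with equality at $q=1$) is also what the paper implicitly relies on.
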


\begin{proof}
By Lemma~\ref{lem:pass to 2-cover} and Lemma~\ref{lem:2 cover cesaro} we have
\begin{equation}\label{eq:use 2 cover lemmas}
\gamma_+\left(\A_m(A),d_X^q\right)\stackrel{\eqref{eq:2n}}{\le} 2\gamma\left(\left( \begin{smallmatrix}
  0 & \A_m(A) \\
   \A_m(A) & 0
   \end{smallmatrix} \right ),d_X^q\right)\stackrel{\eqref{eq:commute tenzor}}{\le} 2\left(2^{q+1}+1\right)\gamma\left(\A_m\left( \begin{smallmatrix}
  0 & A \\
   A & 0
   \end{smallmatrix} \right ),d_X^q\right).
\end{equation}
At the same time, an application of Lemma~\ref{lem:decay of gamma in section} and Lemma~\ref{lem:pass to 2-cover} yields the estimate
\begin{multline}\label{eq:use gamma lemma}
\gamma\left(\A_m\left( \begin{smallmatrix}
  0 & A \\
   A & 0
   \end{smallmatrix} \right ),d_X^q\right)\le (3C)^q\max\left\{1,\frac{\gamma\left(\left( \begin{smallmatrix}
  0 & A \\
   A & 0
   \end{smallmatrix} \right ),d_X^q\right)}{m^\e}\right\}\\\stackrel{\eqref{eq:2n}}{\le} (3C)^q\max\left\{1,\frac{2^q+1}{2}\cdot\frac{\gamma_+\left(A,d_X^q\right)}{m^\e}\right\}.
\end{multline}
The desired estimate~\eqref{eq:gamma+decayq} is a consequence of~\eqref{eq:use 2 cover lemmas} and~\eqref{eq:use gamma lemma}.
\end{proof}

\section{An iterative construction of super-expanders}\label{sec:iterative}

Our goal here is to prove the existence of super-expanders as stated in Theorem~\ref{thm:existence intro}, assuming the validity of Lemma~\ref{lem:base in section}, Corollary~\ref{coro:decay for super-reflexive intro} and Theorem~\ref{thm:products intro}. These ingredients will then be proved in the subsequent sections.

In order to elucidate the ensuing construction, we phrase it in the setting of abstract kernels, though readers are encouraged to keep in mind that it will be used in the geometrically meaningful case of super-reflexive Banach spaces.

\begin{lemma}[Initial zigzag iteration]\label{lem:first kernel use} Fix $d,m,t\in \N$ satisfying
\begin{equation}\label{eq:t_0 assumption2}
td^{2(t-1)}\le m,
\end{equation}
and fix a $d$-regular graph $G_0=(V,E)$ with $|V|=m$. Then for every $j\in \N$ there exists a regular graph $F_j^t=(V_j^t,E_j^t)$ of degree $d^2$ and with $|V_j^t|=m^{j}$ such that the following holds true. If $K:X\times X\to [0,\infty)$ is a kernel such that $\gamma_+(G_0,K)<\infty$ then also $\gamma_+(F_j^t,K)<\infty$ for all $j\in \N$. Moreover, suppose that $C,\gamma\in [1,\infty)$ and $\e\in (0,1)$ satisfy
\begin{equation}\label{eq:t_0 assumption1}
t\ge \left(2C\gamma^2\right)^{1/\e},
\end{equation}
and that the kernel $K$ is such that every finite regular graph $G$ satisfies  the nonlinear spectral calculus inequality
\begin{equation}\label{eq:K calculus assumption in lemma}
 \gamma_+(\A_t(G),K)\le C\max\left\{1,\frac{\gamma_+(G,K)}{t^\e}\right\}.
\end{equation}
Suppose furthermore that
\begin{equation}\label{eq:G assumption}
\gamma_+(G_0,K)\le \gamma.
\end{equation}
Then
$$
\sup_{j\in \N} \gamma_+(F_j^t,K)\le 2C\gamma^2.
$$
\end{lemma}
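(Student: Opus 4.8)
The statement is a zigzag iteration built on top of a single base graph $G_0$ with $|V(G_0)|=m$ vertices and degree $d$. The key hypotheses are the arithmetic condition $td^{2(t-1)}\le m$ (so that $\A_t(G_0)$, which is $td^{t-1}$-regular, can be combined via zigzag with a $d$-vertex graph of degree $\le\sqrt{m/(td^{t-2})}\cdot$ something — actually the point is that $\A_t(G_0)$ has exactly $td^{t-1}\le m/d$ vertices' worth of... let me reconsider) — the point is that the Ces\`aro power $\A_t(G_0)$ is a graph on $m$ vertices that is $td^{t-1}$-regular, and $td^{t-1}\le m/d \cdot d = m$... The arithmetic is arranged so that $G_0$ itself (on $d$ vertices... no, $m$ vertices) can serve as the "small graph" in a zigzag step applied to $\A_t$ of the previously constructed graph. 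So I would first define $F_1^t=G_0$ (or a suitable power/completion of it) and inductively set
\[
F_{j+1}^t \eqdef \A_t\!\left(F_j^t\right)\oz G_0,
\]
checking via the RVW vertex/degree bookkeeping that this graph has $m^{j+1}$ vertices and degree $d^2$: indeed $\A_t(F_j^t)$ has $m^j$ vertices and is $td^{2(t-1)}$-regular, and since $td^{2(t-1)}\le m=|V(G_0)|$ we may first edge-complete $G_0$ up to degree... wait, we need $\A_t(F_j^t)$ to be $|V(G_0)|$-regular to zigzag with $G_0$. So the correct reading is: replace $\A_t(F_j^t)$ by its $m$-edge-completion $\c_m(\A_t(F_j^t))$, which is $m$-regular on $m^j$ vertices (this uses $td^{2(t-1)}\le m$, Definition~\ref{def:edge completion}), and then $\c_m(\A_t(F_j^t))\oz G_0$ has $m^j\cdot m=m^{j+1}$ vertices and degree $d^2$. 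This is the construction.

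**Key steps, in order.** (1) Set up the recursion $F_{j+1}^t=\c_m\!\left(\A_t(F_j^t)\right)\oz G_0$ with $F_1^t$ an appropriate starting graph on $m$ vertices of degree $d^2$ (e.g. $\c_{d^2}(G_0^{?})$ or simply handle $j=1$ separately), and verify the vertex count $m^j$ and degree $d^2$ by the RVW bookkeeping together with Definition~\ref{def:edge completion}; this needs $td^{2(t-1)}\le m$, i.e.~\eqref{eq:t_0 assumption2}. (2) Observe that finiteness of $\gamma_+(G_0,K)$ propagates: by Theorem~\ref{thm:sub} (zigzag sub-multiplicativity), Lemma~\ref{lem:edge completion}, and the fact that $\gamma_+(\A_t(G),K)\le t\cdot\gamma_+(G^s,K)$-type trivial bounds (or more simply, $\gamma_+$ of any connected non-bipartite graph is finite by Lemma~\ref{lem:general graph}, and $\A_t$ with a self-loop term is non-bipartite), one gets $\gamma_+(F_j^t,K)<\infty$ for all $j$ by induction. (3) The quantitative heart: prove by induction on $j$ that $\gamma_+(F_j^t,K)\le 2C\gamma^2$. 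For the inductive step, write
\[
\gamma_+\!\left(F_{j+1}^t,K\right)=\gamma_+\!\left(\c_m\!\left(\A_t(F_j^t)\right)\oz G_0,\;K\right)
\stackrel{\eqref{eq:sub}}{\le}\gamma_+\!\left(\c_m\!\left(\A_t(F_j^t)\right),K\right)\cdot\gamma_+(G_0,K)^2.
\]
By Lemma~\ref{lem:edge completion}, $\gamma_+(\c_m(\A_t(F_j^t)),K)\le 2\gamma_+(\A_t(F_j^t),K)$; by the calculus hypothesis~\eqref{eq:K calculus assumption in lemma} applied to $G=F_j^t$, this is $\le 2C\max\{1,\gamma_+(F_j^t,K)/t^\e\}$; by the inductive hypothesis $\gamma_+(F_j^t,K)\le 2C\gamma^2$ and the choice $t\ge(2C\gamma^2)^{1/\e}$ from~\eqref{eq:t_0 assumption1}, the max equals $1$, giving $\gamma_+(\c_m(\A_t(F_j^t)),K)\le 2C$. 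Combined with $\gamma_+(G_0,K)^2\le\gamma^2$ from~\eqref{eq:G assumption} this yields $\gamma_+(F_{j+1}^t,K)\le 2C\gamma^2$, closing the induction. The base case $j=1$ is handled by choosing $F_1^t$ with $\gamma_+(F_1^t,K)\le 2C\gamma^2$ directly (e.g.~$\gamma_+(G_0,K)\le\gamma\le 2C\gamma^2$ since $C\ge 1$, $\gamma\ge 1$, possibly after an edge-completion costing a factor $2$, so $\gamma_+(F_1^t,K)\le 2\gamma\le 2C\gamma^2$).

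**Main obstacle.** The genuinely delicate points are not the inequality chaining but the bookkeeping: making sure the edge-completion (needed so that $\A_t(F_j^t)$ has exactly $|V(G_0)|=m$ as its regularity, matching the number of vertices of $G_0$ required by the zigzag product) is legitimate, i.e.~that $td^{2(t-1)}\le m$ so that $\c_m$ only \emph{adds} edges (Definition~\ref{def:edge completion} needs $D\ge d$), and that after this the zigzag degree is $d^2$ (the degree of $G_0$, squared in the zigzag). One must also double-check that~\eqref{eq:K calculus assumption in lemma} is stated for \emph{every} finite regular graph $G$, so it legitimately applies to each $F_j^t$. A secondary subtlety is the $j=1$ base case, which should be arranged so that no extra constant creeps in beyond $2C\gamma^2$; since $C,\gamma\ge 1$ there is ample slack. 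None of these is deep, but the factor-tracking (the lone factor $2$ from edge completion being exactly what forces the hypothesis $t\ge(2C\gamma^2)^{1/\e}$ rather than $t\ge(C\gamma^2)^{1/\e}$) is where care is required.
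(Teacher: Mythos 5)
Your proposal is correct and reproduces the paper's argument essentially verbatim: the same recursion $F_{j+1}^t=\c_m(\A_t(F_j^t))\oz G_0$ (with $F_1^t=\c_{d^2}(G_0)$), the same use of~\eqref{eq:t_0 assumption2} to legitimize the edge completion, and the same chaining of~\eqref{eq:sub}, \eqref{eq:edge completition}, \eqref{eq:K calculus assumption in lemma}, \eqref{eq:G assumption} and~\eqref{eq:t_0 assumption1} to close the induction $\gamma_+(F_j^t,K)\le 2C\gamma^2$. The factor-tracking you flag as the delicate point (the single factor of $2$ from edge completion forcing $t\ge(2C\gamma^2)^{1/\e}$) is indeed exactly where the paper spends its care.
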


\begin{proof} Set $F_1^t\eqdef \c_{d^2}(G_0)$, where we recall the definition of the edge completion operation as discussed in Section~\ref{sec:edge completion}. Thus $F_1^t$ has $m$ vertices and degree $d^2$. Assume inductively that we defined $F_j^t$ to be a regular graph with $m^j$ vertices and degree $d^2$. Then the Ces\`aro average $\A_t(F_j^t)$ has $m^j$ vertices and degree $td^{2(t-1)}$ (recall the discussion preceding~\eqref{eq:def cesaro notation}). It follows from~\eqref{eq:t_0 assumption2} that the degree of $\A_t(F_j^t)$ is at most $m$, so we can form the edge completion $\c_m(\A_t(F_j^t))$, which has degree $m$, and we can therefore form the zigzag product
\begin{equation}\label{eq:def Fj+1t}
F_{j+1}^t\eqdef \left(\c_m(\A_t(F_j^t))\right)\oz G_0.
\end{equation}
Thus $F^t_{j+1}$ has $m^{j+1}$ vertices and degree $d^2$, completing the inductive construction. Using Theorem~\ref{thm:sub} and Lemma~\ref{lem:edge completion}, it follows inductively that if $K:X\times X\to [0,\infty)$ is a kernel such that $\gamma_+(G_0,K)<\infty$ then also $\gamma_+(F_j^t,K)<\infty$ for all $j\in \N$.

Assuming the validity of~\eqref{eq:G assumption}, by Lemma~\ref{lem:edge completion} we have
$$
\gamma_+(F^t_1,K)=\gamma_+\left(\c_{d^2}(G_0),K\right)\stackrel{\eqref{eq:edge completition}}{\le} 2\gamma_{+}(G_0,K)\stackrel{\eqref{eq:G assumption}}{\le }2\gamma.
$$
We claim that for every $j\in \N$,
\begin{equation}\label{eq:inductive assumption Fj}
\gamma_+(F_j^t,K)\le 2C\gamma^2.
\end{equation}
 Assuming the validity of~\eqref{eq:inductive assumption Fj} for some $j\in \N$, by Theorem~\ref{thm:sub} we have
\begin{multline*}
\gamma_+(F_{j+1}^t,K)\stackrel{\eqref{eq:sub}\wedge \eqref{eq:def Fj+1t}}{\le} \gamma_+\left(\c_m(\A_t(F_j^t))\right)\gamma_+(G_0,K)^2\stackrel{\eqref{eq:edge completition}\wedge\eqref{eq:G assumption}}{\le} 2\gamma_+\left(\A_t(F_j^t),K\right)\gamma^2\\
\stackrel{\eqref{eq:K calculus assumption in lemma}}{\le} 2C\gamma^2\max\left\{1,\frac{\gamma_+(F_j^t,K)}{t^\e}\right\}\stackrel{\eqref{eq:inductive assumption Fj}}{\le}
2C\gamma^2\max\left\{1,\frac{2C\gamma^2}{t^\e}\right\}\stackrel{\eqref{eq:t_0 assumption1}}{\le} 2C\gamma^2.\tag*{\qedhere}
\end{multline*}
\end{proof}

\begin{corollary}[Intermediate construction for super-reflexive Banach spaces]\label{cor:apply gen zigzag to base graphs}
For every $k\in \N$ there exist regular graphs $\{F_j(k)\}_{j=1}^\infty$ and integers $\{d_k\}_{k=1}^\infty, \{n_j(k)\}_{j,k\in \N}\subseteq \N$, where $\{n_j(k)\}_{j=1}^\infty$ is a strictly increasing sequence, such that $F_j(k)$ has degree $d_k$ and $n_j(k)$ vertices, and the following condition holds true. For every super-reflexive Banach space $(X,\|\cdot\|_X)$,
$$
\forall\, j,k\in \N,\quad \gamma_+\left(F_j(k),\|\cdot\|_X^2\right)<\infty,
$$
and moreover there exists $k(X)\in \N$ such that
$$
\sup_{\substack{j,k\in \N\\ k\ge k(X)}} \gamma_+\left(F_j(k),\|\cdot\|_X^2\right)\le k(X).
$$
\end{corollary}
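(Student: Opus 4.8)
The plan is to apply Lemma~\ref{lem:first kernel use} with carefully chosen parameters, using the base graphs from Lemma~\ref{lem:base in section} and the nonlinear spectral calculus of Corollary~\ref{coro:decay for super-reflexive intro}. The indexing parameter $k$ in the statement will ultimately play the role of (roughly) the index $n$ in Lemma~\ref{lem:base in section}, so that $F_j(k)$ is the $j$-th graph in a zigzag iteration built on top of the $k$-th base graph $H_k(\d)$ for a suitable $\d$. The subtlety is that Corollary~\ref{coro:decay for super-reflexive intro} provides constants $C(X),\e(X)$ that depend on the super-reflexive space $X$, and the nonlinear calculus inequality~\eqref{eq:K calculus assumption in lemma} that feeds Lemma~\ref{lem:first kernel use} is phrased for a \emph{fixed} averaging length $t$; so we must make sure the construction is a single family of graphs that works for all $X$ simultaneously, with the $X$-dependence absorbed only into the choice of which index $k\ge k(X)$ to start from and into the final bound.

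Here is the order of the steps. First, fix the sequence $\{m_n\}$ and, for each $\d\in(0,1]$, the base graphs $\{H_n(\d)\}_{n\ge n_0(\d)}$ supplied by Lemma~\ref{lem:base in section}; recall these have degree $d_n(\d)\le e^{(\log m_n)^{1-\d}}$ and satisfy $2^{n/10}\le m_n\le 2^n$. For each $k$ we want to set up a zigzag iteration with $G_0=H_k(\d_k)$ for an appropriate $\d_k$. To invoke Lemma~\ref{lem:first kernel use} we need $t d^{2(t-1)}\le m$ where $d=d_k(\d_k)$ and $m=m_k$, i.e.\ roughly $t\log d\lesssim \log m_k$; since $\log d_k(\d)\le (\log m_k)^{1-\d}$, for any fixed $\d>0$ and $k$ large we can take $t=t_k$ as large as (say) $(\log m_k)^{\d/2}\to\infty$ while still satisfying~\eqref{eq:t_0 assumption2}. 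The second step is to feed in the calculus inequality. Given a super-reflexive $X$ with constants $C=C(X)$, $\e=\e(X)$ from Corollary~\ref{coro:decay for super-reflexive intro}, and writing $\gamma=\gamma(X)$ for the uniform bound on $\gamma_+(H_k(\d),\|\cdot\|_X^2)$ from~\eqref{eq:9^3} (valid once $\d\le\d_0(X)$ and $k\ge n_0(\d)$), hypothesis~\eqref{eq:t_0 assumption1} requires $t\ge(2C\gamma^2)^{1/\e}$; since $t_k\to\infty$ this holds for all $k$ past some threshold $k(X)$. Then Lemma~\ref{lem:first kernel use} gives $\sup_{j}\gamma_+(F_j^{t_k},\|\cdot\|_X^2)\le 2C\gamma^2$, which is a finite constant depending only on $X$; after possibly enlarging $k(X)$ we may assume $2C(X)\gamma(X)^2\le k(X)$, matching the statement. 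Finiteness of $\gamma_+(F_j(k),\|\cdot\|_X^2)$ for all $j,k$ (not just large $k$) follows from the last bullet of Lemma~\ref{lem:base in section}, which gives $\gamma_+(H_k(\d),\|\cdot\|_X^2)<\infty$ for $K$-convex—hence super-reflexive—$X$, combined with the ``$\gamma_+$ finiteness propagates through the iteration'' clause of Lemma~\ref{lem:first kernel use} (itself a consequence of Theorem~\ref{thm:sub} and Lemma~\ref{lem:edge completion}).

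The remaining bookkeeping is to define the data cleanly: choose a single sequence $\d_k\downarrow 0$ slowly (e.g.\ $\d_k=1/\log\log k$), set $t_k$ to be the largest integer with $t_k d_k(\d_k)^{2(t_k-1)}\le m_k$ (this $\to\infty$ by the degree bound, and one checks $t_k$ is eventually at least any fixed power of a slowly growing function), put $G_0^{(k)}=H_k(\d_k)$ whenever $k\ge n_0(\d_k)$ (and for the finitely many small $k$ below that threshold, take an arbitrary connected regular graph, which only affects finitely many entries and still has finite $\gamma_+$ by Lemma~\ref{lem:general graph}), and let $F_j(k)\eqdef F_j^{t_k}$ be the graph produced by Lemma~\ref{lem:first kernel use} applied to $G_0^{(k)}$. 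Then $F_j(k)$ has degree $d_k\eqdef d^2$ where $d=\deg(G_0^{(k)})$, and $n_j(k)=m_k^{\,j}$, which is strictly increasing in $j$; so all structural requirements of the statement are met.

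\medskip
\noindent\textbf{Main obstacle.} The genuinely delicate point is the \emph{uniformity over all super-reflexive $X$} with a construction that does not know $X$ in advance: because $\e(X)$ can be arbitrarily close to $0$ and $C(X),\gamma(X)$ arbitrarily large, the threshold $(2C(X)\gamma(X)^2)^{1/\e(X)}$ is unbounded, so there is no single finite $t$ that works for every $X$. The fix—and the reason the statement is phrased with a parameter $k$ rather than a single graph sequence—is to let $t_k\to\infty$ along the family, so that for each $X$ \emph{all sufficiently large} $k$ have $t_k$ past the $X$-dependent threshold; this is exactly what forces the appearance of $k(X)$ in the conclusion. Making $t_k\to\infty$ while respecting the constraint $t_k d_k^{2(t_k-1)}\le m_k$ is precisely where the strong degree bound~\eqref{eq:dndelta bound in section} of Lemma~\ref{lem:base in section} (degree sub-polynomial in the number of vertices, with an exponent $1-\d$ one can push toward $1$) is used in an essential way; the weaker bound~\eqref{eq:weaker degree bound comment} would in fact still suffice here, but one does need $\log d_k(\d)=o(\log m_k)$.
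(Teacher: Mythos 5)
Your proposal follows the same overall strategy as the paper: iterate Lemma~\ref{lem:first kernel use} on base graphs from Lemma~\ref{lem:base in section} with a parameter $\d$ shrinking to $0$ and an averaging length $t$ growing to $\infty$, so that for every super-reflexive $X$ the threshold $(2C(X)\gamma^2)^{1/\e(X)}$ is eventually exceeded. You have also correctly identified the heart of the matter: the uniformity issue, which forces the parameter $k$ and the quantity $k(X)$ into the statement.

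However, there is a concrete gap in your bookkeeping. You set $G_0^{(k)}=H_k(\d_k)$ with $\d_k=1/\log\log k$ and dismiss the cases $k<n_0(\d_k)$ as ``finitely many small $k$.'' This is not justified: Lemma~\ref{lem:base in section} gives no control over how fast $n_0(\d)$ grows as $\d\to 0$. If $n_0(1/\log\log k)$ grows faster than $k$, your requirement $k\ge n_0(\d_k)$ fails for \emph{all} large $k$, not finitely many, and the construction collapses. More generally, any \emph{a priori} choice of $\d_k\to 0$ that does not reference $n_0(\cdot)$ runs this risk. There is an additional consequence: the finiteness clause $\gamma_+(F_j(k),\|\cdot\|_X^2)<\infty$ for \emph{all} $j,k$ also relies on having chosen a legitimate base graph for each $k$, so ``arbitrary connected regular graph'' does salvage finiteness but not the rest if infinitely many $k$ are bad.

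The paper avoids this by keeping the parameter $k$ and the base-graph index independent: it sets $\d=1/k$, $t=k$, and then \emph{chooses} an auxiliary integer $n(k)\ge n_0(1/k)$ large enough to satisfy the constraint $k\,e^{2(k-1)(\log m_{n(k)})^{1-1/k}}\le m_{n(k)}$, so that Lemma~\ref{lem:first kernel use} applies with $G_0=H_{n(k)}(1/k)$, $m=m_{n(k)}$. Since $n(k)$ can always be taken as large as desired and the degree bound~\eqref{eq:dndelta bound in section} makes the constraint satisfiable, no assumption about the growth of $n_0(\cdot)$ is needed. Your idea of instead fixing the base index and maximizing $t_k$ subject to the constraint also works in principle, but only if you first choose $\d_k\to 0$ \emph{adaptively} — e.g.\ setting $\d_k=1/m$ on the block $k\in[n_0(1/m),n_0(1/(m+1)))$ — to guarantee $k\ge n_0(\d_k)$; a hard-coded rate such as $1/\log\log k$ does not suffice without further information on $n_0$.
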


\begin{proof}
We shall use here the notation of Lemma~\ref{lem:base in section}. For every $k\in \N$ choose an integer $n(k)\ge n_0(1/k)$ (recall that $n_0(1/k)$ was introduced in Lemma~\ref{lem:base in section}) such that
\begin{equation}\label{eq:n(k)}
ke^{2(k-1)\left(\log m_{n(k)}\right)^{1-\frac{1}{k}}}\le m_{n(k)}.
\end{equation}
By~\eqref{eq:dndelta bound in section}, it follows from~\eqref{eq:n(k)} that $d_{n(k)}(1/k)$, i.e., the degree of the graph $H_{n(k)}(1/k)$, satisfies
$$
kd_{n(k)}^{2(k-1)}\le m_{n(k)}=|V(H_{n(k)}(1/k))|,
$$
where here, and in what follows, $V(G)$ denotes the set of vertices of a graph $G$.  We can therefore apply Lemma~\ref{lem:first kernel use} with the parameters $t=k$, $d=d_{n(k)}(1/k)$, $m=m_{n(k)}$ and $G_0=H_{n(k)}(1/k)$. Letting $\{F_j(k)\}_{j=1}^\infty$ denote the resulting sequence of graphs, we define $$
d_k\eqdef \left(d_{n(k)}(1/k)\right)^2\quad \mathrm{and}\quad  n_j(k)\eqdef \left(m_{n(k)}\right)^j.$$

If $(X,\|\cdot\|_X)$ is a super-reflexive Banach space then it is in particular $K$-convex (see~\cite{Pisier-K-convex}). Recalling the parameter $\d_0(X)$ of Lemma~\ref{lem:base in section}, we have
\begin{equation*}\label{eq:first k condition}
k\ge \frac{1}{\d_0(X)}\implies \gamma_+\left(H_{n(k)}(1/k),\|\cdot\|_X^2\right)\le 9^3.
\end{equation*}
It also follows from Corollary~\ref{coro:decay for super-reflexive intro} that there exists $C(X)\in [1,\infty)$ and $\e(X)\in (0,1)$ for which every finite regular graph $G$ satisfies
\begin{equation}\label{eq:graph version of decay in section}
\forall\, t\in \N,\quad \gamma_+\left(\A_t(G),\|\cdot\|_X^2\right)\le C(X)\max\left\{1,\frac{\gamma_+\left(G,\|\cdot\|_X^2\right)}{t^{\e(X)}}\right\}.
\end{equation}
We may therefore apply Lemma~\ref{lem:first kernel use} with $C=C(X)$, $\e=\e(X)$ and $\gamma=9^3$ to deduce that if we define
$$
k(X)\eqdef \left\lceil \max\left\{\frac{1}{\d_0(X)},\left(2C(X)\cdot9^3\right)^{1/\e(X)}, 2C(X)\cdot9^6\right\} \right\rceil,
$$
then for every $j\in \N$,
\begin{equation*}
k\ge k(X)\implies \sup_{j\in \N}\gamma_+\left(F_j(k),\|\cdot\|_X^2\right)\le 2C(X)\cdot9^6\le k(X).\tag*{\qedhere}
\end{equation*}
\end{proof}

Corollary~\ref{cor:apply gen zigzag to base graphs} provides a sequence of expanders with respect to a {\em fixed} super-reflexive Banach space $(X,\|\cdot\|_X)$, but since the sequence of degrees $\{d_k\}_{k=1}^\infty$ may be unbounded (this is indeed the case in our construction), we still do not have one sequence of bounded degree regular graphs that are expanders with respect to {\em every} super-reflexive Banach space. This is achieved in the following crucial lemma.

\begin{lemma}[Main zigzag iteration]\label{lem:zigzag main iteration}
Let $\{d_k\}_{k=1}^\infty$ be a sequence of integers and for each $k\in \N$ let $\{n_j(k)\}_{j=1}^\infty$ be a strictly increasing sequence of integers. For every $j,k\in \N$ let $F_j(k)$ be a regular graph of degree $d_k$ with $n_j(k)$ vertices. Suppose that $\K$ is a family of kernels such that
\begin{equation}\label{eq:K finiteness assumption}
\forall\, K\in \K,\ \forall\, j,k\in \N,\quad \gamma_+(F_j(k),K)<\infty.
\end{equation}
Suppose also that the following two conditions hold true.
\begin{itemize}
\item For every $K\in \K$ there exists $k_1(K)\in \N$ such that
\begin{equation}\label{eq:k1}
\sup_{\substack{j,k\in \N\\k\ge k_1(K)}}\gamma_+(F_j(k),K)\le k_1(K).
\end{equation}
\item For every $K\in \K$ there exists $k_2(K)\in \N$ such that every regular graph $G$ satisfies the following spectral calculus inequality.
\begin{equation}\label{eq:k2}
\forall\, t\in \N,\quad \gamma_+\left(\A_t(G),K\right)\le k_2(K)\max\left\{1,\frac{\gamma_+(G,K)}{t^{1/k_2(K)}}\right\}.
\end{equation}
\end{itemize}
Then there exists $d\in \N$ and a sequence of $d$-regular graphs $\{H_i\}_{i=1}^\infty$ with
$$
\lim_{i\to \infty} |V(H_i)|=\infty
$$
and
\begin{equation}\label{eq:good for all K}
\forall\, K\in \K,\quad \sup_{j\in \N}\gamma_+(H_j,K)<\infty.
\end{equation}
\end{lemma}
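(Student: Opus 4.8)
The plan is to realize each $H_i$ as the bottom of a ``descending chain'' of zigzag products that passes successively through the families $\{F_j(k)\}_j$ for $k$ ranging down from a level that tends to $\infty$ with $i$, arriving at the \emph{fixed} degree $d:=d_1^2$ no matter how tall the chain is. The mechanism is that a zigzag product $G_1\oz G_2$ has degree $(\deg G_2)^2$, so finishing the chain with a zigzag against a member of $\{F_j(1)\}_j$ (which is $d_1$-regular) forces the output to be $d_1^2$-regular. Concretely, fix a sequence $t_\ell\to\infty$ with $t_\ell\ge 2$ (e.g. $t_\ell:=\ell+1$), set $k_i:=i+1$, let $G^{(i)}_{k_i}:=F_i(k_i)$, and for $\ell=k_i-1,k_i-2,\ldots,1$ define
\[
G^{(i)}_\ell\ :=\ \c_{\,n_{j_{i,\ell}}(\ell)}\!\left(\A_{t_\ell}\!\left(G^{(i)}_{\ell+1}\right)\right)\ \oz\ F_{j_{i,\ell}}(\ell),
\]
where $j_{i,\ell}$ is the least index with $n_{j_{i,\ell}}(\ell)\ge t_\ell\cdot\bigl(\deg G^{(i)}_{\ell+1}\bigr)^{t_\ell-1}$; since $\A_{t_\ell}(G^{(i)}_{\ell+1})$ is $t_\ell(\deg G^{(i)}_{\ell+1})^{t_\ell-1}$-regular, this makes the edge completion legitimate and gives it exactly $|V(F_{j_{i,\ell}}(\ell))|$ as its degree, so the zigzag product is defined. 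Put $H_i:=G^{(i)}_1$. Then every $H_i$ is $d$-regular; and since $|V(G^{(i)}_\ell)|=n_{j_{i,\ell}}(\ell)\,|V(G^{(i)}_{\ell+1})|$ and $|V(G^{(i)}_{k_i})|=n_i(k_i)\ge i$, we have $|V(H_i)|\ge i\to\infty$. Finally, $\gamma_+(H_i,K)<\infty$ for every $K\in\K$ by inducting along the chain, using Theorem~\ref{thm:sub}, Lemma~\ref{lem:edge completion}, the hypothesis~\eqref{eq:k2} (which in particular gives $\gamma_+(\A_{t_\ell}(G),K)<\infty$ whenever $\gamma_+(G,K)<\infty$), and the finiteness hypothesis~\eqref{eq:K finiteness assumption}.

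The reason for threading through \emph{all} levels down to $1$ is a bookkeeping point I would isolate first: for every $\ell<k_i$ the graph $G^{(i)}_\ell$ is a zigzag against a $d_\ell$-regular graph, hence $\deg G^{(i)}_\ell=d_\ell^2$, a value that does not depend on $i$. Consequently, for any \emph{fixed} level $\ell$ and all $i$ large enough that $k_i>\ell+1$, the threshold $t_\ell(\deg G^{(i)}_{\ell+1})^{t_\ell-1}=t_\ell\,d_{\ell+1}^{2(t_\ell-1)}$ is independent of $i$, so $j_{i,\ell}$ stabilizes to a constant $j_\ell$ and the graph $F_{j_{i,\ell}}(\ell)=F_{j_\ell}(\ell)$ inserted into the chain at level $\ell$ is a single fixed graph. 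This is what lets us control the finitely many low levels at which no uniform Poincar\'e bound is available.

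For the estimate, fix $K\in\K$ and write $k_1:=k_1(K)$, $k_2:=k_2(K)$. Combining the zigzag bound~\eqref{eq:sub}, the edge-completion bound~\eqref{eq:edge completition}, and~\eqref{eq:k2} applied to $G^{(i)}_{\ell+1}$ gives, at every level of the chain,
\[
\gamma_+\!\left(G^{(i)}_\ell,K\right)\ \le\ 2k_2\,\max\!\left\{1,\ \frac{\gamma_+\!\left(G^{(i)}_{\ell+1},K\right)}{t_\ell^{1/k_2}}\right\}\cdot\gamma_+\!\left(F_{j_{i,\ell}}(\ell),K\right)^{2}.
\]
Choose $\ell_0=\ell_0(K)\ge k_1$ so large that $t_\ell^{1/k_2}\ge 2k_2k_1^2$ for all $\ell\ge\ell_0$ (possible since $t_\ell\to\infty$), and restrict to $i$ large enough that $k_i>\ell_0$. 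At the top, $\gamma_+(G^{(i)}_{k_i},K)=\gamma_+(F_i(k_i),K)\le k_1\le 2k_2k_1^2$ by~\eqref{eq:k1}; then a downward induction on $\ell$ from $k_i-1$ to $\ell_0$, using~\eqref{eq:k1} (so that $\gamma_+(F_{j_{i,\ell}}(\ell),K)\le k_1$ for $\ell\ge k_1$) and the choice of $\ell_0$ (so that the maximum above equals $1$), yields the self-improving bound $\gamma_+(G^{(i)}_\ell,K)\le 2k_2k_1^2$ for every $\ell\in\{\ell_0,\ldots,k_i\}$ — this is exactly the RVW-type iteration with Ces\`aro averaging playing the role of ``raising to a power''. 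Finally one runs through the fixed finite set of low levels $\ell=\ell_0-1,\ldots,1$, where $F_{j_{i,\ell}}(\ell)=F_{j_\ell}(\ell)$ is the stabilized graph; each such step multiplies $\max\{1,\gamma_+(\,\cdot\,,K)\}$ by at most the constant $2k_2\max\{1,\gamma_+(F_{j_\ell}(\ell),K)^2\}$, which is finite by~\eqref{eq:K finiteness assumption}. Hence $\gamma_+(H_i,K)\le 2k_2k_1^2\prod_{\ell=1}^{\ell_0-1}2k_2\max\{1,\gamma_+(F_{j_\ell}(\ell),K)^2\}=:B(K)<\infty$ for all large $i$, and since $\gamma_+(H_i,K)<\infty$ for the remaining finitely many $i$ (first paragraph), $\sup_i\gamma_+(H_i,K)<\infty$, which is~\eqref{eq:good for all K}.

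The hard part, and the reason the construction is intricate, is the structural tension that the levels $k\ge k_1(K)$ carrying a uniform Poincar\'e bound have unbounded degrees $d_k$, so one is forced to cut the degree back down by zigzagging against \emph{lower} levels, where no uniform bound is available. The descending-chain design resolves this precisely because, for a fixed target $K$, only the finitely many levels below $\ell_0(K)$ are problematic, each is entered with one fixed graph of finite (if uncontrolled) Poincar\'e constant, and the Ces\`aro step at every level restores the Poincar\'e constant to the common value $2k_2(K)k_1(K)^2$ at all but finitely many levels; the only thing to double-check is that the parameters $t_\ell$, chosen once and for all with no knowledge of $K$, need merely be forced to grow, so that a valid threshold $\ell_0(K)$ always exists.
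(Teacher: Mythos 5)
Your proposal is correct and follows essentially the same strategy as the paper's proof: build, for each $i$, a descending chain of zigzag products interleaved with Ces\`aro averages, terminating at level~$1$ so that the bottom graph has degree controlled by~$d_1$ alone; choose the Ces\`aro parameters once and for all to grow with the level, so that for any fixed $K\in\K$ the RVW-type self-improvement kicks in beyond a $K$-dependent threshold; and bound the contribution of the finitely many remaining low levels by the finiteness hypothesis. The one genuine (but inessential) difference is your bookkeeping: you descend through \emph{every} level $k_i,k_i-1,\ldots,1$, whereas the paper descends greedily (each $h_i(k)$ is the smallest level whose graph can absorb the current degree, so levels may be skipped and the chain length is at most $k$); both variants are valid, yours being somewhat more direct while the paper's produces a shorter chain, and you also place the Ces\`aro average before the zigzag at each step rather than after, which is immaterial.
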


\begin{proof}
In what follows, for every $k\in \N$ it will be convenient to introduce the notation
\begin{equation}\label{eq:def Mk}
M_k\eqdef \left(2k^3\right)^k.
\end{equation}
With this, define
\begin{equation}\label{eq:def j(k)}
j(k)\eqdef \min\left\{j\in \N:\ n_j(k)>2d_1^2+M_{k+1}d_{k+1}^{2(M_{k+1}-1)}\right\},
\end{equation}
and
\begin{equation}\label{eq:defW_k}
W_k\eqdef F_{j(k)}(k).
\end{equation}

We will next define for every $k\in \N$ an integer $\ell(k)\in \N\cup\{0\}$ and a sequence of regular graphs $W_k^0,W_k^1,\ldots,W_k^{\ell(k)}$, along with an auxiliary integer sequence $\{h_i(k)\}_{i=0}^{\ell(k)}\subseteq \N$. Set
\begin{equation}\label{eq:start W h}
W_k^0\eqdef W_k\quad\mathrm{and}\quad h_0(k)\eqdef k.
 \end{equation}
 Define $\ell(1)=0$.
 For every integer $k>1$ set
\begin{equation}\label{eq:def h1}
h_1(k)\eqdef \min\left\{h\in \N:\ n_{j(h)}(h)\ge d_{h_0(k)}\right\}.
\end{equation}
Observe that necessarily $h_1(k)<h_0(k)=k$. Indeed, if $h_1(k)\ge k$ then
$$
d_k\stackrel{\eqref{eq:def h1}}{>} n_{j(k-1)}(k-1)\stackrel{\eqref{eq:def j(k)}}{>} M_kd_{k}^{2\left(M_k-1\right)}\stackrel{\eqref{eq:def Mk}}{\ge} d_k,
$$
a contradiction. By the definition of $h_1(k)$ we know that $n_{j(h_1(k))}(h_1(k))\ge d_{h_0(k)}$, so we may form the edge completion $\c_{n_{j(h_1(k))}(h_1(k))}\left(W_k^0\right)$. Since the number of vertices of $W_{h_1(k)}$ is $n_{j(h_1(k))}(h_1(k))$, which is the same as the degree of $\c_{n_{j(h_1(k))}(h_1(k))}\left(W_k^0\right)$, we can define
$$
W_k^1\eqdef \A_{M_{h_1(k)}}\left(\c_{n_{j(h_1(k))}(h_1(k))}\left(W_k^0\right)\oz W_{h_1(k)}\right).
$$
The degree of $W_k^1$ equals $$M_{h_1(k)}d_{h_1(k)}^{2\left(M_{h_1(k)}-1\right)}.$$

Assume inductively that $k,i>1$ and we have already defined the graph $W_k^{i-1}$ and the integer $h_{i-1}(k)$, such that the degree of $W_k^{i-1}$ equals \begin{equation}\label{eq:degree at i-1}
M_{h_{i-1}(k)}d_{h_{i-1}(k)}^{2\left(M_{h_{i-1}(k)}-1\right)}.
\end{equation}

If $h_{i-1}(k)=1$ then conclude the construction, setting $\ell(k)=i-1$.
If $h_{i-1}(k)>1$ then we proceed by defining
\begin{equation}\label{eq:def hi}
h_i(k)\eqdef \min\left\{h\in \N:\ n_{j(h)}(h)\ge M_{h_{i-1}(k)}d_{h_{i-1}(k)}^{2\left(M_{h_{i-1}(k)}-1\right)}\right\}.
\end{equation}
Observe that
\begin{equation}\label{eq:monotonicity h}
h_i(k)<h_{i-1}(k).
\end{equation}
Indeed, if $h_i(k)\ge h_{i-1}(k)$ then
$$
M_{h_{i-1}(k)}d_{h_{i-1}(k)}^{2\left(M_{h_{i-1}(k)}-1\right)}\stackrel{\eqref{eq:def hi}}{>} n_{j(h_{i-1}(k)-1)}(h_{i-1}(k)-1)\stackrel{\eqref{eq:def j(k)}}{>} 2d_1^2+M_{h_{i-1}(k)}d_{h_{i-1}(k)}^{2(M_{h_{i-1}(k)}-1)},
$$
a contradiction. Since the degree of $W_k^{i-1}$ is given in~\eqref{eq:degree at i-1}, which by~\eqref{eq:def hi} is at most  $n_{j(h_i(k))}(h_i(k))$, we may form the edge completion $\c_{n_{j(h_i(k))}(h_i(k))}\left(W_k^{i-1}\right)$. The degree of the resulting graph is $n_{j(h_i(k))}(h_i(k))$, which, by~\eqref{eq:defW_k}, equals the number of vertices of $W_{h_i(k)}$. We can therefore define
\begin{equation}\label{eq:main W recursion}
W_k^i\eqdef \A_{M_{h_i(k)}}\left(\c_{n_{j(h_i(k))}(h_i(k))}\left(W_k^{i-1}\right)\oz W_{h_i(k)}\right).
\end{equation}
The degree of $W_k^i$ equals $M_{h_i(k)}d_{h_i(k)}^{2\left(M_{h_i(k)}-1\right)}$, thus completing the inductive step.

Due to~\eqref{eq:monotonicity h} the above procedure must eventually terminate, and by definition $h_{\ell(k)}(k)=1$. Since $h_0(k)=k$, it follows that
\begin{equation}\label{eq:ellk trivial bound}
\forall\,k\in \N,\quad \ell(k)\le k.
\end{equation}
We define
$$
H_k\eqdef W_k^{\ell(k)}.
$$
The degree of $H_k$ equals $d\eqdef 2d_1^2$ for all $k\in \N$. Also, by construction we have
$$
|V(H_k)|=\left|V\left(W_k^{\ell(k)}\right)\right|\ge \left|V\left(W_k^{\ell(k)-1}\right)\right|\ge \ldots\ge  \left|V\left(W_k^0\right)\right|\stackrel{\eqref{eq:start W h}\wedge\eqref{eq:defW_k}}{=} n_{j(k)}(k)\stackrel{\eqref{eq:def j(k)}}{\ge} M_{k+1}.
$$
Thus $\lim_{k\to \infty} |V(H_k)|=\infty$. It remains to prove that for every kernel $K\in \K$ we have
\begin{equation}\label{eq:goal Hk good}
\sup_{k\in \N}\gamma_+(H_k,K)<\infty.
\end{equation}

To prove~\eqref{eq:goal Hk good} we start with the following crucial estimate, which holds for every $k\in \N$ and $i\in \{1,\ldots,\ell(k)\}$.
\begin{multline}\label{eq:for quote upper W}
\gamma_+\left(W_k^i,K\right)\stackrel{\eqref{eq:k2}\wedge\eqref{eq:main W recursion}}{\le} k_2(K)\max\left\{1,\frac{\gamma_+\left(\c_{n_{j(h_i(k))}(h_i(k))}\left(W_k^{i-1}\right)\oz W_{h_i(k)},K\right)}{M_{h_i(k)}^{1/k_2(K)}}\right\}\\
\stackrel{\eqref{eq:sub}\wedge\eqref{eq:edge completition}\wedge\eqref{eq:defW_k}}{\le} k_2(K)\max\left\{1,\frac{2\gamma_+\left(W_k^{i-1},K\right)\gamma_+\left(F_{j(h_i(k))}(h_i(k)),K\right)^2}{M_{h_i(k)}^{1/k_2(K)}}\right\}.
\end{multline}
In particular, it follows from~\eqref{eq:for quote upper W} that the following crude estimate holds true.
\begin{equation}\label{eq:crude use of calculus}
\gamma_+\left(W_k^i,K\right)\le
2k_2(K)\gamma_+\left(W_k^{i-1},K\right)\gamma_+\left(F_{j(h_i(k))}(h_i(k)),K\right)^2.
\end{equation}
A recursive application of~\eqref{eq:crude use of calculus} yields the estimate
\begin{equation*}
\gamma_+(H_k,K)=\gamma_+\left(W_k^{\ell(k)},K\right)\le(2k_2(K))^{\ell(k)}\gamma_+\left(W_k^0,K\right)\prod_{i=1}^{\ell(k)} \gamma_+\left(F_{j(h_i(k))}(h_i(k)),K\right)^2.
\end{equation*}
Due to the finiteness assumption~\eqref{eq:K finiteness assumption}, it follows that
\begin{equation}\label{eq:Hk finite}
\forall\, k\in \N,\quad \gamma_+(H_k,K)<\infty.
\end{equation}

In order to prove~\eqref{eq:goal Hk good} we will need to apply~\eqref{eq:for quote upper W} more carefully. To this end set
\begin{equation}\label{eq:def k3}
k_3(K)\eqdef \max\left\{k_1(K),k_2(K)\right\},
\end{equation}
and fix $k>k_3(K)$. We will now prove by induction on $i\in \{0,\ldots,\ell(k)\}$ that
\begin{equation}\label{eq:induction, ig hi big}
h_i(k)>k_3(K)\implies \gamma_+\left(W_k^i,K\right)\le k_3(K).
\end{equation}
If $i=0$ then $h_0(k)=k>k_3(K)\ge k_1(K)$, so by our assumption~\eqref{eq:k1},
$$
\gamma_+\left(W_k^0,K\right)\stackrel{\eqref{eq:start W h}\wedge\eqref{eq:defW_k}}{=}\gamma_+\left(F_{j(k)}(k),K\right)\stackrel{\eqref{eq:k1}}{\le} k_1(K)\le k_3(K).
$$
Assume inductively that $i\in \{1,\ldots,\ell(k)\}$ satisfies
\begin{equation}\label{eq:hi big inductive assumption}
h_i(k)>k_3(K).
\end{equation}
 By~\eqref{eq:monotonicity h} and the inductive hypothesis we therefore have
 \begin{equation}\label{eq:use inductive k3}
 \gamma_+\left(W_k^{i-1},K\right)\le k_3(K).
 \end{equation}
Hence,
\begin{multline*}
\gamma_+\left(W_k^i,K\right)
\stackrel{\eqref{eq:for quote upper W}\wedge\eqref{eq:hi big inductive assumption}\wedge\eqref{eq:k1}\wedge \eqref{eq:use inductive k3}}{\le} k_2(K)\max\left\{1,\frac{2k_3(K)k_1(K)^2}{M_{h_i(k)}^{1/k_2(K)}}\right\}\nonumber\\
\stackrel{\eqref{eq:def k3}\wedge\eqref{eq:hi big inductive assumption}}{\le} k_3(K)\max\left\{1,\frac{2k_3(K)^3}{M_{k_3(K)}^{1/k_3(K)}}\right\}\stackrel{\eqref{eq:def Mk}}{=} k_3(K).\nonumber
\end{multline*}
This completes the inductive proof of~\eqref{eq:induction, ig hi big}.

Define
\begin{equation}\label{eq:def max i0}
i_0(k)\eqdef \max\left\{i\in\{0,\ldots,\ell(k)-1\}:\ h_i(k)>k_3(K) \right\}.
\end{equation}
Note that since $h_0(k)=k$, the maximum in~\eqref{eq:def max i0} is well defined. By~\eqref{eq:induction, ig hi big} we have
\begin{equation}\label{eq:bound at k3}
\gamma_+\left(W_k^{i_0(k)},K\right)\le k_3(K).
\end{equation}
A recursive application of~\eqref{eq:crude use of calculus}, combined with~\eqref{eq:bound at k3}, yields the estimate
\begin{equation}\label{eq:product estimate1}
\gamma_+(H_k,K)\le k_3(K)\prod_{i=i_0(k)+1}^{\ell(k)}\left(2k_2(K)\gamma_+\left(F_{j(h_i(k))}(h_i(k)),K\right)^2\right).
\end{equation}
By~\eqref{eq:def max i0}, for every $i\in \{i_0(k)+1,\ldots,\ell(k)\}$ we have $h_i(k)\le k_3(K)$. Due to the strict monotonicity appearing in~\eqref{eq:monotonicity h}, it follows that the number of terms in the product appearing in~\eqref{eq:product estimate1} is at most $k_3(K)$, and therefore
\begin{equation}\label{eq:our large k bound}
\gamma_+(H_k,K)\le k_3(K)\left(2k_2(K)\right)^{k_3(K)}\prod_{r=1}^{k_3(K)}\gamma_+\left(F_{j(r)}(r),K\right)^2.
\end{equation}
We have proved that~\eqref{eq:our large k bound} holds true for every integer $k>k_3(K)$. Note that the upper bound in~\eqref{eq:our large k bound} is independent of $k$, so in combination with~\eqref{eq:Hk finite} this completes the proof of~\eqref{eq:goal Hk good}.
\end{proof}

\begin{proof}[Proof of Theorem~\ref{thm:existence intro}]
Lemma~\ref{lem:zigzag main iteration} applies when $\K$ consists of all $K:X\times X\to [0,\infty)$ of the form  $K(x,y)=\|x-y\|_X^2$, where $(X,\|\cdot\|_X)$ ranges over all super-reflexive Banach spaces. Indeed, hypotheses~\eqref{eq:K finiteness assumption} and ~\eqref{eq:k1} of Lemma~\ref{lem:zigzag main iteration} are nothing more than the assertions of Corollary~\ref{cor:apply gen zigzag to base graphs}. Hypothesis~\eqref{eq:k2} of Lemma~\ref{lem:zigzag main iteration} holds true as well since, by Corollary~\ref{coro:decay for super-reflexive intro}, every super-reflexive Banach space $(X,\|\cdot\|_X)$ satisfies~\eqref{eq:graph version of decay in section}, so we may take $$
k_2(X)\eqdef \max\left\{C(X),\frac{1}{\e(X)}\right\}.
$$

Let $d\in \N$ and $\{H_i\}_{i=1}^\infty$ be the output of Lemma~\ref{lem:zigzag main iteration}. Recalling the notation of Remark~\ref{rem:cycle bounds}, $C_d^\circ$ denotes the cycle of length $d$ with self loops, and $C_9$ denotes the cycle of length $9$ without self loops. For each $i\in \N$, since $H_i$ is $d$-regular, we may form the zigzag product $H_i\oz C_d^\circ$, which is a $9$-regular graph with $d|V(H_i)|$ vertices. We can therefore consider the graph
$$
H_i^*\eqdef \left(H_i\oz C_d^\circ\right)\circr C_9.
$$
Thus $\{H_i^*\}_{i=1}^\infty$ are $3$-regular graphs with $\lim_{i\to \infty} |V(H_i^*)|=\infty$. By Theorem~\ref{thm:sub} and part~\eqref{item:replacement product} of Theorem~\ref{thm:products intro}, for every super-reflexive Banach space $(X,\|\cdot\|_X)$ we have
$$
\gamma_+\left(H_i^*,\|\cdot\|_X^2\right)\le 9\gamma_+\left(H_i,\|\cdot\|_X^2\right)\gamma_+\left(C_d^\circ,\|\cdot\|_X^2\right)^2\gamma_+\left(C_9,\|\cdot\|_X^2\right)^2.
$$
By Lemma~\ref{lem:general graph} we have $\gamma_+\left(C_d^\circ,\|\cdot\|_X^2\right)\le 12d^2$ and  $\gamma_+\left(C_9,\|\cdot\|_X^2\right)\le 648$ (since $C_9$ is not bipartite). Therefore $\gamma_+\left(H_i^*,\|\cdot\|_X^2\right)\lesssim d^4 \gamma_+\left(H_i,\|\cdot\|_X^2\right)$, so due to~\eqref{eq:good for all K} the graphs $\{H_i^*\}_{i=1}^\infty$ satisfy the conclusion of Theorem~\ref{thm:existence intro}.
\end{proof}

\begin{remark}\label{rem:lafforgue}
V. Lafforgue asked~\cite{Lafforgue} whether there exists a sequence
of bounded degree graphs $\{G_k\}_{k=1}^\infty$ that does not admit
a coarse embedding (with the same moduli) into any $K$-convex Banach
space. A positive answer to this question follows from our methods. Independently of our work, Lafforgue~\cite{Laf09} managed to solve this problem as well, so we only sketch the argument.  An inspection of Lafforgue's proof in~\cite{Lafforgue}
shows that his method produces regular graphs $\{H_j(k)\}_{j,k\in \N}$ such
that for each $k\in \N$ the graphs $\{H_j(k)\}_{j\in \N}$ have
degree $d_k$, their cardinalities are unbounded, and for every
$K$-convex Banach space $(X,\|\cdot\|_X)$ there is some $k\in \N$
for which $\sup_{j\in \N}\gamma_+(H_j(k),\|\cdot\|_X^2)<\infty$. The
problem is that the degrees $\{d_k\}_{k\in \N}$ are unbounded, but
this can be overcome as above by applying the zigzag product with a
cycle with self loops. Indeed, define $G_j(k)=H_j(k)\oz
C_{d_k}^\circ$. Then $G_j(k)$ is $9$-regular, and as argued in the
proof of Theorem~\ref{thm:existence intro}, we still have
$\sup_{j\in \N}\gamma_+(G_j(k),\|\cdot\|_X^2)<\infty$. To get a
single sequence of graphs that does not admit a coarse embedding
 into any $K$-convex Banach space, fix a bijection $\psi=(a,b):\N\to \N\times \N$, and define
$G_m=G_{a(m)}(b(m))$. The graphs $G_m$ all have degree $9$. If $X$
is $K$-convex then choose $k\in \N$ as above. If we let $m_j\in \N$
be such that $\psi(m_j)=(j,k)$ then we have shown that the graphs
$\{G_{m_j}\}_{j=1}^\infty$ are arbitrarily large, have bounded
degree, and  satisfy $\sup_{j\in
\N}\gamma_+(G_{m_j},\|\cdot\|_X^2)<\infty$. The argument that was
presented in Section~\ref{sec:coarse} implies that
$\{G_m\}_{m=1}^\infty$ do not embed coarsely into $X$.
\end{remark}

\section{The heat semigroup on the tail space}\label{sec:heat}
This section contains estimates that will be crucially used in the proof of Lemma~\ref{lem:base in section}, in addition to geometric results and open questions of independent interest. We start the discussion by recalling some basic definitions, and setting some (mostly standard) notation on vector-valued Fourier analysis. Let $(X,\|\cdot\|_X)$ be a Banach space. We assume throughout that $X$ is a Banach space over the complex scalars, though, by a standard complexification argument, our results hold also for
Banach spaces over $\R$.

Given a measure space $(\Omega,\mu)$ and $p\in [1,\infty)$, we
denote as usual by $L_p(\mu,X)$ the space of all measurable
$f:\Omega \to X$ satisfying
$$
\|f\|_{L_p(\mu,X)}\eqdef \left(\int_{\Omega}\|f\|_X^pd\mu\right)^{1/p}<\infty.
$$
When $X=\C$ we use the standard notation $L_p(\mu)=L_p(\mu, \C)$. When $\Omega$ is a finite set we denote by $L_p(\Omega,X)$ the space $L_p(\mu,X)$, where $\mu$ is the normalized counting measure on $\Omega$.

For $n\in \N$ and $A\subseteq \{1,\ldots,n\}$, the Walsh function $W_A:\F_2^n\to \{-1,1\}$ is defined by
$$
W_A(x)\eqdef(-1)^{\sum_{j\in A}x_j}.
$$
Any $f:\F_2^n\to X$ has the expansion
$$
f=\sum_{A\subseteq \{1,\ldots, n\}} \hat{f}(A)W_A,
$$
where
$$
\hat{f}(A)\eqdef \frac{1}{2^n}\sum_{x\in \F_2^n} f(x)W_A(x)\in X.
$$
For $\varphi:\F_2^n\to \C$ and $f:\F_2^n\to X$, the convolution $\varphi*f:\F_2^n\to X$ is defined as usual by
$$
\varphi*f(x)\eqdef\frac{1}{2^n}\sum_{w\in \F_2^n} \varphi(x-w)f(w)=\sum_{A\subseteq \{1,\ldots,n\}} \hat{\varphi}(A)\hat{f}(A)W_A(x).
$$

For $k\in \{1,\ldots,n\}$ and $p\in [1,\infty]$ we let $L_p^{\ge k}(\F_2^n,X)$ denote the subspace of $L_p(\F_2^n,X)$ consisting of those $f:\F_2^n\to X$ that satisfy $\hat{f}(A)=0$ for all $A\subseteq \{1,\ldots,n\}$ with $|A|<k$.

Let $e_1,\ldots,e_n$ be the standard basis of $\F_2^n$. For $j\in \{1,\ldots,n \}$ define $\partial_jf:\F_2^n\to X$ by
$$
\partial_jf(x)\eqdef \frac{f(x)-f(x+e_j)}{2}.
$$
Thus
$$
\partial_jf=\sum_{\substack{A\subseteq \{1,\ldots,n\}\\ j\in A}} \hat{f}(A)W_A,
$$
and
$$
\Delta f\eqdef \sum_{j=1}^n \partial_j f =\sum_{A\subseteq \{1,\ldots,n\}} |A|\hat{f}(A) W_A.
$$
For every $z\in \C$ we then have 
\begin{equation}\label{eq:riesz}
e^{z\Delta} f=\sum_{A\subseteq \{1,\ldots,n\}} e^{z|A|}\hat{f}(A) W_A=R_z*f,
\end{equation}
where
\begin{equation}\label{eq:def:riesz}
R_z(x)\eqdef \prod_{j=1}^n \left(1+e^z(-1)^{x_j}\right)=\left(1-e^z\right)^{\|x\|_1}\left(1+e^z\right)^{n-\|x\|_1},
\end{equation}
and we identify $\F_2^n$ with $\{0,1\}^n\subseteq \R^n$. Hence, for every $x\in \F_2^n$ we have
\begin{equation} \label{eq:primal semigroup}
e^{z\Delta} f(x) = \sum_{w\in\mathbb F_2^n}  \left( \frac{1-e^{z}}2 \right)^{\|x-w\|_1}
\left( \frac{1+e^{z}}2 \right)^{n-\|x-w\|_1}
f(w).
\end{equation}
In particular,
\begin{equation}\label{eq:matrix form heat}
\forall\, x,y\in \F_2^n,\quad (e^{z\Delta}\delta_x)(y)= \left(
\frac{1-e^{z}}2 \right)^{\|x-y\|_1} \left( \frac{1+e^{z}}2
\right)^{n-\|x-y\|_1},
\end{equation}
where $\delta_x(w)\eqdef \1_{\{x=w\}}$ is the Kronecker delta.

Given $n\in \N$ and $f:\F_2^n\to X$, the Rademacher projection~\cite{MP-type-cotype} of $f$ is defined by
$$
\Rad(f)\eqdef \sum_{j=1}^n \hat{f}(\{j\})W_{\{j\}}.
$$
The $K$-convexity constant of $X$ is defined~\cite{MP-type-cotype} by
$$
K(X)\eqdef \sup_{n\in \N} \left\|\Rad\right\|_{L_2(\F_2^n,X)\to L_2(\F_2^n,X)}.
$$
If $K(X)<\infty$ then $X$ is said to be $K$-convex. Pisier's deep $K$-convexity theorem~\cite{Pisier-K-convex} asserts that $X$ is $K$-convex if and only if it does not contain copies of $\{\ell_1^n\}_{n=1}^\infty$ with distortion arbitrarily close to $1$, i.e., for all $n\in \N$ we have
$$
\inf_{ T\in \mathscr{L}(\ell_1^n,X)}\|T\|_{\ell_1^n\to X}\cdot\|T^{-1}\|_{T(\ell_1^n)\to \ell_1^n}=1,
$$
where $\mathscr{L}(\ell_1^n,X)$ denotes the space of linear operators $T:\ell_1^n\to X$ (and we use the convention $\|T^{-1}\|_{T(\ell_1^n)\to \ell_1^n}=\infty$ if $T$ is not injective).

Our main result in this section is the following theorem.

\begin{theorem}[Decay of the heat semigroup on the tail space]\label{thm:AB}
For every $K,p\in (1,\infty)$ there are $A(K,p)\in (0,1)$ and $B(K,p), C(K,p)\in (2,\infty)$ such that for every $K$-convex Banach $(X,\|\cdot\|_X)$ with $K(X)\le K$, every $k,n\in \N$ and every $t\in (0,\infty)$,
\begin{equation}\label{eq:AB}
\left\|e^{-t\Delta}\right\|_{L_p^{\ge k}(\F_2^n,X)\to L_p^{\ge k}(\F_2^n,X)}\le C(K,p)e^{-A(K,p)k\min\left\{t,t^{B(K,p)}\right\}}.
\end{equation}
\end{theorem}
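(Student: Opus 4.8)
The plan is to combine a deep theorem of Pisier on the geometry of $K$-convex spaces with a complex-analytic extraction of the high-frequency part of the heat semigroup. The deep input is Pisier's holomorphic semigroup theorem: for a $K$-convex complex Banach space $X$ with $K(X)\le K$, the semigroup $\{e^{-z\Delta}\}$ on $L_p(\F_2^n,X)$ is bounded and holomorphic in a sector $\Sigma_\theta=\{z\in\C\setminus\{0\}:|\arg z|<\theta\}$ of some half-angle $\theta=\theta(K,p)\in(0,\pi/2]$, with $\sup_{n\in\N}\sup_{z\in\Sigma_{\theta'}}\|e^{-z\Delta}\|_{L_p(\F_2^n,X)\to L_p(\F_2^n,X)}=:M(K,p,\theta')<\infty$ for every $\theta'<\theta$. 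By~\eqref{eq:riesz} one has $e^{-z\Delta}=T_{e^{-z}}$ where $T_\rho f\eqdef\sum_A\rho^{|A|}\hat f(A)W_A$, so this says $\|T_\rho\|_{L_p(\F_2^n,X)\to L_p(\F_2^n,X)}\le M$ uniformly in $n$ for $\rho$ ranging over the region $\mathcal R_\theta\eqdef\{e^{-z}:z\in\Sigma_{\theta'}\}$, an open subset of the open unit disc containing both a neighborhood of $\rho=0$ and a sector with vertex at $\rho=1$ opening toward $\rho=0$. (We also use that $T_\rho$ is a contraction for real $\rho\in[-1,1]$, and that when $X$ is a Hilbert space $\theta=\pi/2$ works and~\eqref{eq:AB} then follows at once from Parseval.)

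\textbf{A contour representation.} For $f\in L_p^{\ge k}(\F_2^n,X)$ and $t>0$ one has $e^{-t\Delta}f=\sum_{j\ge k}e^{-tj}P_{=j}f$, where $P_{=j}f=\sum_{|A|=j}\hat f(A)W_A$; since $P_{=j}f=\frac{1}{2\pi i}\oint_{|\rho|=r}\rho^{-j-1}T_\rho f\,d\rho$ for every $r>0$, summing the geometric series gives
\[
e^{-t\Delta}f=\frac{1}{2\pi i}\oint_{\gamma}\frac{e^{-tk}\rho^{-k}}{\rho-e^{-t}}\,T_\rho f\,d\rho,
\]
valid for any rectifiable Jordan curve $\gamma$ that lies in $\mathcal R_\theta\cup[-1,1]$ (where $T_\rho$ is controlled) and winds once around the two points $0$ and $e^{-t}$. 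Taking $\gamma$ a small circle about the origin already yields the self-improving estimate $\|T_\rho\|_{L_p^{\ge k}\to L_p}\le\frac{\epsilon_0}{\epsilon_0-|\rho|}(|\rho|/\epsilon_0)^{k}M$ whenever $|\rho|<\epsilon_0$, where $\epsilon_0=\epsilon_0(K,p)\in(0,1)$ is fixed so that $\{|\rho|\le\epsilon_0\}\subseteq\mathcal R_\theta$: on the tail space $T_\rho$ decays like $|\rho|^{k}$ for $\rho$ of small modulus.

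\textbf{The regime of large $t$.} If $e^{-t}<\epsilon_0/2$, i.e.\ $t\ge t_*(K,p):=\ln(2/\epsilon_0)$, then $\gamma=\{|\rho|=\epsilon_0\}\subseteq\mathcal R_\theta$ already encircles $0$ and $e^{-t}$, and inserting $\|T_\rho\|\le M$ on $\gamma$ and $|\rho-e^{-t}|\ge\epsilon_0/2$ into the displayed identity gives $\|e^{-t\Delta}f\|_p\le 2M\,e^{-k(t-\ln(1/\epsilon_0))}\|f\|_p$, which for $t\ge 2\ln(1/\epsilon_0)$ is at most $2M\,e^{-kt/2}\|f\|_p$. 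This is~\eqref{eq:AB} with a linear exponent in $t$, and $\min\{t,t^{B}\}=t$ throughout this range for any $B\ge 1$; so the whole difficulty lies in small $t$.

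\textbf{The regime of small $t$ --- the crux --- and patching.} When $t$ is small, $e^{-t}$ is close to the boundary point $\rho=1$, and the circle $\{|\rho|=\epsilon_0\}$ no longer encircles $e^{-t}$; the curve $\gamma$ must then run from a loop around the origin out to a loop around $e^{-t}$, and this excursion is forced through the thin channel of $\mathcal R_\theta$ that pinches toward $\rho=1$. The heart of the proof is to choose $\gamma$ so as to balance the factor $e^{-tk}\rho^{-k}/(\rho-e^{-t})$ --- estimated via Pisier's bound near $\rho=e^{-t}$ and via the self-improved bound near $\rho=0$ --- against the length of $\gamma$, letting $\gamma$ approach $\rho=1$ only to distance of order $t$ (as dictated by the aperture $\theta$). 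The upshot is a bound $C(K,p)\,e^{-A(K,p)\,k\,t^{B(K,p)}}$ in which the exponent is degraded from $t$ to a higher power $t^{B}$; this degradation is exactly the cost of routing $\gamma$ through the pinched part of $\mathcal R_\theta$ --- near $\rho=1$ it is only a narrow sector rather than (as for a Hilbert space) a half-plane. I expect this small-$t$ estimate to be the principal obstacle: extracting genuine $k$-decay rather than a merely bounded contribution from the channel is delicate, and it may well require a more quantitative input than the bare holomorphy of the semigroup (Pisier's precise estimates on the vector-valued Riesz transform, or a reduction exploiting the tensor structure $\F_2^{n}\cong(\F_2^{n/m})^{m}$). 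Granting this, one combines the two regimes, absorbs the bounded middle range of $t$ into $C(K,p)$ (there $k\min\{t,t^{B}\}$ stays bounded, so a constant factor suffices), and enlarges constants so that $B>2$ and $A<1<C$, obtaining~\eqref{eq:AB}; the reduction to a general $K$-convex Banach space is exactly Pisier's theorem quoted above, and the real case follows by complexification as noted in the text.
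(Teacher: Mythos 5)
You correctly identify Pisier's holomorphic-semigroup theorem (the paper's Theorem~\ref{thm:Pis K-conv}) as the deep input, and your large-$t$ argument is sound: your ``self-improved bound'' $\|T_\rho\|_{L_p^{\ge k}\to L_p}\le \frac{\epsilon_0}{\epsilon_0-|\rho|}(|\rho|/\epsilon_0)^kM$ obtained by Cauchy's formula on the circle $\{|\rho|=\epsilon_0\}$ is, after the substitution $\rho=e^{-z}$, exactly the paper's estimate~\eqref{eq:big real}, which the paper derives by extracting the Rademacher projections $\Rad_m$ from $e^{-z\Delta}$ via Fourier inversion on a vertical segment. That part of your argument is a genuine re-derivation.

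The gap is precisely where you say it is: the small-$t$ regime, and the Cauchy/residue route you sketch there cannot produce the required $k$-decay. After deforming, the only pole enclosed by your contour is $\rho=e^{-t}$ (you observed the residue at $0$ vanishes on the tail space), and near that point the weight $e^{-tk}\rho^{-k}$ is $\approx 1$ while the only available bound on $T_\rho$ is the uniform $M$; integrating over a small loop around $e^{-t}$ thus gives an $O(M)$ contribution, independent of $k$. Pushing the contour out toward $\{|\rho|=\delta\}$ does no better: there $e^{-tk}|\rho|^{-k}\cdot\|T_\rho\|_{L_p^{\ge k}\to L_p}\lesssim (e^{-t}/\epsilon_0)^k$, which for small $t$ is exponentially \emph{growing} in $k$. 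Cauchy's formula concentrates the integral at the residue, and the residue at $e^{-t}$ is tautologically $e^{-t\Delta}f$, so no choice of contour can improve the pointwise upper bound beyond the trivial one. The paper instead uses \emph{harmonic measure} rather than a residue: it works in the $z$-plane with the truncated sector $V=\{|z|\le r,\ |\arg z|\le\phi\}$, writes $e^{-t\Delta}=\int_{\partial V}\Psi_\e^t(z)\,e^{-z\Delta}\,d\mu_t(z)$ where $\mu_t$ is harmonic measure at the interior point $t$, and constructs (Lemma~\ref{lem:strip}, following~\cite{pisier-2007}) an auxiliary bounded holomorphic $\Psi_\e^t$ on $V$ with $\Psi_\e^t(t)=1$, $|\Psi_\e^t|=\e$ on the two short boundary edges $V_0$, and $|\Psi_\e^t|=\e^{-(1-\theta_t)/\theta_t}$ on the far arc $V_1=\{|z|=r\}$. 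The far arc lies in the region $\Re z\ge 2a$ where~\eqref{eq:big real} gives the genuine $e^{-ka}$ decay; the bound~\eqref{eq:before eps choice} is then a two-term average whose first term is $\e M$ (from $V_0$) and whose second term is $\e^{-(1-\theta_t)/\theta_t}\cdot Me^{-ka}/(1-e^{-a})$ (from $V_1$), and the free parameter $\e$ is optimized. The degraded exponent $t^{B}$ with $B=\pi/(2\phi)$ comes from Lemma~\ref{lem:theta bound}, a conformal-invariance calculation (Riemann map $z\mapsto(z/r)^{\pi/(2\phi)}$ from $V$ to a half-disk) showing $\theta_t\ge\frac12(t/r)^{\pi/(2\phi)}$. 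This harmonic-measure/three-lines mechanism --- spreading weight over all of $\partial V$ so as to exploit the good estimate far from $t$ --- is the idea your proposal is missing, and it is genuinely different in character from a residue computation.
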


The fact that Theorem~\ref{thm:AB} assumes that $X$ is $K$-convex is not an artifact of our proof: we have, in fact, the following converse statement.

\begin{theorem}\label{thm:K-convec char}
Let $X$ be a Banach space $(X,\|\cdot\|_X)$ for which exist $k\in \N$, $p\in (1,\infty)$ and $t\in (0,\infty)$ such that
\begin{equation}\label{eq:<1}
\sup_{n\in \N} \left\|e^{-t\Delta}\right\|_{L_p^{\ge k}(\F_2^n,X)\to L_p^{\ge k}(\F_2^n,X)}<1.
\end{equation}
Then $X$ is $K$-convex.
\end{theorem}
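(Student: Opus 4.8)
The plan is to prove the contrapositive: assuming $X$ is \emph{not} $K$-convex, I will show that for \emph{every} $k\in\N$, $p\in(1,\infty)$ and $t\in(0,\infty)$ one has $\sup_{n}\|e^{-t\Delta}\|_{L_p^{\ge k}(\F_2^n,X)\to L_p^{\ge k}(\F_2^n,X)}=1$, which contradicts~\eqref{eq:<1}. The upper bound $\le 1$ is automatic, since $e^{-t\Delta}$ is convolution with the probability measure $p_t(z)=\big(\tfrac{1-e^{-t}}2\big)^{\|z\|_1}\big(\tfrac{1+e^{-t}}2\big)^{n-\|z\|_1}$ (see~\eqref{eq:primal semigroup}) and hence a contraction on $L_p(\F_2^n,X)$ for every $p\ge 1$. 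The first reduction uses Pisier's $K$-convexity theorem quoted above: non-$K$-convexity of $X$ means that $\ell_1^N$ embeds into $X$ with distortion arbitrarily close to $1$ for every $N$. Thus it suffices to produce, for each fixed $k,p,t$ and each $\eta>0$, some $n$ and some $f\in L_p^{\ge k}(\F_2^n,\ell_1^{2^n})$ with $\|e^{-t\Delta}f\|_{L_p(\ell_1)}\ge(1-\eta)\|f\|_{L_p(\ell_1)}$: applying a linear embedding $T\colon\ell_1^{2^n}\to X$ with $\|v\|_1\le\|Tv\|_X\le(1+\eta)\|v\|_1$ coordinatewise then gives an $X$-valued function with the same estimate up to a factor $1+\eta$, and linearity of $T$ keeps it in the tail space because $\widehat{Tf}(A)=T\widehat f(A)$ vanishes whenever $\widehat f(A)=0$.

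For $k=1$ I would take the test function $g\colon\F_2^n\to\ell_1(\F_2^n)$ given by $g(x)=\delta_x$ and set $f=g-2^{-n}\mathbf 1\in L_p^{\ge 1}(\F_2^n,\ell_1(\F_2^n))$. By~\eqref{eq:matrix form heat}, $e^{-t\Delta}g(x)$ is the probability vector $p_t(x+\cdot)$, so $e^{-t\Delta}$ is \emph{pointwise $\ell_1$-isometric} on the nonnegative function $g$; consequently $\|f(x)\|_{\ell_1}=2(1-2^{-n})$ and $\|e^{-t\Delta}f(x)\|_{\ell_1}=\|p_t-U\|_{\ell_1}$ for every $x$, where $U$ is the uniform distribution on $\F_2^n$. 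The crucial point to verify is that $\|p_t-U\|_{\ell_1}\to 2$ as $n\to\infty$: $p_t$ is the law of $n$ i.i.d.\ $\mathrm{Bernoulli}\big(\tfrac{1-e^{-t}}2\big)$ bits, which concentrates on Hamming weight $\approx\tfrac{1-e^{-t}}2\,n$, strictly below the weight $\approx\tfrac n2$ carried by $U$ (here $t>0$ is used), so the two measures become asymptotically mutually singular. Since both $\|f(x)\|_{\ell_1}$ and $\|e^{-t\Delta}f(x)\|_{\ell_1}$ are independent of $x$, the ratio $\|e^{-t\Delta}f\|_{L_p(\ell_1)}/\|f\|_{L_p(\ell_1)}$ equals $\|p_t-U\|_{\ell_1}/\big(2(1-2^{-n})\big)\to 1$, for every $p$.

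To reach the deeper tail space $L_p^{\ge k}$ for $k\ge 2$, I would tensorize. Partition $\{1,\dots,n\}$ (taking $k\mid n$) into blocks $B_1,\dots,B_k$ of size $n/k$, let $g^{(i)}$ be the mean-zero function of the previous paragraph on block $i$, and set $f(x)(y_1,\dots,y_k)=\prod_{i=1}^k g^{(i)}(x_{B_i})(y_i)$, viewed as an $\ell_1(\F_2^{B_1}\times\cdots\times\F_2^{B_k})$-valued function on $\F_2^n$. Each factor is mean-zero in the coordinates of block $i$, so $\widehat f(A)=0$ unless $A$ meets every block, i.e.\ $f\in L_p^{\ge k}$. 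Since $e^{-t\Delta}=\prod_{i=1}^k e^{-t\Delta_{B_i}}$ with commuting factors, each $e^{-t\Delta_{B_i}}$ acting only in the block-$i$ coordinates, $e^{-t\Delta}f(x)$ is again a product over the blocks, and $\ell_1$-norms of products over disjoint index sets multiply; hence both $\|f(x)\|_{\ell_1}$ and $\|e^{-t\Delta}f(x)\|_{\ell_1}$ are $x$-independent and equal the $k$-th powers of the one-block quantities from Step $k=1$ with $n$ replaced by $n/k$. Letting $n\to\infty$ gives $\|e^{-t\Delta}f\|_{L_p(\ell_1)}/\|f\|_{L_p(\ell_1)}\to 1$, which is exactly what the reduction asked for, completing the argument.

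I expect the only genuine obstacle to be conceptual, namely identifying the right test function. It is tempting to look for a function that $e^{-t\Delta}$ nearly fixes, but $e^{-t\Delta}$ contracts every Fourier mode $W_A$ by $e^{-t|A|}\le e^{-tk}$, so in a Hilbert space—or any $K$-convex space, by Theorem~\ref{thm:AB}—the tail-space norm is genuinely bounded away from $1$, and no tail-space function can be even approximately fixed modewise. The way around this is that in $\ell_1$ the $L_p$-norm is insensitive to cancellation within a single Fourier mode: nonnegative functions (point masses) are \emph{exactly} preserved in $L_p(\ell_1)$ by the Markov operator $e^{-t\Delta}$, and the only losses are the asymptotically negligible subtraction of the mean—cheap precisely because $p_t$ and $U$ become nearly singular as $n\to\infty$—and the passage from mean-zero functions to $L_p^{\ge k}$, handled by tensorizing over disjoint blocks of coordinates. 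Once this mechanism is in place, the remaining ingredients (the large-deviation estimate for $\|p_t-U\|_{\ell_1}$, the multiplicativity of $\ell_1$-norms and of $e^{-t\Delta}$ across disjoint blocks, and the transfer to $X$ via Pisier's theorem) are routine.
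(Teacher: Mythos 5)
Your proof is correct and follows essentially the same route as the paper: the paper's test function $f_n(x)(y)=2^n\mathbf 1_{\{x=y\}}-1$ (Lemma~\ref{lem:normalized delta}) is your $\delta_x-2^{-n}\mathbf 1$ up to a scalar, your identification of $\|e^{-t\Delta}f_n(x)\|_{\ell_1}$ with $\|p_t-U\|_{\ell_1}$ and the large-deviation/near-singularity argument is what the paper does via the CLT in~\eqref{eq:CLTeps1}--\eqref{eq:m in other range}, the tensorization over $k$ blocks is the paper's $k$-fold product $\F_2^{kn}$ (equation~\eqref{eq:def F example}), and the appeal to Pisier's $K$-convexity theorem is identical. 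The only differences are cosmetic (normalization, and arguing the contrapositive explicitly rather than bounding $\|T\|\cdot\|T^{-1}\|$ from below as in~\eqref{eq:arrow linit}).
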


\begin{remark}\label{rem:conj<1} We conjecture that any $K$-convex Banach space satisfies~\eqref{eq:<1}  for every $k\in \N$, $p\in (1,\infty)$ and $t\in (0,\infty)$. Theorem~\ref{thm:AB} implies~\eqref{eq:<1} if $k$ or $t$ are large enough, but, due to the factor $C(K,p)$ in~\eqref{eq:AB}, it does not imply~\eqref{eq:<1} in its entirety. The factor $C(K,p)$ in~\eqref{eq:AB} does not have impact on the application of Theorem~\ref{thm:AB} that we present here; see Section~\ref{sec:base}.
\end{remark}

\subsection{Warmup: the tail space of scalar valued functions}\label{sec:scalar}

Before passing to the proofs of Theorem~\ref{thm:AB} and Theorem~\ref{thm:K-convec char}, we address separately the classical scalar case $X=\C$, since it already exhibits interesting open questions. The problem was studied by P.-A. Meyer~\cite{Meyer} who proved Lemma~\ref{lem:meyer} below. We include its proof here since it is not stated explicitly in this way in~\cite{Meyer}, and moreover Meyer studies this problem with $\F_2^n$ replaced by $\R^n$ equipped with the standard Gaussian measure (the proof in the discrete setting does not require anything new. We warn the reader that the proof in~\cite{Meyer} contains an inaccurate duality argument).

\begin{lemma}[P.-A. Meyer]\label{lem:meyer} For every $p\in [2,\infty)$
there exists $c_p\in (0,\infty)$ such that for every $k\in \N$,
every tail space function $f\in L_p^{\ge k}(\F_2^n)$ and every
time $t\in (0,\infty)$,
\begin{equation}\label{e:meyer}
\left\|e^{-t\Delta} f\right\|_{L_p(\F_2^n)}\le
e^{-c_pk\min\{t,t^2\}}\left\|f\right\|_{L_p(\F_2^n)}.
\end{equation}
Hence,
\begin{equation}\label{eq:meyer laplacian}
\left\|\Delta f\right\|_{L_p(\F_2^n)}\gtrsim c_p\sqrt{k}\cdot \|f\|_{L_p(\F_2^n)}.
\end{equation}
\end{lemma}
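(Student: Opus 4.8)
The plan is to prove the semigroup decay estimate~\eqref{e:meyer} first, and then deduce the lower bound~\eqref{eq:meyer laplacian} on the Laplacian by a soft integration argument. For the main inequality~\eqref{e:meyer}, I would split according to the size of $t$. The heart of the matter is the regime $t\ge 1$ (say), where one can hope for an honest exponential rate in $kt$; the small-time regime $t\in(0,1)$ will then follow by relating $\left\|e^{-t\Delta}f\right\|_p$ to $\left\|e^{-\Delta}f\right\|_p$ via the semigroup property and a crude interpolation, which is where the $t^2$ exponent in $\min\{t,t^2\}$ comes from (for $t$ small, running the semigroup for time $t$ kills only an $O(t)$-fraction of the ``energy'' per unit level, and composing the estimate costs a square).

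For the large-$t$ regime the clean approach is hypercontractivity. Recall that on $\F_2^n$ the Bonami--Beckner estimate says $\left\|e^{-s\Delta}\right\|_{L_2\to L_q}\le 1$ whenever $q-1\le e^{2s}$, equivalently for $s\ge \tfrac12\log(q-1)$. I would first handle $p=2$ by a direct spectral computation: if $f\in L_2^{\ge k}(\F_2^n)$ then, using the Walsh expansion and Parseval,
\[
\left\|e^{-t\Delta}f\right\|_{L_2(\F_2^n)}^2=\sum_{|A|\ge k}e^{-2t|A|}\bigl|\hat f(A)\bigr|^2\le e^{-2tk}\sum_{|A|\ge k}\bigl|\hat f(A)\bigr|^2=e^{-2tk}\|f\|_{L_2(\F_2^n)}^2,
\]
so~\eqref{e:meyer} holds with $c_2=1$ and no small-time correction needed. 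For general $p\in(2,\infty)$ I would fix $q=p$ and a time $s_0=s_0(p)$ with $e^{2s_0}\ge p-1$, and write, for $t$ large, $e^{-t\Delta}=e^{-s_0\Delta}\circ e^{-(t-s_0)\Delta}$; then bound $\left\|e^{-t\Delta}f\right\|_{L_p}\le \left\|e^{-s_0\Delta}\right\|_{L_2\to L_p}\left\|e^{-(t-s_0)\Delta}f\right\|_{L_2}\le e^{-(t-s_0)k}\|f\|_{L_2}\le e^{-(t-s_0)k}\|f\|_{L_p}$, using that $e^{-(t-s_0)\Delta}f$ still lies in the degree-$\ge k$ tail space (the semigroup preserves each Walsh level) together with the $p=2$ estimate just proved and the trivial bound $\|f\|_{L_2}\le\|f\|_{L_p}$. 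Adjusting constants to absorb $e^{s_0 k}$ into a factor and re-deriving the small-$t$ case gives the stated form with some $c_p>0$; if one wants~\eqref{e:meyer} with no multiplicative constant out front (as stated), one tunes the exponent down slightly and uses the $p=2$ case to handle an initial block of time, so that the prefactor becomes $\le 1$.

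Given~\eqref{e:meyer}, the bound~\eqref{eq:meyer laplacian} on the inverse Laplacian follows by a standard subordination-type argument: for $f\in L_p^{\ge k}(\F_2^n)$ write $f=\Delta g$ where $g=\Delta^{-1}f$ makes sense since $\Delta$ is invertible on the tail space, or more directly integrate the semigroup. Using $\int_0^\infty e^{-t\Delta}f\,dt = \Delta^{-1}f$ (valid on the tail space since all Walsh frequencies present are $\ge k\ge 1$), we get
\[
\left\|\Delta^{-1}f\right\|_{L_p(\F_2^n)}\le \int_0^\infty \left\|e^{-t\Delta}f\right\|_{L_p(\F_2^n)}\,dt\le \|f\|_{L_p(\F_2^n)}\int_0^\infty e^{-c_pk\min\{t,t^2\}}\,dt\lesssim \frac{\|f\|_{L_p(\F_2^n)}}{\sqrt{c_pk}},
\]
the last step because $\int_0^1 e^{-c_pkt^2}dt\asymp (c_pk)^{-1/2}$ and $\int_1^\infty e^{-c_pkt}dt\lesssim (c_pk)^{-1}$. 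Applying this to $g\in L_p^{\ge k}$ arbitrary with $f=\Delta g$ yields $\|g\|_{L_p}\lesssim (c_pk)^{-1/2}\|\Delta g\|_{L_p}$, which rearranges to~\eqref{eq:meyer laplacian}.

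The step I expect to be the main (minor) obstacle is getting~\eqref{e:meyer} in the sharp form with \emph{no} prefactor constant, and in particular pinning down the $\min\{t,t^2\}$ dependence for small $t$ in the $L_p$ setting: hypercontractivity is only useful once $t\gtrsim \log p$, so for $0<t\lesssim 1$ one must argue differently, e.g. by the three-lines lemma / complex interpolation of the analytic family $z\mapsto e^{-z\Delta}$ between the trivial bound at $t=0$ and the $p=2$ exponential bound, which naturally produces a rate proportional to $t^2$ rather than $t$. This is exactly the kind of duality/interpolation point where the excerpt warns that Meyer's original write-up was inaccurate, so I would be careful to phrase the small-time interpolation cleanly, working on the tail space directly (where the Laplacian is bounded below) rather than dualizing.
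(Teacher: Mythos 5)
Your overall architecture is right for the deduction of~\eqref{eq:meyer laplacian} from~\eqref{e:meyer} — integrating $\Delta^{-1}f=\int_0^\infty e^{-t\Delta}f\,dt$ against the decay estimate and splitting at $t=1$ is exactly what the paper does, and that part of your argument is complete. The main issue is in your proof of~\eqref{e:meyer} itself, and in particular the small-$t$ regime, which you correctly flag as the weak spot but do not actually resolve. Your large-$t$ argument (compose $e^{-s_0\Delta}$, which is an $L_2\to L_p$ contraction by hypercontractivity once $e^{2s_0}\ge p-1$, with the spectral $L_2$ decay $e^{-(t-s_0)k}$) is fine once $t\gg s_0$, but absorbing the $e^{s_0k}$ prefactor into a smaller constant already requires $t\gtrsim s_0$, and for $0<t\lesssim s_0$ the sketch via ``three-lines between the trivial bound at $t=0$ and the $L_2$ bound'' is not worked out: it is not clear what analytic family you interpolate, and the trivial bound at $t=0$ carries no decay, so it is not obvious this would produce the $e^{-c_pkt^2}$ rate rather than just $e^{-0}$. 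As written, the proposal has a genuine gap there.

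The paper avoids the case split entirely with a single interpolation valid for every $t>0$. Fix $t$ and let $q=1+e^{2t}(p-1)$, so that Bonami--Beckner gives $\|e^{-t\Delta}f\|_{L_q}\le\|f\|_{L_p}$ (note $q$ depends on $t$, it is not held fixed). On the other hand the $L_2$ spectral bound gives $\|e^{-t\Delta}f\|_{L_2}\le e^{-kt}\|f\|_{L_2}\le e^{-kt}\|f\|_{L_p}$. Since $p\in[2,q]$, write $1/p=\theta/2+(1-\theta)/q$ and use log-convexity of $L_p$ norms to get $\|e^{-t\Delta}f\|_{L_p}\le \|e^{-t\Delta}f\|_{L_2}^\theta\|e^{-t\Delta}f\|_{L_q}^{1-\theta}\le e^{-kt\theta}\|f\|_{L_p}$. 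Computing $\theta$ gives
\[
kt\theta=\frac{2(p-1)kt\left(e^{2t}-1\right)}{p\left(e^{2t}(p-1)-1\right)},
\]
which behaves like a constant times $kt^2$ as $t\to 0$ (for $p>2$) and like a constant times $kt$ as $t\to\infty$; this is exactly $\gtrsim c_pk\min\{t,t^2\}$, with no prefactor in front of the exponential. The moral is that one should let the hypercontractive target exponent $q=q(t)$ move with $t$ and interpolate, rather than fixing $q=p$ and composing semigroups; the former gives a uniform argument that automatically detects the crossover between the $t$ and $t^2$ regimes.
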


\begin{proof}
The estimate~\eqref{eq:meyer laplacian} follows immediately
from~\eqref{e:meyer} as follows.
\begin{multline*}
\|f\|_{L_p(\F_2^n)}=\left\|\int_0^\infty e^{-t\Delta}\Delta f dt\right\|_{L_p(\F_2^n)}\le \int_0^\infty \left\|e^{-t\Delta}\Delta f\right\|_{L_p(\F_2^n)}dt\\ \stackrel{\eqref{e:meyer}}{\le}
\left(\int_0^1 e^{-c_pkt^2}dt+\int_1^\infty
e^{-c_pkt}dt\right)\|\Delta f\|_{L_p(\F_2^n)}\lesssim \frac{\|\Delta
f\|_{L_p(\F_2^n)}}{c_p\sqrt{k}}.
\end{multline*}

To prove~\eqref{e:meyer}, we may assume that $\|f\|_{L_p(\F_2^n)}=1$. Since
$p\ge 2$, it follows that
\begin{equation}\label{eq:spectral}
\left\|e^{-t\Delta} f\right\|_{L_2(\F_2^n)}\le e^{-kt}\|f\|_{L_2(\F_2^n)}\le e^{-kt}\|f\|_{L_p(\F_2^n)}= e^{-kt}.
\end{equation}
By classical hypercontractivity estimates~\cite{Bon70,Beck75}, if we
define
\begin{equation}\label{eq:def q}
q\eqdef 1+e^{2t}(p-1).
\end{equation}
then
\begin{equation}\label{eq:use beckner}
\left\|e^{-t\Delta}f\right\|_{L_q(\F_2^n)}\le \|f\|_{L_p(\F_2^n)}=1. \end{equation}
 Since
$p\in [2,q]$ we may consider $\theta\in [0,1]$ given by
\begin{equation}\label{eq:def theta1}
\frac{1}{p}=\frac{\theta}{2}+\frac{1-\theta}{q}.
\end{equation}
Now,
\begin{multline}\label{eq:hypercontractive}
\left\|e^{-t\Delta}f\right\|_{L_p(\F_2^n)}\le
\left\|e^{-t\Delta}f\right\|_{L_2(\F_2^n)}^\theta\cdot\left\|e^{-t\Delta}f\right\|_{L_q(\F_2^n)}^{1-\theta}
\\\stackrel{\eqref{eq:spectral}
\wedge\eqref{eq:use beckner}}{\le}
e^{-kt\theta}\stackrel{\eqref{eq:def q}\wedge\eqref{eq:def
theta1}}{=}\exp\left(-\frac{2(p-1)kt(e^{2t}-1)}{p\left(e^{2t}(p-1)-1\right)}\right).
\end{multline}
By choosing $c_p$ appropriately, the desired
estimate~\eqref{e:meyer} is a
consequence~\eqref{eq:hypercontractive}.
\end{proof}

\begin{remark}\label{rem:open meyer}
For the purpose of the geometric applications that are contained in the
present paper we need to understand the vector-valued analogue of
Lemma~\ref{lem:meyer}, i.e., Theorem~\ref{thm:AB}. Nevertheless, The following interesting questions
seem to be open for scalar-valued functions.
\begin{enumerate}
\item Can one
prove Lemma~\ref{lem:meyer} also when $p\in (1,2)$? Note that while
$\Delta$ and $e^{-t\Delta}$ are self-adjoint operators, one needs to
understand the dual norm on $L_p^{\ge k}(\F_2^n,\R)^*$ in order to
use duality here.
\item What is the correct asymptotic dependence on $k$
in~\eqref{eq:meyer laplacian}? Specifically, can~\eqref{eq:meyer
laplacian} be improved to
\begin{equation}\label{eq:lower linear}
\|\Delta f\|_p\gtrsim_p k\|f\|_p?
\end{equation}
\item As a potential way to prove~\eqref{eq:lower linear},  can one improve~\eqref{e:meyer} to
\begin{equation}\label{eq:conjectured linear heat}
f\in L_p^{\ge k}(\F_2^n)\implies \forall t\in (0,\infty),\quad \left\|e^{-t\Delta}f\right\|_{L_p(\F_2^n)}\le e^{-c_p k t}\|f\|_{L_p(\F_2^n)}?
\end{equation}
\end{enumerate}
As some evidence for~\eqref{eq:conjectured linear heat}, P. Cattiaux  proved (private
communication) the case $k=1$, $p=4$ of~\eqref{eq:conjectured linear heat} when the heat
semigroup on $\F_2^n$ is replaced by the Ornstein-Uhlenbeck semigroup on
$\R^n$. Specifically, let $\gamma_n$ be the standard Gaussian
measure on $\R^n$ and consider the Ornstein-Uhlenbeck operator
$L=\Delta - x\cdot \nabla$. Cattiaux proved that there exists a
universal constant $c\in (0,\infty)$ such that for every $f\in
L_4(\gamma_n,\R)$ and every $t\in (0,\infty)$,
\begin{equation}\label{eq:catt}
\int_{\R^n} f d\gamma_n=0\implies
\left\|e^{-tL}f\right\|_{L_4(\gamma_n)}\le
e^{-ct}\|f\|_{L_4(\gamma_n)}. \end{equation}
We shall now present a sketch of Cattiaux's proof of~\eqref{eq:catt}. By differentiating at $t=0$, integrating by parts, and using the semigroup property, one sees that~\eqref{eq:catt} is equivalent to the following assertion.
\begin{equation}\label{eq:catt differentiated}
\int_{\R^n} f d\gamma_n=0\implies \int_{\R^n} f^4d\gamma_n\lesssim \int_{\R^n}f^2\|\nabla f\|_2^2d\gamma_n.
\end{equation}
The Gaussian Poincar\'e inequality (see~\cite{BU83,Led01}) applied
to $f^2$ implies that
$$
\int_{\R^n} f^4d\gamma_n-\left(\int_{\R^n}f^2d\gamma_n\right)^2\lesssim \int_{\R^n} f^2\left\|\nabla f\right\|_2^2d\gamma_n.
$$
The desired inequality~\eqref{eq:catt differentiated} would therefore follow from
\begin{equation}\label{eq:catt desired new}
\int_{\R^n} f d\gamma_n=0\implies\left(\int_{\R^n}f^2d\gamma_n\right)^2\lesssim \int_{\R^n} f^2\left\|\nabla f\right\|_2^2d\gamma_n.
\end{equation}
Fix $M\in (0,\infty)$ that will be determined later. Define $\phi_M:\R\to \R$ by
\begin{equation}\label{eq:def phiM}
\phi_M(x)\eqdef \left\{\begin{array}{ll} 0&\mathrm{if\ } |x|\le M,\\
2(x-M)&\mathrm{if\ } x\in [M,2M],\\
2(x+M)&\mathrm{if\ } x\in [-2M,-M],\\
x&\mathrm{if\ } |x|\ge 2M.
\end{array}\right.
\end{equation}
Since $|\phi'|\le 2$, an application of the Gaussian Poincar\'e inequality to $\phi\circ f$ yields the estimate
\begin{equation}\label{eq:poincare composed}
\int_{\R^n} (\phi\circ f)^2d\gamma_n-\left(\int_{\R^n}\phi\circ f d\gamma_n\right)^2\stackrel{\eqref{eq:def phiM}}{\lesssim} \int_{\{|f|\ge M\}} \left\|\nabla f\right\|_2^2d\gamma_n.
\end{equation}
Now,
\begin{equation}\label{eq:lower composed}
\int_{\R^n} (\phi\circ f)^2d\gamma_n\stackrel{\eqref{eq:def phiM}}{\ge} \int_{\{|f|\ge 2M\}}f^2d\gamma_n\ge \int_{\R^n} f^2d\gamma_n-4M^2.
\end{equation}
Also,
\begin{equation}\label{eq:upper on M set}
\int_{\{|f|\ge M\}} \left\|\nabla f\right\|_2^2d\gamma_n\le \frac{1}{M^2}\int_{\R^n} f^2\|\nabla f\|_2^2d\gamma_n.
\end{equation}
If in addition $\int_{\R^n}fd\gamma_n=0$ then
\begin{equation}\label{eq:use expectation 0}
\left|\int_{\R^n}\phi\circ fd\gamma_n\right|=\left|\int_{\R^n}(\phi\circ f-f)d\gamma_n\right|\stackrel{\eqref{eq:def phiM}}{=}\left|\int_{\{|f|\le 2M\}}(\phi\circ f-f)d\gamma_n\right|
\le 4M.
\end{equation}
Hence, by \eqref{eq:poincare composed}, \eqref{eq:lower composed}, \eqref{eq:upper on M set} and~\eqref{eq:use expectation 0},
\begin{equation}\label{eq:to optimize M}
\int_{\R^n} f d\gamma_n=0\implies \int_{\R^n}f^2d\gamma_n\lesssim M^2+\frac{1}{M^2}\int_{\R^n} f^2\|\nabla f\|_2^2d\gamma_n.
\end{equation}
The optimal choice of $M$ in~\eqref{eq:to optimize M} is
$$
M=\left(\int_{\R^n} f^2\|\nabla f\|_2^2d\gamma_n\right)^{1/4},
$$
yielding the desired inequality~\eqref{eq:catt desired new}. It would be interesting to generalize the above argument so as to extend~\eqref{eq:catt} to the
setting of functions in all the Hermite tail spaces $\{L_p^{\ge
k}(\gamma_n,\R)\}_{k\in \N}$ (i.e., functions whose Hermite coefficients of
degree less than $k$ vanish).
\end{remark}

\subsection{Proof of Theorem~\ref{thm:AB}}\label{sec:decay heat}
For every $m\in \{1,\ldots,n\}$ consider the level-$m$ Rademacher projection given by
\[ \Rad_m (f)\eqdef \sum_{\substack{A\subseteq \{1,\ldots,n\}\\ |A|=m}} \hat f(A) W_A .\]
Thus $\Rad_1=\Rad$ and for every $z\in \C$ we have
$$
e^{z\Delta}=\sum_{m=0}^n e^{zm}\Rad_m.
$$
We shall
use the following deep theorem of Pisier~\cite{Pisier-K-convex}.

\begin{theorem}[Pisier]\label{thm:Pis K-conv}
For  every $K,p\in(1,\infty)$ there exist $\phi=\phi(K,p)\in (0,\pi/4)$ and $M=M(K,p)\in (2,\infty)$
such that for every Banach space $X$ satisfying $K(X)\le K$, $n\in \N$ and $z\in \C$, we have
\begin{equation}\label{eq:holomorphic extension sector}
 |\arg z|\le \phi\implies \left\|e^{-z\Delta}\right\|_{L_p(\F_2^n,X)\to L_p(\F_2^n,X)}\le M.
\end{equation}
\end{theorem}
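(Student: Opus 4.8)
\emph{Step 1: reduction to a dimension-free bound on the generator.} The plan is to deduce \eqref{eq:holomorphic extension sector} from the single estimate
\[
\sup_{t>0}\big\|t\Delta\, e^{-t\Delta}\big\|_{L_p(\F_2^n,X)\to L_p(\F_2^n,X)}\ \le\ C(K,p),
\]
valid for all $n\in\N$ and all $X$ with $K(X)\le K$, where $C(K,p)$ depends only on $K$ and $p$. Two observations reduce the theorem to this. First, \eqref{eq:primal semigroup} with $z=-t$ ($t>0$) exhibits $e^{-t\Delta}$ as a convex combination $\E[\tau_{N_t}]$ of the translation isometries $\tau_\delta f(x)\eqdef f(x+\delta)$ of $L_p(\F_2^n,X)$, where $N_t$ is a random point of $\F_2^n$; hence $\sup_{t>0}\|e^{-t\Delta}\|_{L_p(\F_2^n,X)\to L_p(\F_2^n,X)}\le 1$ for \emph{every} Banach space $X$, with no geometric hypothesis. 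Second, a standard fact of analytic semigroup theory: a uniformly bounded semigroup whose generator $\Delta$ satisfies $\sup_{t>0}\|t\Delta e^{-t\Delta}\|\le C$ extends holomorphically to a sector $\{|\arg z|\le\phi\}$ on which it is bounded by some $M$, with $\phi$ and $M$ functions of $C$ alone (and one may then decrease $\phi$ below $\pi/4$ and enlarge $M$ beyond $2$ at no cost). Since everything in the generator estimate is uniform in $n$, so are the resulting $\phi=\phi(K,p)$ and $M=M(K,p)$, which is exactly \eqref{eq:holomorphic extension sector}.

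\emph{Step 2: the generator bound through $K$-convexity.} Expand $t\Delta e^{-t\Delta}=\sum_{m\ge 1}tm\,e^{-tm}\Rad_m$. The coefficient sequence has $\ell_1$-norm $\sum_{m\ge 1}tm\,e^{-tm}=t\,e^{-t}(1-e^{-t})^{-2}\asymp 1/t$, which blows up as $t\to 0^+$, so the triangle inequality in $m$ gives no uniform bound: genuine cancellation among the level projections $\Rad_m$ must be used. The input is $K$-convexity in its operative form — Pisier's theorem that the hypothesis $\sup_n\|\Rad_1\|_{L_2(\F_2^n,X)\to L_2(\F_2^n,X)}\le K$ upgrades, uniformly in $n$, to a bound on $\|\Rad_1\|_{L_p(\F_2^n,X)\to L_p(\F_2^n,X)}$ depending only on $K$ and $p$, for every $p\in(1,\infty)$. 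From this I would obtain the generator estimate by an analytic-interpolation argument: embed $t\Delta e^{-t\Delta}$ into an analytic family $\Phi_w=\sum_m\phi_w(m)\Rad_m$ of Rademacher multipliers indexed by $w$ in a strip, arranged so that on one edge of the strip $\Phi_w$ collapses to a scalar multiple of a single Rademacher projection (controlled by the previous sentence) while on the other edge only the trivial orthogonality bound $\|\sum_m\mu_m\Rad_m\|_{L_2\to L_2}=\sup_m|\mu_m|$ is needed; then invoke Stein's complex interpolation theorem, using that the symbol $m\mapsto tm\,e^{-tm}$ is bounded with variation controlled uniformly in $t$. In effect this is a dimension-free Littlewood--Paley / Marcinkiewicz multiplier theorem for the decomposition $I=\sum_m\Rad_m$ on $K$-convex spaces.

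\emph{Main obstacle.} The real difficulty is the passage in Step~2 from the \emph{single} scalar-type inequality ``$K(X)\le K$'' to control of the whole operator family, with all constants \emph{independent of the dimension $n$}. The seemingly natural route — establish the two-point ($n=1$) estimate and tensorize — fails precisely here: the two-point constant is in general larger than $1$ once $z$ leaves the positive real axis, and its $n$-fold tensor power then diverges, so the interpolation must instead be performed globally, in a manner that never incurs a dimensional loss. This dimension-free interpolation is the substance of Pisier's $K$-convexity theorem~\cite{Pisier-K-convex}, and it is the genuinely hard ingredient; granting it, Step~1 and the deduction of the generator bound from the multiplier theorem are essentially formal.
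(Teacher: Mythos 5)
The paper does not prove this statement: it is Pisier's theorem, imported as a black box from~\cite{Pisier-K-convex} (with the remark that explicit $M,\phi$ appear in~\cite{Mau03}). So there is no internal proof for your proposal to be measured against; what I can do is assess the sketch on its own terms, and there is a genuine gap in Step~2.

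The anchor estimate you invoke on one edge of the strip — ``the trivial orthogonality bound $\bigl\|\sum_m\mu_m\Rad_m\bigr\|_{L_2(\F_2^n,X)\to L_2(\F_2^n,X)}=\sup_m|\mu_m|$'' — is false for a general Banach space $X$. Orthogonality of the levels $\Rad_m$ in $L_2(\F_2^n,X)$ holds only when $X$ is a Hilbert space (or, after renorming, admits an equivalent inner product). For $X=\ell_1$, for instance, already the single projection $\Rad_1=\Rad$ fails to be bounded on $L_2(\F_2^n,\ell_1)$ uniformly in $n$; this is precisely why $\ell_1$ is not $K$-convex and why the theorem restricts to $K$-convex targets. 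Thus your proposed Stein interpolation has one endpoint controlled by $K$-convexity and the other endpoint simply not available, and the argument does not close. Note also that the only Fourier-multiplier estimate that really is free of geometric hypotheses on $X$ is positivity: $\|e^{-t\Delta}\|_{L_p(\F_2^n,X)\to L_p(\F_2^n,X)}\le 1$ for $t\ge 0$ because the Riesz kernel~\eqref{eq:def:riesz} is a probability density — but once one multiplies by $t\Delta$ (or moves $z$ off the positive axis) that positivity is lost, and no norm bound comes for free.

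More fundamentally, the structure you describe is circular. You are using as an ``input'' that $\sup_n\|\Rad\|_{L_p(\F_2^n,X)\to L_p(\F_2^n,X)}$ is controlled by $K(X)$ and $p$, and then claiming that this upgrades to the sector bound by a ``dimension-free Littlewood--Paley/Marcinkiewicz multiplier theorem'' whose substance is, as you say yourself, Pisier's $K$-convexity theorem. But the theorem being proved \emph{is} the main technical lemma of Pisier's $K$-convexity theorem. In fact the logical direction in the paper runs the other way: after stating Theorem~\ref{thm:Pis K-conv} as a black box, the paper \emph{derives} the bound $\|\Rad_m\|_{L_p(\F_2^n,X)\to L_p(\F_2^n,X)}\le Me^{am}$ (equation~\eqref{eq:higher rad bound}) from the sector estimate by a Cauchy integral over the vertical segment $\Re z=a$, not vice versa. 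Step~1 of your proposal (that a uniformly bounded semigroup with $\sup_{t>0}\|t\Delta e^{-t\Delta}\|<\infty$ is bounded and holomorphic in a sector with angle and constant depending only on that bound) is a correct and standard reduction; the difficulty is that establishing the generator bound from $K(X)\le K$ alone, uniformly in $n$, is precisely the hard part, and the mechanism you propose for it does not work as written.
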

One can give explicit bounds on $M,\phi$ in terms of $p$ and $K$; see \cite{Mau03}. We will
require the following standard corollary of Theorem~\ref{thm:Pis
K-conv}. Define
$$
a=\frac{\pi}{\tan \phi},
$$ so that all the points in the
open segment joining $a-i\pi$ and $a+i\pi$ have argument at most
$\phi$. Then
\begin{equation}\label{eq:higher rad bound}
\left\|\Rad_m\right\|_{L_p(\F_2^n,X)\to L_p(\F_2^n,X)}\le M e^{am}.
\end{equation}
Indeed,
\begin{equation*}
\frac{1}{2\pi} \int_{-\pi}^\pi e^{imt} e^{-(a+it)\Delta} dt = \frac{1}{2\pi}
\int_{-\pi}^\pi e^{imt} \sum_{k=0}^n e^{-(a+it)k} \Rad_k dt=e^{-ma}\Rad_m.
\end{equation*}
Now~\eqref{eq:higher rad bound} is deduced by convexity as follows.
\begin{equation*}
 \left\|\Rad_m\right\|_{L_p(\F_2^n,X)\to L_p(\F_2^n,X)}
 \le \frac{e^{ma}}{2\pi} \int_{-\pi}^\pi
 \left\|e^{-(a+it)\Delta}\right\|_{L_p(\F_2^n,X)\to L_p(\F_2^n,X)}
dt \le Me^{ma} .  \end{equation*}

It follows that
\begin{equation}\label{eq:big real}
\Re z\ge 2a\implies \left\|e^{-z\Delta}\right\|_{L_p^{\ge k}(\F_2^n,X)\to L_p^{\ge k}(\F_2^n,X)}\le \frac{M}{1-e^{-a}}e^{-k\Re z/2}\le \frac{M}{1-e^{-a}}e^{-ka}.
\end{equation}
Indeed,
 \begin{multline*}
 \left\|e^{-z\Delta}\right\|_{L_p^{\ge k}(\F_2^n,X)\to L_p^{\ge k}(\F_2^n,X)}
 =
 \left\|\sum_{m=k}^n e^{-zm} \Rad_m \right\|_{L_p^{\ge k}(\F_2^n,X)\to L_p^{\ge k}(\F_2^n,X)} \\
 \stackrel{\eqref{eq:higher rad bound}}{\le}
 \sum_{m=k}^n e^{-m\Re z}Me^{am}\le M\sum_{m=k}^n e^{-m\Re z/2} = \frac{M}{1-e^{-\Re z/2}} e^{-k\Re z/2}\le \frac{M}{1-e^{-a}}e^{-k\Re z/2}.
  \end{multline*}

The ensuing argument is a quantitative variant of the proof of the
main theorem of Pisier in~\cite{pisier-2007}. Let
$$
r\eqdef 2\sqrt{a^2+\pi^2},
$$ and define
$$
V\eqdef\left\{z\in \mathbb C:\ |z|\le r\ \wedge\ |\arg z|\le \phi\right\}.
$$
The set $V\subseteq \mathbb C$ is depicted in Figure~\ref{fig:triangle}.

 \begin{figure}[ht]
 \begin{center}
   \includegraphics{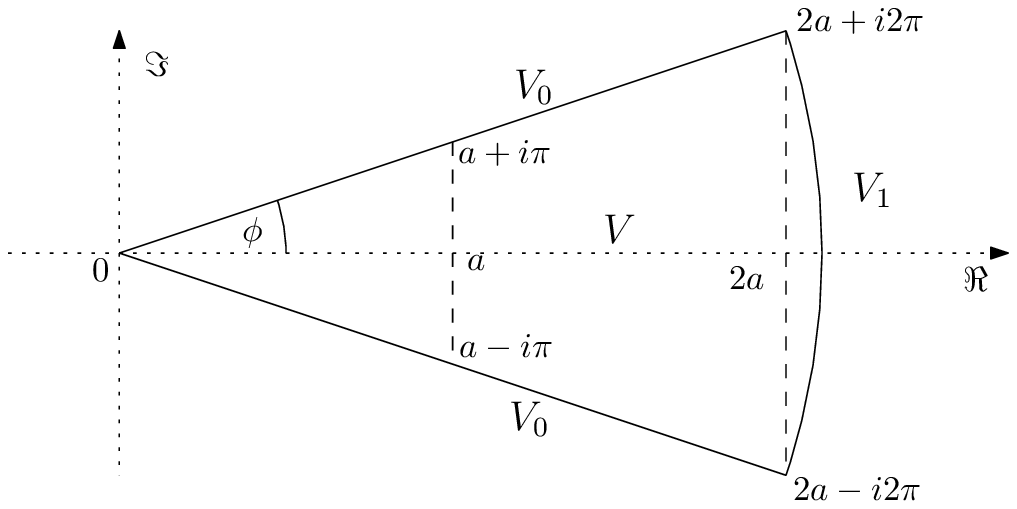}
 \end{center}
 \caption{The sector $V\subseteq \C$.}
 \label{fig:triangle}
 \end{figure}
Denote
 $$
 V_0\eqdef\left\{ x\pm i x\tan \phi:\; x\in[0,2a) \right\},
 $$ and
 $$V_1\eqdef\left\{re^{i\theta}:\; |\theta|\le \phi\right\},
 $$ so that we have the disjoint union $\partial V=V_0\cup V_1$.

Fix $t\in(0,2a)$. Let $\mu_t$ be the harmonic measure corresponding to $V$ and $t$, i.e., $\mu_t$ is the Borel probability measure  on $\partial V$
such that for every bounded analytic function $f:V\to \C$ we have
\begin{equation}\label{eq:harmonic}
f(t) =\int_{\partial V} f(z) d\mu_t(z) .
\end{equation}
We refer to~\cite{GM08} for more information on this topic and the
ensuing discussion. For concreteness, it suffices to recall here that for every Borel set
$E\subseteq
\partial V$ the number $\mu_t(E)$ is the probability that the standard $2$-dimensional Brownian motion
starting at $t$ exits $V$ at $E$. Equivalently, by conformal
invariance, $\mu_t$ is the push-forward of the normalized Lebesgue
measure on the unit circle $S^1$ under the Riemann mapping from the
unit disk to $V$ which takes the origin to $t$.

Denote $$
\theta_t\eqdef\mu_t(V_1),
$$ and write
\begin{equation}\label{eq:break mu_t}
\mu_t=(1-\theta_t)\mu_t^0+\theta_t
\mu_t^1,
\end{equation}
where $\mu_t^0,\mu_t^1$ are probability measures on $V_0,V_1$, respectively. We will use the following bound on $\theta_t$, whose proof is standard.
\begin{lemma}\label{lem:theta bound} For every $t\in (0,2a)$ we have
\begin{equation}\label{eq:theta lower}
\theta_t\ge \frac12 \left(\frac{t}{r}\right)^{\frac{\pi}{2\phi}}.
\end{equation}
\end{lemma}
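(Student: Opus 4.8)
The plan is to produce an explicit harmonic minorant of the harmonic measure of $V_1$ and then invoke the maximum principle. Recall first that $\theta_t=\mu_t(V_1)$ is precisely $\omega(t,V_1,V)$, the value at $t$ of the bounded harmonic function on the interior of $V$ whose boundary values equal $\mathbf 1_{V_1}$ off the two corner points $re^{\pm i\phi}$. Set $\alpha\eqdef \pi/(2\phi)$; since $\phi\in(0,\pi/4)$ we have $\alpha>2$, and because every $z\in V$ satisfies $|\arg z|\le\phi<\pi/2$, the principal branch of $z\mapsto z^\alpha$ is holomorphic on the interior of $V$ and continuous on all of $V$. Hence $u(z)\eqdef\Re(z^\alpha)/r^\alpha$ defines a bounded harmonic function on the interior of $V$, continuous up to the boundary.

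Next I would record the boundary behaviour of $u$. On $V_1=\{re^{i\theta}:|\theta|\le\phi\}$ one has $z^\alpha=r^\alpha e^{i\alpha\theta}$ with $\alpha\theta\in[-\pi/2,\pi/2]$, so $u(z)=\cos(\alpha\theta)\in[0,1]$, hence $u\le\mathbf 1_{V_1}$ there. Writing a point of $V_0$ as $\rho e^{\pm i\phi}$ with $\rho\in[0,r)$ (using the identity $2a\sec\phi=r$), we get $z^\alpha=\pm i\,\rho^\alpha$, so $\Re(z^\alpha)=0$ and $u=0=\mathbf 1_{V_1}$ there. Thus $u\le \mathbf 1_{V_1}$ on $\partial V=V_0\cup V_1$ away from the two corner points. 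Since a finite set has zero harmonic measure, the maximum principle for bounded harmonic functions gives $u\le\omega(\cdot,V_1,V)$ throughout the interior of $V$. Evaluating at $t\in(0,2a)\subseteq(0,r)$, where $t^\alpha>0$, yields
\begin{equation*}
\theta_t=\omega(t,V_1,V)\ge u(t)=(t/r)^{\alpha}=(t/r)^{\pi/(2\phi)}\ge \frac12(t/r)^{\pi/(2\phi)},
\end{equation*}
which is in fact stronger than~\eqref{eq:theta lower}.

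There is essentially no difficult step here; the only point that needs a word of care — the ``main obstacle'', such as it is — is the application of the maximum principle against the boundary data $\mathbf 1_{V_1}$, which is discontinuous at the two corners of the sector. This is handled by the standard fact that a bounded subharmonic function which is $\le 0$ on $\partial V$ off a set of zero harmonic measure (in particular off a finite set) is $\le 0$ inside $V$. Alternatively, one can first apply the conformal map $z\mapsto z^\alpha$, which sends $V$ onto the half-disc $\{\Re w>0,\ |w|<r^\alpha\}$, $V_1$ onto the bounding semicircle, $V_0$ onto the diameter, and $t$ onto $t^\alpha$; the inequality then follows by comparing the harmonic measure of the semicircle with the harmonic function $w\mapsto \Re(w)/r^\alpha$, which is a verbatim version of the computation above.
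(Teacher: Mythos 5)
Your proof is correct, and it takes a genuinely different route from the paper's. The paper composes an explicit chain of four conformal equivalences $V\to\mathbb D_+\to\mathbb Q_+\to\mathbb H_+\to\mathbb D$, post-composes with a M\"obius map sending $t$ to $0$, and then bounds $\theta_t$ by computing the arc-length of the image of $V_1$, which ultimately requires checking the elementary inequality $\frac{1}{\pi}\arcsin\bigl(\tfrac{4s(1-s^2)}{(1+s^2)^2}\bigr)\ge\frac{s}{2}$ for $s<\sqrt2-1$. You instead exhibit a direct harmonic minorant $u(z)=\Re(z^\alpha)/r^\alpha$ with $\alpha=\pi/(2\phi)$, observe that $u\le\mathbf 1_{V_1}$ on $\partial V$ (indeed $u=0$ on $V_0$, $u\in[0,1]$ on $V_1$, and even at the two corners $u=0\le 1=\mathbf 1_{V_1}$, so there is nothing exceptional to remove), and invoke the extended maximum principle for bounded harmonic functions. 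This is shorter, avoids the explicit arc-length computation and the accompanying calculus lemma, and in fact yields the stronger bound $\theta_t\ge(t/r)^{\pi/(2\phi)}$, without the factor $\tfrac12$. The paper's computation, on the other hand, gives an exact closed form for $\theta_t$ and a marginally better constant in the range $s\in(\sqrt2-1,\tfrac12)$, but neither of these is used. One small remark: your identity $r=2a\sec\phi$ is correct (both equal $2\pi/\sin\phi$), and you use it correctly to parametrize $V_0$ as $\{\rho e^{\pm i\phi}:\rho\in[0,r)\}$; since the corners belong to $V_1$ in the paper's convention, the continuity of $u$ at the corners together with $u(re^{\pm i\phi})=\cos(\pm\pi/2)=0$ means that, as you note, the only genuine use of a ``zero harmonic measure'' clause is to handle the discontinuity of $\omega(\cdot,V_1,V)$ itself at the corners, not of $u$.
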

\begin{proof}
This is  an exercise in conformal invariance. Let $\mathbb D=\{z\in \C:\ |z|\le 1\}$ denote the unit disk centered at the origin, and let $\mathbb D_+$ denote the intersection of $\mathbb D$ with the right half plane $\{z\in \mathbb C:\ \Re z\ge 0\}$. The mapping $h_1:V\to \mathbb D_+$ given by
$$
h_1(z)\eqdef \left(\frac{z}{r}\right)^{\frac{\pi}{2\phi}}
$$
is a conformal equivalence between $V$ and $\mathbb D_+$. Let $\mathbb Q_+=\{x+iy:\ x,y\in [0,\infty)\}$ denote the positive quadrant. The M\"obius transformation $h_2:\mathbb D_+\to \mathbb Q_+$ given by
$$
h_2(z)\eqdef -i\cdot\frac{z+i}{z-i}
$$
is a conformal equivalence between $\mathbb D_+$ and $\mathbb Q_+$. The mapping $h_3(z)\eqdef z^2$ is a conformal equivalence between $\mathbb Q_+$ and the upper half-plane $\mathbb H_+=\{z\in \C:\ \Im(z)\ge 0\}$. Finally, the M\"obius transformation
 $$
 h_4(z)\eqdef \frac{z-i}{z+i}
  $$
  is a conformal equivalence between $\mathbb H_+$ and $\mathbb D$. By composing these mappings, we obtain the following conformal equivalence between $V$ and $\mathbb D$.
$$
F(z)\eqdef (h_4\circ h_3\circ h_2\circ h_1)(z)=\frac{-\left(\left(\frac{z}{r}\right)^{\frac{\pi}{2\phi}}+i\right)^2
-i\left(\left(\frac{z}{r}\right)^{\frac{\pi}{2\phi}}-i\right)^2}
{-\left(\left(\frac{z}{r}\right)^{\frac{\pi}{2\phi}}+i\right)^2+i
\left(\left(\frac{z}{r}\right)^{\frac{\pi}{2\phi}}-i\right)^2}.
$$
Therefore, the mapping $G:V\to \mathbb D$ given by
$$
G(z)\eqdef \frac{F(z)-F(t)}{1-\overline{F(t)}F(z)}
$$
is a conformal equivalence between $V$ and $\mathbb D$ with $G(t)=0$.

By conformal invariance, $\theta_t$ is the length of the arc $G(V_1)\subseteq \partial \mathbb D=S^1$, divided by $2\pi$. Writing $s=h_1(t)=(t/r)^{\pi/(2\phi)}\in (0,1)$, we have
$$
G(2a+i2\pi)=\frac{-4s(s^2-1)-i\left((s^2-1)^2-4s^2\right)}{(s^2+1)^2},
$$
and
$$
G(2a-i2\pi)=\frac{4s(s^2-1)-i\left((s^2-1)^2-4s^2\right)}{(s^2+1)^2}.
$$
It follows that if $s\ge \sqrt{2}-1$ then $\theta_t\ge \frac12$, and if $s<\sqrt{2}-1$ then
\begin{equation}\label{eq:elementary arcsin}
\theta_t=\frac{1}{\pi}\arcsin\left(\frac{4s(1-s^2)}{(s^2+1)^2}\right)\ge \frac{s}{2},
\end{equation}
where the rightmost inequality in~\eqref{eq:elementary arcsin} follows from elementary calculus.
\end{proof}

\begin{lemma}\label{lem:strip}
For every $\e\in (0,1)$ there exists a bounded analytic function  $\Psi_\e^t:V\to \mathbb C$ satisfying
\begin{itemize}
\item
$\Psi_\e^t(t)=1$,
\item  $|\Psi_\e^t(z)|= \e$ for every $z\in V_0$,
\item $|\Psi_\e^t(z)|= \frac{1}{\e^{(1-\theta_t)/\theta_t}}$ for every $z\in V_1$.
\end{itemize}
\end{lemma}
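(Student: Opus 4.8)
The plan is to construct $\log|\Psi_\e^t|$ as the bounded harmonic function on $V$ whose boundary values are the constant $\log\e$ on $V_0$ and the constant $-\frac{1-\theta_t}{\theta_t}\log\e$ on $V_1$; the point is that the normalization $\theta_t=\mu_t(V_1)$ is exactly what forces this harmonic function to vanish at the point $t$. First note that $\theta_t\in(0,1)$, since $t$ is an interior point of $V$ and each of $V_0$, $V_1$ carries positive harmonic measure. Hence the boundary datum $g\colon\partial V\to\R$ given by $g\equiv\log\e$ on $V_0$ and $g\equiv-\frac{1-\theta_t}{\theta_t}\log\e$ on $V_1$ is well defined and bounded, and it is continuous on $\partial V$ except at the two vertices $re^{\pm i\phi}$ where $V_0$ and $V_1$ meet (in particular $g$ is continuous at $0$, where the two segments comprising $V_0$ both carry the value $\log\e$).

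Next I would take $u\colon V\to\R$ to be the harmonic extension of $g$, written through harmonic measure as $u(z)=\int_{\partial V}g\,d\mu_z$. Since each $\mu_z$ is a probability measure, $u$ takes values in $\bigl[\log\e,\,-\frac{1-\theta_t}{\theta_t}\log\e\bigr]$ and is in particular bounded, and by the classical boundary behaviour of Poisson integrals on a Jordan domain (see \cite{GM08}) $u$ extends continuously to every point of $\partial V$ at which $g$ is continuous. Thus $u\equiv\log\e$ on $V_0$ and $u\equiv-\frac{1-\theta_t}{\theta_t}\log\e$ on $V_1$, the only exceptions being the two vertices $re^{\pm i\phi}$, which carry no harmonic measure and will be harmless later. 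Since $V$ is simply connected, $u$ admits a single-valued harmonic conjugate $v$; I then set $F\eqdef u+iv$ and $\Psi_0\eqdef e^{F}$, which is analytic on $V$ with $|\Psi_0|=e^{u}$, hence bounded, of modulus $\e$ on $V_0$, and of modulus $\e^{-(1-\theta_t)/\theta_t}=1/\e^{(1-\theta_t)/\theta_t}$ on $V_1$.

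It then remains only to normalize at $t$. Using the splitting $\mu_t=(1-\theta_t)\mu_t^0+\theta_t\mu_t^1$ from~\eqref{eq:break mu_t} and the fact that $g$ is constant on $V_0$ and on $V_1$, one computes
\[
u(t)=\int_{\partial V}g\,d\mu_t=(1-\theta_t)\log\e+\theta_t\left(-\frac{1-\theta_t}{\theta_t}\log\e\right)=0,
\]
so that $|\Psi_0(t)|=1$ and $\Psi_0(t)=e^{i\alpha}$ for some $\alpha\in\R$; the desired function is then $\Psi_\e^t\eqdef e^{-i\alpha}\Psi_0$. Equivalently, one may transport everything to the unit disk via the conformal equivalence $G\colon V\to\mathbb D$ with $G(t)=0$ exhibited in the proof of Lemma~\ref{lem:theta bound} — where $\theta_t$ was identified with the normalized length of the arc $G(V_1)$ — and take $\Psi_\e^t=\Phi\circ G$ with $\Phi$ the exponential of the Herglotz integral on $S^1$ of the two-valued function equal to $\log\e$ on $G(V_0)$ and $-\frac{1-\theta_t}{\theta_t}\log\e$ on $G(V_1)$, whose mean over $S^1$ vanishes precisely by the choice of the exponent $(1-\theta_t)/\theta_t$.

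I do not expect a serious obstacle: the construction is standard, and the only steps that need care are (i) the one-line arithmetic identity giving $u(t)=0$, where the precise value of $|\Psi_\e^t|$ on $V_1$ and the definition $\theta_t=\mu_t(V_1)$ enter, and (ii) the boundary-regularity bookkeeping, i.e.\ confirming that the harmonic extension of the piecewise-constant datum $g$ actually attains its constant boundary moduli at every point of $V_0$ (including the convex corner at $0$) and of $V_1$, the two vertices $re^{\pm i\phi}$ being the only — and, in the later application, irrelevant — exceptions.
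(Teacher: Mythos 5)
Your proposal is correct and is essentially the paper's own argument in unpacked form: the paper (following Pisier) uses a conformal equivalence $h\colon V\to S=\{0\le\Re z\le1\}$ with $h(V_0)=S_0$, $h(V_1)=S_1$, $h(t)=\theta_t$ and sets $\Psi_\e^t=\e^{1-h/\theta_t}$, and your harmonic function $u$ is precisely $\log\e\cdot\bigl(1-\Re h/\theta_t\bigr)$, so the two constructions coincide up to the final phase normalization (which the strip parametrization renders unnecessary). The only content in either case is the identity $u(t)=0$, which you correctly trace to the definition $\theta_t=\mu_t(V_1)$ together with the exponent $(1-\theta_t)/\theta_t$ on $V_1$.
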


\begin{proof} The proof is the same as the proof of Claim 2 in~\cite{pisier-2007}. We sketch it briefly for the sake of completeness. Consider the strip $S=\{z\in \mathbb C:\ \Re(z)\in [0,1]\}$ and for $j\in \{0,1\}$ let $S_j=\{z\in \mathbb C:\ \Re(z)=j\}$. As explained in~\cite[Claim~1]{pisier-2007}, there exists a conformal equivalence $h:V\to S$ such that $h(t)=\theta_t$,  $h(V_0)=S_0$ and $h(V_1)=S_1$. Now define $\Psi_\e(z)\eqdef \e^{1-\frac{h(z)}{\theta_t}}.$
\end{proof}

\begin{proof}[Proof of Theorem~\ref{thm:AB}] Take $t\in (0,\infty)$. If $t\ge 2a$ then by~\eqref{eq:big real} we have
\begin{equation}\label{eq:bigger than 2a}
\left\|e^{-t\Delta}\right\|_{L_p^{\ge k}(\F_2^n,X)\to L_p^{\ge k}(\F_2^n,X)}\le \frac{M}{1-e^{-a}}e^{-kt/2}.
\end{equation}
Suppose therefore that $t\in (0,2a)$. Fix $\e\in (0,1)$ that will be determined later, and let $\Psi_\e^t$ be the function from Lemma~\ref{lem:strip}. Then
\begin{multline}\label{eq:harmonic identity}
e^{-t\Delta} =\Psi_\e^t(t)e^{-t\Delta}\stackrel{\eqref{eq:harmonic}}{=} \int_{\partial V} \Psi_\e^t(z) e^{-z\Delta} d \mu_t(z)\\
\stackrel{\eqref{eq:break mu_t}}{=} (1-\theta_t) \int_{V_0}\Psi_\e^t(z) e^{-z\Delta} d\mu^0_t(z) +
\theta_t \int_{V_1} \Psi_\e^t(z) e^{-z\Delta} d\mu^1_t(z).
\end{multline}
Hence, using~\eqref{eq:harmonic identity} in combination with Lemma~\ref{lem:strip}, Theorem~\ref{thm:Pis K-conv} and~\eqref{eq:big real}, we deduce that
\begin{multline}\label{eq:before eps choice}
\left\|e^{-t\Delta}\right\|_{L_p^{\ge k}(\F_2^n,X)\to L_p^{\ge k}(\F_2^n,X)}\le (1-\theta_t)\e M+\frac{\theta_t}{\e^{(1-\theta_t)/\theta_t}}\cdot \frac{Me^{-ka}}{1-e^{-a}}\\
\stackrel{\eqref{eq:theta lower}}{\le} \e M +\frac{M e^{-ka}}{1-e^{-a}}\cdot \frac{1}{\e^{2(r/t)^{\pi/(2\phi)}-1}}.
\end{multline}
We now choose
$$
\e=\exp\left(-\frac12\left(\frac{t}{r}\right)^{\frac{\pi}{2\phi}}ka\right),
$$
in which case~\eqref{eq:before eps choice} completes the proof of Theorem~\ref{thm:AB}, with $B(K,p)=\frac{\pi}{2\phi}$.
\end{proof}

\subsection{Proof of Theorem~\ref{thm:K-convec char}}\label{sec:K-convex char}

The elementary computation contained in Lemma~\ref{lem:normalized delta} below will be useful in ensuing considerations.
\begin{lemma}\label{lem:normalized delta}
Define $f_n:\F_2^n\to L_1(\F_2^n)$ by
\begin{equation}\label{eq:def normalized delta}
f_n(x)(y)=2^n\1_{\{x=y\}}-1.
\end{equation}
Then $f_n\in L_p^{\ge 1}(\F_2^n,L_1(\F_2^n))$, yet for every $t\in (0,\infty)$  we have
\begin{equation}\label{eq:limit ratio}
\lim_{n\to \infty} \frac{\left\|e^{-t\Delta}f_n\right\|_{L_p(\F_2^n,L_1(\F_2^n))}}{\|f_n\|_{L_p(\F_2^n,L_1(\F_2^n))}}=1,
\end{equation}
where the limit in~\eqref{eq:limit ratio} is uniform in $p\in [1,\infty)$.
\end{lemma}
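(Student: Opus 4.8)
The plan is to compute everything explicitly using the fact that $f_n$ is essentially the sum over $x$ of Kronecker deltas minus the constant, and that $L_1$ norms of the relevant functions can be evaluated by hand. First I would verify that $f_n\in L_p^{\ge 1}$: from~\eqref{eq:def normalized delta} one has $f_n(x)(y)=2^n\1_{\{x=y\}}-1=2^n\delta_y(x)-1$, so as a function of $x$ (with $y$ fixed as a coordinate of the $L_1$-value) the Fourier expansion is $f_n(\cdot)(y)=2^n\sum_{A}\hat{\delta_y}(A)W_A(\cdot)-1$, and subtracting the constant kills exactly the $A=\emptyset$ term; hence $\hat{f_n}(\emptyset)=0$, i.e.\ $f_n\in L_p^{\ge 1}(\F_2^n,L_1(\F_2^n))$ for every $p$. (Equivalently, $\sum_{x}f_n(x)=0$ in $L_1(\F_2^n)$.)

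Next I would compute the denominator. For fixed $x$, $\|f_n(x)\|_{L_1(\F_2^n)}=\frac{1}{2^n}\sum_{y}\left|2^n\1_{\{x=y\}}-1\right|=\frac{1}{2^n}\bigl((2^n-1)+(2^n-1)\bigr)=2\cdot\frac{2^n-1}{2^n}=2(1-2^{-n})$. This is independent of $x$, so $\|f_n\|_{L_p(\F_2^n,L_1(\F_2^n))}=2(1-2^{-n})$ for every $p\in[1,\infty)$. For the numerator I would use~\eqref{eq:primal semigroup}: $e^{-t\Delta}f_n(x)=\sum_{w}\left(\frac{1-e^{-t}}{2}\right)^{\|x-w\|_1}\left(\frac{1+e^{-t}}{2}\right)^{n-\|x-w\|_1}f_n(w)$. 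Plugging in $f_n(w)(y)=2^n\1_{\{w=y\}}-1$ and using $\sum_w\left(\frac{1-e^{-t}}{2}\right)^{\|x-w\|_1}\left(\frac{1+e^{-t}}{2}\right)^{n-\|x-w\|_1}=1$ (a product of $n$ factors each summing to $1$), the constant $-1$ contributes $-1$ and the delta contributes the single term $w=y$, giving
\[
e^{-t\Delta}f_n(x)(y)=2^n\left(\tfrac{1-e^{-t}}{2}\right)^{\|x-y\|_1}\left(\tfrac{1+e^{-t}}{2}\right)^{n-\|x-y\|_1}-1,
\]
which is also just $(e^{-t\Delta}\delta_x)(y)\cdot 2^n - 1$ by~\eqref{eq:matrix form heat}. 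Then $\|e^{-t\Delta}f_n(x)\|_{L_1(\F_2^n)}=\frac{1}{2^n}\sum_{y}\left|2^n(e^{-t\Delta}\delta_x)(y)-1\right|$; again this is independent of $x$ by translation, so I may set $x=0$ and get $\|e^{-t\Delta}f_n\|_{L_p(\F_2^n,L_1(\F_2^n))}=\frac{1}{2^n}\sum_{y\in\F_2^n}\left|2^n\left(\tfrac{1-e^{-t}}{2}\right)^{\|y\|_1}\left(\tfrac{1+e^{-t}}{2}\right)^{n-\|y\|_1}-1\right|$, uniformly in $p$.

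Finally I would establish~\eqref{eq:limit ratio} by showing the numerator also tends to $2$ as $n\to\infty$. Writing $a=\frac{1-e^{-t}}{2}\in(0,1/2)$ and $b=\frac{1+e^{-t}}{2}$, so $a+b=1$ and $b>1/2$, the quantity inside is $\frac{1}{2^n}\sum_{j=0}^n\binom{n}{j}\left|2^na^jb^{n-j}-1\right|$. Now $2^na^jb^{n-j}=(2a)^j(2b)^{n-j}=(1-e^{-t})^j(1+e^{-t})^{n-j}$. By the binomial/CLT heuristic, $\|y\|_1$ concentrates around $n/2$, where $(1-e^{-t})^{n/2}(1+e^{-t})^{n/2}=(1-e^{-2t})^{n/2}\to 0$ exponentially; more precisely the expression $2^na^jb^{n-j}$ is at most $1/2$ except on an exponentially small fraction of the cube (those $j$ with $j/n$ close to $0$), so $\left|2^na^jb^{n-j}-1\right|$ equals $1-2^na^jb^{n-j}$ on the bulk and the total contribution of the atypical $j$ is $o(1)$. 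Concretely, $\frac{1}{2^n}\sum_j\binom{n}{j}\left(1-2^na^jb^{n-j}\right)=1-\frac{1}{2^n}\sum_j\binom{n}{j}2^na^jb^{n-j}=1-(a+b)^n=1-1=0$ would be the computation if no absolute value were present; the absolute value only flips sign on the small set where $2^na^jb^{n-j}>1$, contributing an extra $\frac{2}{2^n}\sum_{j:2^na^jb^{n-j}>1}\binom{n}{j}(2^na^jb^{n-j}-1)$, and I would bound this by $\frac{2}{2^n}\sum_{j<\alpha n}\binom{n}{j}2^na^jb^{n-j}$ for a suitable $\alpha$ with $a^\alpha b^{1-\alpha}<1/2$, i.e.\ $\frac{2\cdot 2^n}{2^n}\sum_{j<\alpha n}\binom{n}{j}a^jb^{n-j}\cdot\frac{1}{2^n}$… — more cleanly: split $\sum_j\binom{n}{j}\left|2^na^jb^{n-j}-1\right| = \sum_j\binom{n}{j} - \sum_j\binom{n}{j}2^na^jb^{n-j} + 2\sum_{j\in S}\binom{n}{j}(2^na^jb^{n-j}-1)$ where $S=\{j:2^na^jb^{n-j}\ge 1\}$; dividing by $2^n$ gives $1 - (a+b)^n + 2\cdot(\text{error}) = 2\cdot(\text{error})$ once we note $\sum_j\binom nj 2^n a^j b^{n-j} = 2^n(a+b)^n\cdot$ — wait, I must be careful: $\sum_j\binom nj 2^n a^j b^{n-j} = 2^n(a+b)^n = 2^n$, so $\frac{1}{2^n}\sum_j\binom{n}{j}2^na^jb^{n-j}=1$; thus the expression is $1 - 1 + 2\cdot\frac{1}{2^n}\sum_{j\in S}\binom{n}{j}(2^na^jb^{n-j}-1) = 2\cdot\frac{1}{2^n}\sum_{j\in S}\binom{n}{j}(2^na^jb^{n-j}-1)$. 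Since $j\in S$ forces $j/n\le\beta$ for some $\beta=\beta(t)<1/2$ (because $a^{j/n}b^{1-j/n}<1/2$ for $j/n$ near $1/2$), and on $j/n\le\beta$ we have $\binom{n}{j}\le 2^{nH(\beta)}$ with $H(\beta)<1$ while $2^na^jb^{n-j}\le 2^n$, a crude bound gives the error $\le \frac{2}{2^n}\cdot(\beta n+1)\cdot 2^{nH(\beta)}\cdot 2^n = 2(\beta n +1)2^{nH(\beta)}$ — which is the wrong direction. The honest bound instead: $\frac{1}{2^n}\sum_{j\in S}\binom nj(2^na^jb^{n-j}-1)\le \frac{1}{2^n}\sum_{j\le\beta n}\binom nj 2^n a^j b^{n-j} = \sum_{j\le\beta n}\binom nj a^j b^{n-j} = \Pr[\mathrm{Bin}(n,a)\le\beta n]$ with $a<1/2<\beta$ — no, $\mathrm{Bin}(n,a)$ has mean $an<\beta n$, so this probability is near $1$. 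The correct normalization: $\sum_j\binom nj a^j b^{n-j}=(a+b)^n=1$, and $\Pr[\mathrm{Bin}(n,a)\le \beta n]$ where actually I want the complementary small-probability event. I would sort this out by instead directly observing $2^n a^j b^{n-j}\ge 1 \iff (2a)^j(2b)^{n-j}\ge 1 \iff j\ln(2a) + (n-j)\ln(2b)\ge 0$; since $\ln(2a)<0<\ln(2b)$, this means $j\le \frac{\ln(2b)}{\ln(2b)-\ln(2a)}n =: \beta n$ with $\beta\in(0,1)$, and one checks $\beta<1/2$ iff $\ln(2b)<-\ln(2a)$ iff $4ab<1$ iff $1-e^{-2t}<1$, true. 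Then $\frac{1}{2^n}\sum_{j\le\beta n}\binom nj 2^n a^jb^{n-j} = \sum_{j\le\beta n}\binom nj a^j b^{n-j}=\Pr[\mathrm{Bin}(n,b)\ge(1-\beta)n]$, and since $b<1-\beta$ (equivalent to $\beta<1-b=a<1/2<1-\beta$, hmm needs $\beta<a$?) — at this point I would simply invoke a Chernoff bound: because the typical value of $\|y\|_1/n$ is $1/2$, which lies strictly outside $[0,\beta]$ when $\beta<1/2$, the probability $\Pr[\|y\|_1\le\beta n]$ decays exponentially in $n$, hence the error term $\to 0$, uniformly, completing the proof that the numerator $\to 2$ and therefore the ratio $\to 1$. \textbf{The main obstacle} is precisely this last estimate: making rigorous and clean the claim that $\frac{1}{2^n}\sum_{j}\binom{n}{j}\left|(1-e^{-t})^j(1+e^{-t})^{n-j}-1\right|\to 2$, for which the key point is that $(1-e^{-t})^j(1+e^{-t})^{n-j}$ is $\le 1$ outside an exponentially small ``Chernoff tail'' of the binomial distribution (the set $\{j\le\beta n\}$ with $\beta<1/2$), so that the absolute value contributes essentially the same as $1-(1-e^{-t})^j(1+e^{-t})^{n-j}$, whose average is exactly $1-(a+b)^n=0$ after the correct normalization — I just need to book-keep the $2^n$ weights carefully, but no genuinely new idea is required.
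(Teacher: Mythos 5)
Your overall strategy mirrors the paper's exactly: evaluate both norms as explicit binomial sums, use that both norms are constant in $x$ (which gives independence of $p$ and hence the required uniformity), and then show the numerator tends to $2$ just as the denominator does. Your computations up through
\[
\left\|e^{-t\Delta}f_n\right\|_{L_p(\F_2^n,L_1(\F_2^n))}=\frac1{2^n}\sum_{j=0}^n\binom nj\bigl|2^na^jb^{n-j}-1\bigr|,\qquad a=\tfrac{1-e^{-t}}2,\ b=\tfrac{1+e^{-t}}2,
\]
agree with the paper's formula~\eqref{eq:norm of evolute}. Your algebraic split into
\[
\frac1{2^n}\sum_{j=0}^n\binom nj\bigl|2^na^jb^{n-j}-1\bigr|
=2\Bigl(\Pr\!\left[\mathrm{Bin}(n,a)\le\beta n\right]-\Pr\!\left[\mathrm{Bin}(n,\tfrac12)\le\beta n\right]\Bigr),\qquad \beta=\frac{\log(2b)}{\log(2b)-\log(2a)},
\]
obtained from the identity $\sum_j\binom nj(1-2^na^jb^{n-j})=0$, is in fact a cleaner route than the paper's, which instead works with two explicit Central Limit Theorem windows of width $n^{2/3}$ around $n/2$ and around $an$, checks that they are eventually disjoint, and compares the two products on each window.

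The gap is in your wrap-up. You assert ``the error term $\to0$'' and in the same breath conclude ``the numerator $\to2$''; given the factor-of-two identity you just derived, these two statements contradict each other. The quantity in parentheses must tend to $1$, not $0$. For this you need both $\Pr[\mathrm{Bin}(n,a)\le\beta n]\to1$ and $\Pr[\mathrm{Bin}(n,1/2)\le\beta n]\to0$, i.e., you need the strict inequalities $a<\beta<\tfrac12$. You verify $\beta<\tfrac12$ (equivalent to $4ab<1$, which holds since $t>0$), but you explicitly leave $\beta>a$ as a question mark (``hmm needs $\beta<a$?''). A short rearrangement gives $\beta>a\iff b\log(2b)+a\log(2a)>0\iff a^ab^b>\tfrac12$. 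Since $s\mapsto s^s(1-s)^{1-s}$ attains its minimum value $\tfrac12$ only at $s=\tfrac12$, this holds for every $a\neq\tfrac12$, i.e., for every $t>0$ (this is precisely the inequality $h\bigl((1-e^{-t})/2\bigr)>\tfrac12$ used in the paper). With $a<\beta<\tfrac12$ in hand, $\mathrm{Bin}(n,a)$ has mean $an<\beta n$ while $\mathrm{Bin}(n,\tfrac12)$ has mean $n/2>\beta n$, so the law of large numbers (or a Chernoff bound, if you want an explicit rate) sends the first probability to $1$ and the second to $0$, and the numerator tends to $2$ as needed. So your approach is sound and even somewhat slicker than the paper's, but as written it stops one verifiable inequality short of a proof and contains a literal contradiction in the concluding sentence.
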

\begin{proof} By definition $\sum_{x\in \F_2^n}f_n(x)=0$, i.e., $f_n\in L_p^{\ge 1}(\F_2^n,L_1(\F_2^n))$. Observe that
\begin{equation}\label{eq:norm of delta}
\|f_n\|_{L_p(\F_2^n,L_1(\F_2^n))}=2\left(1-\frac{1}{2^n}\right),
\end{equation}
and note also that for every $x,y\in \F_2^n$ we have
\begin{equation}\label{eq:f_n riesz}
f_n(x)(y)= \prod_{i=1}^n \left(1+(-1)^{x_i+y_i}\right)-1=\sum_{\substack{A\subseteq \{1,\ldots,n\}\\A\neq\emptyset}} W_A(x)W_A(y).
\end{equation}
It follows from~\eqref{eq:primal semigroup} that for every $x\in \F_2^n$ we have
\begin{multline*}
\left\|e^{-t\Delta}f_n\right\|_{L_1(\F_2^n)}=\frac{1}{2^n}\sum_{y\in \F_2^n} \left|\sum_{w\in\mathbb F_2^n}  \left( \frac{1-e^{t}}2 \right)^{\|x-w\|_1}
\left( \frac{1+e^{t}}2 \right)^{n-\|x-w\|_1}
f_n(w)(y)\right|\\\stackrel{\eqref{eq:def normalized delta}}{=}
\frac{1}{2^n}\sum_{y\in \F_2^n}\left|2^n\left( \frac{1-e^{t}}2 \right)^{\|x-y\|_1}
\left( \frac{1+e^{t}}2 \right)^{n-\|x-y\|_1}-1\right|.
\end{multline*}
Hence,
\begin{equation}\label{eq:norm of evolute}
\left\|e^{-t\Delta}f_n\right\|_{L_p(\F_2^n,L_1(\F_2^n))}=\sum_{m=0}^n\binom{n}{m}\left|\left(\frac{1-e^{-t}}{2}
\right)^m\left(\frac{1+e^{-t}}{2}\right)^{n-m}-\frac{1}{2^n}\right|.
\end{equation}

Let $U_1,\ldots,U_n$ be i.i.d. random variables such that $\Pr[U_1=0]=\Pr[U_1=1]=\frac12$. By the Central Limit Theorem,
\begin{multline}\label{eq:CLT1}
1=\lim_{n\to \infty}\Pr\left[\sum_{j=1}^n U_j\in \left(\frac12 n-n^{2/3},\frac12 n+n^{2/3}\right)\right]\\=\lim_{n\to \infty} \sum_{m\in \left(\frac12 n-n^{2/3},\frac12 n+n^{2/3}\right)\cap \N} \binom{n}{m}\frac{1}{2^n}.
\end{multline}
Similarly, if $V_1,\ldots,V_n$ are i.i.d. random variables such that $\Pr[V_1=1]=(1-e^{-t})/2$ and $\Pr[V_1=0]=(1+e^{-t})/2$, then by the Central Limit Theorem,
\begin{multline}\label{eq:CLT2}
1=\lim_{n\to \infty}\Pr\left[\sum_{j=1}^n V_j\in \left(\frac{1-e^{-t}}{2} n-n^{2/3},\frac{1-e^{-t}}{2}n+n^{2/3}\right)\right]\\
=\lim_{n\to \infty} \sum_{m\in \left(\frac{1-e^{-t}}{2}  n-n^{2/3},\frac{1-e^{-t}}{2}  n+n^{2/3}\right)\cap \N} \binom{n}{m}\left(\frac{1-e^{-t}}{2}\right)^m\left(\frac{1+e^{-t}}{2} \right)^{n-m}.
\end{multline}
Fix $\e\in (0,1)$. It follows from~\eqref{eq:CLT1}, \eqref{eq:CLT2} that for $n$ large enough we have
\begin{equation}\label{eq:CLTeps1}
\sum_{m\in \left(\frac12 n-n^{2/3},\frac12 n+n^{2/3}\right)\cap \N} \binom{n}{m}\frac{1}{2^n}\ge 1-\frac{\e}{2},
\end{equation}
and
\begin{equation}\label{eq:CLTeps2}
\sum_{m\in \left(\frac{1-e^{-t}}{2}  n-n^{2/3},\frac{1-e^{-t}}{2}  n+n^{2/3}\right)\cap \N} \binom{n}{m}\left(\frac{1-e^{-t}}{2}\right)^m\left(\frac{1+e^{-t}}{2} \right)^{n-m}\ge 1-\frac{\e}{2}.
\end{equation}
Moreover, by choosing $n$ to be large enough we can ensure that
\begin{equation}\label{eq:disjoint}
\left(\frac12 n-n^{2/3},\frac12 n+n^{2/3}\right)\cap \left(\frac{1-e^{-t}}{2}  n-n^{2/3},\frac{1-e^{-t}}{2}  n+n^{2/3}\right)=\emptyset.
\end{equation}
Since
\begin{multline*}
m\in \left(\frac12 n-n^{2/3},\frac12 n+n^{2/3}\right)\implies \\ \left(\frac{1-e^{-t}}{2}\right)^m\left(\frac{1+e^{-t}}{2}\right)^{n-m}<\frac{1}{2^n}
\left(1-e^{-2t}\right)^{n/2}\left(\frac{1+e^{-t}}{1-e^{-t}}\right)^{n^{2/3}},
\end{multline*}
if $n$ is large enough then
\begin{equation}\label{eq:m in 1/2 range}
m\in \left(\frac12 n-n^{2/3},\frac12 n+n^{2/3}\right)\implies \left(\frac{1-e^{-t}}{2}\right)^m\left(\frac{1+e^{-t}}{2}\right)^{n-m}<\frac{\e}{2^{n+1}}.
\end{equation}
Moreover, because $t>0$ we have $h((1-e^{-t})/2)>\frac12$, where $h(s)\eqdef s^s(1-s)^{1-s}$ for $s\in [0,1]$. Noting that
\begin{multline*}
m\in \left(\frac{1-e^{-t}}{2}  n-n^{2/3},\frac{1-e^{-t}}{2}  n+n^{2/3}\right)\implies  \\
\left(\frac{1-e^{-t}}{2}\right)^m\left(\frac{1+e^{-t}}{2}\right)^{n-m}>
\left(h\left(\frac{1-e^{-t}}{2}\right)\right)^n
\left(\frac{1-e^{-t}}{1+e^{-t}}\right)^{n^{2/3}},
\end{multline*}
we see that if $n$ is large enough then
\begin{equation}\label{eq:m in other range}
m\in \left(\frac{1-e^{-t}}{2}  n-n^{2/3},\frac{1-e^{-t}}{2}  n+n^{2/3}\right)\implies \frac{1}{2^n}<\frac{\e}{2} \left(\frac{1-e^{-t}}{2}\right)^m\left(\frac{1+e^{-t}}{2}\right)^{n-m}.
\end{equation}
Consequently, if we choose $n$ so as to ensure the validity of~\eqref{eq:CLTeps1}, \eqref{eq:CLTeps2}, \eqref{eq:disjoint}, \eqref{eq:m in 1/2 range}, \eqref{eq:m in other range}, then recalling~\eqref{eq:norm of delta} we see that
\begin{equation*}
\left\|e^{-t\Delta}f_n\right\|_{L_p(\F_2^n,L_1(\F_2^n))}\ge 2\left(1-\frac{\e}{2}\right)^2\stackrel{\eqref{eq:norm of delta}}{\ge} (1-\e)\|f_n\|_{L_p(\F_2^n,L_1(\F_2^n))}.\tag*{\qedhere}
\end{equation*}
\end{proof}

\begin{proof}[Proof of Theorem~\ref{thm:K-convec char}]
Suppose that there exists $\delta\in (0,1)$, $k\in \N$, $p\in (1,\infty)$ and $t\in (0,\infty)$ such that
\begin{equation}\label{eq:<1-delta}
\forall\, n\in \N,\quad \left\|e^{-t\Delta}\right\|_{L_p^{\ge k}(\F_2^n,X)\to L_p^{\ge k}(\F_2^n,X)}<1-\delta.
\end{equation}

For $n\in \N$, identify $\F_2^{kn}$ with the $k$-fold product of $\F_2^n$. Define $F:\F_2^{kn}\to L_1(\F_2^{kn})$ by
\begin{equation}\label{eq:def F example}
F(x^1,\ldots,x^k)(y^1,\ldots,y^k)\eqdef \prod_{i=1}^k f_n(x^i)(y^i),
\end{equation}
where $f_n\in L_p^{\ge 1}(\F_2^n,L_1(\F_2^n))$ is given in~\eqref{eq:def normalized delta}. Then $F\in L_p^{\ge k}(\F_2^{kn},L_1(\F_2^{kn}))$. For every injective linear operator $T:L_1(\F_2^{kn})\to X$ we have $T\circ F\in L_p^{\ge k}(\F_2^{kn},X)$, and therefore
\begin{multline}\label{eq:arrow linit}
1-\delta\stackrel{\eqref{eq:<1-delta}}{>}\frac{\left\|e^{-t\Delta}(T\circ F)\right\|_{L_p(\F_2^{kn},X)}}{\|T\circ F\|_{L_p(\F_2^{kn},X)}}\ge \frac{1}{\|T\|\cdot \|T^{-1}\|}\cdot\frac{\left\|e^{-t\Delta}F\right\|_{L_p(\F_2^{kn},L_1(\F_2^{kn}))}}
{\|F\|_{L_p(\F_2^{kn},L_1(\F_2^{kn}))}}\\
\stackrel{\eqref{eq:def F example}}{=} \frac{1}{\|T\|\cdot \|T^{-1}\|}\cdot \left(\frac{\left\|e^{-t\Delta}f_n\right\|_{L_p(\F_2^n,L_1(\F_2^n))}}{\|f_n\|_{L_p(\F_2^n,L_1(\F_2^n))}}\right)^k
\xrightarrow[ n\to\infty]{} \frac{1}{\|T\|\cdot \|T^{-1}\|},
\end{multline}
where  in the last step of~\eqref{eq:arrow linit} we used Lemma~\ref{lem:normalized delta}.
It follows that
$$
\sup_{m\in \N}\inf_{S\in \mathscr{L}(\ell_1^m,X)}\|S\|\cdot\|S^{-1}\|\ge \frac{1}{1-\delta}.
$$
By Pisier's $K$-convexity theorem~\cite{Pisier-K-convex}  we conclude that $X$ must be $K$-convex.
\end{proof}

\subsection{Inverting the Laplacian on the vector-valued tail space}\label{sec:inverting}
Here we discuss lower bounds on the restriction of $\Delta$ to the tail space.  Such bounds can potentially yield a simplification of our construction of the base graph; see Remarks~\ref{rem:better laplace bound?} and~\ref{rem:vanilla KN} below.

\begin{theorem}\label{thm:lower laplace}
For every $K,p\in (1,\infty)$ there exist $\delta=\delta(K,p),c=c(K,p)\in (0,1)$ such that if $X$ is a $K$-convex Banach space with $K(X)\le K$ then for every $n\in \N$ and $k\in \{1,\ldots,n\}$,
\begin{equation}\label{eq:lower laplacian delta}
f\in L_p^{\ge k}\left(\F_2^n,X\right)\implies \left\|\Delta f\right\|_{L_p(\F_2^n,X)}\ge ck^\delta\cdot\|f\|_{L_p(\F_2^n,X)}.
\end{equation}
\end{theorem}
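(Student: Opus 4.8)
The plan is to deduce Theorem~\ref{thm:lower laplace} from the heat semigroup decay estimate of Theorem~\ref{thm:AB} by integrating in time, exactly as the lower bound~\eqref{eq:meyer laplacian} was deduced from~\eqref{e:meyer} in the scalar case treated in Lemma~\ref{lem:meyer}.

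First I would record that $\Delta$ maps the tail space $L_p^{\ge k}(\F_2^n,X)$ into itself, and that on this subspace $\Delta$ is invertible with $\Delta^{-1}=\int_0^\infty e^{-t\Delta}\,dt$: on a Walsh function $W_A$ with $|A|=m\ge k\ge 1$ one has $\int_0^\infty e^{-t\Delta}W_A\,dt=\int_0^\infty e^{-tm}W_A\,dt=W_A/m$, and a general $f\in L_p^{\ge k}(\F_2^n,X)$ is a finite sum of such Walsh functions. Consequently the Bochner integral $f=\int_0^\infty e^{-t\Delta}(\Delta f)\,dt$ converges absolutely (the integrability of $t\mapsto\|e^{-t\Delta}(\Delta f)\|_{L_p(\F_2^n,X)}$ being supplied by Theorem~\ref{thm:AB} itself), and the triangle inequality for Bochner integrals yields
\begin{equation*}
\|f\|_{L_p(\F_2^n,X)}\le \left(\int_0^\infty \left\|e^{-t\Delta}\right\|_{L_p^{\ge k}(\F_2^n,X)\to L_p^{\ge k}(\F_2^n,X)}\,dt\right)\|\Delta f\|_{L_p(\F_2^n,X)}.
\end{equation*}

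Next I would estimate the time integral using Theorem~\ref{thm:AB}. Writing $A=A(K,p)$, $B=B(K,p)$, $C=C(K,p)$, the integral is at most $C\int_0^\infty e^{-Ak\min\{t,t^B\}}\,dt$. Since $B>2>1$, on $(0,1]$ we have $\min\{t,t^B\}=t^B$ and on $[1,\infty)$ we have $\min\{t,t^B\}=t$, so the integral splits as $\int_0^1 e^{-Akt^B}\,dt+\int_1^\infty e^{-Akt}\,dt$. The second term equals $\frac{1}{Ak}e^{-Ak}\le\frac{1}{Ak}$. For the first term, the substitution $s=(Ak)^{1/B}t$ gives $\int_0^1 e^{-Akt^B}\,dt\le (Ak)^{-1/B}\int_0^\infty e^{-s^B}\,ds=c_1A^{-1/B}k^{-1/B}$, where $c_1=c_1(B)\in(0,\infty)$ since $B>0$. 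Because $1/B<1$ and $k\ge 1$ we have $\frac{1}{Ak}\le \frac1A k^{-1/B}$, so $\int_0^\infty e^{-Ak\min\{t,t^B\}}\,dt\le c_2(K,p)\,k^{-1/B}$ for a suitable $c_2(K,p)\in(0,\infty)$. Combining this with the previous display gives $\|f\|_{L_p(\F_2^n,X)}\le C(K,p)c_2(K,p)\,k^{-1/B(K,p)}\|\Delta f\|_{L_p(\F_2^n,X)}$, which is exactly~\eqref{eq:lower laplacian delta} with $\delta=1/B(K,p)\in(0,\tfrac12)\subseteq(0,1)$ (using $B(K,p)>2$) and $c=\min\bigl\{1,(C(K,p)c_2(K,p))^{-1}\bigr\}\in(0,1)$.

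I do not anticipate a genuine obstacle in carrying this out: the entire analytic content lies in Theorem~\ref{thm:AB}, and the present deduction is a routine integration. The only point requiring minor care is that the multiplicative factor $C(K,p)\ge 1$ appearing in~\eqref{eq:AB} is harmless once one integrates in $t$, at the cost of obtaining the exponent $\delta=1/B(K,p)$ rather than the (conjecturally optimal, cf.\ Remark~\ref{rem:open meyer}) exponent $1$.
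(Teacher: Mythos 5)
Your proof is correct and follows exactly the paper's argument: both write $f=\int_0^\infty e^{-t\Delta}\Delta f\,dt$, apply Minkowski's integral inequality together with Theorem~\ref{thm:AB}, split the integral at $t=1$ using $B>1$, and arrive at the exponent $\delta=1/B(K,p)$. The only cosmetic difference is that you spell out why the semigroup is integrable and why $\Delta$ is invertible on the (finite-dimensional) tail space, which the paper takes as given.
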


\begin{proof}
The estimate~\eqref{eq:lower laplacian delta} is deduced from Theorem~\ref{thm:AB} as follows. If $f\in L_p^{\ge k}\left(\F_2^n,X\right)$ then
\begin{multline*}
\|f\|_{L_p(\F_2^n,X)}=\left\|\int_0^\infty e^{-t\Delta}\Delta f dt\right\|_{L_p(\F_2^n,X)}\stackrel{\eqref{eq:AB}}{\le}
C\left(\int_0^1 e^{-Akt^B}dt+\int_1^\infty e^{-Akt}dt\right)\|\Delta f\|_{L_p(\F_2^n,X)}\\
\le C\left(\frac{\Gamma(1/B)}{(Ak)^{1/B}}+\frac{e^{-Ak}}{Ak}\right)\|\Delta f\|_{L_p(\F_2^n,X)}\lesssim \frac{CB}{A^{1/B}}\cdot \frac{1}{k^{1/B}} \|\Delta f\|_{L_p(\F_2^n,X)}.\tag*{\qedhere}
\end{multline*}
\end{proof}

We also have the following converse to Theorem~\ref{thm:lower laplace}.
\begin{theorem}
If $X$ is a Banach space such that for some $p,K\in (0,\infty)$ and $k\in \N$ we have \begin{equation}\label{eq:second K-convex char}
\lim_{n\to \infty} \inf_{\substack{f\in L_p^{\ge k}(\F_2^n,X)\\f\neq 0}}\frac{\|\Delta f\|_{L_p(\F_2^n,X)}}{\|f\|_{L_p(\F_2^n,X)}}>0,
\end{equation}
then $X$ is $K$-convex.
\end{theorem}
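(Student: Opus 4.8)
The statement to prove is the converse of Theorem~\ref{thm:lower laplace}: if $X$ is a Banach space for which, for some $p,K\in(0,\infty)$ and $k\in\N$, the tail-space lower bound \eqref{eq:second K-convex char} holds asymptotically in $n$, then $X$ is $K$-convex. This is an exact analogue of Theorem~\ref{thm:K-convec char}, and the plan is to reduce to it.

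First I would observe that a lower bound on $\Delta$ on the tail space is equivalent to a decay estimate for the heat semigroup $e^{-t\Delta}$ on the tail space, at least for large $t$. Concretely, if \eqref{eq:second K-convex char} holds, pick $n_0$ and a constant $a>0$ such that $\|\Delta f\|_{L_p(\F_2^n,X)}\ge a\|f\|_{L_p(\F_2^n,X)}$ for all $n\ge n_0$ and all $f\in L_p^{\ge k}(\F_2^n,X)$. Since $e^{-t\Delta}$ commutes with $\Delta$ and preserves the tail space $L_p^{\ge k}(\F_2^n,X)$ (both are Fourier multipliers supported on $|A|\ge k$), and since $\frac{d}{dt}e^{-t\Delta}f=-\Delta e^{-t\Delta}f$, one gets for $f$ in the tail space with $\|f\|=1$ that $\frac{d}{dt}\|e^{-t\Delta}f\|_{L_p}\le -a\|e^{-t\Delta}f\|_{L_p}$ wherever the norm is differentiable — or, more robustly, one integrates $e^{-t\Delta}=\int_t^\infty e^{-s\Delta}\Delta e^{-(s-t)\Delta}f\,ds$ isn't quite right; instead I would simply use the identity $\|g\|_{L_p}\le \frac1a\|\Delta g\|_{L_p}$ with $g=e^{-t\Delta}f$ together with $\frac{d}{dt}\|e^{-t\Delta}f\|_{L_p}^p = -p\int \|e^{-t\Delta}f\|_X^{p-2}\langle \cdot\rangle$, leading (via the standard Grönwall-type argument as in the proof of Lemma~\ref{lem:meyer}) to $\|e^{-t\Delta}f\|_{L_p(\F_2^n,X)}\le e^{-at}\|f\|_{L_p(\F_2^n,X)}$. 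In particular, choosing $t$ with $e^{-at}<1$, we obtain a $t\in(0,\infty)$ and a $\delta\in(0,1)$ with
$$
\sup_{n\ge n_0}\left\|e^{-t\Delta}\right\|_{L_p^{\ge k}(\F_2^n,X)\to L_p^{\ge k}(\F_2^n,X)}\le 1-\delta.
$$
Enlarging to all $n\in\N$ only requires noting that for $n<n_0$ the tail-space norm of $e^{-t\Delta}$ is at most $e^{-kt}\|\Rad_{\ge k}\|$, a finite quantity; but actually we do not even need all $n$ — the proof of Theorem~\ref{thm:K-convec char} only uses $n\to\infty$, so large $n$ suffices.

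Next I would feed this into (the proof of) Theorem~\ref{thm:K-convec char}. We are given exactly hypothesis \eqref{eq:<1} of that theorem (with the same $k$, $p$, $t$, and with the supremum over $n$ strictly below $1$), so Theorem~\ref{thm:K-convec char} applies verbatim and yields that $X$ is $K$-convex. The one cosmetic point to double-check is the case $p\le 1$ allowed by the hypothesis "$p\in(0,\infty)$": if $p<1$ then $L_p$ is not normed, but the argument producing the semigroup decay still goes through if we interpret $\|\cdot\|_{L_p}$ as the quasinorm and note that the needed convexity/monotonicity steps only use $p\ge 1$ in the step invoking Theorem~\ref{thm:K-convec char}; in practice one reduces to $p\ge 1$ by monotonicity of $L_p$ norms on a probability space together with the fact that the tail space condition for one exponent $p$ is equivalent (up to constants, using hypercontractivity on the finite-dimensional tail space as in Lemma~\ref{lem:meyer}) to that for any other exponent in $(1,\infty)$. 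I would state this reduction briefly and then invoke Theorem~\ref{thm:K-convec char}.

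The main obstacle is the first step: passing cleanly from the spectral-type lower bound $\|\Delta f\|\gtrsim \|f\|$ to an operator-norm bound $\|e^{-t\Delta}\|<1$ on the tail space. Over a Hilbert space this is immediate by the spectral theorem, but for a general Banach space $X$ one must argue by the differential-inequality / Grönwall method (exactly as in the scalar proof of Lemma~\ref{lem:meyer}, which used $\frac{d}{dt}\|e^{-t\Delta}f\|^2$), being careful that $e^{-t\Delta}$ preserves the tail space so that the lower bound $a$ is available at every time $t$. Once that is in hand, everything else is a direct citation of Theorem~\ref{thm:K-convec char}, whose proof (via Lemma~\ref{lem:normalized delta} and Pisier's $K$-convexity theorem~\cite{Pisier-K-convex}) requires no modification. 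I would therefore organize the write-up as: (i) the semigroup-decay reduction, (ii) invoke Theorem~\ref{thm:K-convec char}; and if $p$ is not already in $(1,\infty)$, a one-line reduction to that range.
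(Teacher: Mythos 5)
The route you propose---deduce the operator-norm decay \eqref{eq:<1} from the tail-space lower bound \eqref{eq:second K-convex char}, then invoke Theorem~\ref{thm:K-convec char}---is attractive, but it contains a genuine gap at exactly the step you flag as the ``main obstacle.'' A lower bound of the form $\|\Delta g\|_{L_p(\F_2^n,X)}\ge a\|g\|_{L_p(\F_2^n,X)}$ for $g$ in the tail space does \emph{not} imply $\|e^{-t\Delta}g\|_{L_p}\le e^{-at}\|g\|_{L_p}$ for a general Banach space $X$. From $\frac{d}{dt}e^{-t\Delta}g=-\Delta e^{-t\Delta}g$ and the triangle inequality one can only conclude $\bigl|\frac{d}{dt}\|e^{-t\Delta}g\|\bigr|\le\|\Delta e^{-t\Delta}g\|$, which is the wrong-sided inequality: knowing that the time-derivative of $e^{-t\Delta}g$ has large norm does not force the norm $\|e^{-t\Delta}g\|$ to decrease quickly, since what matters is the component of $-\Delta e^{-t\Delta}g$ along a supporting functional of $e^{-t\Delta}g$. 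In the Hilbertian case one uses $\frac{d}{dt}\|e^{-t\Delta}g\|_{L_2}^2=-2\langle\Delta e^{-t\Delta}g,\,e^{-t\Delta}g\rangle$ together with positivity of $\Delta$; the Banach-space analogue would require a dissipativity bound $\Re\langle J(g),\Delta g\rangle\ge a\|g\|^2$ (with $J$ the duality map), which is a strictly stronger hypothesis than $\|\Delta g\|\ge a\|g\|$. Also note that Lemma~\ref{lem:meyer}, which you cite as a model for the ``Gr\"onwall-type argument,'' in fact proceeds via hypercontractivity and interpolation, not via a differential inequality, and neither ingredient is available here for general $X$. In short: the reduction from the resolvent bound $\|\Delta^{-1}\|\le 1/a$ to the semigroup bound $\|e^{-t\Delta}\|<1$ (uniformly in $n$) is not established, and I do not believe it follows formally.

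The paper's own proof avoids the semigroup entirely and instead establishes the contrapositive directly. Assuming $X$ is not $K$-convex, it shows $\sup_n\|\Delta^{-1}\|_{L_p^{\ge k}(\F_2^n,X)\to L_p^{\ge k}(\F_2^n,X)}=\infty$, which is exactly the negation of \eqref{eq:second K-convex char} (since $\|\Delta f\|\ge a\|f\|$ on the tail space is equivalent to $\|\Delta^{-1}\|\le 1/a$ there). This is done by plugging in the same explicit test function $F$ of \eqref{eq:def F example} used in Theorem~\ref{thm:K-convec char}, computing $\Delta^{-1}F=\int_0^\infty e^{-t\Delta}F\,dt$ via the product formula \eqref{eq:prod identity}, and deducing the lower bound $\|\Delta^{-1}F\|\gtrsim(\log n)\cdot 8^{-k}\|F\|$; the transfer from $L_1$-valued to $X$-valued functions then goes through Pisier's $K$-convexity theorem exactly as in Theorem~\ref{thm:K-convec char}. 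To salvage your reduction, you would need a separate lemma asserting that a uniform resolvent bound on the tail space implies a uniform semigroup decay there, and that does not follow from Gr\"onwall in this generality.
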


\begin{proof} For $f\in L_p(\F_2^n,X)$ define
$$
\Delta^{-1}f =\sum_{\substack{A\subseteq \{1,\ldots,n\}\\A\neq \emptyset}} \frac{1}{|A|}\hat{f}(A)W_A.
$$
In~\cite[Thm.~5]{NS02} it was shown that if $X$ is not $K$-convex then
$$
\sup_{n\in \N} \left\|\Delta^{-1}\right\|_{L_p(\F_2^n,X)\to L_p(\F_2^n,X)}=\infty.
$$
Here we need to extend this statement to the assertion contained in~\eqref{eq:goal lower inverse laplace} below, which should hold true for every Banach space $X$ that is not $K$-convex and every $k\in \N$.
\begin{equation}\label{eq:goal lower inverse laplace}
\sup_{n\in \N} \left\|\Delta^{-1}\right\|_{L_p^{\ge k}(\F_2^n,X)\to L_p^{\ge k}(\F_2^n,X)}=\infty.
\end{equation}

Arguing as in the proof of Theorem~\ref{thm:K-convec char}, by Pisier's $K$-convexity theorem~\cite{Pisier-K-convex} it will suffice to prove that for $n\ge 2$, if $F:\F_2^{kn}\to L_1(F_2^{kn})$ is given as in~\eqref{eq:def F example} then
\begin{equation}\label{eq:goal log}
\frac{\left\|\Delta^{-1}F\right\|_{L_p(\F_2^{kn},L_1(\F_2^{kn}))}}{\|F\|_{L_p(\F_2^{kn},L_1(\F_2^{kn}))}}\gtrsim \frac{\log n}{8^k}.
\end{equation}
Note that by~\eqref{eq:norm of delta},
\begin{equation}\label{eq:norm F}
\|F\|_{L_p(\F_2^{kn},L_1(\F_2^{kn}))}=2^k\left(1-\frac{1}{2^n}\right)^k\le 2^k.
\end{equation}
By~\eqref{eq:f_n riesz} and \eqref{eq:def F example}, for every $(x^1,\ldots,x^k),(y^1,\ldots,y^k)\in\F_2^{kn}$ and every $t\in (0,\infty)$,
\begin{multline}\label{eq:prod identity}
e^{-t\Delta}F(x^1,\ldots,x^k)(y^1,\ldots,y^k)
=\prod_{i=1}^k\left(\prod_{j=1}^n\left(1+e^{-t}(-1)^{x_j^i+y^i_j}\right)-1\right)\\
=\prod_{i=1}^k \left(\left(1-e^{-t}\right)^{\|x^i-y^i\|_1}\left(1+e^{-t}\right)^{n-\|x^i-y^i\|_1}-1\right).
\end{multline}
For every $x\in \F_2^n$ denote
$$
\Omega_x\eqdef \left\{y\in \F_2^n:\ \|y-x\|_1\le \frac{n}{2}\right\}.
$$
Then
\begin{equation}\label{eq:Omega size}
\forall\, x\in \F_2^n,\quad |\Omega_x|\ge 2^{n-1},
\end{equation}
 and by~\eqref{eq:prod identity} we have
\begin{equation}\label{eq:lower on good prod}
(y^1,\ldots,y^k)\in \prod_{i=1}^k\Omega_{x^i}\implies \left|e^{-t\Delta}F(x^1,\ldots,x^k)(y^1,\ldots,y^k)\right|
\ge \left(1-\left(1-e^{-2t}\right)^{n/2}\right)^k.
\end{equation}
Now,
\begin{align}\label{eq:use on good}
\nonumber\left\|\Delta^{-1}F(x^1,\ldots,x^k)\right\|_{L_1(\F_2^{kn})}&=\left\|\int_0^\infty
e^{-t\Delta}F(x^1,\ldots,x^k)dt\right\|_{L_1(\F_2^{kn})}\\\nonumber&\ge \frac{1}{2^{kn}}
\sum_{(y^1,\ldots,y^k)\in\prod_{i=1}^k\Omega_{x^i}}\left|\int_0^\infty
e^{-t\Delta}F(x^1,\ldots,x^k)(y^1,\ldots,y^k)dt\right|\\&\ge \frac{1}{2^k}\int_0^\infty \left(1-\left(1-e^{-2t}\right)^{n/2}\right)^kdt,
\end{align}
where in~\eqref{eq:use on good} we used~\eqref{eq:Omega size} and~\eqref{eq:lower on good prod}. Finally,
\begin{equation}\label{eq:log range}
\left\|\Delta^{-1}F\right\|_{L_p(\F_2^{kn},L_1(\F_2^{kn}))}\stackrel{\eqref{eq:use on good}}{\ge} \frac{1}{2^k}\int_0^{\frac12\log n}\left(1-\left(1-e^{-2t}\right)^{n/2}\right)^kdt\gtrsim \frac{\log n}{4^k}.
\end{equation}
The desired estimate~\eqref{eq:goal log} now follows from~\eqref{eq:norm F} and~\eqref{eq:log range}.
\end{proof}

\begin{remark}\label{rem:better laplace bound?}
The following natural problem presents itself. Can one improve Theorem~\ref{thm:lower laplace} so as to have $\delta=1$, i.e., to obtain the bound
\begin{equation}\label{eq:Laplace linear hope}
f\in L_p^{\ge k}\left(\F_2^n,X\right)\implies \left\|\Delta f\right\|_{L_p(\F_2^n,X)}\ge c(K,p)k\cdot\|f\|_{L_p(\F_2^n,X)}?
\end{equation}
As discussed in Remark~\ref{rem:open meyer}, this seems to be unknown even when $X=\R$. If~\eqref{eq:Laplace linear hope} were true then it would significantly simplify our construction of the base graph, since in Section~\ref{sec:base} we would be able to use the ``vanilla" hypercube quotients of~\cite{KN06} instead of the quotients of the discretized heat semigroup as in Lemma~\ref{lem:base code}; see Remark~\ref{rem:vanilla KN} below for more information on this potential simplification.
\end{remark}

\section{Nonlinear spectral gaps in uniformly convex normed
spaces}\label{sec:UC}

Let $(X,\|\cdot\|_X)$ be a normed space. For $n\in \N$ and $p\in
[1,\infty)$ we let $L_p^n(X)$ denote the space of functions
$f:\{1,\ldots,n\}\to X$, equipped with the norm
$$
\|f\|_{L_p^n(X)} =\left(\frac{1}{n}\sum_{i=1}^n
\|f(i)\|_X^p\right)^{1/p}.
$$
Thus, using the notation introduced in the beginning of Section~\ref{sec:heat},
$L_p^n(X)=L_p^n(\{1,\ldots,n\},X)$. We shall also use the notation
$$
L_p^n(X)_0\eqdef \left\{f\in L_p^n(X):\ \sum_{i=1}^n f(i)=0\right\}.
$$

Given an $n\times n$ symmetric stochastic matrix $A=(a_{ij})$ we
denote by $A\otimes I^n_X$ the operator from $L_p^n(X)$ to
$L_p^n(X)$ given by
$$
\left(A\otimes I^n_X\right)f(i)=\sum_{j=1}^n a_{ij}f(j).
$$
Note that since $A$ is symmetric and stochastic the operator
$A\otimes I^n_X$ preserves the subspace $L_p^n(X)_0$, that is
$\left(A\otimes I^n_X\right)(L_p^n(X)_0)\subseteq L_p^n(X)_0$. Define
\begin{equation}\label{eq:def lambdap}
\lambda^{(p)}_X(A)\eqdef \left\|A\otimes I^n_X\right\|_{L_p^n(X)_0\to
L_p^n(X)_0}.
\end{equation}
Note that, since $A$ is doubly stochastic, $\lambda_X^p(A)\le 1$. It
is immediate to check that
$$
\lambda_{\R}^{(2)}(A)=\lambda_{L_2}^{(2)}(A)=\lambda(A)=\max_{i\in
\{2,\ldots,n\}}|\lambda_i(A)|.
$$
Thus $\lambda^{(p)}_X(A)$ should be viewed as a non-Euclidean (though still linear) variant of
the absolute spectral gap of $A$. The following lemma substantiates
this analogy by establishing a relation between $\lambda_X^{(p)}(A)$
and $\bp(A,\|\cdot\|_X^p)$.
\begin{lemma}\label{lem:norm to poincare}
For every normed space $(X,\|\cdot\|_X)$, every $p\ge 1$ and every
$n\times n$ symmetric stochastic matrix $A$, we have
\begin{eqnarray}\label{eq:linear to nonlinear}
\bp(A,\|\cdot\|_X^p)\le
\left(1+\frac{4}{1-\lambda_X^{(p)}(A)}\right)^p.
\end{eqnarray}
\end{lemma}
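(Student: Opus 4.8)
The plan is to prove the pointwise Poincar\'e inequality underlying $\gamma_+(A,\|\cdot\|_X^p)$ directly, extracting the factor $\tfrac{1}{1-\lambda_X^{(p)}(A)}$ from the invertibility of $I-(A\otimes I^n_X)^2$ on the mean-zero subspace $L_p^n(X)_0$. Write $\lambda=\lambda_X^{(p)}(A)$; if $\lambda=1$ the right-hand side of \eqref{eq:linear to nonlinear} is infinite, so assume $\lambda<1$. Fix $x=(x_1,\dots,x_n),y=(y_1,\dots,y_n)\in L_p^n(X)$, let $\mu=\frac1n\sum_{i=1}^n x_i$ and $\nu=\frac1n\sum_{j=1}^n y_j$ be their barycenters in $X$, and set $\tilde x=x-\mu,\ \tilde y=y-\nu\in L_p^n(X)_0$ (subtracting the corresponding constant functions). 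Put $R=\big(\frac1n\sum_{i,j}a_{ij}\|x_i-y_j\|_X^p\big)^{1/p}$ and $L=\big(\frac1{n^2}\sum_{i,j}\|x_i-y_j\|_X^p\big)^{1/p}$. The goal is the estimate $L\le\big(1+\tfrac4{1-\lambda}\big)R$, which on raising to the $p$-th power is precisely \eqref{eq:linear to nonlinear}.

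First I would record the consequences of $R$ being small. By Jensen's inequality (valid since $p\ge1$) together with double stochasticity of $A$, one has $\|x-(A\otimes I^n_X)y\|_{L_p^n(X)}^p=\frac1n\sum_i\big\|\sum_j a_{ij}(x_i-y_j)\big\|_X^p\le R^p$, and symmetrically $\|(A\otimes I^n_X)x-y\|_{L_p^n(X)}\le R$; moreover the barycenter of each of $x-(A\otimes I^n_X)y$ and $(A\otimes I^n_X)x-y$ equals $\mu-\nu$, so a further application of Jensen gives $\|\mu-\nu\|_X\le R$. Since $A\otimes I^n_X$ fixes constant functions, the mean-zero part $z:=\big(x-(A\otimes I^n_X)y\big)-(\mu-\nu)$ of $x-(A\otimes I^n_X)y$ equals $\tilde x-(A\otimes I^n_X)\tilde y$ and satisfies $\|z\|_{L_p^n(X)}\le\|x-(A\otimes I^n_X)y\|_{L_p^n(X)}+\|\mu-\nu\|_X\le 2R$; similarly $w:=\big((A\otimes I^n_X)x-y\big)-(\mu-\nu)=(A\otimes I^n_X)\tilde x-\tilde y\in L_p^n(X)_0$ has $\|w\|_{L_p^n(X)}\le 2R$.

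The key step is to solve the pair of linear relations $z=\tilde x-(A\otimes I^n_X)\tilde y$ and $w=(A\otimes I^n_X)\tilde x-\tilde y$ for $\tilde x$ and $\tilde y$. Eliminating one variable at a time yields $\big(I-(A\otimes I^n_X)^2\big)\tilde y=(A\otimes I^n_X)z-w$ and $\big(I-(A\otimes I^n_X)^2\big)\tilde x=z-(A\otimes I^n_X)w$. Now $A\otimes I^n_X$ maps $L_p^n(X)_0$ into itself with norm $\lambda<1$, hence $(A\otimes I^n_X)^2$ has norm at most $\lambda^2$ on $L_p^n(X)_0$, so $I-(A\otimes I^n_X)^2$ is invertible there with inverse of norm at most $\frac1{1-\lambda^2}$ (Neumann series). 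Therefore
\[
\|\tilde y\|_{L_p^n(X)}\le\frac{\lambda\|z\|_{L_p^n(X)}+\|w\|_{L_p^n(X)}}{1-\lambda^2}\le\frac{2R(1+\lambda)}{1-\lambda^2}=\frac{2R}{1-\lambda},
\]
and the identical bound holds for $\|\tilde x\|_{L_p^n(X)}$. Finally, writing $x_i-y_j=\tilde x_i+(\mu-\nu)-\tilde y_j$ and applying Minkowski's inequality in $L_p$ of the uniform measure on the pairs $(i,j)$ gives $L\le\|\tilde x\|_{L_p^n(X)}+\|\mu-\nu\|_X+\|\tilde y\|_{L_p^n(X)}\le\frac{2R}{1-\lambda}+R+\frac{2R}{1-\lambda}=\big(1+\tfrac4{1-\lambda}\big)R$, and taking $p$-th powers and recalling the definition of $\gamma_+$ concludes the proof.

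The argument is short, and the only points requiring care are bookkeeping ones: checking that $z$ and $w$ really belong to $L_p^n(X)_0$ (which uses that $A\otimes I^n_X$ annihilates the constants) and that Jensen's and Minkowski's inequalities are applied with the normalizations $\frac1n$ and $\frac1{n^2}$ matching the definitions of $L_p^n(X)$ and of $\gamma_+$. The single genuine idea — replacing the hypothesis ``$\lambda_X^{(p)}(A)$ small'' by ``$I-(A\otimes I^n_X)^2$ invertible on $L_p^n(X)_0$ with controlled inverse'' — is exactly what produces the factor $\frac1{1-\lambda}$, so I do not anticipate any real obstacle.
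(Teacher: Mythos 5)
Your proof is correct and it lands on exactly the paper's constant. The underlying idea is also the same as the paper's: subtract the barycenters, observe that $A\otimes I_X^n$ is a $\lambda$-contraction on $L_p^n(X)_0$, and use the resulting invertibility to bound $\|\tilde x\|,\|\tilde y\|$ by $O\bigl(\tfrac{R}{1-\lambda}\bigr)$. The only difference is presentational: the paper packages $f_0\oplus g_0$ into $L_p^{2n}(X)$, views the coupled system through the block matrix $\bigl(\begin{smallmatrix}0&A\\A&0\end{smallmatrix}\bigr)$, and applies the reverse triangle inequality $\|(I-M)h\|\ge(1-\|M\|)\|h\|$; you keep the two components separate, eliminate to get $(I-(A\otimes I_X^n)^2)\tilde y = (A\otimes I_X^n)z-w$ (and symmetrically for $\tilde x$), and invert via a Neumann series. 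Note that $\bigl(\begin{smallmatrix}z\\-w\end{smallmatrix}\bigr)=(I-B\otimes I_X^{2n})(f_0\oplus g_0)$, so you are effectively computing the same object and then squaring the operator before inverting — both routes cost one factor $\tfrac1{1-\lambda}$ and both triangle steps contribute the extra $+1$, giving $\bigl(1+\tfrac4{1-\lambda}\bigr)^p$. Your elimination argument is a touch longer but avoids introducing the auxiliary $2n\times 2n$ block matrix, which some readers may find cleaner.
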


\begin{proof}
Write $\lambda =\lambda_X^{(p)}(A)$. We may assume that $\lambda<1$,
since otherwise there is nothing to prove. Fix
$f,g:\{1,\ldots,n\}\to X$ and denote
$$
\overline f\eqdef \frac{1}{n}\sum_{i=1}^n f(i)\quad
\mathrm{and}\quad  \overline g\eqdef \frac{1}{n}\sum_{i=1}^n g(i).
$$
Thus
$$
f_0\eqdef f-\overline f\in L_p^n(X)_0\quad \mathrm{and}\quad
g_0\eqdef g-\overline g\in L_p^n(X)_0. $$
Therefore
\begin{equation}\label{eq:use norm bounds}
\left\|(A\otimes I_X^n)f_0\right\|_{L_p^n(X)}\le \lambda
\left\|f_0\right\|_{L_p^n(X)}\quad \mathrm{and}\quad
\left\|(A\otimes I_X^n)g_0\right\|_{L_p^n(X)}\le \lambda
\|g_0\|_{L_p^n(X)}.
\end{equation}
Let $B$ be the $(2n)\times (2n)$ symmetric stochastic matrix given
by
\begin{equation}\label{eq:defB}
 B=   \begin{pmatrix}
  0 & A \\
   A & 0
   \end{pmatrix}.
   \end{equation}
Letting $h=f_0\oplus g_0\in L_p^{2n}(X)$ be given by
$$
h(i)\eqdef \left\{\begin{array}{ll} f_0(i) &\mathrm{if\ } i\in
\{1,\ldots,n\}, \\ g_0(i-n) &\mathrm{if\ } i\in \{n+1,\ldots,2n\},
\end{array}\right.
$$
 we
see that
\begin{align}\label{eq:subtract identity}
&\!\!\!\!\!\!\!\!\!\!\!\!\!\!\!(1-\lambda)\left\|h\right\|_{L_p^{2n}(X)}
=\|h\|_{L_p^{2n}(X)}-\left(\frac{\lambda^p\left\|
f_0\right\|_{L_p^n(X)}^p+  \lambda^p\left\|
g_0\right\|_{L_p^n(X)}^p}{2}\right)^{1/p}\nonumber\\&\stackrel{\eqref{eq:use norm bounds}}\le
\|h\|_{L_p^{2n}(X)}-\left(\frac12\left\|(A\otimes
I_X^n)f_0\right\|_{L_p^n(X)}^p+ \frac12\left\|(A\otimes
I_X^n)g_0\right\|_{L_p^n(X)}^p\right)^{1/p}\nonumber\\
&\stackrel{\eqref{eq:defB}}{=} \|h\|_{L_p^{2n}(X)}-\left\|(B\otimes
I_X^{2n})h\right\|_{L_p^{2n}(X)}\nonumber\\
&\le \left\|\left(I_{L_p^{2n}(X)}-B\otimes
I_X^{2n}\right)h\right\|_{L_p^{2n}(X)}\nonumber\\
&= \left(\frac{1}{2n}\sum_{i=1}^n\left\|\sum_{j=1}^n
a_{ij}\left(f_0(i)-g_0(j)\right)\right\|_X^p+\frac{1}{2n}\sum_{i=1}^n\left\|\sum_{j=1}^n
a_{ij}\left(g_0(i)-f_0(j)\right)\right\|_X^p\right)^{1/p}\nonumber\\
&\le  \left(\frac{1}{n}\sum_{i=1}^n\sum_{j=1}^n
a_{ij}\|f_0(i)-g_0(j)\|_X^p\right)^{1/p}\nonumber\\
&\le \left\|\overline f-\overline
g\right\|_X+\left(\frac{1}{n}\sum_{i=1}^n\sum_{j=1}^n
a_{ij}\|f(i)-g(j)\|_X^p\right)^{1/p}.
\end{align}
Note that
\begin{multline}\label{eq:distance between expectations}
\left\|\overline f-\overline
g\right\|_X=\left\|\frac{1}{n}\sum_{i=1}^n\sum_{j=1}^n
a_{ij}(f(i)-g(j))\right\|_X\\\le \frac{1}{n}\sum_{i=1}^n\sum_{j=1}^n
a_{ij} \left\|f(i)-g(j)\right\|_X\le
\left(\frac{1}{n}\sum_{i=1}^n\sum_{j=1}^n
a_{ij}\|f(i)-g(j)\|_X^p\right)^{1/p}.
\end{multline}
Combining~\eqref{eq:subtract identity} and~\eqref{eq:distance
between expectations} we see that
\begin{equation}\label{eq:combined}
\left(\frac{1}{2n}\sum_{i=1}^n\left(\|f_0(i)\|_X^p+\|g_0(i)\|_{X}^p\right)\right)^{1/p}\le
\frac{2}{1-\lambda}\left(\frac{1}{n}\sum_{i=1}^n\sum_{j=1}^n
a_{ij}\|f(i)-g(j)\|_X^p\right)^{1/p}. \end{equation} But,
\begin{eqnarray*}
&&\!\!\!\!\!\!\!\!\!\!\!\!\!\!\!\!\!\!\!\!\!\!\!\!\!\!\!\!\!\!\!\!\!\!\!\!\!\!\left(\frac{1}{n^2}\sum_{i=1}^n\sum_{j=1}^n
\|f(i)-g(j)\|_X^p\right)^{1/p}\le \left\|\overline f-\overline
g\right\|_X+ \left(\frac{1}{n^2}\sum_{i=1}^n\sum_{j=1}^n
\|f_0(i)-g_0(j)\|_X^p\right)^{1/p}\\&\le& \left\|\overline
f-\overline g\right\|_X+
\left(\frac{1}{n}\sum_{i=1}^n2^{p-1}\left(\|f_0(i)\|_X^p+\|g_0(i)\|_{X}^p\right)\right)^{1/p}\\
&\stackrel{\eqref{eq:distance between
expectations}\wedge\eqref{eq:combined}}{\le}&
\left(1+\frac{4}{1-\lambda}\right)\left(\frac{1}{n}\sum_{i=1}^n\sum_{j=1}^n
a_{ij}\|f(i)-g(j)\|_X^p\right)^{1/p},
\end{eqnarray*}
which implies the desired estimate~\eqref{eq:linear to nonlinear}.
\end{proof}

\subsection{Norm bounds need not imply nonlinear spectral gaps}\label{sec:no converse in L_1}

One cannot bound $\bp(A,\|\cdot\|_X^p)$ in terms of $\lambda_X^{(p)}(A)$
for a  general Banach space $X$, as shown in the following example.

\begin{lemma}\label{lem:gamma+ but no norm}
For every $n\in \N$ there exists a $2^n\times 2^n$ symmetric stochastic matrix $A_n$ such that for every $p\in [1,\infty)$,
\begin{equation}\label{eq:heat has gamma+}
\sup_{n\in \N} \gamma_+\left(A_n,\|\cdot\|_{L_1}^p\right)<\infty,
\end{equation}
yet
\begin{equation}\label{eq:no norm bound}
\lim_{n\to \infty} \lambda_{L_1}^{(p)}(A_n)=1.
\end{equation}
\end{lemma}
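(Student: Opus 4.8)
\emph{Choice of the matrix.} The plan is to take $A_n$ to be the discretized heat semigroup on the hypercube at a fixed time, say time one. Identifying $\F_2^n$ with $\{1,\dots,2^n\}$, set $A_n\eqdef e^{-\Delta}$; by \eqref{eq:matrix form heat} its entries are
\[
(A_n)_{xy}=\left(\tfrac{1-e^{-1}}{2}\right)^{\|x-y\|_1}\left(\tfrac{1+e^{-1}}{2}\right)^{n-\|x-y\|_1}\qquad(x,y\in\F_2^n).
\]
Since $1-e^{-1}>0$ these are nonnegative; they depend only on $\|x-y\|_1$, so $A_n$ is symmetric; and each row sums to $\left(\tfrac{1-e^{-1}}{2}+\tfrac{1+e^{-1}}{2}\right)^n=1$. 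Thus $A_n$ is a $2^n\times 2^n$ symmetric stochastic matrix, its eigenvectors are the Walsh functions $W_S$ with eigenvalues $e^{-|S|}>0$, and hence $\lambda(A_n)=\max_{\emptyset\ne S\subseteq\{1,\dots,n\}}e^{-|S|}=e^{-1}<1$.

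\emph{Proof of \eqref{eq:no norm bound}.} First I would invoke Lemma~\ref{lem:normalized delta}: the function $f_n\in L_p^{\ge1}(\F_2^n,L_1(\F_2^n))$ defined there is nonzero and has vanishing average, so, regarding $L_1(\F_2^n)$ isometrically inside $L_1$, it is a nonzero element of $L_p^{2^n}(L_1)_0$, and $(A_n\otimes I_{L_1}^{2^n})f_n=e^{-\Delta}f_n$. Therefore, using \eqref{eq:def lambdap}, the fact that $A_n$ is doubly stochastic, and \eqref{eq:limit ratio},
\[
1\ \ge\ \lambda_{L_1}^{(p)}(A_n)\ \ge\ \frac{\bigl\|e^{-\Delta}f_n\bigr\|_{L_p(\F_2^n,L_1(\F_2^n))}}{\bigl\|f_n\bigr\|_{L_p(\F_2^n,L_1(\F_2^n))}}\ \xrightarrow[n\to\infty]{}\ 1,
\]
which is \eqref{eq:no norm bound}.

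\emph{Proof of \eqref{eq:heat has gamma+}.} Fix $p\in[1,\infty)$, so that $\rho\eqdef\|\cdot\|_{L_1}^p$ is a $2^{p-1}$-quasi-semimetric. Let $B_n\eqdef\left(\begin{smallmatrix}0&A_n\\A_n&0\end{smallmatrix}\right)$ be the matrix appearing in Lemma~\ref{lem:pass to 2-cover}; its eigenvalues are $\{\pm e^{-|S|}:S\subseteq\{1,\dots,n\}\}$, so $\lambda_2(B_n)=e^{-1}$ is bounded away from $1$ uniformly in $n$. By Matou\v{s}ek's extrapolation method for Poincar\'e inequalities~\cite{Mat97} (see also~\cite{BLMN05}), in its form for general metric targets, any symmetric stochastic matrix $B$ with $\lambda_2(B)\le 1-\delta$ satisfies $\gamma(B,d^p)\le C(p,\delta)$ for every metric $d$; applying this with $B=B_n$, $d=\|\cdot\|_{L_1}$, $\delta=1-e^{-1}$, and then passing from $\gamma$ to $\gamma_+$ via Lemma~\ref{lem:pass to 2-cover} (with $\kappa=p-1$),
\[
\gamma_+\bigl(A_n,\|\cdot\|_{L_1}^p\bigr)\ \stackrel{\eqref{eq:2n}}{\le}\ 2\,\gamma\bigl(B_n,\|\cdot\|_{L_1}^p\bigr)\ \le\ 2\,C\bigl(p,1-e^{-1}\bigr),
\]
a bound independent of $n$, proving \eqref{eq:heat has gamma+}. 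For $p=1$ one can bypass extrapolation: since $\|\cdot\|_{L_1}$ is of negative type there is $\psi:L_1\to L_2$ with $\|\psi(u)-\psi(v)\|_{L_2}^2=\|u-v\|_{L_1}$, whence $\gamma(B_n,\|\cdot\|_{L_1})\le\gamma(B_n,\|\cdot\|_{L_2}^2)=(1-e^{-1})^{-1}$; it is this negative-type structure of $L_1$ — absent for a general Banach space — that makes the example possible.

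\emph{Where the difficulty lies.} The two conclusions pull in opposite directions, and this tension is precisely the point of the example: $e^{-\Delta}$ has an $n$-uniform spectral gap in the Hilbertian — and hence, via negative type and Matou\v{s}ek extrapolation, $L_1$ — sense, yet Lemma~\ref{lem:normalized delta} shows that in the $L_1$ norm it moves the first–tail-space functions $f_n$ by a vanishingly small relative amount, which forces the \emph{linear} quantity $\lambda_{L_1}^{(p)}(A_n)$ up to $1$. The only genuinely nontrivial inputs are Lemma~\ref{lem:normalized delta} (already available) and the extrapolation step; everything else is bookkeeping, the main subtlety being the identification of $e^{-\Delta}f_n$ with $(A_n\otimes I^{2^n}_{L_1})f_n$ inside $L_p^{2^n}(L_1)_0$ and the careful passage between $\gamma$ and $\gamma_+$.
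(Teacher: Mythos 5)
Your proof of \eqref{eq:no norm bound} is correct and essentially coincides with what the paper does: the only content is to observe that $f_n$ from Lemma~\ref{lem:normalized delta} is a mean-zero $L_1(\F_2^n)$-valued function, that $(A_n\otimes I_{L_1}^{2^n})f_n=e^{-t\Delta}f_n$, and then to invoke \eqref{eq:limit ratio}. However, your proof of \eqref{eq:heat has gamma+} has a genuine gap for $p>1$. You appeal to ``Matou\v{s}ek's extrapolation method, in its form for general metric targets,'' claiming that $\lambda_2(B)\le 1-\delta$ implies $\gamma(B,d^p)\le C(p,\delta)$ for \emph{every} metric $d$. No such result exists, and it is in fact false. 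Matou\v{s}ek's extrapolation is a statement specifically about $L_q$ targets (with the matching exponent $q$), as the introduction of this paper explicitly notes; the question of whether a uniform spectral gap forces a Poincar\'e inequality even for uniformly convex targets is the paper's central open problem, and Section~\ref{sec:no-decay} exhibits a metric space $(X,\rho)$ for which $\gamma_+(G_n,\rho^2)\to\infty$ along a fixed expander sequence $\{G_n\}$, directly refuting the universal claim you make. So as written the argument does not establish \eqref{eq:heat has gamma+} for $p>1$.

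Your $p=1$ fallback is correct (and is the right idea): $L_1$ equipped with $\sqrt{\|\cdot\|_{L_1}}$ embeds isometrically into $L_2$, hence $\gamma(B_n,\|\cdot\|_{L_1})\le\gamma(B_n,\|\cdot\|_{L_2}^2)=(1-e^{-1})^{-1}$. To cover all $p$ along these lines you should push the embeddings one step further: writing $\|\cdot\|_{L_1}^p=(\sqrt{\|\cdot\|_{L_1}})^{2p}$ and using the isometric embeddings $(L_1,\sqrt{\|\cdot\|_{L_1}})\hookrightarrow L_2\hookrightarrow L_{2p}$, one reduces to bounding $\gamma_+(A_n,\|\cdot\|_{L_{2p}}^{2p})$, which \emph{is} in the scope of the genuine $L_{2p}$-extrapolation. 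The paper instead bypasses extrapolation altogether: it applies Meyer's Lemma~\ref{lem:meyer} coordinatewise to get a uniform bound on the linear quantity $\lambda_{L_{2p}}^{(2p)}(A_n)$, converts this into $\gamma_+(A_n,\|\cdot\|_{L_{2p}}^{2p})\le C_p(t)$ via Lemma~\ref{lem:norm to poincare}, and then descends through the same chain of isometric embeddings $L_2\hookrightarrow L_{2p}$ and $(L_1,\sqrt{\|\cdot\|_{L_1}})\hookrightarrow L_2$. Either route works, but the ``general metric target'' shortcut must be replaced by one of them.
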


\begin{proof}
We use here the results and notation of Section~\ref{sec:heat}. For every $t\in (0,\infty)$, the operator $e^{-t\Delta}$ is an averaging operator, since by~\eqref{eq:riesz} it corresponds to convolution with the Riesz kernel given in~\eqref{eq:def:riesz}. Hence the $\F_2^n\times \F_2^n$ matrix $A_n$ whose entry at $(x,y)\in \F_2^n\times \F_2^n$ is
$$
(e^{-t\Delta}\delta_x)(y)\stackrel{\eqref{eq:primal semigroup}}{=}\left(\frac{1-e^{-t}}{2}\right)^{\|x-y\|_1}\left(\frac{1+e^{-t}}{2}\right)^{n-\|x-y\|_1}
$$
is symmetric and stochastic. Lemma~\ref{lem:normalized delta} implies the validity of~\eqref{eq:no norm bound}, so it remains to establish~\eqref{eq:heat has gamma+}.

By Lemma~\ref{lem:meyer} there exists $c_p\in (0,\infty)$ such that
$$
\lambda_{L_{2p}}^{(2p)}(A_n)\le e^{-c_p\min\{t,t^2\}}.
$$
It therefore follows from Lemma~\ref{lem:norm to poincare} that
$$
\gamma_+\left(A_n,\|\cdot\|_{L_{2p}}^{2p}\right)\le \left(\frac{5-e^{-c_p\min\{t,t^2\}}}{1-e^{-c_p\min\{t,t^2\}}}\right)^p\eqdef C_p(t)<\infty.
$$
Since $L_2$ embeds isometrically into $L_{2p}$ (see e.g.~\cite{Woj91}), it follows that $\gamma_+\left(A_n,\|\cdot\|_{L_{2}}^{2p}\right)\le C_p(t)$. It is a standard fact that $L_1$ equipped with the metric $d(f,g)=\sqrt{\|f-g\|_1}$ admits an isometric embedding into $L_2$ (for one of several possible simple proofs of this, see~\cite[Sec.~3]{Nao10}). It follows that $\gamma_+\left(A_n,\|\cdot\|_{L_{1}}^{p}\right)=\gamma_+\left(A_n,d^{2p}\right)\le C_p(t)$.
\end{proof}

\subsection{A partial converse to Lemma~\ref{lem:norm to poincare} in uniformly convex spaces}\label{sec:converese} Despite the validity of Lemma~\ref{lem:gamma+ but no norm}, Lemma~\ref{poincare to norm in uniformly convex} below is a partial converse to Lemma~\ref{lem:norm to poincare} that holds true if $X$ is uniformly convex. We start this section with a review of uniform convexity and smoothness; the material below will also be used in Section~\ref{sec:martingale}.

Let $(X,\|\cdot\|_X)$ be a normed space. The {\em modulus of uniform
convexity\/} of $X$ is defined for $\e\in [0,2]$ as
\begin{eqnarray}\label{def:convexity}
\delta_X(\e)\eqdef\inf\left\{ 1-\frac{\|x+y\|_X}{2}:\ x,y\in X,\
\|x\|_X=\|y\|_X=1,\ \|x-y\|_X=\e\right\}.
\end{eqnarray}
$X$ is said to be {\em uniformly convex} if $\delta_X(\e)>0$ for all
$\e\in (0,2]$. Furthermore, $X$ is said to have modulus of convexity
of power type $p$ if there exists a constant $c\in (0,\infty)$ such
that $\delta_X(\e)\ge c\,\e^p$ for all $\e\in [0,2]$. It is
straightforward to check that in this case necessarily $p\ge 2$. By
Proposition 7 in~\cite{BCL} (see also~\cite{Fiegel76}), $X$ has
modulus of convexity of power type $p$ if and only if there exists a
constant $K\in [1,\infty)$ such that for every $x,y\in X$
\begin{eqnarray}\label{eq:two point convex}
\|x\|_X^p+\frac{1}{K^p}\|y\|_X^p\le
\frac{\|x+y\|_X^p+\|x-y\|_X^p}{2}.
\end{eqnarray}
The infimum over those $K$ for which \eqref{eq:two point convex}
holds is called the $p$-convexity constant of $X$, and is denoted
$K_p(X)$.

The {\em modulus of uniform smoothness} of $X$ is defined for
$\tau\in (0,\infty)$ as
\begin{equation}\label{eq:def smoothness}
\rho_X(\tau)\eqdef \sup\left\{\frac{\|x+\tau y\|_X+\|x-\tau
y\|_X}{2}-1:\ x,y\in X,\ \|x\|_X=\|y\|_X=1\right\}.
\end{equation}
$X$ is said to be {\em uniformly smooth} if $\lim_{\tau\to
0}\rho_X(\tau)/\tau=0$. Furthermore, $X$ is said to have modulus of
smoothness of power type $p$ if there exists a constant $C\in
(0,\infty)$ such that $\rho_X(\tau)\le C\tau^p$ for all $\tau\in
(0,\infty)$. It is straightforward to check that in this case
necessarily $p\in [1,2]$. It follows from~\cite{BCL} that $X$ has
modulus of smoothness of power type $p$ if and only if there exists
a constant $S\in [1,\infty)$ such that for every $x,y\in X$
\begin{equation}\label{eq:two point smooth}
\frac{\|x+y\|_X^p+\|x-y\|_X^p}{2}\le \|x\|_X^p+S^p\|y\|_X^p.
\end{equation}
The infimum over those $S$ for which \eqref{eq:two point smooth}
holds is called the $p$-smoothness constant of $X$, and is denoted
$S_p(X)$.

The moduli appearing in~\eqref{def:convexity} and~\eqref{eq:def
smoothness} relate to each other via the following classical duality
formula of Lindenstrauss~\cite{Lin63}.
$$
\rho_{X^*}(\tau)=\sup\left\{\frac{\tau\e}{2}-\delta_X(\e):\ \e\in
[0,2]\right\}.
$$
Correspondingly, it was shown in~\cite[Lem.~5]{BCL} that the  best
constants in~\eqref{eq:two point convex} and~\eqref{eq:two point
smooth} have the following duality relation.
\begin{equation}\label{eq:duality KS}
K_p(X)=S_{p/(p-1)}(X^*).
\end{equation}

Observe that if $q\ge p$ then for all $x,y\in X$ we have
$$\left(\frac{\|x+y\|_X^p+\|x-y\|_X^p}{2}\right)^{1/p}\le
\left(\frac{\|x+y\|_X^q+\|x-y\|_X^q}{2}\right)^{1/q},
$$
and
$$
\left(\|x\|_X^q+\frac{1}{K^q}\|y\|_X^q\right)^{1/q}\le
\left(\|x\|_X^p+\frac{1}{K^p}\|y\|_X^p\right)^{1/p}.
$$
Hence,
\begin{equation}\label{eq:monotonicity K}
q\ge p\implies K_q(X)\le K_p(X).
\end{equation}
Similarly we have (though we will not use this fact later),
$$
q\le p\implies S_q(X)\le S_p(X).
$$

The following lemma can be deduced from a combination of results
in~\cite{FP74,Fiegel76} and~\cite{BCL} (without the explicit
dependence on $p,q$). A simple proof of the case $p=2$ of it is also
contained in~\cite{Nao11}; we include the natural adaptation of the
argument  to general $p\in (1,2]$ for the sake of
completeness.

\begin{lemma}\label{lem:L_q smoothness} For every $p\in (1,2]$,
$q\in [p,\infty)$, every Banach space $(X,\|\cdot\|_X$ and every measure space $(\Omega,\mu)$, we have
$$
S_p\left(L_q(\mu,X)\right)\le (5pq)^{1/p}S_p(X).
$$
\end{lemma}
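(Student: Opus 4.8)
The plan is to reduce everything to the two-point inequality \eqref{eq:two point smooth} characterizing $p$-smoothness, and then to combine a pointwise application of that inequality in $X$ with a scalar $L_q$-smoothness estimate. Fix $f,g\in L_q(\mu,X)$. By the definition of $S_p(X)$, for $\mu$-almost every $\omega\in\Omega$ we have
$$
\frac{\|f(\omega)+g(\omega)\|_X^p+\|f(\omega)-g(\omega)\|_X^p}{2}\le \|f(\omega)\|_X^p+S_p(X)^p\|g(\omega)\|_X^p.
$$
Raise both sides to the power $q/p\ge 1$ and integrate in $\omega$. The left-hand side, after using that $t\mapsto t^{q/p}$ is convex and monotone, controls $\bigl(\|f+g\|_{L_q(\mu,X)}^p+\|f-g\|_{L_q(\mu,X)}^p\bigr)/2$ up to passing through the integral; the issue is that integration and the power $q/p$ do not commute with the arithmetic mean for free. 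So the first real step is the scalar reduction: it suffices to prove the lemma for $X=\mathbb{R}$, i.e. to show $S_p(L_q(\mu))\le (5pq)^{1/p}$, because once we have a pointwise bound of the form $\Phi(\omega)^p\le \alpha(\omega)^p+S^p\beta(\omega)^p$ with $\alpha,\beta\ge 0$, we can apply the scalar case to the functions $\alpha,\beta\in L_q(\mu)$ together with the elementary fact that $\|\Phi\|_{L_q(\mu)}\le \|\,\|f+g\|_X\,\|_{L_q(\mu)}$ etc. Concretely: set $a(\omega)=\|f(\omega)\|_X$, $b(\omega)=\|g(\omega)\|_X$; the triangle inequality in $X$ gives $\|f\pm g\|_{L_q(\mu,X)}\le \|a\pm b\|_{L_q(\mu)}$ only in one direction, so instead one keeps the vector norms $u(\omega)=\|f(\omega)+g(\omega)\|_X$, $v(\omega)=\|f(\omega)-g(\omega)\|_X$ and uses the pointwise inequality $\frac{u^p+v^p}{2}\le a^p+S_p(X)^pb^p$ together with the scalar smoothness inequality applied to $a,b$.

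The heart of the matter is therefore the scalar estimate $S_p(L_q(\mu))\le (5pq)^{1/p}S_p(\mathbb{R})=(5pq)^{1/p}$. Here $S_p(\mathbb{R})=1$ for $p\in(1,2]$. The plan is to prove, for all real $s,t$,
$$
\frac{|s+t|^p+|s-t|^p}{2}\le |s|^p+5pq\,|t|^p
$$
is \emph{not} what we want at the scalar level — rather we want the genuine $L_q$ statement. I would follow the standard route: it is enough to show that for all $f,g\in L_q(\mu)$,
$$
\left(\frac{\|f+g\|_{L_q}^p+\|f-g\|_{L_q}^p}{2}\right)^{1/p}\le \left(\|f\|_{L_q}^p+(5pq)\|g\|_{L_q}^p\right)^{1/p},
$$
and for this one uses the pointwise two-point inequality with a good constant: for $p\in(1,2]$ and all real $x,y$ one has $\frac{|x+y|^p+|x-y|^p}{2}\le |x|^p+(p-1)|y|^p\cdot C$ — more usefully, the clean inequality $\frac{|x+y|^p+|x-y|^p}{2}\le \left(|x|^2+(p-1)|y|^2\right)^{p/2}$ for $p\in(1,2]$ (Hanner/Gross-type), which after the substitution and Minkowski's inequality in $L_{q/p}$ (valid since $q/p\ge 1$ when $q\ge p$; and when $q<p$ one uses the reverse with the $q$-norm directly — but the lemma assumes $q\ge p$) yields
$$
\frac{\|f+g\|_{L_q}^p+\|f-g\|_{L_q}^p}{2}\le \left\|\,|f|^2+(p-1)|g|^2\,\right\|_{L_{q/2}}^{p/2}\le \left(\|f\|_{L_q}^2+(p-1)\|g\|_{L_q}^2\right)^{p/2}.
$$
Then converting the $\ell_2^2$-type right-hand side back to an $\ell_p^p$-type bound costs a factor polynomial in $p$ and $q$: using $\left(A^2+(p-1)B^2\right)^{p/2}\le A^p + c(p)\,B^p$ with an explicit $c(p)$, and absorbing the discrepancy between the exponent-$2$ and exponent-$p$ formulations at the cost of the stated $5pq$; this last numerical optimization is where the constant $(5pq)^{1/p}$ is pinned down.

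The main obstacle I anticipate is bookkeeping the constant to land exactly at $(5pq)^{1/p}$ rather than merely $O_{p,q}(1)$: one must be careful that (i) Minkowski's inequality in $L_{q/p}$ is applied in the correct direction (it requires $q/p\ge 1$, i.e. $q\ge p$, which is exactly the hypothesis), and (ii) the passage from the quadratic form $|f|^2+(p-1)|g|^2$ to the $p$-th power form introduces only a factor controlled by $q$ (this is where the dependence on $q$, not just $p$, enters — a naive bound gives a $q$-independent constant for the $\ell_2$-version but one needs the $\ell_p$-version for \eqref{eq:two point smooth}, and interpolating costs a power of $q$). Beyond that, the vectorization step (deducing the $L_q(\mu,X)$ statement from the scalar statement plus the definition of $S_p(X)$) is routine: apply the scalar inequality to the functions $\omega\mapsto\|f(\omega)\|_X$ and $\omega\mapsto\|g(\omega)\|_X$ after the pointwise use of $S_p(X)$, and collect the two multiplicative constants $(5pq)^{1/p}$ and $S_p(X)$ into the product $(5pq)^{1/p}S_p(X)$.
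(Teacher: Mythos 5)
Your plan never confronts the real difficulty, and the reduction step you propose does not go through. Let me point out the two genuine gaps.

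\textbf{The ``scalar reduction'' is not available.} You want to first prove $S_p(L_q(\mu))\le(5pq)^{1/p}$ and then ``vectorize'' by applying that scalar fact to the functions $\omega\mapsto\|f(\omega)\|_X$ and $\omega\mapsto\|g(\omega)\|_X$ together with the pointwise $p$-smoothness of $X$. But the quantities you need to control are the $L_q$-norms of $u(\omega)=\|f(\omega)+g(\omega)\|_X$ and $v(\omega)=\|f(\omega)-g(\omega)\|_X$, and the pointwise bound $\frac{u^p+v^p}{2}\le a^p+S_p(X)^pb^p$ (with $a=\|f\|_X$, $b=\|g\|_X$) does not let you deduce $\frac{\|u\|_{L_q}^p+\|v\|_{L_q}^p}{2}\le\|a\|_{L_q}^p+C\|b\|_{L_q}^p$: taking $L_{q/p}$-norms of both sides of the pointwise inequality and applying Minkowski controls $\left\|\frac{u^p+v^p}{2}\right\|_{L_{q/p}}$ from above, but you would then need to bound $\frac{\|u\|_{L_q}^p+\|v\|_{L_q}^p}{2}$ by $\left\|\frac{u^p+v^p}{2}\right\|_{L_{q/p}}$, and Minkowski runs in the opposite direction for that. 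There is no clean submultiplicativity $S_p(L_q(\mu,X))\le S_p(L_q(\mu))\,S_p(X)$ to invoke; the paper proves the vector-valued statement directly, without a scalar detour.

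\textbf{The pointwise inequality that actually makes the integration work is never stated or proved.} You correctly sense that one wants to raise a pointwise inequality to the power $q/p$ and integrate, but raising the $p$-smoothness inequality $\frac{\|x+y\|_X^p+\|x-y\|_X^p}{2}\le\|x\|_X^p+S^p\|y\|_X^p$ to the power $q/p$ gives an upper bound on $\left(\frac{\|x+y\|_X^p+\|x-y\|_X^p}{2}\right)^{q/p}$, which by the power-mean inequality is a \emph{lower} bound for $\frac{\|x+y\|_X^q+\|x-y\|_X^q}{2}$ --- the wrong direction. What is actually needed, and what the paper proves (inequality \eqref{eq:AR}), is
\[
\frac{\|x+y\|_X^q+\|x-y\|_X^q}{2}\le\bigl(\|x\|_X^p+5pq\,S^p\|y\|_X^p\bigr)^{q/p},
\]
with the exponent $q$ on the \emph{left}. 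Once this is in hand, the lemma follows in three lines: use $\frac{\alpha^p+\beta^p}{2}\le\left(\frac{\alpha^q+\beta^q}{2}\right)^{p/q}$ on the two $L_q$-norms, write the $q$-mean as an integral, apply \eqref{eq:AR} pointwise, and finish with Minkowski in $L_{q/p}$. You have dismissed the passage from the $p$-smoothness inequality to this exponent-$q$ inequality as ``bookkeeping the constant,'' but it is the heart of the proof: it requires normalizing to $\|y\|_X\le\|x\|_X=1$, the estimate $\bigl|\|x+y\|_X^p-\|x-y\|_X^p\bigr|\le2p\|y\|_X$, the scalar inequality $\bigl(\frac{(1+\beta)^\alpha+(1-\beta)^\alpha}{2}\bigr)^{1/\alpha}\le1+2\alpha\beta^2$, and a careful combination of these with the $p$-smoothness bound $b\le 1+S^p\|y\|_X^p$. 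The Hanner/Gross-type route you sketch (Minkowski in $L_{q/2}$, then ``converting back'') also founders on the fact that $q/2$ may be less than $1$ when $p\le q<2$, and in any case it does not produce the exponent-$q$ form on the left.

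In short, your plan identifies the shape of the argument but substitutes for its core a reduction that fails (the scalar-vector factorization) and relegates the actual work (establishing \eqref{eq:AR}) to a remark about constants.
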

\begin{proof} Fix $S>S_p(X)$. We will show that for every $x,y\in X$ we have
\begin{equation}\label{eq:AR}
\frac{\|x+y\|_X^q+\|x-y\|_X^q}{2}\le
\left(\|x\|_X^p+5pqS^p\|y\|_X^p\right)^{q/p}.
\end{equation}

Assuming the validity of~\eqref{eq:AR} for the moment, we complete
the proof of Lemma~\ref{lem:L_q smoothness} as follows. If $f,g\in
L_q(\mu,X)$ then
\begin{eqnarray*}
\frac{\|f+g\|_{L_q(\mu,X)}^p+\|f-g\|_{L_q(\mu,X)}^p}{2}&\le&
\left(\frac{\|f+g\|_{L_q(\mu,X)}^q+\|f-g\|_{L_q(\mu,X)}^q}{2}\right)^{p/q}\\
&=&\left(\int_\Omega\frac{\|f+g\|_X^q+\|f-g\|_X^q}{2}d\mu\right)^{p/q}\\
&\stackrel{\eqref{eq:AR}}{\le}&
\Big\|\|f\|_X^p+5pqS^p\|g\|_X^p\Big\|_{L_{q/p}(\mu)}\\
&\le& \|f\|_{L_q(\mu,X)}^p+5pqS^p\|g\|_{L_q(\mu,X)}^p.
\end{eqnarray*}
This proves that $S_p(L_q(\mu,X))^p\le 5pqS_p(X)^p$, as desired.

It remains to prove~\eqref{eq:AR}. Since  $\|y\|_X^p+5pqS^p\|x\|_X^p \le
\|x\|_X^p+5pqS^p\|y\|_X^p$ if $\|x\|_X\le \|y\|_X$, it suffices to
prove~\eqref{eq:AR} under the additional assumption $\|y\|_X\le
\|x\|_X$. After normalization we may further assume that $\|x\|_X=1$
and $\|y\|_X\le 1$.

Note that
\begin{equation}\label{eq:y triangle}
\big|\|x+y\|_X^p-\|x-y\|_X^p\big|\le
\left(1+\|y\|_X\right)^p-\left(1-\|y\|_X\right)^p\le 2p\|y\|_X.
\end{equation}
We claim that for every $\alpha\in [1,\infty)$ and $\beta\in [-1,1]$
we have
\begin{equation}\label{eq:norm quadratic}
\left(\frac{(1+\beta)^\alpha+(1-\beta)^\alpha}{2}\right)^{1/\alpha}\le
1+ 2\alpha\beta^2.
\end{equation}
Indeed, by symmetry it suffices to prove~\eqref{eq:norm quadratic}
when $\beta\in [0,1]$. The left hand side of~\eqref{eq:norm
quadratic} is at most $\max\{1+\beta,1-\beta\}=1+\beta$, which
implies~\eqref{eq:norm quadratic} when $\beta\ge 1/(2\alpha)$. We
may therefore assume  that $\beta\in [0,1/(2\alpha)]$, in which case
the crude bound $(1+\beta)^\alpha+(1-\beta)^\alpha\le 2+4\alpha^2\beta^2$ follows
from Taylor's expansion, implying~\eqref{eq:norm quadratic} in this
case as well.

Set
\begin{equation}\label{eq:b beta}
b\eqdef \frac{\|x+y\|_X^p+\|x-y\|_X^p}{2}\quad \mathrm{and}\quad
\beta\eqdef \frac{\|x+y\|_X^p-\|x-y\|_X^p}{\|x+y\|_X^p+\|x-y\|_X^p},
\end{equation}
and define
\begin{equation}\label{eq:def theta beta}
\theta\eqdef
\left(\frac{(1+\beta)^{q/p}+(1-\beta)^{q/p}}{2}\right)^{p/q}-1\in
[0,1].
\end{equation}
Observe that by convexity $b\ge 1$, and therefore
\begin{equation}\label{eq:bound thetha upper}
\theta\stackrel{\eqref{eq:norm quadratic}}{\le}
2\frac{q}{p}\beta^2\stackrel{\eqref{eq:y triangle}\wedge \eqref{eq:b beta}}{\le}
\frac{2q}{p}\left(\frac{2p\|y\|_X}{2b}\right)^2\le 2pq\|y\|_X^p,
\end{equation}
where we used the fact that $p\in [1,2]$ and $\|y\|_X\le 1$. Now,
\begin{multline*}
\frac{\|x+y\|_X^q+\|x-y\|_X^q}{2} \stackrel{\eqref{eq:b beta}\wedge
\eqref{eq:def theta
beta}}{=}(b(1+\theta))^{q/p}\\\stackrel{\eqref{eq:two point
smooth}}{\le}
\left(\left(1+S^p\|y\|_X^p\right)(1+\theta)\right)^{q/p}\stackrel{\eqref{eq:bound
thetha upper}}{\le} \left(1+5pqS^p\|y\|_X^p\right)^{q/p}.\tag*{\qedhere}
\end{multline*}
\end{proof}

By~\eqref{eq:duality KS}, Lemma~\ref{lem:L_q smoothness} implies the
following dual statement.
\begin{corollary}\label{cor:L_p(X)}
For every $p\in [2,\infty)$, $q\in (1,p]$, every Banach space $(X,\|\cdot\|_X)$ and every measure space
$(\Omega,\mu)$, we have
$$
K_p(L_q(\mu,X))\le \left(\frac{5pq}{(p-1)(q-1)}\right)^{1-1/p}K_p(X).
$$
\end{corollary}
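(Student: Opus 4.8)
The plan is to obtain Corollary~\ref{cor:L_p(X)} by dualizing Lemma~\ref{lem:L_q smoothness} through the convexity--smoothness duality~\eqref{eq:duality KS}. Write $p'\eqdef p/(p-1)$ and $q'\eqdef q/(q-1)$. Since $p\ge 2$ we have $p'\in(1,2]$, and since $t\mapsto t/(t-1)$ is decreasing on $(1,\infty)$ the hypothesis $q\le p$ gives $q'\ge p'$; thus the triple $(p',q',X^*)$ satisfies the hypotheses of Lemma~\ref{lem:L_q smoothness}, which yields, for every measure space $(\Omega,\nu)$,
\begin{equation}\label{eq:cor-dual-smoothness}
S_{p'}\bigl(L_{q'}(\nu,X^*)\bigr)\le (5p'q')^{1/p'}S_{p'}(X^*).
\end{equation}
A direct computation gives $(5p'q')^{1/p'}=\left(\tfrac{5pq}{(p-1)(q-1)}\right)^{1-1/p}$, and by~\eqref{eq:duality KS} applied to $X$ we have $S_{p'}(X^*)=K_p(X)$, so the right-hand side of~\eqref{eq:cor-dual-smoothness} is exactly the bound claimed in the corollary.

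It then remains to pass from~\eqref{eq:cor-dual-smoothness} to a bound on $K_p(L_q(\mu,X))$. Applying~\eqref{eq:duality KS} to the Banach space $L_q(\mu,X)$ gives $K_p\bigl(L_q(\mu,X)\bigr)=S_{p'}\bigl((L_q(\mu,X))^*\bigr)$, so it would suffice to know $(L_q(\mu,X))^*=L_{q'}(\mu,X^*)$ isometrically and then invoke~\eqref{eq:cor-dual-smoothness} with $\nu=\mu$. In the super-reflexive setting where the corollary will be applied $X$ is reflexive, $X^*$ has the Radon--Nikodym property, and this identification is standard; for a general Banach space $X$ and a general measure it may fail, and this is the only step requiring a little care.

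To handle the general case I would observe that the defining inequality~\eqref{eq:two point convex} for $K_p(L_q(\mu,X))$ only involves two vectors $f,g\in L_q(\mu,X)$. Approximating $f$ and $g$ in $L_q$-norm by simple functions supported on a common finite measurable partition of $\Omega$ realizes $\mathrm{span}(f,g)$, up to an arbitrarily small perturbation, inside $L_q(\nu,X)$ for a measure $\nu$ with finitely many atoms; for such $\nu$ the identity $(L_q(\nu,X))^*=L_{q'}(\nu,X^*)$ is elementary and needs no Radon--Nikodym property. Combining this with~\eqref{eq:duality KS} and~\eqref{eq:cor-dual-smoothness} shows that~\eqref{eq:two point convex} holds on each such finite-dimensional subspace with constant $\left(\tfrac{5pq}{(p-1)(q-1)}\right)^{1-1/p}K_p(X)$; letting the approximation parameters tend to zero gives the inequality on $\mathrm{span}(f,g)$, and since $f,g$ were arbitrary this is the assertion of Corollary~\ref{cor:L_p(X)}. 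I do not expect any genuine obstacle here; the only mildly delicate point is this dual-space identification, which is why the reduction to finitely many atoms is worth making explicit.
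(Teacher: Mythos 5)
Your proposal follows the same route as the paper: apply Lemma~\ref{lem:L_q smoothness} to $X^*$ with the conjugate exponents $p'=p/(p-1)$, $q'=q/(q-1)$ (noting $p'\in(1,2]$ and $q'\ge p'$ precisely when $p\ge 2$ and $q\le p$), compute $(5p'q')^{1/p'}=\bigl(\tfrac{5pq}{(p-1)(q-1)}\bigr)^{1-1/p}$, and pass back through~\eqref{eq:duality KS}. The paper presents this as a one-line duality remark, which tacitly uses $(L_q(\mu,X))^*=L_{q'}(\mu,X^*)$; you correctly flag that this identification fails in general (it requires $X^*$ to have the Radon--Nikod\'ym property with respect to $\mu$), and your repair is sound: the two-point inequality~\eqref{eq:two point convex} defining $K_p$ involves only a pair of vectors, so it suffices to establish it for simple functions over a common finite partition, where $L_q(\nu,X)$ is a finite $\ell_q$-sum of copies of $X$ and the vector-valued $L_q$--$L_{q'}$ duality is elementary; density and continuity of the norm then extend it to all of $L_q(\mu,X)$. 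An alternative way to phrase the same repair, slightly slicker and worth knowing: $L_{q'}(\mu,X^*)$ is always a \emph{norming} subspace of $(L_q(\mu,X))^*$, so $L_q(\mu,X)$ embeds isometrically into $(L_{q'}(\mu,X^*))^*$; applying~\eqref{eq:duality KS} to this dual pair and using that $S_{p'}$ is unchanged under passing to the bidual (by local reflexivity, since it is a two-dimensional modulus) gives $K_p(L_q(\mu,X))\le S_{p'}(L_{q'}(\mu,X^*))$, and then Lemma~\ref{lem:L_q smoothness} finishes. Either way the statement holds as claimed for arbitrary Banach spaces and measure spaces; your write-up is correct and, if anything, more careful than the paper's.
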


The following lemma is stated and proved in~\cite{Ball} when $p=2$.

\begin{lemma}\label{lem:from Ball}
Let $X$ be a normed space and $U$ a random vector in $X$ with $\E
\left[\|U\|_X^p\right]<\infty$. Then
$$
\left\|\E
\left[U\right]\right\|_X^p+\frac{1}{(2^{p-1}-1)K_p(X)^p}\E\left[\left\|U-\E[
U]\right\|_X^p\right]\le \E\left[\|U\|_X^p\right].
$$
\end{lemma}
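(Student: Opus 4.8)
I would argue as follows. Write $K\eqdef K_p(X)$ and $m\eqdef \E[U]$; by homogeneity and a routine approximation it is enough to treat a finitely supported $U$, so that all the moments below are finite. Let $U_1,U_2,\dots$ be i.i.d.\ copies of $U$, set $V_i\eqdef U_i-m$ (so $\E[V_i]=0$), and for $n\in\N$ form the dyadic averages $S_n\eqdef 2^{-n}\sum_{i=1}^{2^n}V_i$ and the increments
\[
D_n\eqdef \frac{1}{2^{n}}\left(\sum_{i=1}^{2^{n-1}}V_i-\sum_{i=2^{n-1}+1}^{2^{n}}V_i\right),
\]
so that $S_{n-1}=S_n+D_n$, hence $V_1=S_n+\sum_{k=1}^{n}D_k$, and moreover $S_n$ and $D_n$ are the half-sum and the half-difference of two independent copies of $S_{n-1}$ built from disjoint blocks of the $V_i$. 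In particular, conditionally on $S_n$ the increment $D_n$ is symmetric about $0$.

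The first step is to iterate the two-point inequality~\eqref{eq:two point convex}. Applying it pointwise with $x=m+S_n$ and $y=D_n$ and then averaging over the conditional (symmetric) law of $D_n$ given $S_n$, one gets
\[
\E\big\|m+S_{n-1}\big\|_X^p\ \ge\ \E\big\|m+S_{n}\big\|_X^p+\frac{1}{K^p}\,\E\|D_n\|_X^p\qquad(n\in\N).
\]
Iterating this from $n=1$, letting $n\to\infty$, using the strong law of large numbers and uniform integrability to see that $S_n\to 0$ in $L_p$, and using Jensen's inequality (and $\E[S_n]=0$) to bound the surviving term below by $\|m\|_X^p$, yields the ``geometric decomposition''
\[
\E\|U\|_X^p\ \ge\ \|m\|_X^p+\frac{1}{K^p}\sum_{k=1}^{\infty}\E\|D_k\|_X^p .
\]
Consequently the lemma follows once one proves the ``reconstruction'' estimate
\[
\sum_{k=1}^{\infty}\E\|D_k\|_X^p\ \ge\ \frac{1}{2^{p-1}-1}\,\E\|U-m\|_X^p .
\]

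This reconstruction bound is the heart of the argument and, I expect, the main obstacle: it is where the constant $2^{p-1}-1$ (the one-dimensional Clarkson constant, which is also why $p\ge 2$ enters here, via $K_p(\R)=1\le K_p(X)$) must be paid. Since $\sum_{k=1}^{n}\E\|D_k\|_X^p$ is exactly the sum of $p$-th moments of the conditionally symmetric increments of the reverse martingale $S_n,S_{n-1},\dots,S_0=V_1$ whose terminal value is $U-m$, I would establish it by iterating — along the very same dyadic scheme — the one-step estimate
\[
\E\|W\|_X^p\ \le\ \E\left\|\frac{W+W'}{2}\right\|_X^p+\big(2^{p-1}-1\big)\,\E\left\|\frac{W-W'}{2}\right\|_X^p ,
\]
valid for every mean-zero $W\in L_p(X)$ with independent copy $W'$, applied successively to $W=S_{n-1}$ while the $\E\|S_n\|_X^p$ remainder disappears in the limit. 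The one-step estimate itself would be proved by conditioning on $\tfrac{W+W'}{2}$ (so that $\tfrac{W-W'}{2}$ becomes a conditionally symmetric, mean-zero increment) and reducing, through the convexity of $t\mapsto|t|^p$, to a one-dimensional inequality — this being the part of the proof that mirrors, and slightly extends, Ball's computation in the case $p=2$. The remaining points (the finite-support reduction, the two passages to the limit, and the scalar verification) are routine and would be supplied as details.
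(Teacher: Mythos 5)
Your argument takes a genuinely different route from the paper's: the paper proves the lemma by a short bootstrapping argument (it defines $\theta$ as the best constant, applies~\eqref{eq:two point convex} with $x=\frac12 V_0+\frac12\E[V_0]$, $y=\frac12 V_0-\frac12\E[V_0]$, and uses the definition of $\theta$ again to self-improve, yielding $\theta\ge\theta/2^{p-1}+1/(2^{p-1}K^p)$ and hence $\theta\ge 1/((2^{p-1}-1)K^p)$). You instead build a dyadic i.i.d.\ decomposition and split the lemma into a ``geometric'' inequality that uses uniform convexity, plus a ``reconstruction'' inequality that is supposed to be universal. The first half of your argument is correct: applying~\eqref{eq:two point convex} to $x=m+S_n$, $y=D_n$, using that $S_n+D_n$ and $S_n-D_n$ are equidistributed with $S_{n-1}$, telescoping, and applying Jensen to bound $\E\|m+S_N\|_X^p\ge\|m\|_X^p$, gives cleanly
$$
\E\|U\|_X^p\ \ge\ \|m\|_X^p+\frac{1}{K^p}\sum_{k\ge 1}\E\|D_k\|_X^p.
$$

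The gap is in the reconstruction step, and specifically in the purported proof of the one-step estimate
$$
\E\|W\|_X^p\ \le\ \E\left\|\frac{W+W'}{2}\right\|_X^p+\big(2^{p-1}-1\big)\,\E\left\|\frac{W-W'}{2}\right\|_X^p .
$$
You propose to prove this by conditioning on $A=\frac{W+W'}{2}$, using the conditional symmetry of $B=\frac{W-W'}{2}$ to write $\E\|A+B\|_X^p=\E\big[\frac{\|A+B\|_X^p+\|A-B\|_X^p}{2}\big]$, and then invoking a pointwise one-dimensional inequality. But the relevant pointwise inequality
$$
\frac{\|a+b\|_X^p+\|a-b\|_X^p}{2}\ \le\ \|a\|_X^p+\big(2^{p-1}-1\big)\|b\|_X^p
$$
is \emph{false}: already for $X=\R$, $a=1$ and $b=t$ small, the left side is $1+\binom{p}{2}t^2+O(t^4)$ while the right side is $1+(2^{p-1}-1)t^p$, and for $p>2$ the quadratic term dominates. (Clarkson's inequality gives only $\frac{\|a+b\|^p+\|a-b\|^p}{2}\le 2^{p-2}(\|a\|^p+\|b\|^p)$, whose $2^{p-2}>1$ coefficient in front of $\|a\|^p$ is too lossy.) So the proposed reduction does not yield the one-step estimate. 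The one-step estimate itself appears to be asymptotically sharp (I checked highly skewed two-point distributions, where both sides agree to leading order), which suggests it may well be true, but it is a nontrivial, self-contained claim rather than ``a slight extension of Ball's $p=2$ computation'' — Ball's computation is the bootstrapping argument, not a pointwise scalar inequality, and it uses the uniform-convexity constant $K_p(X)$ in an essential way, whereas your one-step estimate is supposed to hold for all normed spaces. As written, the proposal leaves this step without a valid proof, so the argument has a genuine gap.
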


\begin{proof}
We repeat here the $p>2$ variant of the argument from~\cite{Ball}
for the sake of completeness. Let $(\Omega,\Pr)$ be the probability
space on which $U$ is defined. Denote
\begin{equation}\label{eq:def theta}
\theta\eqdef\inf\left\{\frac{\E \left[\|V\|_X^p\right]-\left\|\E
[V]\right\|_X^p}{\E\left[\left\|V-\E [V]\right\|_X^p\right]}:\ V\in
L_p(\Omega,X)\ \wedge\ \E\left[\|V-\E [V]\|_X^p\right]>0\right\}.
\end{equation}
Then $\theta\ge 0$. Our goal is to show that
\begin{equation}\label{eq:goal lower theta}
\theta\ge \frac{1}{(2^{p-1}-1)K_p(X)^p}.
\end{equation}
Fix $\phi>\theta$. Then there exists a random vector $V_0\in
L_p(\Omega,X)$ for which
\begin{equation}\label{eq:reverse theta}
\phi \E\left[\|V_0-\E [V_0]\|_X^p\right]> \E
\left[\|V_0\|_X^p\right]-\|\E [V_0]\|_X^p.
\end{equation}
Fix $K>K_p(X)$. Apply the inequality~\eqref{eq:two point convex}
 to the vectors $$ x=\frac12 V_0+\frac12 \E [V_0]\quad
\mathrm{and} \quad y=\frac12 V_0-\frac12 \E [V_0], $$ to get the
point-wise estimate
\begin{equation}\label{eq:point-wise}
2\left\|\frac12 V_0+\frac12 \E
[V_0]\right\|_X^p+\frac{2}{K^p}\left\|\frac12 V_0-\frac12 \E
[V_0]\right\|_X^p\le \|V_0\|_X^p+\|\E [V_0]\|_X^p.
\end{equation}
Hence
\begin{eqnarray*}
&&\!\!\!\!\!\!\!\!\!\!\!\!\!\!\!\!\!\phi \E\left[\|V_0-\E[
V_0]\|_X^p\right]\stackrel{\eqref{eq:reverse theta}}{>} \E\left[
\|V_0\|_X^p\right]-\|\E [V_0]\|_X^p\\
&\stackrel{\eqref{eq:point-wise}}{\ge}&
2\left(\E\left[\left\|\frac12 V_0+\frac12 \E[
V_0]\right\|_X^p\right]-\left\|\E \left[\frac12V_0+\frac12\E[
V_0]\right]\right\|_X^p\right)+ \frac{2}{K^p}\E\left[\left\|\frac12
V_0-\frac12 \E [V_0]\right\|_X^p\right]\\
&\stackrel{\eqref{eq:def
theta}}{\ge}&2\theta\E\left[\left\|\left(\frac12V_0+\frac12\E[
V_0]\right)-\E \left[\frac12V_0+\frac12\E[
V_0]\right]\right\|_X^p\right]+\frac{2}{K^p}\E\left[\left\|\frac12
V_0-\frac12 \E[
V_0]\right\|_X^p\right]\\
&=&
\left(\frac{\theta}{2^{p-1}}+\frac{1}{2^{p-1}K^p}\right)\E\left[\|V_0-\E
[V_0]\|_X^p\right].
\end{eqnarray*}
Thus
\begin{equation}\label{eq:phi}
\phi\ge \frac{\theta}{2^{p-1}}+\frac{1}{2^{p-1}K^p}.
\end{equation}
Since~\eqref{eq:phi} holds for all $\phi>\theta$ and $K>K_p(X)$, the
desired lower bound~\eqref{eq:goal lower theta} follows.
\end{proof}

\begin{lemma}\label{poincare to norm in uniformly convex}
Fix $p\in [2,\infty)$ and let $X$ be a normed space with
$K_p(X)<\infty$. Then for every $n\times n$ symmetric stochastic
matrix $A=(a_{ij})$ we have
$$
\lambda_X^{(p)}(A)\le
\left(1-\frac{1}{(2^{p-1}-1)K_p(X)^p\bp\left(A,\|\cdot\|_X^p\right)}\right)^{1/p}.
$$
\end{lemma}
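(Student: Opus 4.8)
The plan is to prove, for an arbitrary $f\in L_p^n(X)_0$, the estimate
\[
\left\|\left(A\otimes I^n_X\right)f\right\|_{L_p^n(X)}^p\le \left(1-\frac{1}{(2^{p-1}-1)K_p(X)^p\bp\left(A,\|\cdot\|_X^p\right)}\right)\left\|f\right\|_{L_p^n(X)}^p,
\]
after which raising both sides to the power $1/p$ and taking the supremum over $f\in L_p^n(X)_0$ yields the lemma. One may assume throughout that $\bp(A,\|\cdot\|_X^p)<\infty$, since otherwise the asserted bound reduces to $\lambda_X^{(p)}(A)\le 1$, which always holds because $A$ is doubly stochastic.

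The first step is to apply Lemma~\ref{lem:from Ball} one row of $A$ at a time. For $i\in\n$ the $i$-th row $(a_{ij})_{j\in\n}$ of $A$ is a probability vector, so I would let $U_i$ be the $X$-valued random variable that equals $f(j)$ with probability $a_{ij}$. Then $\E[U_i]=\left(A\otimes I^n_X\right)f(i)$, $\E\left[\|U_i\|_X^p\right]=\sum_{j}a_{ij}\|f(j)\|_X^p$, and $\E\left[\|U_i-\E[U_i]\|_X^p\right]=\sum_{j}a_{ij}\left\|f(j)-\left(A\otimes I^n_X\right)f(i)\right\|_X^p$, so Lemma~\ref{lem:from Ball} gives, for every $i\in\n$,
\[
\left\|\left(A\otimes I^n_X\right)f(i)\right\|_X^p+\frac{1}{(2^{p-1}-1)K_p(X)^p}\sum_{j=1}^n a_{ij}\left\|f(j)-\left(A\otimes I^n_X\right)f(i)\right\|_X^p\le \sum_{j=1}^n a_{ij}\|f(j)\|_X^p.
\]
Averaging this over $i\in\n$ and using that $A$ is doubly stochastic, so that $\frac1n\sum_{i=1}^n\sum_{j=1}^n a_{ij}\|f(j)\|_X^p=\|f\|_{L_p^n(X)}^p$, reduces the target estimate to the lower bound
\[
\frac1n\sum_{i=1}^n\sum_{j=1}^n a_{ij}\left\|f(j)-\left(A\otimes I^n_X\right)f(i)\right\|_X^p\ge \frac{1}{\bp\left(A,\|\cdot\|_X^p\right)}\|f\|_{L_p^n(X)}^p.
\]

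The second step proves this lower bound, and is the only place where the Poincar\'e constant enters. I would apply the defining inequality~\eqref{eq:kernel-poin+} of $\bp\left(A,\|\cdot\|_X^p\right)$ for the kernel $K(x,y)=\|x-y\|_X^p$ to the two $n$-tuples $x_i\eqdef\left(A\otimes I^n_X\right)f(i)$ and $y_j\eqdef f(j)$, obtaining
\[
\frac{1}{n^2}\sum_{i=1}^n\sum_{j=1}^n\left\|\left(A\otimes I^n_X\right)f(i)-f(j)\right\|_X^p\le \frac{\bp\left(A,\|\cdot\|_X^p\right)}{n}\sum_{i=1}^n\sum_{j=1}^n a_{ij}\left\|\left(A\otimes I^n_X\right)f(i)-f(j)\right\|_X^p.
\]
Since $A$ is doubly stochastic and $f\in L_p^n(X)_0$, the function $\left(A\otimes I^n_X\right)f$ also lies in $L_p^n(X)_0$; hence for each fixed $j$ the convexity of $\|\cdot\|_X^p$ (Jensen's inequality for the average over $i$) gives $\frac1n\sum_{i=1}^n\left\|\left(A\otimes I^n_X\right)f(i)-f(j)\right\|_X^p\ge \left\|\frac1n\sum_{i=1}^n\left(A\otimes I^n_X\right)f(i)-f(j)\right\|_X^p=\|f(j)\|_X^p$. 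Summing over $j$ bounds the left-hand side of the last display below by $\|f\|_{L_p^n(X)}^p$, and, together with the first step, this closes the argument.

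I do not expect a genuine obstacle here: the argument is simply a duality pairing of the Poincar\'e inequality~\eqref{eq:kernel-poin+} with Ball's variance inequality (Lemma~\ref{lem:from Ball}). The one point that needs care is the bookkeeping in the second step — one must feed $\bp$ the pair $\bigl(\left(A\otimes I^n_X\right)f,\ f\bigr)$ rather than $(f,f)$, and exploit that it is $\left(A\otimes I^n_X\right)f$, not merely $f$, that has vanishing mean; this is exactly what makes the Jensen step return $\|f(j)\|_X^p$ and thereby recover the full right-hand norm $\|f\|_{L_p^n(X)}^p$.
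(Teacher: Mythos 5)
Your proof is correct and follows essentially the same approach as the paper's: both apply Lemma~\ref{lem:from Ball} row-by-row with the same random variables $U_i$, average over $i$, and then lower-bound the resulting Dirichlet-type sum by first invoking the defining $\gamma_+$-inequality for the pair of $n$-tuples $\bigl((A\otimes I^n_X)f(i)\bigr)_i$, $\bigl(f(j)\bigr)_j$ and then using Jensen together with the vanishing of $\frac1n\sum_i (A\otimes I^n_X)f(i)$. The only cosmetic difference is that the paper fixes $\bp>\bp(A,\|\cdot\|_X^p)$ and passes to the limit at the end, whereas you work directly with $\bp(A,\|\cdot\|_X^p)$; since $\gamma_+$ is defined as an infimum, the paper's phrasing is technically cleaner, but this does not affect the substance of the argument.
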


\begin{proof}
Fix $\bp>\bp\left(A,\|\cdot\|_X^p\right)$ and $f\in L_p^n(X)_0$. For
every $i\in \{1,\ldots,n\}$ consider the random vector $U_i\in X$
given by
$$
\Pr\left[U_i=f(j)\right]=a_{ij}.
$$
Lemma~\ref{lem:from Ball} implies that
\begin{equation}\label{eq:use ball}
\left\|\sum_{j=1}^n a_{ij}f(j)\right\|_X^p\le \sum_{j=1}^n
a_{ij}\|f(j)\|_X^p-\frac{1}{(2^{p-1}-1)K_p(X)^p}\sum_{j=1}^n
a_{ij}\left\|f(j)-\sum_{k=1}^n a_{ik}f(k)\right\|_X^p.
\end{equation}
Define for $i\in \{1,\ldots,n\}$,
$$
g(i)=\E [U_i]=\sum_{k=1}^n a_{ik}f(k).
$$
By averaging~\eqref{eq:use ball} over $i\in \{1,\ldots,n\}$ we see
that
\begin{eqnarray}\label{eq:sum i}
\left\|(A\otimes
I_X^n)f\right\|_{L_p^n(X)}^p&=&\frac{1}{n}\sum_{i=1}^n\left\|\sum_{j=1}^n
a_{ij}f(j)\right\|_X^p\nonumber\\&\le&\nonumber
\frac{1}{n}\sum_{i=1}^n\sum_{j=1}^n
a_{ij}\|f(j)\|_X^p-\frac{1}{n(2^{p-1}-1)K_p(X)^p}\sum_{i=1}^n\sum_{j=1}^n
a_{ij}\left\|f(j)-g(i)\right\|_X^p\\
&=&
\|f\|_{L_p^n(X)}^p-\frac{1}{n(2^{p-1}-1)K_p(X)^p}\sum_{i=1}^n\sum_{j=1}^n
a_{ij}\left\|f(j)-g(i)\right\|_X^p.
\end{eqnarray}
The definition of $\bp\left(A,\|\cdot\|_X^p\right)$ implies that
\begin{multline}\label{eq:use def bp}
\frac{1}{n}\sum_{i=1}^n\sum_{j=1}^n
a_{ij}\left\|f(j)-g(i)\right\|_X^p\ge \frac{1}{\bp
n^2}\sum_{i=1}^n\sum_{j=1}^n \left\|f(j)-g(i)\right\|_X^p\\\ge
\frac{1}{\bp n}\sum_{j=1}^n
\left\|f(j)-\frac{1}{n}\sum_{i=1}^ng(i)\right\|_X^p=\frac{1}{\bp
n}\sum_{j=1}^n
\left\|f(j)\right\|_X^p=\frac{1}{\bp}\|f\|_{L_p^n(X)}^p,
\end{multline}
where we used the fact that since $f\in L_p^n(X)_0$ we have
$$\sum_{i=1}^ng(i)=\sum_{k=1}^n
\left(\sum_{i=1}^na_{ik}\right)f(k)=\sum_{k=1}^n f(k)=0.$$
Substituting~\eqref{eq:use def bp} into~\eqref{eq:sum i} yields the
bound
\begin{equation}\label{eq:the norm bound}
\left\|(A\otimes I_X^n)f\right\|_{L_p^n(X)}^p\le
\left(1-\frac{1}{(2^{p-1}-1)K_p(X)^p\bp}\right)\|f\|_{L_p^n(X)}^p.
\end{equation}
Since~\eqref{eq:the norm bound} holds for every $f\in L_p^n(X)_0$
and $\bp>\bp\left(A,\|\cdot \|_X^p\right)$, inequality~\eqref{eq:the
norm bound} implies the required bound on
$\lambda_X^{(p)}(A)=\left\|A\otimes I^n_X\right\|_{L_p^n(X)_0\to
L_p^n(X)_0}$.
\end{proof}

\begin{theorem}\label{decay in uniformly convex}
Fix $p\in [2,\infty)$ and $t\in \N$. Let $X$ be a normed space with
$K_p(X)<\infty$. Then for every $n\times n$ symmetric stochastic
matrix $A=(a_{ij})$ we have
$$
\bp\left(A^t,\|\cdot\|_X^p\right)\le
\left[4K_p(X)\right]^{p^2}\cdot\max\left\{1,\left(\frac{\bp\left(A,\|\cdot\|_X^p\right)}{t}\right)^p\right\}.
$$
\end{theorem}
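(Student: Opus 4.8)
The plan is to combine the linear-operator bound of Lemma~\ref{poincare to norm in uniformly convex} with elementary spectral calculus for the linear operator $A\otimes I_X^n$ on $L_p^n(X)_0$, and then convert back to a nonlinear Poincar\'e constant via Lemma~\ref{lem:norm to poincare}. Concretely, write $\lambda=\lambda_X^{(p)}(A)$ and $\gamma_+=\bp(A,\|\cdot\|_X^p)$. Lemma~\ref{poincare to norm in uniformly convex} gives
$$
\lambda^p\le 1-\frac{1}{(2^{p-1}-1)K_p(X)^p\gamma_+}.
$$
Since $A\otimes I_X^n$ restricted to $L_p^n(X)_0$ has norm $\lambda$, the operator $A^t\otimes I_X^n=(A\otimes I_X^n)^t$ restricted to the same subspace has norm at most $\lambda^t$, i.e. $\lambda_X^{(p)}(A^t)\le \lambda^t$. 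Plugging this into Lemma~\ref{lem:norm to poincare} applied to $A^t$ yields
$$
\bp\left(A^t,\|\cdot\|_X^p\right)\le\left(1+\frac{4}{1-\lambda^t}\right)^p.
$$
So everything reduces to the scalar estimate: bound $1/(1-\lambda^t)$ from above in terms of $1/(1-\lambda^p)$, hence in terms of $(2^{p-1}-1)K_p(X)^p\gamma_+$.

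The key scalar step is the observation that if $\lambda\in[0,1)$ and $1-\lambda^p\ge 1/\Gamma$ for some $\Gamma\ge 1$, then $1-\lambda^t\gtrsim \min\{1,t/(p\Gamma)\}$. Indeed, $1-\lambda^p\ge 1/\Gamma$ forces $\lambda^p\le 1-1/\Gamma$, so $\lambda\le\left(1-\tfrac1\Gamma\right)^{1/p}\le e^{-1/(p\Gamma)}$, whence $\lambda^t\le e^{-t/(p\Gamma)}$ and $1-\lambda^t\ge 1-e^{-t/(p\Gamma)}\ge c\min\{1,t/(p\Gamma)\}$ for an absolute constant $c$ (one can take $c=1-e^{-1}>1/2$). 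Therefore
$$
\frac{1}{1-\lambda^t}\le \frac{2}{\min\{1,t/(p\Gamma)\}}=2\max\left\{1,\frac{p\Gamma}{t}\right\},
$$
with $\Gamma=(2^{p-1}-1)K_p(X)^p\gamma_+(A,\|\cdot\|_X^p)$. Substituting back,
$$
\bp\left(A^t,\|\cdot\|_X^p\right)\le\left(1+8\max\left\{1,\frac{p(2^{p-1}-1)K_p(X)^p\gamma_+(A,\|\cdot\|_X^p)}{t}\right\}\right)^p\le 9^p\max\left\{1,\frac{p(2^{p-1}-1)K_p(X)^p\gamma_+(A,\|\cdot\|_X^p)}{t}\right\}^p.
$$

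It remains to absorb the numerical constants into $[4K_p(X)]^{p^2}$. Here one uses $K_p(X)\ge 1$ (clear from~\eqref{eq:two point convex} with $x=0$), so $2^{p-1}-1\le 2^{p-1}\le 2^p\le K_p(X)^p\cdot 2^p$ is not quite what is wanted; rather, bound $9^p\cdot p^p\cdot(2^{p-1}-1)^p\cdot K_p(X)^{p^2}\le (C p\, 2^p)^p K_p(X)^{p^2}$ for an absolute constant $C$, and then check $C p\,2^p\le 4^p\le 4^{p^2}$ for $p\ge 2$ (since $p\mapsto 4^p/(Cp2^p)=2^p/(Cp)$ is increasing and exceeds $1$ once $p$ is at least an absolute threshold; the remaining bounded range of $p$, namely $p$ in a compact subinterval of $[2,\infty)$, is handled by enlarging the absolute constant, and in any case $4^{p}\le 4^{p^2}$ absorbs slack), giving the claimed
$$
\bp\left(A^t,\|\cdot\|_X^p\right)\le [4K_p(X)]^{p^2}\max\left\{1,\left(\frac{\bp(A,\|\cdot\|_X^p)}{t}\right)^p\right\}.
$$
I expect the only real subtlety to be this last bookkeeping of constants to match the exact exponent $p^2$ and base $4$ in the statement; the conceptual content — passing through the linear spectral radius $\lambda_X^{(p)}$, using its exact multiplicativity under powers, and the two-sided comparison with $\bp$ — is entirely supplied by Lemmas~\ref{lem:norm to poincare} and~\ref{poincare to norm in uniformly convex}. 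One should also note at the outset that if $\bp(A,\|\cdot\|_X^p)=\infty$ or $K_p(X)=\infty$ there is nothing to prove, and if $\lambda=0$ (so $A^t$ is rank-one-ish on $L_p^n(X)_0$) the bound is immediate, so the argument above may assume $\lambda\in(0,1)$ and all quantities finite.
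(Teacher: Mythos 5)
Your proposal follows the paper's proof essentially verbatim: pass to the linear quantity $\lambda_X^{(p)}$, use the exact power identity $\lambda_X^{(p)}(A^t)\le\lambda_X^{(p)}(A)^t$, feed Lemma~\ref{poincare to norm in uniformly convex} through the elementary estimate $1-e^{-s}\gtrsim\min\{1,s\}$, and convert back with Lemma~\ref{lem:norm to poincare} applied to $A^t$. The one thing I would flag, which you yourself anticipated, is that your specific constants do not quite close: with $1-e^{-s}\ge(1-e^{-1})\min\{1,s\}$ you need $9\,p\,(2^{p-1}-1)\le 4^p$, and at $p=2$ this reads $18\le 16$. The fix is cheap: since $\alpha\eqdef p(2^{p-1}-1)K_p(X)^p\ge 2$, the only nontrivial regime is $\gamma_+/t\ge 1$, in which $s=t/(\alpha\gamma_+)\le\tfrac12$, so one may use the sharper bound $1-e^{-s}\ge\tfrac34 s$; this replaces the $9$ by $\tfrac{19}{3}$ and then $\tfrac{19}{3}\,p\,(2^{p-1}-1)\le 4^p$ does hold for all $p\ge 2$ (checking $p=2,3,4$ by hand and noting the left side is $O(p2^p)$ while the right is $2^{2p}$). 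The trivial regime $\gamma_+/t<1$ is handled by $\bp(A^t,\|\cdot\|_X^p)\le 9^p\le 16^p\le (4K_p(X))^{p^2}$ when $1-\lambda_X^{(p)}(A)^t\ge\tfrac12$. So the argument is sound; just tighten the last scalar inequality rather than taking $c=1-e^{-1}$ uniformly.
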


\begin{proof}
Note that since $A\otimes I_X^n$ preserves $L_p^n(X)_0$  we have
\begin{multline}\label{eq:power spectrum}
\lambda_X^{(p)}(A^t)=\left\|A^t\otimes I^n_X\right\|_{L_p^n(X)_0\to
L_p^n(X)_0}=\left\|(A\otimes I^n_X)^t\right\|_{L_p^n(X)_0\to
L_p^n(X)_0}\\\le \left\|A\otimes I^n_X\right\|_{L_p^n(X)_0\to
L_p^n(X)_0}^t= \lambda_X^{(p)}(A)^t.
\end{multline}
Lemma~\ref{lem:norm to poincare} applied to the matrix $A^t$, in
combination with~\eqref{eq:power spectrum}, yields the bound
\begin{equation}\label{eq:use for power}
\bp\left(A^t,\|\cdot\|_X^p\right)\le
\left(\frac{5-\lambda_X^{(p)}(A)^t}{1-\lambda_X^{(p)}(A)^t}\right)^p\le
\left(\frac{5}{1-\lambda_X^{(p)}(A)^t}\right)^p.
\end{equation}
On the other hand, using Lemma~\ref{poincare to norm in uniformly
convex} we have
\begin{multline}\label{eq:exp}
\lambda_X^{(p)}(A)\le
\left(1-\frac{1}{(2^{p-1}-1)K_p(X)^p\bp\left(A,\|\cdot\|_X^p\right)}\right)^{1/p}\\\le
\exp\left(-\frac{1}{p(2^{p-1}-1)K_p(X)^p\bp\left(A,\|\cdot\|_X^p\right)}\right).
\end{multline}
Thus
\begin{multline}\label{pass to min}
1-\lambda_X^{(p)}(A)^t\stackrel{\eqref{eq:exp}}{\ge}
1-\exp\left(-\frac{t}{p(2^{p-1}-1)K_p(X)^p\bp\left(A,\|\cdot\|_X^p\right)}\right)\\\ge
\frac12
\min\left\{1,\frac{t}{p(2^{p-1}-1)K_p(X)^p\bp\left(A,\|\cdot\|_X^p\right)}\right\}.
\end{multline}
The required result is now a combination of~\eqref{pass to min}
and~\eqref{eq:use for power}.
\end{proof}

\subsection{Martingale inequalities and metric Markov cotype}\label{sec:martingale}
Let $X$ be a Banach space with $K_p(X)<\infty$. Assume that
$\{M_k\}_{k=0}^n\subseteq X$ is a martingale with respect to the
filtration $\mathcal{F}_0\subseteq \mathcal{F}_1\subseteq
\cdots\subseteq \mathcal{F}_{n-1}$, i.e., $\E\left[M_{i+1}
|\mathcal{F}_{i}\right]=M_i$ for every $i\in\{0,1,\dots,n-1\}$.
Lemma~\ref{lem:from Ball} implies that
\begin{eqnarray}\label{eq:pisier iteration}
&&\!\!\!\!\!\!\!\!\!\!\!\!\!\!\!\!\!\!\!\!\!\!\E\left[\left\|M_n-M_0\right\|_X^p\Big|\mathcal{F}_{n-1}\right]
\nonumber\ge \left\|\E\left[
M_n-M_0\Big|\mathcal{F}_{n-1}\right]\right\|_X^p\\&\phantom{\le}&\quad+\frac{1}{(2^{p-1}-1)K_p(X)^p}
\E\left[\left\|M_n-M_0-\E\left[\left.M_n-M_0\Big|\mathcal{F}_{n-1}\right]\right\|_X^p\right|\mathcal{F}_{n-1}\right]\nonumber\\
&=&\left\|M_{n-1}-M_0\right\|_X^p+\frac{1}{(2^{p-1}-1)K_p(X)^p}\E\left[\left\|M_n-M_{n-1}\right\|_X^p\Big|\mathcal{F}_{n-1}\right].
\end{eqnarray}
Taking expectation in~\eqref{eq:pisier iteration} yields the
estimate
$$
\E\left[\left\|M_n-M_0\right\|_X^p\right]\ge
\E\left[\left\|M_{n-1}-M_{0}\right\|_X^p\right]+\frac{1}{(2^{p-1}-1)K_p(X)^p}\E\left[\left\|M_n-M_{n-1}\right\|_X^p\right].
$$
Iterating this argument we obtain the following famous inequality of
Pisier~\cite{Pisier-martingales}, which will be used crucially in
what follows.

\begin{theorem}[Pisier's martingale inequality]\label{thm:pisier ineq} Let $X$ be a
Banach space with $K_p(X)<\infty$. Suppose that
$\{M_k\}_{k=0}^n\subseteq X$ is a martingale (with respect some
filtration). Then
$$
\E \left[\left\|M_n-M_0\right\|_X^p\right]\ge
\frac{1}{(2^{p-1}-1)K_p(X)^p}\sum_{k=1}^n\E\left[\left\|M_k-M_{k-1}\right\|_X^p\right].
$$
\end{theorem}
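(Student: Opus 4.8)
The plan is to derive Theorem~\ref{thm:pisier ineq} by iterating the conditional estimate~\eqref{eq:pisier iteration}, exactly as the discussion preceding the statement already indicates. The key point is that~\eqref{eq:pisier iteration} has been established via Lemma~\ref{lem:from Ball} applied to the conditional expectation $\E[\,\cdot\,|\mathcal F_{n-1}]$, and taking (unconditional) expectations yields
\begin{equation}\label{eq:one step pisier}
\E\left[\left\|M_n-M_0\right\|_X^p\right]\ge \E\left[\left\|M_{n-1}-M_0\right\|_X^p\right]+\frac{1}{(2^{p-1}-1)K_p(X)^p}\E\left[\left\|M_n-M_{n-1}\right\|_X^p\right].
\end{equation}

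First I would observe that the same reasoning applies to any initial segment $\{M_k\}_{k=0}^j$ of the martingale: for every $j\in\{1,\dots,n\}$, applying Lemma~\ref{lem:from Ball} conditionally on $\mathcal F_{j-1}$ to the random vector $U=M_j-M_0$ (which has $\E[U\,|\,\mathcal F_{j-1}]=M_{j-1}-M_0$, using the martingale property and $\mathcal F_{j-1}$-measurability of $M_0$) gives
\begin{equation}\label{eq:general step}
\E\left[\left\|M_j-M_0\right\|_X^p\right]\ge \E\left[\left\|M_{j-1}-M_0\right\|_X^p\right]+\frac{1}{(2^{p-1}-1)K_p(X)^p}\E\left[\left\|M_j-M_{j-1}\right\|_X^p\right].
\end{equation}
Then I would sum~\eqref{eq:general step} over $j=1,\dots,n$; the terms $\E[\|M_{j-1}-M_0\|_X^p]$ telescope, leaving $\E[\|M_n-M_0\|_X^p]\ge \E[\|M_0-M_0\|_X^p]+\frac{1}{(2^{p-1}-1)K_p(X)^p}\sum_{j=1}^n\E[\|M_j-M_{j-1}\|_X^p]$, and since $\E[\|M_0-M_0\|_X^p]=0$ this is precisely the claimed inequality. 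One should also note at the outset that $\E[\|M_k\|_X^p]<\infty$ for all $k$ is part of the (implicit) hypothesis on a martingale in $L_p$, so that all expectations above are finite and the manipulations are legitimate.

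There is essentially no obstacle here: the entire content is in Lemma~\ref{lem:from Ball}, which has already been proved, and the theorem is a one-line telescoping consequence. The only mild subtlety worth stating carefully is the verification that $\E[M_j-M_0\,|\,\mathcal F_{j-1}]=M_{j-1}-M_0$, i.e.\ that applying the variance-type inequality at step $j$ correctly identifies the conditional mean of the increment; this uses $\E[M_j\,|\,\mathcal F_{j-1}]=M_{j-1}$ together with the fact that $M_0$ is $\mathcal F_{j-1}$-measurable. Once this is in place, the iteration is routine and the proof is complete.
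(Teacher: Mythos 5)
Your argument is correct and matches the paper's proof: both establish the one‑step inequality~\eqref{eq:pisier iteration} via a conditional application of Lemma~\ref{lem:from Ball}, take expectations, and then iterate (your telescoping sum is just the inductive iteration written out explicitly). The verification that $\E[M_j-M_0\mid\mathcal F_{j-1}]=M_{j-1}-M_0$ and the remark on $L_p$‑integrability are exactly the points the paper implicitly relies on, so nothing is missing.
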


We  also need the following variant of Pisier's inequality.

\begin{corollary}\label{coro L_2(X)}
Fix $p\in [2,\infty)$, $q\in (1,\infty)$ and let $X$ be a normed space
with $K_p(X)<\infty$. Then for every $q$-integrable martingale
$\{M_k\}_{k=0}^n\subseteq X$, if $q\in [p,\infty)$ then
\begin{equation}\label{eq:easy figiel}
\E \left[\left\|M_n-M_0\right\|_X^q\right]\ge
\frac{1}{(2^{q-1}-1)K_p(X)^q}\sum_{k=1}^n\E\left[\left\|M_k-M_{k-1}\right\|_X^q\right].
\end{equation}
and if  $q\in (1,p]$, then
\begin{equation}\label{eq:hard figiel}
\E \left[\left\|M_n-M_0\right\|_X^q\right]\ge
\frac{\left((1-1/p)(1-1/q)\right)^{q(1-1/p)}}{5^{q(1-1/p)}\left(2K_p(X)\right)^{q}n^{1-q/p}}\sum_{k=1}^n\E\left[\left\|M_k-M_{k-1}\right\|_X^q\right].
\end{equation}
\end{corollary}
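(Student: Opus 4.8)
The plan is to deduce both inequalities in Corollary~\ref{coro L_2(X)} from Pisier's martingale inequality (Theorem~\ref{thm:pisier ineq}) applied to a martingale taking values in the Bochner space $L_q(\Pr,X)$, exploiting the fact that the $p$-convexity constant of $L_q(\Pr,X)$ is controlled by that of $X$ via Corollary~\ref{cor:L_p(X)} (for $q\le p$) and, in the easy range $q\ge p$, by the monotonicity $K_q(X)\le K_p(X)$ of \eqref{eq:monotonicity K} together with the trivial embedding $K_q(L_q(\mu,X))\le\dots$ — more precisely by a direct argument analogous to Lemma~\ref{lem:L_q smoothness} but on the convexity side. Let me describe the mechanism. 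Given the $X$-valued martingale $\{M_k\}_{k=0}^n$ with respect to a filtration $\mathcal F_0\subseteq\cdots\subseteq\mathcal F_{n-1}$, I would consider the martingale differences $D_k\eqdef M_k-M_{k-1}$ and form, for each $k\in\{1,\dots,n\}$, the conditional expectation operators. The point is that $\{M_k\}$ can be viewed as a martingale in the Banach space $Y\eqdef L_q(\Pr,X)$ by freezing: set $N_j\eqdef \E[M_n\mid \mathcal F_{j}]$ viewed appropriately, so that $\|N_j-N_{j-1}\|_Y^p$ relates to $\E[\|M_j-M_{j-1}\|_X^q]^{p/q}$. Applying Theorem~\ref{thm:pisier ineq} in $Y$ gives $\E_Y\|N_n-N_0\|_Y^p\ge \frac{1}{(2^{p-1}-1)K_p(Y)^p}\sum_{j}\E_Y\|N_j-N_{j-1}\|_Y^p$, and unwinding the Bochner norms yields the stated bounds after inserting the estimate for $K_p(Y)$ from Corollary~\ref{cor:L_p(X)}.

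The cleaner route, which I expect to be what the paper does, is to iterate Lemma~\ref{lem:from Ball} directly with the vector being $M_n-M_0$ but measured in $L_q$ rather than pointwise: at step $j$ one writes the conditional Jensen/convexity inequality
$$
\E\!\left[\|M_n-M_0\|_X^q\,\middle|\,\mathcal F_{j-1}\right]\ge \|M_{j-1}-M_0\|_X^q+\frac{1}{(2^{q-1}-1)K_q(X)^q}\,\E\!\left[\|M_j-M_{j-1}\|_X^q\,\middle|\,\mathcal F_{j-1}\right]
$$
when $q\ge 2$ (using $K_q(X)\le K_p(X)$), summing over $j$ and taking expectations to get \eqref{eq:easy figiel}. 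For $q\le p$ one cannot use $K_q(X)$ (it may be infinite), so instead one passes to $L_q(\Pr,X)$, uses $K_p(L_q(\Pr,X))\le \big(\tfrac{5pq}{(p-1)(q-1)}\big)^{1-1/p}K_p(X)$ from Corollary~\ref{cor:L_p(X)}, applies Pisier's inequality (Theorem~\ref{thm:pisier ineq}) in that space with exponent $p$, and then converts the resulting $\ell_p$-sum of $\|M_j-M_{j-1}\|$'s into the $\ell_q$-sum appearing in \eqref{eq:hard figiel}. That conversion is where the factor $n^{1-q/p}$ enters: by Hölder's inequality, $\sum_{k=1}^n a_k\le n^{1-q/p}\big(\sum_{k=1}^n a_k^{p/q}\big)^{q/p}$ for $a_k\ge 0$ and $p/q\ge 1$, applied with $a_k=\E[\|M_k-M_{k-1}\|_X^q]$.

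The main obstacle — and the only genuinely delicate point — is organizing the ``martingale in $L_q(\Pr,X)$'' argument so that the differences are honestly a martingale difference sequence in the Bochner space and so that $\E_Y\|N_n-N_0\|_Y^p$ is bounded above by $\E[\|M_n-M_0\|_X^q]^{p/q}$ rather than the other way around; here one uses Jensen's inequality in the form $\big(\E\|\cdot\|_X^q\big)^{1/q}$ is a norm and the tower property. Once the martingale is set up correctly, everything else is bookkeeping: substituting the $K_p$ bound from Corollary~\ref{cor:L_p(X)}, raising to the appropriate power, and applying Hölder. I would also double-check the constants: $(2^{p-1}-1)^{-1}$ gets replaced by its $q$-analogue $(2^{q-1}-1)^{-1}$ in \eqref{eq:easy figiel}, while in \eqref{eq:hard figiel} the constant $5^{q(1-1/p)}(2K_p(X))^q$ absorbs both the $(2^{p-1}-1)^{-1}\le 2^{-p}\cdot$(something)$\le (2K_p)^{-q}$ from Pisier's inequality applied in $L_q(\Pr,X)$ and the $\big(\tfrac{5pq}{(p-1)(q-1)}\big)^{1-1/p}$ from Corollary~\ref{cor:L_p(X)}, after noting $pq/\big((p-1)(q-1)\big)=\big(1-1/p\big)^{-1}\big(1-1/q\big)^{-1}$; I would verify this matches the exponent $q(1-1/p)$ on the numerator $\big((1-1/p)(1-1/q)\big)$ in the statement.
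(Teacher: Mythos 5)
Your overall strategy is the right one and matches the paper's: handle $q\ge p$ by Pisier's inequality with exponent $q$ plus the monotonicity $K_q(X)\le K_p(X)$, and handle $q\le p$ by working in $L_q(\mu,X)$ (whose $K_p$ is controlled by Corollary~\ref{cor:L_p(X)}) and using H\"older to turn an $\ell_{p/q}$-sum into an $\ell_1$-sum, which is exactly where the factor $n^{1-q/p}$ comes from. Your constant accounting at the end is also correct: $5pq/\big((p-1)(q-1)\big)=5\big((1-1/p)(1-1/q)\big)^{-1}$, and raising $(2K)^p$ to the $q/p$-th power produces the stated $5^{q(1-1/p)}(2K_p(X))^q\big((1-1/p)(1-1/q)\big)^{-q(1-1/p)}$.

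The one place you correctly flag a difficulty but do not resolve it is the ``martingale in $L_q(\Pr,X)$'' framing. There is no natural martingale structure in the Bochner space here: the $M_k$ are already the same random vectors, not deterministic elements of $L_q(\Pr,X)$ evolving against some auxiliary filtration, so Theorem~\ref{thm:pisier ineq} cannot be quoted as a black box in that space. The paper avoids this issue entirely by not invoking Theorem~\ref{thm:pisier ineq} in $L_q(\mu,X)$ at all. Instead it applies the two-point convexity inequality~\eqref{eq:two point convex} directly in $L_q(\mu,X)$ to the explicit pair
$$
x=M_{n-1}-M_0+\tfrac{M_n-M_{n-1}}{2},\qquad y=\tfrac{M_n-M_{n-1}}{2},
$$
which yields an inequality between $L_q$-moments of $M_n-M_0$, $M_{n-1}-M_0$, $M_{n-1}-M_0+\tfrac12(M_n-M_{n-1})$ and $M_n-M_{n-1}$. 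The martingale property then enters only through conditional Jensen, giving
$\E\|M_{n-1}-M_0\|_X^q\le\E\|M_n-M_0\|_X^q$ and $\E\|M_{n-1}-M_0\|_X^q\le\E\big\|M_{n-1}-M_0+\tfrac12(M_n-M_{n-1})\big\|_X^q$, which collapses the estimate to the one-step recursion
$$
\left(\E\|M_{n-1}-M_0\|_X^q\right)^{p/q}+\frac{1}{(2K)^p}\left(\E\|M_n-M_{n-1}\|_X^q\right)^{p/q}\le\left(\E\|M_n-M_0\|_X^q\right)^{p/q},
$$
and this telescopes by induction. In other words, the proof re-runs the mechanism behind Pisier's inequality at the level of $L_q$-moments of the original $X$-valued martingale, rather than inventing a new martingale in a larger space. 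If you adopt this formulation your sketch becomes complete.

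A minor point: in your display for the $q\ge p$ case you wrote $\E[\|M_n-M_0\|_X^q\mid\mathcal F_{j-1}]$ on the left with a single increment on the right; this does not telescope. The correct step-$j$ inequality (obtained from Lemma~\ref{lem:from Ball} applied conditionally on $\mathcal F_{j-1}$ to $M_j-M_0$) has $M_j$, not $M_n$, on the left, and the sum over $j$ then telescopes to give~\eqref{eq:easy figiel} after using $K_q(X)\le K_p(X)$.
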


\begin{proof} Denote the probability space on which the martingale $\{M_k\}_{k=0}^n$ is defined by $(\Omega,\mu)$.
 Suppose also that $\mathcal{F}_0\subseteq
\mathcal{F}_1\subseteq \cdots\subseteq \mathcal{F}_{n-1}$ is the
filtration with respect to which $\{M_k\}_{k=0}^n$ is a martingale.

If $p\le q$ then~\eqref{eq:easy figiel} is an immediate consequence
of Theorem~\ref{thm:pisier ineq} and~\eqref{eq:monotonicity K}. If
$q\in (1,p]$ then by Corollary~\ref{cor:L_p(X)} we have
$$
K\eqdef K_p(L_q(\mu,X))\le
\left(\frac{5pq}{(p-1)(q-1)}\right)^{1-1/p}K_p(X).
$$ We can therefore apply~\eqref{eq:two point convex} to the
following two vectors in $L_q(\mu,X)$.
$$x=M_{n-1}-M_0+\frac{M_n-M_{n-1}}{2}\quad\mathrm{and}\quad
y=\frac{M_n-M_{n-1}}{2},$$ yielding the following estimate.
\begin{multline}\label{eq:prove lin}
\left(\E\left[\left\|M_{n-1}-M_0+\frac{M_n-M_{n-1}}{2}\right\|_X^q\right]\right)^{p/q}
+\frac{1}{(2K)^p}\left(\E\left[\left\|M_n-M_{n-1}\right\|_X^q\right]\right)^{p/q}\\\le
\frac{\left(\E\left[\left\|M_n-M_{0}\right\|_X^q\right]\right)^{p/q}
+\left(\E\left[\left\|M_{n-1}-M_{0}\right\|_X^q\right]\right)^{p/q}}{2}.
\end{multline}
Now,
$$
\E\left[\left\|M_{n-1}-M_{0}\right\|_X^q\right]=
\E\left[\left\|M_{n-1}-M_{0}+\E\left[M_n-M_{n-1}\Big|\mathcal{F}_{n-1}\right]\right\|_X^q\right]\le
\E\left[\left\|M_{n}-M_{0}\right\|_X^q\right],
$$
and
\begin{multline*}
\E\left[\left\|M_{n-1}-M_{0}\right\|_X^q\right]=\E\left[\left\|M_{n-1}-M_{0}+\E\left[\frac{M_n-M_{n-1}}{2}\Big|
\mathcal{F}_{n-1}\right]\right\|_X^q\right]\\\le
\E\left[\left\|M_{n-1}-M_0+\frac{M_n-M_{n-1}}{2}\right\|_X^q\right].
\end{multline*}
Thus~\eqref{eq:prove lin} implies that
\begin{equation}\label{eq:before sum q}
\left(\E\left[\left\|M_{n-1}-M_0\right\|_X^q\right]\right)^{p/q}
+\frac{1}{(2K)^p}\left(\E\left[\left\|M_n-M_{n-1}\right\|_X^q\right]\right)^{p/q}\le
\left(\E\left[\left\|M_n-M_{0}\right\|_X^q\right]\right)^{p/q}.
\end{equation}
Applying~\eqref{eq:before sum q} inductively we get the lower bound
\begin{multline*}
(2K)^p\left(\E\left[\left\|M_n-M_{0}\right\|_X^q\right]\right)^{p/q}\ge
\sum_{k=1}^n\left(\E\left[\left\|M_k-M_{k-1}\right\|_X^q\right]\right)^{p/q}\\\ge
\frac{1}{n^{\frac{p}{q}-1}}\left(\sum_{k=1}^n\E\left[\left\|M_k-M_{k-1}\right\|_X^q\right]\right)^{p/q},
\end{multline*} which is precisely~\eqref{eq:hard figiel}.
\end{proof}

We are now in position to prove the main theorem of this section, which establishes metric Markov cotype $p$ inequalities (recall Definition~\ref{def:metric markov cotype}) for Banach space with modulus of convexity of power type $p$. An important theorem of Pisier~\cite{Pisier-martingales} asserts that if a normed space $(X,\|\cdot\|_X)$ is super-reflexive then there exists $p\in [2,\infty)$ such that $K_p(X)<\infty$. Thus the case $q=2$ of Theorem~\ref{thm:metric markov cotype general q} below corresponds to Theorem~\ref{thm:markov cotype thm in intro}.

\begin{theorem}\label{thm:metric markov cotype general q} Fix $p\in [2,\infty)$ and let $(X,\|\cdot\|_X)$ be a normed space with $K_p(X)<\infty$. Then for every $m,n\in \N$, every $n\times n$ symmetric stochastic matrix $A=(a_{ij})$ and every $x_1,\ldots,x_n\in X$ there exist $y_1,\ldots y_n\in X$ such that for all $q\in (1,\infty)$,
\begin{multline}\label{eq:max cotype}
\max\left\{\sum_{i=1}^n
\|x_i-y_i\|_X^q,\left(\frac{((1-1/p)(1-1/q))^{1-1/p}}{32\cdot 5^{1-1/p}K_p(X)}\right)^qm^{\min\{1,q/p\}}\sum_{i=1}^n\sum_{j=1}^n
a_{ij}\left\|y_i-y_j\right\|_X^q\right\}\\\le
\sum_{i=1}^n\sum_{j=1}^n\A_m(A)_{ij}\|x_i-x_j\|_X^q.
\end{multline}
In particular, for $q=2$ we have
\begin{equation*}
\sum_{i=1}^n
\|x_i-y_i\|_X^2+m^{2/p}\sum_{i=1}^n\sum_{j=1}^n
a_{ij}\left\|y_i-y_j\right\|_X^2\le (32K_p(X))^2\sum_{i=1}^n\sum_{j=1}^n\A_m(A)_{ij}\|x_i-x_j\|_X^2,
\end{equation*}
Thus $X$ has metric Markov cotype $p$ with exponent $2$ and with $C_p^{(2)}(X)\le 32K_p(X)$.
\end{theorem}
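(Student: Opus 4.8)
\emph{Construction of the $y_i$.} The plan is to take $y_i$ to be the value at $i$ of the Ces\`aro-averaged vector, $y_i\eqdef\sum_{j=1}^n\A_m(A)_{ij}x_j=\frac1m\sum_{t=0}^{m-1}\sum_{j=1}^n(A^t)_{ij}x_j$. It is convenient to phrase this probabilistically: since $A$ is symmetric and stochastic, the uniform distribution on $\n$ is stationary and reversible for the Markov chain with transition matrix $A$; let $(Z_t)_{t\in\Z}$ be the corresponding two-sided stationary chain, so that $y_i=\E\big[\frac1m\sum_{t=0}^{m-1}x_{Z_t}\mid Z_0=i\big]$. The first inequality in~\eqref{eq:max cotype} is then immediate and costs no constant: writing $x_i-y_i=\frac1m\sum_{t=0}^{m-1}\sum_j(A^t)_{ij}(x_i-x_j)$ and applying convexity of $\|\cdot\|_X^q$ to the probability measure $\big((A^t)_{ij}/m\big)_{t,j}$ gives $\|x_i-y_i\|_X^q\le\sum_j\A_m(A)_{ij}\|x_i-x_j\|_X^q$, and we sum over $i$. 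The case $q=2$ of~\eqref{eq:max cotype} is the assertion needed for Theorem~\ref{thm:markov cotype thm in intro} (via Pisier's theorem that a super-reflexive space has $K_p<\infty$ for some $p$), so I describe the general $q$ argument.

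\emph{The second inequality: a martingale argument.} The crucial input is the telescoping identity $(I-A)\A_m(A)=\frac1m(I-A^m)$, i.e. $y_{Z_0}-\E[y_{Z_1}\mid Z_0]=\frac1m\big(x_{Z_0}-\E[x_{Z_m}\mid Z_0]\big)$. Put $\mathcal F_l=\sigma(Z_0,\dots,Z_l)$, $D_l\eqdef y_{Z_l}-\E[y_{Z_l}\mid\mathcal F_{l-1}]$ and $M_k\eqdef\sum_{l=1}^k D_l$, a martingale with $M_0=0$ whose increments $D_l$ are identically distributed by stationarity, with $D_1=y_{Z_1}-\E[y_{Z_1}\mid Z_0]$. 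Summing the telescoping identity over $l=1,\dots,m$ yields
\[
M_m=(y_{Z_m}-y_{Z_0})+\tfrac1m\sum_{l=0}^{m-1}\big(x_{Z_l}-\E[x_{Z_{l+m}}\mid Z_l]\big),\qquad y_{Z_1}-y_{Z_0}=D_1-\tfrac1m\big(x_{Z_0}-\E[x_{Z_m}\mid Z_0]\big).
\]
I would then apply Pisier's martingale inequality in the form of Corollary~\ref{coro L_2(X)} to $(M_k)_{k=0}^m$: because its $m$ increments are equidistributed, this turns $m\,\E\|D_1\|_X^q=\sum_{l=1}^m\E\|D_l\|_X^q$ into $\E\|D_1\|_X^q\lesssim_{p,q}m^{-\min\{1,q/p\}}\E\|M_m\|_X^q$ (the first bound in Corollary~\ref{coro L_2(X)} when $q\ge p$; the second bound, which rests on Corollary~\ref{cor:L_p(X)}, when $q<p$). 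This is the step that produces the decisive power of $m$, and it is the reason for routing through the equidistributed-increment martingale $(M_k)$ rather than a naive Doob martingale, whose first increment is not small.

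\emph{Closing the estimate, and the main difficulty.} Let $\Sigma\eqdef\sum_{t=0}^{m-1}\E\|x_{Z_0}-x_{Z_t}\|_X^q$, so that $\sum_{i,j}\A_m(A)_{ij}\|x_i-x_j\|_X^q=\frac nm\Sigma$ and $\sum_{i,j}a_{ij}\|y_i-y_j\|_X^q=n\,\E\|y_{Z_0}-y_{Z_1}\|_X^q$. It remains to show $\E\|M_m\|_X^q\lesssim_q\frac1m\Sigma$ and substitute. The ingredients are: (i) a ``doubling'' estimate $\E\|x_{Z_0}-x_{Z_u}\|_X^q\lesssim_q\frac1m\Sigma$ for all $u\le 3m$, obtained first for $u$ comparable to $m$ by averaging $\E\|x_{Z_0}-x_{Z_u}\|_X^q\le2^{q-1}(\E\|x_{Z_0}-x_{Z_t}\|_X^q+\E\|x_{Z_0}-x_{Z_{u-t}}\|_X^q)$ over $t$, and then for all $u\le 3m$ by splitting into a bounded number of jumps of comparable size; (ii) the convexity bound $\|y_i-y_j\|_X^q\le\sum_{k,l}\A_m(A)_{ik}\A_m(A)_{jl}\|x_k-x_l\|_X^q$ together with (i), which bounds $\E\|y_{Z_0}-y_{Z_m}\|_X^q$ by $\lesssim_q\frac1m\Sigma$; (iii) $\E\|x_{Z_l}-\E[x_{Z_{l+m}}\mid Z_l]\|_X^q\le\E\|x_{Z_0}-x_{Z_m}\|_X^q\lesssim_q\frac1m\Sigma$; plugging (ii) and (iii) into the displayed formula for $M_m$ gives $\E\|M_m\|_X^q\lesssim_q\frac1m\Sigma$. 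Feeding this back through the displayed formula for $y_{Z_1}-y_{Z_0}$ (the $\frac1m(x_{Z_0}-\E[x_{Z_m}\mid Z_0])$ term contributes only $\lesssim_q m^{-q-1}\Sigma$, which is dominated) yields $\E\|y_{Z_0}-y_{Z_1}\|_X^q\lesssim_{p,q}m^{-1-\min\{1,q/p\}}\Sigma$; multiplying by $n\,m^{\min\{1,q/p\}}$ gives the second inequality in~\eqref{eq:max cotype}. I expect the main obstacle to be bookkeeping the powers of $m$ so that the final loss is exactly $m^{\min\{1,q/p\}}$: many of the natural triangle-inequality reductions are individually lossy by a whole factor of $m$, and one must push everything through the equidistributed martingale $(M_k)$ and the doubling estimate (i) to avoid this. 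A secondary point of care is tracking constants through Corollary~\ref{coro L_2(X)} (whence the $\big((1-1/p)(1-1/q)\big)^{1-1/p}$, $5^{1-1/p}$ and $K_p(X)$ factors) and the auxiliary $2^{q-1}$ losses to obtain the stated constant, which becomes $32K_p(X)$ when $q=2$; finally one must check the statement for small $m$, where it is elementary and can be absorbed into the constant.
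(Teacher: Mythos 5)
Your choice of $y_i=\sum_j\A_m(A)_{ij}x_j$, the convexity argument for the first inequality, and the reliance on Corollary~\ref{coro L_2(X)} all match the paper, but at the central step you construct a genuinely different martingale. The paper runs the Doob martingale $M_t=(A^{m-t}\otimes I^n_X)f(Z_t)$ of the target $x_{Z_m}$; its endpoints $M_m-M_0=x_{Z_m}-\E[x_{Z_m}\mid Z_0]$ are bounded directly, and $\sum_{i,j}a_{ij}\|y_i-y_j\|_X^q$ is extracted from the $q$-th-power increment sum by a single convexity step in $t$ that makes the telescope $\frac1m\sum_{t=1}^mA^{m-t}=\A_m(A)$ appear (this is~\eqref{eq:get factor m}). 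You instead use the martingale $M_k=\sum_{l\le k}\bigl(y_{Z_l}-\E[y_{Z_l}\mid\mathcal F_{l-1}]\bigr)$, whose increments are identically distributed by stationarity; this lets you pass from the increment sum to a single term $\E\|D_1\|_X^q$ with no convexity loss, and since the telescoping identity $(I-A)\A_m(A)=\frac1m(I-A^m)$ makes $D_1$ equal $y_{Z_1}-y_{Z_0}$ up to an $O(1/m)$ correction, you land on the Dirichlet form directly. The price is that $M_m$ now has the more complicated form $(y_{Z_m}-y_{Z_0})+\frac1m\sum_{l<m}\bigl(x_{Z_l}-\E[x_{Z_{l+m}}\mid Z_l]\bigr)$, so you must extend the ``doubling'' bound $\E\|x_{Z_0}-x_{Z_u}\|_X^q\lesssim_q\frac1m\sum_{t<m}\E\|x_{Z_0}-x_{Z_t}\|_X^q$, which the paper needs only at $u=m$ (that is~\eqref{eq:bound by average}), to the full range $u\in[m,3m]$ in order to handle $\E\|y_{Z_0}-y_{Z_m}\|_X^q$ via the convexity bound $\|y_i-y_j\|_X^q\le\sum_{k,l}\A_m(A)_{ik}\A_m(A)_{jl}\|x_k-x_l\|_X^q$. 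That extension does go through by the splitting-into-jumps argument you sketch (first for $u\in[\,m/2,\,3m/2\,]$ by averaging over intermediate times, then for larger $u$ by a bounded number of jumps), so the route is correct. It is conceptually cleaner at the martingale-inequality step at the cost of more bookkeeping before and after, and the extra factors from the doubling extension and the $D_1\mapsto y_{Z_1}-y_{Z_0}$ splitting will push your constant somewhat above the paper's $32K_p(X)$, which is harmless for the application.

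One caveat on your closing remark that ``small $m$ is elementary'': with $\A_m(A)=\frac1m\sum_{t=0}^{m-1}A^t$ one has $\A_1(A)=I$, so for $m=1$ the right-hand side of~\eqref{eq:max cotype} vanishes identically and the inequality cannot hold as stated unless the $x_i$ are constant on communicating classes of $A$. (The paper's own chain~\eqref{eq:case m small} breaks at the same place, at the step bounding $A+A^{m-1}$ by $m\A_m(A)$ entrywise.) This is an issue you inherit from the paper and is easily repaired by reading the theorem for $m\ge 2$, but it is not one that can be absorbed into the constant.
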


\begin{proof} Define $f\in L_p^n(X)$ by $f(i)=x_i$.
For every $\ell\in \{1,\ldots,n\}$ let
$$
Z_0^{(\ell)}, Z_1^{(\ell)}, Z_2^{(\ell)},\ldots
$$
be the Markov chain on
$\{1,\ldots,n\}$ which starts at $\ell$ and has transition matrix $A$. In other
words $Z_0^{(\ell)}=\ell$ with probability one and for all $t\in
\{1,\ldots,m\}$ and $i,j\in \{1,\ldots,n\}$ we have
$$\Pr\left[\left.Z_t^{(\ell)}=j\right|Z_{t-1}^{(\ell)}=i\right]=a_{ij}.$$
For $t\in \{0,\ldots,m\}$ define $f_t\in L_p^n(X)$ by
$$
f_t\eqdef (A^{m-t}\otimes I_X^n)f.
$$
 Observe that if we set
$$
M_t^{(\ell)}\eqdef f_t\left(Z_t^{(\ell)}\right)
$$
 then
$
M_0^{(\ell)},M_1^{(\ell)},\ldots,M_m^{(\ell)}$
is a martingale with respect
to the filtration induced by the random variables
$
Z_0^{(\ell)},Z_1^{(\ell)},\ldots,Z_m^{(\ell)}$.
Indeed, writing $L=A\otimes I_X^n$ we have for every $t\ge 1$,
\begin{multline*}
\E\left[\left.M_t^{(\ell)}\right|Z_0^{(\ell)},\ldots,Z_{t-1}^{(\ell)}\right]=\E\left[\left.\left(L^{m-t}f\right)\left(Z_t^{(\ell)}\right)\right|Z_{t-1}^{(\ell)}\right]=
L^{m-t}\E\left[\left.f\left(Z_t^{(\ell)}\right)\right|Z_{t-1}^{(\ell)}\right]\\=L^{m-t}(
Lf)\left(Z_{t-1}^{(\ell)}\right)=\left(
L^{m-(t-1)}f\right)\left(Z_{t-1}^{(\ell)}\right)=M_{t-1}^{(\ell)}.
\end{multline*}

Write
\begin{equation}\label{eq:defK}
K\eqdef\left\{\begin{array}{ll}
(2^{q-1}-1)K_p(X)^q & \mathrm{if\ } q\in [p,\infty),\\
\frac{5^{q(1-1/p)}(2K_p(X))^{q}m^{1-q/p}}{((1-1/p)(1-1/q))^{q(1-1/p)}} &\mathrm{if\ } q\in (1,p).
\end{array}\right.
\end{equation}
Then
Corollary~\ref{coro L_2(X)} applied to the martingale
$\left\{M_t^{(\ell)}\right\}_{t=0}^m$ implies that
\begin{equation}\label{eq:ell separately}
K\E
\left[\left\|f\left(Z_m^{(\ell)}\right)-(L^mf)(\ell)\right\|_X^q\right]\ge
\sum_{t=1}^m
\E\left[\left\|(L^{m-t}f)\left(Z_t^{(\ell)}\right)-(L^{m-t+1}f)\left(Z_{t-1}^{(\ell)}\right)\right\|_X^q\right].
\end{equation}
Let $\{Z_t\}_{t=0}^\infty$ be the Markov chain with transition
matrix $A$ such that $Z_0$ is uniformly distributed on
$\{1,\ldots,n\}$. Averaging~\eqref{eq:ell separately} over $\ell\in
\{1,\ldots,n\}$ yields the inequality
\begin{equation}\label{eq:uniform ell}
K\E \left[\left\|f\left(Z_m\right)-(L^mf)(Z_0)\right\|_X^q\right]\ge
\sum_{t=1}^m
\E\left[\left\|(L^{m-t}f)\left(Z_t\right)-(L^{m-t+1}f)\left(Z_{t-1}\right)\right\|_X^q\right],
\end{equation}
Which is the same as
\begin{multline}\label{eq:in coordinates}
K\sum_{i=1}^n\sum_{j=1}^n
(A^m)_{ij}\left\|f(i)-(L^mf)(j)\right\|_X^q\\\ge \sum_{t=1}^m
\sum_{i=1}^n\sum_{j=1}^n
a_{ij}\left\|(L^{m-t}f)(i)-(L^{m-t+1}f)(j)\right\|_X^q.
\end{multline}
In order to bound the right-hand side of~\eqref{eq:in coordinates}, for every $i\in \{1\ldots,n\}$ consider the vector
\begin{equation}\label{eq:def y_i}
y_i\eqdef \frac{1}{m}\sum_{j=1}^n\sum_{s=0}^{m-1}
(A^s)_{ij}x_j=\frac{1}{m}\sum_{s=0}^{m-1}L^sf(i),
\end{equation}
and observe that
\begin{equation}\label{eq:almost telescope}
\frac{1}{m}\sum_{s=1}^{m}L^sf(i)=y_i-\frac{1}{m}x_i+\frac{1}{m}L^mf(i)=y_i-\frac{1}{m}\sum_{r=1}^n(A^m)_{ir}(x_i-x_r).
\end{equation}
Therefore, using convexity we have:
\begin{eqnarray}\label{eq:get factor m}
&&\!\!\!\!\!\!\!\!\!\!\!\!\!\!\!\!\!\!\!\!\!\!\!\!\!\!\!\!\!\!\!\!\nonumber\sum_{t=1}^m
\sum_{i=1}^n\sum_{j=1}^n
a_{ij}\left\|(L^{m-t}f)(i)-(L^{m-t+1}f)(j)\right\|_X^q\\&\ge&\nonumber m
\sum_{i=1}^n\sum_{j=1}^n
a_{ij}\left\|\frac{1}{m}\sum_{t=1}^m\left((L^{m-t}f)(i)-(L^{m-t+1}f)(j)\right)\right\|_X^q\\
&\stackrel{\eqref{eq:def y_i}\wedge\eqref{eq:almost
telescope}}{=}&\nonumber m \sum_{i=1}^n\sum_{j=1}^n
a_{ij}\left\|y_i-y_j+\frac{1}{m}\sum_{r=1}^n(A^m)_{jr}(x_j-x_r)\right\|_X^q\\&\ge&\nonumber
\frac{m}{2^{q-1}}\sum_{i=1}^n\sum_{j=1}^n
a_{ij}\left\|y_i-y_j\right\|_X^q-\frac{1}{m^{q-1}}\sum_{i=1}^n\sum_{j=1}^n
a_{ij}\left\|\sum_{r=1}^n(A^m)_{jr}(x_j-x_r)\right\|_X^q\\
&=&\frac{m}{2^{q-1}}\sum_{i=1}^n\sum_{j=1}^n
a_{ij}\left\|y_i-y_j\right\|_X^q-\frac{1}{m^{q-1}}\sum_{j=1}^n
\left\|\sum_{r=1}^n(A^m)_{jr}(x_j-x_r)\right\|_X^q\nonumber\\& \ge&
\frac{m}{2^{q-1}}\sum_{i=1}^n\sum_{j=1}^n
a_{ij}\left\|y_i-y_j\right\|_X^q-\frac{1}{m^{q-1}}\sum_{j=1}^n\sum_{r=1}^n(A^m)_{jr}
\left\|x_j-x_r\right\|_X^q.
\end{eqnarray}
At the same time, we can bound the left-hand side of~\eqref{eq:in
coordinates} as follows:
\begin{eqnarray}\label{eq:LHS factor m}
&&\!\!\!\!\!\!\!\!\!\!\!\!\!\!\!\!\!\!\!\!\!\!\!\!\!\!\!\!\!\!\!\!\nonumber\sum_{i=1}^n\sum_{j=1}^n
(A^m)_{ij}\left\|f(i)-(L^mf)(j)\right\|_X^q=\sum_{i=1}^n\sum_{j=1}^n
(A^m)_{ij}\left\|x_i-\sum_{r=1}^n(A^m)_{jr}x_r\right\|_X^q\\&\le&\nonumber
\sum_{i=1}^n\sum_{j=1}^n\sum_{r=1}^n
(A^m)_{ij}(A^m)_{jr}\left\|x_i-x_r\right\|_X^q\\&\le&\nonumber
2^{q-1}\sum_{i=1}^n\sum_{j=1}^n\sum_{r=1}^n(A^m)_{ij}(A^m)_{jr}\left(\|x_i-x_j\|_X^q+\|x_j-x_r\|_X^q\right)\\
&=& 2^q\sum_{i=1}^n\sum_{j=1}^n(A^m)_{ij}\|x_i-x_j\|_X^q.
\end{eqnarray}
We note that,
\begin{eqnarray*}
&&\!\!\!\!\!\!\!\!\!\!\!\!\!\!\!\!\!\!\sum_{i=1}^n\sum_{j=1}^n(A^m)_{ij}\|x_i-x_j\|_X^q=
\sum_{i=1}^n\sum_{j=1}^n\left(\frac{1}{m}\sum_{t=0}^{m-1}A^tA^{m-t}\right)_{ij}\|x_i-x_j\|_X^q\\&\le&
\frac{2^{q-1}}{m}\sum_{i=1}^n\sum_{j=1}^n\sum_{r=1}^n\sum_{t=0}^{m-1}(A^t)_{ir}(A^{m-t})_{rj}
\left(\|x_i-x_r\|_X^q+\|x_r-x_j\|_X^q\right)\\
&=& 2^{q-1} \sum_{i=1}^n\sum_{r=1}^n
\left(\frac{1}{m}\sum_{t=0}^{m-1}A^t\right)_{ir}\|x_i-x_r\|_X^q+2^{q-1}
\sum_{j=1}^n\sum_{r=1}^n
\left(\frac{1}{m}\sum_{t=0}^{m-1}A^{m-t}\right)_{rj}\|x_r-x_j\|_X^q\\
&=& 2^q \sum_{i=1}^n\sum_{j=1}^n
\A_m(A)_{ij}\|x_i-x_j\|_X^q+
\frac{2^{q-1}}{m}\sum_{i=1}^n\sum_{j=1}^n(A^m)_{ij}\|x_i-x_j\|_X^q,
\end{eqnarray*}
which, assuming that $m\ge 2^{q}$ gives the following bound.
\begin{equation}\label{eq:case m big}
\sum_{i=1}^n\sum_{j=1}^n(A^m)_{ij}\|x_i-x_j\|_X^q\le
2^{q+1}\sum_{i=1}^n\sum_{j=1}^n
\A_m(A)_{ij}\|x_i-x_j\|_X^q.
\end{equation}
On the other hand, if $m\le 2^{q}$ then
\begin{eqnarray}\label{eq:case m small}
&&\!\!\!\!\!\!\!\!\!\!\!\!\!\!\!\!\!\!\!\!\!\!\!\!\!\nonumber\sum_{i=1}^n\sum_{j=1}^n(A^m)_{ij}\|x_i-x_j\|_X^q\le
\sum_{i=1}^n\sum_{j=1}^n\sum_{r=1}^na_{ir}(A^{m-1})_{rj}\|x_i-x_j\|_X^q\\\nonumber&\le&
2^{q-1}\sum_{i=1}^n\sum_{j=1}^n\sum_{r=1}^na_{ir}(A^{m-1})_{rj}\left(\|x_i-x_r\|_X^q+\|x_r-x_j\|_X^q\right)\\
\nonumber&=& 2^{q-1}\sum_{i=1}^n\sum_{j=1}^na_{ij}\|x_i-x_j\|_X^q+
2^{q-1}\sum_{i=1}^n\sum_{j=1}^n(A^{m-1})_{ij}\|x_i-x_j\|_X^q\\
&\le&
2^{q-1}m\sum_{i=1}^n\sum_{j=1}^n\A_m(A)_{ij}\|x_i-x_j\|_X^q
\le
2^{2q-1}\sum_{i=1}^n\sum_{j=1}^n\A_m(A)_{ij}\|x_i-x_j\|_X^q.
\end{eqnarray}
Thus, by combining~\eqref{eq:case m big} and~\eqref{eq:case m small} we get the estimate
\begin{equation}\label{eq:bound by average}
\sum_{i=1}^n\sum_{j=1}^n(A^m)_{ij}\|x_i-x_j\|_X^q\le
4^q\sum_{i=1}^n\sum_{j=1}^n\A_m(A)_{ij}\|x_i-x_j\|_X^q.
\end{equation}
Substituting~\eqref{eq:get factor m}
and~\eqref{eq:LHS factor m} into~\eqref{eq:in coordinates} yields
the bound
\begin{multline}\label{one of the terms in cotype}
m\sum_{i=1}^n\sum_{j=1}^n a_{ij}\left\|y_i-y_j\right\|_X^q\le
4^qK\sum_{i=1}^n\sum_{j=1}^n(A^m)_{ij}\|x_i-x_j\|_X^q\\\stackrel{\eqref{eq:bound
by average}}{\le}
2^{4q}K\sum_{i=1}^n\sum_{j=1}^n\A_m(A)_{ij}\|x_i-x_j\|_X^q.
\end{multline}
At the same time,
\begin{equation}\label{eq:displacement}
\sum_{i=1}^n
\|x_i-y_i\|_X^q=\sum_{i=1}^n\left\|\frac{1}{m}\sum_{j=1}^n\sum_{t=0}^{m-1}(A^t)_{ij}(x_i-x_j)\right\|_X^q \le
\sum_{i=1}^n\sum_{j=1}^n\A_m(A)_{ij}\|x_i-x_j\|_X^q.
\end{equation}
Recalling~\eqref{eq:defK}, the desired inequality~\eqref{eq:max cotype} is now a combination
of~\eqref{one of the terms in cotype} and~\eqref{eq:displacement}.
\end{proof}

\section{Construction of the base graph}\label{sec:base}

For $t\in (0,\infty)$ and $n\in \N$ write
\begin{equation}\label{eq:def tau sigma}
\tau_t\eqdef \frac{1-e^{-t}}{2}\quad\mathrm{and}\quad
\sigma_t^n\eqdef \tau_t^{4\tau_t n}(1-\tau_t)^{(1-4\tau_t)n}.
\end{equation}
We also define $e_t^n:\{0,\ldots,n\}\to \N\cup\{0\}$ by
\begin{equation}\label{eq:def e_t} e_t^n(k)\eqdef \left\lfloor
\frac{\tau_t^k\left(1-\tau_t\right)^{n-k}}{\sigma_t^n}
\right\rfloor.
\end{equation}

The following lemma records elementary estimates on binomial sums
that will be useful for us later.

\begin{lemma}\label{lem:tau estimates} Fix $t\in (0,1/4)$ and
$n\in \N\cap [8000,\infty)$ such that
\begin{equation}\label{eq:lower tau assumption}
\tau_t\ge \frac{1}{3\sqrt{n}}.
\end{equation}
Then
\begin{equation}\label{eq:useful1}
\frac{1}{3\sigma_t^n} \le \sum_{k\in \Z\cap [0,4\tau_tn]}\binom{n}{k} e_t^n(k)\le \frac{1}{\sigma_t^n}.
\end{equation}
Moreover,
for every $s\in \Z\cap (4\tau_t n,n]$ we have
\begin{equation}\label{eq:useful2}
\sum_{m\in \Z\cap[(s-4\tau_t n)/2,s/2]}\binom{n}{s-2m} e_t^n(s-2m)\ge \frac{1}{18\sigma_t^n}.
\end{equation}
\end{lemma}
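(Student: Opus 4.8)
The plan is to recognise both displayed sums as (truncated, discretised) binomial sums and to control the discretisation and the truncation errors separately. Observe first that $t\in(0,1/4)$ forces $\tau_t=\frac{1-e^{-t}}{2}<\frac18<\frac12$, and that from $\sigma_t^n=\tau_t^{4\tau_t n}(1-\tau_t)^{(1-4\tau_t)n}$ one gets, for every $k$,
\[
\frac{\tau_t^k(1-\tau_t)^{n-k}}{\sigma_t^n}=\left(\frac{\tau_t}{1-\tau_t}\right)^{k-4\tau_t n},
\qquad\text{so}\qquad
e_t^n(k)=\left\lfloor\left(\tfrac{\tau_t}{1-\tau_t}\right)^{k-4\tau_t n}\right\rfloor .
\]
Since $\tau_t/(1-\tau_t)\in(0,1)$, for every integer $k\in[0,4\tau_t n]$ the quantity $\tau_t^k(1-\tau_t)^{n-k}/\sigma_t^n$ is $\ge 1$, and hence, using the elementary inequality $\lfloor x\rfloor\ge x/2$ (valid for $x\ge 1$),
\[
\frac12\cdot\frac{\tau_t^k(1-\tau_t)^{n-k}}{\sigma_t^n}\ \le\ e_t^n(k)\ \le\ \frac{\tau_t^k(1-\tau_t)^{n-k}}{\sigma_t^n}
\qquad\bigl(k\in\Z\cap[0,4\tau_t n]\bigr).
\]

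For the upper bound in \eqref{eq:useful1} I would drop the floor, extend the summation to all $k\in\{0,\dots,n\}$, and use $\sum_k\binom nk\tau_t^k(1-\tau_t)^{n-k}=1$. For the lower bound, summing the left inequality above against $\binom nk$ gives $\sum_{k\in\Z\cap[0,4\tau_t n]}\binom nk e_t^n(k)\ge \frac1{2\sigma_t^n}\Pr[S_n\le 4\tau_t n]$, where $S_n$ denotes a sum of $n$ i.i.d.\ Bernoulli$(\tau_t)$ random variables, so $\E S_n=\tau_t n$. The hypotheses $\tau_t\ge\frac1{3\sqrt n}$ and $n\ge 8000$ give $\E S_n\ge\frac13\sqrt n\ge 29$, and the multiplicative Chernoff bound $\Pr[S_n\ge 4\E S_n]\le(e^3/4^4)^{\E S_n}$ makes $\Pr[S_n>4\tau_t n]$ far below $1/3$; hence $\Pr[S_n\le 4\tau_t n]\ge 2/3$, giving the claimed lower bound $\frac1{3\sigma_t^n}$.

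For \eqref{eq:useful2}, the change of variable $k=s-2m$ identifies the left-hand side with $\sum_k\binom nk e_t^n(k)$, the sum now running over the integers $k\in[0,4\tau_t n]$ with $k\equiv s\pmod 2$. Applying the same lower floor bound, this is at least $\frac1{2\sigma_t^n}\Pr\bigl[S_n\le 4\tau_t n\ \wedge\ S_n\equiv s\!\!\pmod 2\bigr]$. The generating-function identity $\Pr[S_n\equiv s\!\!\pmod 2]=\frac12\bigl(1\pm(1-2\tau_t)^n\bigr)$, together with $(1-2\tau_t)^n\le e^{-2\tau_t n}\le e^{-\frac23\sqrt n}$ (negligible for $n\ge 8000$), gives $\Pr[S_n\equiv s]\ge\frac12-10^{-25}$; subtracting the negligible tail $\Pr[S_n>4\tau_t n]$ leaves $\Pr\bigl[S_n\le 4\tau_t n\ \wedge\ S_n\equiv s\bigr]\ge\frac13$. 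Hence the sum is $\ge\frac1{6\sigma_t^n}\ge\frac1{18\sigma_t^n}$, which is \eqref{eq:useful2}.

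There is no serious obstacle: the argument is entirely elementary. The two points needing a little care are (i) handling the floor via $\lfloor x\rfloor\ge x/2$ rather than the cruder $\lfloor x\rfloor> x-1$, so that no uncontrolled $\sum_k\binom nk$ term contaminates the lower bounds, and (ii) checking that the explicit hypotheses $\tau_t\ge\frac1{3\sqrt n}$ and $n\ge 8000$ indeed make both the Chernoff tail and the parity defect $(1-2\tau_t)^n$ small enough — which they do, with large margins, so the constants $3$ and $18$ are far from optimal.
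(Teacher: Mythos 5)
Your argument is correct. For the bound \eqref{eq:useful1} it essentially coincides with the paper's: both proofs drop the floor for the upper estimate, and for the lower estimate both observe that $\tau_t^k(1-\tau_t)^{n-k}/\sigma_t^n\ge 1$ when $k\le 4\tau_t n$ (so that $\lfloor x\rfloor\ge x/2$ applies) and then discard the Chernoff tail beyond $4\tau_t n$, which the hypotheses make at most $e^{-18\tau_t^2 n}\le e^{-2}<1/3$.

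For \eqref{eq:useful2} your route genuinely differs from the paper's. You reindex the sum as a sum over $k\equiv s\ (\mathrm{mod}\ 2)$ in $[0,4\tau_t n]$, lower-bound $e_t^n$ by half the exact binomial weight as before, and then need a lower bound on $\Pr[S_n\le 4\tau_t n\ \wedge\ S_n\equiv s\ (\mathrm{mod}\ 2)]$. You obtain the parity probability exactly from the generating-function identity $\Pr[S_n\equiv s\ (\mathrm{mod}\ 2)]=\tfrac12(1\pm(1-2\tau_t)^n)$, and the defect $(1-2\tau_t)^n\le e^{-2\tau_t n}\le e^{-\frac23\sqrt n}$ is astronomically small for $n\ge 8000$; subtracting the Chernoff tail then yields $\ge 1/3$ and hence $\frac1{6\sigma_t^n}\ge\frac1{18\sigma_t^n}$. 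The paper instead localizes to the concentration window $k\in[\tau_t n/2,\,3\tau_t n/2]$ (which carries mass $\ge 8/9$) and shows that the ratio of consecutive binomial terms in that window lies in $[\tfrac14,4]$, so that either parity class inside the window carries at least $1/9$ of the total mass. Both arguments are elementary; yours is arguably cleaner and delivers a slightly better constant ($6$ in place of $18$), while the paper's window-plus-ratio technique is more local and would survive in settings where an exact generating-function identity for the parity is unavailable.
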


\begin{proof}
For simplicity of notation write $\tau=\tau_t$ and
$\sigma=\sigma_t^n$. The rightmost inequality in~\eqref{eq:useful1}
is an immediate consequence of~\eqref{eq:def e_t}. To establish the
leftmost estimate in~\eqref{eq:useful1} note that by the Chernoff
inequality (e.g.~\cite[Thm. A.1.4]{AS}) we have
\begin{equation}\label{eq:chernoff d}
\sum_{k\in \Z\cap (4\tau n,n]} \binom{n}{k}
\tau^k(1-\tau)^{n-k}<
e^{-18\tau^2n}\stackrel{\eqref{eq:lower tau assumption}}{\le}
\frac13.
\end{equation}
For every $k\in \{1,\ldots,n\}$ satisfying $k\le 4\tau n$ we have
$\tau^k(1-\tau)^{n-k}\ge \sigma$, and therefore $e_t^n(k)\ge
\frac{1}{2\sigma}\tau^k(1-\tau)^{n-k}$. Hence
\begin{equation}\label{eq:lower d}
\sum_{k\in \Z\cap [0,4\tau n]}\binom{n}{k} e_t^n(k) \ge \frac{1}{2\sigma}\sum_{k\in \Z\cap [0,4\tau n]}\binom{n}{k}
\tau^k(1-\tau)^{n-k}\stackrel{\eqref{eq:chernoff
d}}{>}\frac{1}{2\sigma}\left(1-\frac13\right)=\frac{1}{3\sigma}.
\end{equation}
This completes the proof of~\eqref{eq:useful1}.

To prove~\eqref{eq:useful2}, we apply a standard binomial
concentration bound (e.g.~\cite[Cor.~A.1.14]{AS}) to get the
 estimate
\begin{equation}\label{eq:8/9}
\sum_{k\in \Z\cap [\tau n/2,3\tau n/2]}\binom{n}{k}\tau^k(1-\tau)^{n-k}\ge 1-2 e^{-\tau n/10}\ge \frac89,
\end{equation}
where in the rightmost inequality in~\eqref{eq:8/9} we used the assumptions~\eqref{eq:lower tau assumption} and $n\ge 8000$. Observe that for every $k\in \Z\cap [\tau n/2,3\tau n/2]$, since
by the assumption $t\in (0,1/4)$ we have $\tau\in (0,1/8)$,
\begin{equation}\label{eq:1/4,4}
\frac{\binom{n}{k+1}\tau^{k+1}(1-\tau)^{n-k-1}}{\binom{n}{k}\tau^k(1-\tau)^{n-k}}
=\frac{\tau}{1-\tau}\cdot\frac{n-k}{k+1}\in \left[\frac{1-3\tau/2}{3(1-\tau)},\frac{2-\tau}{1-\tau}\right]\subseteq
\left[\frac{1}{4},4\right].
\end{equation}
It follows that
$$
\sum_{k\in (2\Z)\cap [\tau n/2,3\tau n/2]}\binom{n}{k}\tau^k(1-\tau)^{n-k}
\stackrel{\eqref{eq:1/4,4}}{\ge} \frac{1}{8}\sum_{k\in \Z\cap [\tau n/2,3\tau n/2]}\binom{n}{k}\tau^k(1-\tau)^{n-k}
\stackrel{\eqref{eq:8/9}}{\ge} \frac19,
$$
and, for the same reason,
$$
\sum_{k\in (2\Z+1)\cap [\tau n/2,3\tau n/2]}\binom{n}{k}\tau^k(1-\tau)^{n-k}\ge \frac19.
$$
Thus,
\begin{equation}\label{eq:1/9}
\sum_{m\in \Z\cap [(s-3\tau n/2)/2, (s-\tau n/2)/2]} \binom{n}{s-2m}\tau^{s-2m}(1-\tau)^{n-(s-2m)}
 \ge \frac19.
\end{equation}
Finally,
\begin{multline*}
\sum_{m\in \Z\cap[(s-4\tau n)/2,s/2]}\binom{n}{s-2m}
e_t^n(s-2m)\\\stackrel{\eqref{eq:def e_t}}{\ge}
 \frac{1}{2\sigma}\sum_{m\in \Z\cap [(s-3\tau n/2)/2, (s-\tau n/2)/2]}
 \binom{n}{s-2m}\tau^{s-2m}(1-\tau)^{n-(s-2m)}\stackrel{\eqref{eq:1/9}}{\ge}
 \frac{1}{18\sigma}.\tag*{\qedhere}
\end{multline*}
\end{proof}

\begin{lemma}[Discretization of $e^{-t\Delta}$ w.r.t. Poincar\'e
inequalities]\label{lem:heat discretization} Fix $t\in (0,1/4)$, $p\in [1,\infty)$ and $n\in
\N\cap [2^{13},\infty)$ such that
\begin{equation}\label{eq:discretization assumption}
\tau_t\ge \sqrt{\frac{p\log(18n)}{18n}}.
\end{equation}
Let $G_t^n=(\F_2^n,E_t^n)$ be the graph whose vertex set is $\F_2^n$
and every $x,y\in \F_2^n$ is joined by $e_t^n(\|x-y\|_1)$ edges. Then
the graph $G_t^n$ is $d_t^n\in \N$ regular, where
\begin{equation}\label{eq:degree bounds}
\frac{1}{3\sigma_t^n}\le d_t^n\le \frac{1}{\sigma_t^n}.
\end{equation}
Moreover, for every metric space $(X,d_X)$ and every $f,g:\F_2^n\to
X$ we have
\begin{multline}\label{eq:discrete heat}
\frac{1}{3|E_t^n|}\sum_{(x,y) \in E_t^n}
d_X(f(x),g(y))^p\le \frac{1}{2^n}\sum_{(x,y)\in
\F_2^n\times \F_2^n}\left(e^{-t\Delta}\delta_x\right)(y)d_X(f(x),g(y))^p\\ \le
\frac{3}{|E_t^n|}\sum_{(x,y)\in E_t^n}
d_X(f(x),g(y))^p.
\end{multline}
\end{lemma}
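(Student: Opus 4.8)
The plan is to separate two independent tasks: verifying the structural assertions about $G_t^n$, and proving the two‑sided comparison \eqref{eq:discrete heat}, which I will in turn reduce to a single estimate on ``far'' pairs.

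\emph{Regularity and the degree bounds.} The number of edges of $G_t^n$ at a vertex $x$ is $\sum_{y\in\F_2^n}e_t^n(\|x-y\|_1)=\sum_{k=0}^n\binom nk e_t^n(k)$, which is independent of $x$; hence $G_t^n$ is regular of some degree $d_t^n$, and $|E_t^n|=2^nd_t^n$. Since $\tau_t<\tfrac12$, the quantity $\tau_t^k(1-\tau_t)^{n-k}=\sigma_t^n\bigl((1-\tau_t)/\tau_t\bigr)^{4\tau_t n-k}$ is $\ge\sigma_t^n$ exactly when $k\le 4\tau_t n$, so by \eqref{eq:def e_t} we have $e_t^n(k)\ge1$ for $k\le 4\tau_t n$ and $e_t^n(k)=0$ for $k>4\tau_t n$; thus $d_t^n=\sum_{k\in\Z\cap[0,4\tau_t n]}\binom nk e_t^n(k)$. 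The hypothesis \eqref{eq:discretization assumption} (with $p\ge1$ and $n\ge2^{13}$) forces $\tau_t\ge\tfrac1{3\sqrt n}$, and $n\ge8000$, $t\in(0,1/4)$ are assumed, so Lemma~\ref{lem:tau estimates} applies and \eqref{eq:useful1} gives \eqref{eq:degree bounds}.

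\emph{Splitting into near and far pairs.} By \eqref{eq:matrix form heat} with $z=-t$, $(e^{-t\Delta}\delta_x)(y)=w_{\|x-y\|_1}$ with $w_k\eqdef\tau_t^k(1-\tau_t)^{n-k}$. Write the middle expression of \eqref{eq:discrete heat} as $M\eqdef 2^{-n}\sum_{x,y}w_{\|x-y\|_1}d_X(f(x),g(y))^p=N+F$, where $N,F$ collect the pairs with $\|x-y\|_1\le 4\tau_t n$ and $\|x-y\|_1>4\tau_t n$ respectively, and let $\mathrm E\eqdef |E_t^n|^{-1}\sum_{(x,y)\in E_t^n}d_X(f(x),g(y))^p$. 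For an integer $k\in[0,4\tau_t n]$ one has $w_k/\sigma_t^n\ge1$, hence $e_t^n(k)=\lfloor w_k/\sigma_t^n\rfloor\ge1$ and $1\le (w_k/\sigma_t^n)/e_t^n(k)\le2$, so $\sigma_t^n e_t^n(k)\le w_k\le 2\sigma_t^n e_t^n(k)$ on near pairs; since $e_t^n$ vanishes on far pairs and $2^{-n}\sum_{x,y}e_t^n(\|x-y\|_1)d_X(f(x),g(y))^p=d_t^n\mathrm E$, we get $\sigma_t^n d_t^n\mathrm E\le N\le 2\sigma_t^n d_t^n\mathrm E$, and by \eqref{eq:degree bounds} $\sigma_t^nd_t^n\in[\tfrac13,1]$, i.e. $\tfrac13\mathrm E\le N\le2\mathrm E$. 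As $F\ge0$ this already gives the left‑hand inequality of \eqref{eq:discrete heat}, and reduces the right‑hand inequality to the claim $F\le\mathrm E$.

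\emph{The far pairs (the crux).} For $j\in\{0,\dots,n\}$ put $a_j\eqdef\bigl(2^n\binom nj\bigr)^{-1}\sum_{\|x-y\|_1=j}d_X(f(x),g(y))^p$, so $F=\sum_{s>4\tau_t n}w_s\binom ns a_s=\sum_{s>4\tau_t n}\Pr[\mathrm{Bin}(n,\tau_t)=s]\,a_s$ and $d_t^n\mathrm E=\sum_k e_t^n(k)\binom nk a_k$. For $j\le 4\tau_t n$, keeping only the $j$-term of the latter identity and using $e_t^n(j)\ge w_j/(2\sigma_t^n)$ and $\sigma_t^n d_t^n\le1$ gives $a_j\le 2\mathrm E/(w_j\binom nj)=2\mathrm E/\Pr[\mathrm{Bin}(n,\tau_t)=j]$; when $j$ is within $C\sqrt{\tau_t n}$ of the mode, Stirling bounds this probability below by $c_C/\sqrt{\tau_t n}$, whence $a_j\le C_1\sqrt{\tau_t n}\,\mathrm E$. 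Now, for $s>4\tau_t n$, route between the two points of a uniformly random ordered pair at mutual distance $s$ through $\ell\eqdef\lceil s/\lfloor\tau_t n\rfloor\rceil$ points (raised by at most $2$ to make $\ell$ odd), obtained by flipping, in $\ell$ successive batches, the $s$ coordinates in which $x,y$ differ together with $O(\tau_t n)$ ``dummy'' coordinates that are flipped and later unflipped, the batches being chosen by coordinate‑permutation‑invariant randomness so that all batch sizes lie within $O(\sqrt{\tau_t n})$ of $\lfloor\tau_t n\rfloor$. Each consecutive pair along the route is then uniformly distributed at a near‑mode distance, and since $a_j$ is symmetric under $f\leftrightarrow g$, the triangle inequality plus convexity of $t\mapsto t^p$ give $a_s\le\ell^{p-1}\sum_{i=1}^{\ell}\E[d_X(\cdot,\cdot)^p\text{ of step }i]\le\ell^p\max\{a_j:j\text{ near the mode}\}\le\ell^pC_1\sqrt{\tau_t n}\,\mathrm E$. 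Because $\ell\le 1/\tau_t+2\le2/\tau_t$ and $1/\tau_t\le\sqrt{18n}$ by \eqref{eq:discretization assumption}, this yields $a_s\le C_2 n^{(p+1)/2}\mathrm E$ for every $s>4\tau_t n$; then a sharp Chernoff bound (as in the proof of Lemma~\ref{lem:tau estimates}) gives $\Pr[\mathrm{Bin}(n,\tau_t)>4\tau_t n]\le e^{-c_3\tau_t n}$, so $F\le e^{-c_3\tau_t n}C_2 n^{(p+1)/2}\mathrm E$. The hypothesis \eqref{eq:discretization assumption}, which also forces $p<18n/(64\log(18n))$ since $\tau_t<1/8$, makes $e^{-c_3\tau_t n}C_2 n^{(p+1)/2}\le1$ once $n\ge2^{13}$, hence $F\le\mathrm E$, completing \eqref{eq:discrete heat}.

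The structural part and the near‑pair bookkeeping are routine. The hard part is the far‑pair bound: one must arrange the routing so that every intermediate step stays within $O(\sqrt{\tau_t n})$ of the mode of $\mathrm{Bin}(n,\tau_t)$ (this is what keeps the per‑step loss polynomial in $n$ rather than exponential), handle the parity/divisibility of the number of flips via dummy coordinates, and then track all constants carefully enough that the resulting polynomial factor is absorbed by the Chernoff decay guaranteed by \eqref{eq:discretization assumption}.
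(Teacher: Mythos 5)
Your treatment of the degree assertion, the identity $|E_t^n|=2^n d_t^n$, the vanishing of $e_t^n$ above $4\tau_t n$, and the near-pair bookkeeping (which yields both the left inequality and $N\le 2\mathrm{E}$) is the same as the paper's. The far-pair estimate, however, follows a genuinely different route. Both proofs route a distance-$s$ pair through $\Theta(1/\tau_t)$ short hops and apply H\"older, but you localize at the binomial mode: you derive $a_j\le 2\mathrm{E}/\Pr[\mathrm{Bin}(n,\tau_t)=j]$ for $j\le 4\tau_t n$, invoke a local CLT lower bound to get $a_j\lesssim\sqrt{\tau_t n}\,\mathrm{E}$ near $j\approx\tau_t n$, and therefore need every hop along the route to land in an $O(\sqrt{\tau_t n})$ window around the mode — this is exactly what the dummy coordinates and parity adjustments are for. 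The paper avoids any local CLT and any ``near-mode'' requirement: for every admissible $k\in[0,4\tau_t n]$ of parity $s\bmod 2$ it builds a single random-permutation path $x+y_0^\pi,\dots,x+y_{2m+1}^\pi$ from the vectors $z_i^\pi$, whose $2m+1$ steps all have Hamming weight exactly $k$ (no dummy coordinates, and the path lives in the coordinates $\pi(1),\dots,\pi(k+2m)$ with $k+2m=s\le n$, so there is never a shortage of free coordinates); it then multiplies \eqref{eq:pass to s notation} by $\binom{n}{k}e_t^n(k)$ and sums over $k$, and this summation together with \eqref{eq:useful2} plays exactly the role that the $1/\Pr[\mathrm{Bin}=j]$ lower bound plays for you. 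What each buys: you get a smaller polynomial factor $\sim\tau_t^{-p}\sqrt{\tau_t n}$ at the cost of a more delicate pairing of the upper and lower bounds on $\tau_t$ against Chernoff; the paper gets the cruder factor $18n^p$ but then $18n^p e^{-18\tau_t^2 n}\le 18^{1-p}\le 1$ follows from \eqref{eq:discretization assumption} in a single substitution. Two points to be careful about if you fill in your version: (i) the dummy-coordinate construction fails verbatim when $s$ is close to $n$, since there are fewer than $\Theta(\tau_t n)$ coordinates outside $x\oplus y$; you must allow some coordinates of $x\oplus y$ to be flipped an odd number of times $\ge 3$ (or, more simply, sum over step sizes as the paper does). (ii) Your reference to ``a sharp Chernoff bound as in Lemma~\ref{lem:tau estimates}'' is slightly off — the bound used there (\eqref{eq:chernoff d}) is the Hoeffding form $e^{-18\tau_t^2 n}$; the relative-entropy bound $e^{-c\tau_t n}$ you want is a different (and indeed stronger) inequality, though with the tighter bound $\ell^p\sqrt{\tau_t n}\lesssim\tau_t^{1/2-p}\sqrt n$ the Hoeffding form also suffices.
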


\begin{proof} Observe that  the assumptions of Lemma~\ref{lem:heat discretization} imply the assumptions of Lemma~\ref{lem:tau estimates}. We may therefore use the conclusions of
Lemma~\ref{lem:tau estimates} in the ensuing proof. For simplicity
of notation write $\tau=\tau_t$ and $\sigma=\sigma_t^n$. By
definition $G_t^n$ is a regular graph. Denote its degree by
$d=d_t^n$. Then,
\begin{equation}\label{eq:d formula}
d=\sum_{k=0}^n \binom{n}{k}e_t(k)\stackrel{\eqref{eq:def e_t}}{\in}\left[\frac{1}{3\sigma},\frac{1}{\sigma}\right].
\end{equation}
This proves~\eqref{eq:degree bounds}.  We also immediately deduce
the leftmost inequality in~\eqref{eq:discrete heat} as follows.
\begin{eqnarray*}
&&\!\!\!\!\!\!\!\!\!\!\!\!\!\!\!\!\!\!\!\!\!\!\!\!\!\!\!\!\!\!\frac{1}{2^n}\sum_{(x,y)\in
\F_2^n\times \F_2^n}\left(e^{-t\Delta}\delta_x\right)(y)d_X(f(x),g(y))^p\\&\stackrel{\eqref{eq:matrix
form heat}}{=}&\frac{1}{2^n}\sum_{(x,y)\in \F_2^n\times \F_2^n}
\tau^{\|x-y\|_1}(1-\tau)^{n-\|x-y\|_1}d_X(f(x),g(y))^p\\&\stackrel{\eqref{eq:def
e_t}}{\ge}& \frac{\sigma}{2^n}\sum_{(x,y)\in \F_2^n\times \F_2^n}e_t^n(\|x-y\|_1)d_X(f(x),g(y))^p\\ &\stackrel{\eqref{eq:degree bounds}}{\ge}& \frac{1}{3|E_t^n|} \sum_{(x,y)\in E_t^n} d_X(f(x),g(y))^p,
\end{eqnarray*}
where we used the fact that $|E_n^t|=2^nd$.

It remains to prove the rightmost inequality in~\eqref{eq:discrete
heat}. To this end fix $k\in \Z$ satisfying $0\le k\le 4\tau n$ and
$m\in \N\cup\{0\}$ satisfying $k+2m\le n$. For every permutation $\pi\in S_n$ define $z_0^\pi,\ldots,z_{2m+1}^\pi,y_0^\pi,\ldots,y^\pi_{2m+1}\in \F_2^n$ by setting $z^\pi_0=y_0^\pi=0$ and for $i\in \{1,\ldots,2m+1\}$,
\begin{equation*}\label{eq:def zipi}
z_i^\pi\eqdef \sum_{j=1}^{k-1} e_{\pi(j)}+e_{\pi(k+i-1)},
\end{equation*}
and
\begin{equation}\label{eq:def yipi}
y_i^\pi\eqdef \sum_{j=1}^{i} z_j^\pi,
\end{equation}
where the sum in~\eqref{eq:def yipi} is performed in $\F_2^n$ (i.e., modulo $2$), and we recall that $e_1,\ldots,e_n$ is the standard basis of $\F_2^n$. For every $x\in \F_2^n$ we have
\begin{multline*}
d_X\left(f(x),g\left(x+y_{2m+1}^\pi\right)\right)\\\le \sum_{i=0}^md_X\left(f\left(x+y_{2i}^\pi\right),g\left(x+y_{2i+1}^\pi\right)\right)+
\sum_{i=0}^{m-1}d_X\left(g\left(x+y_{2i+1}^\pi\right),f\left(x+y_{2i+2}^\pi\right)\right).
\end{multline*}
Hence, H\"older's inequality yields the following estimate.
\begin{multline}\label{eq:triangle holder}
\frac{d_X\left(f(x),g\left(x+y_{2m+1}^\pi\right)\right)^p}{(2m+1)^{p-1}}\\\le \sum_{i=0}^m
d_X\left(f\left(x+y_{2i}^\pi\right),g\left(x+y_{2i+1}^\pi\right)\right)^p+
\sum_{i=0}^{m-1}d_X\left(g\left(x+y_{2i+1}^\pi\right),f\left(x+y_{2i+2}^\pi\right)\right)^p.
\end{multline}

Note that
$$
y_{2m+1}^\pi=\sum_{j=1}^{k+2m}e_{\pi(j)}.
$$
Therefore, if $\pi\in S_n$ is chosen uniformly at random then
$y_{2m+1}^\pi$ is distributed uniformly over the $\binom{n}{k+2m}$
elements $w\in \F_2$ with $\|w\|_1=k+2m$. This observation implies
that
\begin{equation}\label{eq:left average}
\frac{1}{2^nn!}\sum_{x\in \F_2^n} \sum_{\pi\in S_n}
d_X\left(f(x),g\left(x+y_{2m+1}^\pi\right)\right)^p=\frac{1}{2^n\binom{n}{k+2m}}\sum_{\substack{(x,y)\in
\F_2^n\times \F_2^n\\ \|x-y\|_1=k+2m}} d_X\left(f(x),g\left(y\right)\right)^p.
\end{equation}
Similarly, for every $j\in \{0,\ldots,2m\}$ we have
\begin{multline}\label{eq:invariance}
\sum_{x\in \F_2^n} \sum_{\pi\in
S_n}d_X\left(f\left(x+y_{j}^\pi\right),g\left(x+y_{j+1}^\pi\right)\right)^p=
\sum_{\pi\in S_n}\sum_{x\in \F_2^n}
d_X\left(f\left(x+y_{j}^\pi\right),g\left(x+y_{j}^\pi+z_j^\pi\right)\right)^p\\=\sum_{\pi\in
S_n}\sum_{u\in \F_2^n}
d_X\left(f\left(u\right),g\left(u+z_j^\pi\right)\right)^p=
\frac{n!}{\binom{n}{k}}\sum_{\substack{(u,v)\in \F_2^n\times \F_2^n\\ \|u-v\|_1=k}}
d_X\left(f(u),g\left(v\right)\right)^p,
\end{multline}
where in the penultimate equality of~\eqref{eq:invariance} we used
the fact that for each $\pi\in S_n$, if $x$ is chosen uniformly at
random from $\F_2^n$ then $x+y_j^\pi$ is distributed uniformly over
$\F_2^n$, and in the last equality of~\eqref{eq:invariance} we used
the fact that, because $\|z_j^\pi\|_1=k$, if $\pi\in S_n$ is chosen
uniformly at random then $z_{j}^\pi$ is distributed uniformly over
the $\binom{n}{k}$ elements $w\in \F_2$ with $\|w\|_1=k$.

A combination of~\eqref{eq:triangle holder}, \eqref{eq:left average}
and~\eqref{eq:invariance} yields the following (crude) estimate.
\begin{equation}\label{eq:average at fixed distance}
\frac{1}{2^n\binom{n}{k+2m}}\sum_{\substack{(x,y)\in \F_2^n\times \F_2^n\\
\|x-y\|_1=k+2m}} d_X\left(f(x),g\left(y\right)\right)^p\le
\frac{n^p}{2^n\binom{n}{k}}\sum_{\substack{(x,y)\in \F_2^n\times \F_2^n\\
\|x-y\|_1=k}} d_X\left(f(x),g\left(y\right)\right)^p.
\end{equation}
If we fix $s\in \N\cap(4\tau n,n]$ then~\eqref{eq:average at fixed distance} implies that for every $m\in \N\cap [(s-4\tau n)/2,s/2]$,
\begin{equation}\label{eq:pass to s notation}
\frac{\binom{n}{s-2m}}{n^p\binom{n}{s}}\sum_{\substack{(x,y)\in \F_2^n\times \F_2^n\\
\|x-y\|_1=s}} d_X\left(f(x),g\left(y\right)\right)^p\le \sum_{\substack{(x,y)\in \F_2^n\times \F_2^n\\
\|x-y\|_1=s-2m}}  d_X\left(f(x),g\left(y\right)\right)^p.
\end{equation}
Multiplying both sides of~\eqref{eq:pass to s notation} by
$e_t^n(s-2m)$ and summing over $m\in \N\cap [(s-4\tau n)/2,s/2]$ yields the
following estimate.
\begin{multline*}
\frac{\sum_{m\in \Z\cap [(s-4\tau
n)/2,s/2]}\binom{n}{s-2m}e_t^n(s-2m)}
{n^p\binom{n}{s}}\sum_{\substack{x,y\in \F_2^n\\
\|x-y\|_1=s}} d_X\left(f(x),g\left(y\right)\right)^p\\\le
\sum_{m\in \Z\cap [(s-4\tau n)/2,s/2]}e_t^n(s-2m)\sum_{\substack{(x,y)\in \F_2^n\times \F_2^n\\
\|x-y\|_1=s-2m}}  d_X\left(f(x),g\left(y\right)\right)^p\le
\sum_{(x,y)\in  E_t^n} d_X(f(x),g(y))^p.
\end{multline*}
Due to~\eqref{eq:useful2} it follows that for every $s\in
\N\cap(4\tau n,n]$ we have
\begin{equation}\label{eq:average dist s}
 \frac{1}{\binom{n}{s}}\sum_{\substack{(x,y)\in \F_2^n\times \F_2^n\\
\|x-y\|_1=s}} d_X\left(f(x),g\left(y\right)\right)^p\le 18\sigma n^p
\sum_{(x,y)\in  E_t^n} d_X(f(x),g(y))^p.
\end{equation}
Now,
\begin{eqnarray*}
&&\!\!\!\!\!\!\!\!\!\!\!\!\!\!\!\!\!\!\!\!\!\!\!\!\!\!\!\!\!\!\!\!\!\!\!\!\frac{1}{2^n}\sum_{(x,y)\in
\F_2^n\times \F_2^n}\left(e^{-t\Delta}\delta_x\right)(y)d_X(f(x),g(y))^p\\&\stackrel{\eqref{eq:matrix
form heat}}{=}&
\frac{1}{2^n}\sum_{s=0}^n\tau^s(1-\tau)^{n-s}\sum_{\substack{(x,y)\in \F_2^n\times \F_2^n\\ \|x-y\|_1=s}}d(f(x),g(y))^p\\
&\stackrel{\eqref{eq:def e_t}\wedge\eqref{eq:average dist s}}{\le}&
\frac{\sigma}{2^n}\left(2+18 n^p\sum_{s\in \Z\cap (4\tau n,n]}\binom{n}{s}\tau^s(1-\tau)^{n-s}\right)
\sum_{(x,y)\in E_t^n} d_X(f(x),g(y))^p\\
&\stackrel{\eqref{eq:chernoff d}\wedge \eqref{eq:d formula}}{\le}& \left(2+18n^p e^{-18\tau^2 n}\right)\frac{1}{d2^n}\sum_{(x,y)\in
 E_t^n} d_X(f(x),g(y))^p\\
&\stackrel{\eqref{eq:discretization assumption}}{\le}& \frac{3}{|E_t^n|}\sum_{(x,y)\in
 E_t^n} d_X(f(x),g(y))^p.
\end{eqnarray*}
This concludes the proof of~\eqref{eq:discrete heat}.
\end{proof}

In what follows for every $n\in \N$ we fix $V_n\subseteq \F_2^n$ which is a ``good linear code", i.e., a linear subspace over $\F_2$ with
\begin{equation}\label{eq:C assumptions}
D_n\eqdef \dim(V_n)\ge \frac{n}{10}\quad \mathrm{and}\quad k_n\eqdef \min_{x\in V_n\setminus \{0\}} \|x\|_1\ge \frac{n}{10}.
\end{equation}
Also, we assume that the sequences $\{D_n\}_{n=1}^\infty$ and $\{k_n\}_{n=1}^\infty$ are increasing. The essentially arbitrary choice of the constant $10$ in~\eqref{eq:C assumptions} does not play an important role in what follows. The fact that $\{V_n\}_{n=1}^\infty$ exists is simple; see~\cite{MS77}. We shall use the standard notation
$$
V_n^\perp\eqdef \left\{x\in \F_2^n:\ \forall\, y\in V_n,\quad \sum_{j=1}^n x_jy_j\equiv 0\mod 2\right\}.
$$

\begin{lemma}\label{lem:base code} For every $K,p\in (1,\infty)$ there exists $n(K,p)\in \N$ and $\delta(K,p)\in (0,1)$ with the following properties. Setting
\begin{equation}\label{eq:def m_n}
m_n\eqdef |\F_2^n/V_n^\perp|\stackrel{\eqref{eq:C assumptions}}{=}2^{D_n},
\end{equation}
 there exists a sequence of connected regular graphs $$\{H_n(K,p)\}_{n=n(K,p)}^\infty$$  such that for every integer $n\ge n(K,p)$ the graph $H_n(K,p)$ has $m_n$ vertices and degree
 \begin{equation}\label{e:def d_n}
 d_n(K,p)\le e^{(\log m_n)^{1-\delta(K,p)}},
 \end{equation}
and for every $K$-convex Banach space $X=(X,\|\cdot\|_X)$ with $K(X)\le K$, 
\begin{equation}\label{eq:gamma+ base grapg bound for large index}
\forall\, n\in [n(K,p),\infty)\cap\N,\quad \gamma_+\left((H_n(K,p),\|\cdot\|_X^p\right)\le 9^{p+1}.
\end{equation}
\end{lemma}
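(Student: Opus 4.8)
The plan is to build the base graph $H_n(K,p)$ as a quotient of the hypercube $\F_2^n$ by the subgroup $V_n^\perp$, where the hypercube itself is first replaced by a weighted graph modeled on the discretized heat semigroup $G_t^n$ from Lemma~\ref{lem:heat discretization}. Concretely, fix $t=t(K,p)$ to be a suitable small constant (to be chosen at the end so that the decay estimate of Theorem~\ref{thm:AB} beats the expansion loss), and consider the graph $G_t^n$ whose vertices are $\F_2^n$ and whose edge multiplicities are $e_t^n(\|x-y\|_1)$. By Lemma~\ref{lem:heat discretization}, the Poincar\'e inequality on $G_t^n$ with respect to any kernel $d_X^p$ is equivalent, up to a factor of $9$, to the averaging inequality
\begin{equation*}
\frac{1}{2^n}\sum_{(x,y)\in \F_2^n\times\F_2^n}(e^{-t\Delta}\delta_x)(y)\,d_X(f(x),g(y))^p.
\end{equation*}
Then define $H_n(K,p)$ to be the graph on $\F_2^n/V_n^\perp$ obtained by pushing forward the edge set of $G_t^n$ under the quotient map $\F_2^n\to \F_2^n/V_n^\perp$. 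Since $G_t^n$ is invariant under translations by $V_n^\perp$, this is well-defined and $H_n(K,p)$ is $d_t^n$-regular with $m_n=2^{D_n}$ vertices.

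First I would estimate the degree: from $\tau_t=(1-e^{-t})/2\asymp t$ and $\sigma_t^n = \tau_t^{4\tau_t n}(1-\tau_t)^{(1-4\tau_t)n}$ one gets $\log(1/\sigma_t^n)\asymp \tau_t n\log(1/\tau_t)\asymp t n\log(1/t)$, and $d_t^n\le 1/\sigma_t^n$ by~\eqref{eq:degree bounds}. Since $\log m_n = D_n\log 2\asymp n$, choosing $t$ small (but fixed) makes $\log d_t^n\le c(t)\log m_n$ with $c(t)\to 0$ as $t\to 0$; interpolating, one can even arrange the subpolynomial bound $\log d_n(K,p)\le (\log m_n)^{1-\delta(K,p)}$ by taking $t=t_n\to 0$ slowly with $n$ — but one must then recheck the hypothesis $\tau_{t_n}\ge \sqrt{p\log(18n)/(18n)}$ of Lemma~\ref{lem:heat discretization}, which only forces $t_n\gtrsim \sqrt{\log n/n}$, leaving plenty of room for $\delta(K,p)$ close to $1$. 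Connectedness of $H_n(K,p)$ is immediate since $G_t^n$ is connected (it has edges between Hamming-distance-$1$ vertices whenever $e_t^n(1)\ge 1$, which holds once $n$ is large).

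The heart of the argument is the Poincar\'e inequality~\eqref{eq:gamma+ base grapg bound for large index}. Here I would follow the ``hypercube quotient'' strategy: a function $f$ on $\F_2^n/V_n^\perp$ lifts to a $V_n^\perp$-periodic function $\tilde f$ on $\F_2^n$, and by Fourier duality periodicity means $\hat{\tilde f}$ is supported on $(V_n^\perp)^\perp=V_n$; since $V_n$ is a good code with minimum weight $k_n\ge n/10$, this forces $\tilde f$ (after subtracting its mean) to lie in the tail space $L_p^{\ge k_n}(\F_2^n,X)$. Given $f,g$ on the quotient, I would bound the left-hand ``global'' average of $d_X(f,g)^p=\|\,\cdot\,\|_X^p$ by splitting $\tilde f-\tilde g = (\tilde f-\overline{\tilde f}) - (\tilde g - \overline{\tilde g}) + (\overline{\tilde f}-\overline{\tilde g})$, controlling the mean term directly by the averaging operator, and using Theorem~\ref{thm:AB}: the operator $e^{-t\Delta}$ contracts the tail space $L_p^{\ge k_n}(\F_2^n,X)$ by a factor $C(K,p)e^{-A(K,p)k_n\min\{t,t^{B}\}}$, and since $k_n\ge n/10$ and $t$ is fixed this factor is $\le \tfrac12$ (say) once $n\ge n(K,p)$. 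Comparing $\frac{1}{2^n}\sum\|f(x)-g(y)\|_X^p$ with $\frac1{2^n}\sum(e^{-t\Delta}\delta_x)(y)\|f(x)-g(y)\|_X^p$ via the contraction then yields a Poincar\'e inequality for the averaging operator on the quotient with a constant depending only on $K,p$; translating back through Lemma~\ref{lem:heat discretization} (which costs only the factor $9$, and passing from $\gamma$ to $\gamma_+$ via the bipartite double cover costs another bounded factor, explaining the final $9^{p+1}$) gives~\eqref{eq:gamma+ base grapg bound for large index}.

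The main obstacle I anticipate is making the tail-space contraction argument genuinely yield a \emph{two-function} ($\gamma_+$-type) Poincar\'e inequality rather than a one-function ($\gamma$-type) one, and doing so with an \emph{absolute} constant independent of $n$ despite the factor $C(K,p)$ in Theorem~\ref{thm:AB}. The first point is handled by running everything on the bipartite double cover $\left(\begin{smallmatrix}0&A\\A&0\end{smallmatrix}\right)$ and invoking Lemma~\ref{lem:pass to 2-cover}; the second point is exactly why the threshold $n(K,p)$ must be allowed to depend on $K,p$ — one needs $k_n\gtrsim n$ large enough that $C(K,p)e^{-A(K,p)k_n\min\{t,t^B\}}<1$, which the good-code condition $k_n\ge n/10$ guarantees for $n$ large. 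A secondary technical nuisance is verifying that the periodic lift preserves the normalized $L_p$ norms (clear, since the quotient map is a balanced covering) and that the pushed-forward graph $H_n(K,p)$ is exactly $d_t^n$-regular with multiplicities behaving well under the quotient; these are routine once the setup is fixed. Finally, choosing $t=t(K,p)$ (or $t_n$) optimally to simultaneously achieve~\eqref{e:def d_n} and the contraction is a balancing act but involves no deep input beyond the estimates already assembled in Lemma~\ref{lem:tau estimates}, Lemma~\ref{lem:heat discretization}, and Theorem~\ref{thm:AB}.
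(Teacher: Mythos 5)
Your overall architecture matches the paper's: build $H_n(K,p)$ as the quotient of the discretized heat graph $G_t^n$ by $V_n^\perp$, lift mean-zero functions on the quotient to $V_n^\perp$-invariant functions in the tail space $L_p^{\ge k_n}(\F_2^n,X)$, contract with Theorem~\ref{thm:AB}, and pass to the combinatorial graph via Lemma~\ref{lem:heat discretization}. However, your handling of the parameter $t$ leaves the key balancing step unresolved, and that is where the lemma's degree bound actually gets decided.

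You correctly notice that $t$ cannot be a fixed constant (else $\log d_t^n\asymp n\asymp\log m_n$ and \eqref{e:def d_n} fails), and that one must let $t=t_n\to 0$. But when you recheck the hypotheses after making $t_n$ $n$-dependent, you only verify the condition $\tau_{t_n}\ge\sqrt{p\log(18n)/(18n)}$ from Lemma~\ref{lem:heat discretization}, conclude that $t_n\gtrsim\sqrt{\log n/n}$ is enough, and assert that this leaves room for $\delta(K,p)$ close to $1$. This overlooks the genuine binding constraint: the contraction $C(K,p)\,e^{-A(K,p)k_n\min\{t_n,t_n^{B}\}}\le\frac12$ from Theorem~\ref{thm:AB} forces $k_n t_n^{B}\gtrsim_{K,p}1$, i.e., $t_n\gtrsim n^{-1/B}$, which is much larger than $\sqrt{\log n/n}$ since $B>2$. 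The paper pins this down by choosing $t=\left(\frac{\log(2C)}{k_nA}\right)^{1/B}$ so that the contraction factor is exactly $\frac12$, and then $\log d_t^n\asymp \tau_t n\log(1/\tau_t)\asymp n^{1-1/B}\log n$, giving $\delta(K,p)$ of order $1/B(K,p)$ — not close to $1$, but in $(0,1)$, which is all the lemma requires. Without this precise choice, you have not actually established that \eqref{e:def d_n} and \eqref{eq:gamma+ base grapg bound for large index} are simultaneously achievable, which is the crux of the lemma.

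A secondary remark: you flag the passage from tail-space contraction to a two-function $\gamma_+$ bound as the "main obstacle" and propose falling back to bipartite double covers via Lemma~\ref{lem:pass to 2-cover}. That fallback is unnecessary and would be awkward to combine with the hypercube tail-space machinery. The decomposition you sketch (subtracting the means from $\tilde f$ and $\tilde g$) is exactly the content of Lemma~\ref{lem:norm to poincare}: the contraction gives $\lambda_X^{(p)}(Q)\le\frac12$ on the quotient averaging matrix $Q$, and Lemma~\ref{lem:norm to poincare} then yields $\gamma_+(Q,\|\cdot\|_X^p)\le 9^p$ directly; the remaining factor of $3$ from Lemma~\ref{lem:heat discretization} produces the final $9^{p+1}$. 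Recognizing that Lemma~\ref{lem:norm to poincare} already packages the two-function statement would close the circle cleanly.
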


\begin{proof} Fix $K,p\in (1,\infty)$. Let $A=A(K,p), B=B(K,p), C=C(K,p)$ be the constants of Theorem~\ref{thm:AB}. Recall that $B>2$. Set 
\begin{equation}\label{eq:our choice of t}
t=t(n,K,p)\eqdef \left(\frac{\log(2C)}{k_nA}\right)^{1/B},
\end{equation}
where $k_n$ is given in~\eqref{eq:C assumptions}.
Then there exists $n(K,p)\in \N$ such that every integer $n\ge n(K,p)$ satisfies the assumptions of Lemma~\ref{lem:heat discretization}, and moreover there exists $\delta(K,p)\in (0,1)$ such that for every integer $n\ge n(K,p)$ we have
\begin{equation}\label{eq:def delta(K,p)}
\frac{1}{\tau_t^{8n\tau_t}}\le e^{(\log m_n)^{1-\delta(K,p)}}.
\end{equation}
(To verify~\eqref{eq:def delta(K,p)} recall that $\log m_n=D_n\log 2\ge n/20$.)

Assume from now on that $n\in \N$ satisfies $n\ge n(K,p)$. Let $G_t^n=(\F_2^n,E_t^n)$ be the graph constructed in Lemma~\ref{lem:heat discretization}. The degree of $G_t^n$ is
\begin{equation*}\label{eq:degree bound for K-convex}
d_t^n\stackrel{\eqref{eq:degree bounds}}{\le}\frac{1}{\sigma_t^n}\stackrel{\eqref{eq:def tau sigma}}{\le}\frac{1}{\tau_t^{8n\tau_t}}\stackrel{\eqref{eq:def delta(K,p)}}{\le} e^{n^{1-\delta(K,p)}}.
\end{equation*}

The desired graph $H_n=H_n(K,p)$ is defined to be the following quotient of $G_t^n$. The vertex set of $H_n$  is $\F_2^n/V_n^\perp$. Given two cosets $x+V_n^\perp,y+V_n^\perp\in \F_2^n/V_n^\perp$, the number of edges  joining $x+V_n^\perp$ and $y+V_n^\perp$ in $H_n$ is defined to be the number of edges of $G_t^n$ with one endpoint in $x+V_n^\perp$ and the other endpoint in $y+V_n^\perp$, divided by the cardinality of $V_n^\perp$. Thus, the number of edges joining $x+V_n^\perp$ and $y+V_n^\perp$ in the graph $H_n$ equals
$$
\frac{1}{|V_n^{\perp}|}\sum_{(u^\perp,v^\perp)\in V_n^\perp\times V_n^\perp}e_t^n\left(\left\|x-y+(u^\perp-v^\perp)\right\|_1\right)=\sum_{u^\perp\in V_n^\perp} e_t^n\left(\left\|x-y+u^\perp\right\|_1\right).
$$
Hence $H_n$ is a regular graph of the same degree as $G_t^n$ (i.e., the degree of $H_n$ equals $d_t^n$). In what follows we let $\pi:\F_2^n\to \F_2^n/V_n^\perp$ denote the quotient map.

Fix a $K$-convex Banach space $(X,\|\cdot\|_X)$ with $K(X)\le K$. For every $f\in L_p(\F_2^n/V_n^\perp,X)$ define $\pi f:\F_2^n\to X$ by $\pi f(x)=f(\pi(x))$. Thus $\pi f$ is constant on the cosets of $V_n^\perp$. It follows from~\cite[Lem.~3.3]{KN06} that if $\sum_{x\in \F_2^n/V_n^\perp}f(x)=0$ then $\pi f\in L_p^{\ge k_n}(\F_2^n,X)$, where $k_n$ is defined in~\eqref{eq:C assumptions}. By Theorem~\ref{thm:AB} we therefore have
\begin{equation}\label{eq:1/2 bound}
\frac{\left\|\left(e^{-t\Delta}\pi\right)f\right\|_{L_p(\F_2^n/V_n^\perp,X)}}{\|f\|_{L_p(\F_2^n/V_n^\perp,X)}}\le Ce^{-Ak_n\min\left\{t,t^{B}\right\}}\stackrel{\eqref{eq:our choice of t}}{=}\frac12.
\end{equation}
Let $Q$ be the $(\F_2^n/V_n^\perp)\times (\F_2^n/V_n^\perp)$ symmetric stochastic matrix corresponding to the averaging operator $e^{-t\Delta}\pi$, i.e., the entry of $Q$ at $(x+V_n^\perp,y+V_n^\perp)\in (\F_2^n/V_n^\perp)\times (\F_2^n/V_n^\perp)$ is
\begin{equation}\label{eq:def P}
q_{x+V_n^\perp,y+V_n^\perp}\eqdef \left(\left(e^{-t\Delta}\pi\right)\delta_{x+V_n^\perp}\right)\left(y+V_n^\perp\right)=\sum_{\substack{u\in a+V_n^\perp\\ v\in b+V_n^\perp}}\tau_t^{\|a-b\|_1}(1-\tau_t)^{n-\|a-b\|_1}.
\end{equation}
Since~\eqref{eq:1/2 bound} holds for all $f\in L_p(\F_2^n/V_n^\perp,X)$ with $\sum_{x\in \F_2^n/V_n^\perp}f(x)=0$, we have $\lambda_X^{(p)}(Q)\le \frac12$ (recall here the notation introduced in~\eqref{eq:def lambdap}). Consequently, Lemma~\ref{lem:norm to poincare} implies that
$$
\gamma_+\left(Q,\|\cdot\|_X^p\right)\le 9^p.
$$
Thus every $f,g:\F_2^n/V_n^\perp \to X$ satisfy
\begin{multline}\label{eq:gap on quotient}
\frac{1}{|\F_2^n/V_n^\perp|^2}\sum_{(S,T)\in (\F_2^n/V_n^\perp)\times (\F_2^n/V_n^\perp)}\|f(S)-g(T)\|_X^p\\\le \frac{9^p}{|\F_2^n/V_n^\perp|}\sum_{(S,T)\in (\F_2^n/V_n^\perp)\times (\F_2^n/V_n^\perp)}q_{S,T}\|f(S)-g(T)\|_X^p.
\end{multline}
Observe that
\begin{eqnarray}\label{rewrite quotient}
&&\nonumber\!\!\!\!\!\!\!\!\!\!\!\!\!\!\!\!\!\!\!\!\!\!\!\!\!\!\!\!\!\!\sum_{(S,T)\in (\F_2^n/V_n^\perp)\times (\F_2^n/V_n^\perp)}q_{S,T}\|f(S)-g(T)\|_X^p\stackrel{\eqref{eq:def P}}{=}\sum_{(a,b)\in \F_2^n\times \F_2^n} \left(e^{-t\Delta}\delta_a\right)(b)\|\pi f(a)-\pi g(b)\|_X^p\\\nonumber
&\stackrel{\eqref{eq:discrete heat}}{\le}& \frac{3}{|E_t^n|}\sum_{(a,b)\in  E_t^n}\|\pi f(a)-\pi g(b)\|_X^p\\\nonumber
&=& \frac{3}{2^nd_t^n}\sum_{(S,T)\in (\F_2^n/V_n^\perp)\times (\F_2^n/V_n^\perp)}\left(\sum_{(a,b)\in S\times T}E_t^n(a,b)\right) \|f(S)-g(T)\|_X^p\\
&=&\frac{3}{|E(H_n)|}
\sum_{(S,T)\in E(H_n)}\|f(S)-g(T)\|_X^p.
\end{eqnarray}
In~\eqref{rewrite quotient} we used the fact that for every $S,T\in \F_2^n/V_n^\perp$, by the definition of the graph $H_n$, the quantity $$\frac{1}{|V_n^\perp|}\sum_{(a,b)\in S\times T}E_t^n(a,b)$$ equals the number of edges joining $S$ and $T$ in $H_n$, and that since $H_n$ is a $d_t^n$-regular graph we have $|V_n^\perp|/(2^nd_t^n)=1/|E(H_n)|$.

The desired estimate~\eqref{eq:gamma+ base grapg bound for large index} now follows from~\eqref{eq:gap on quotient} and~\eqref{rewrite quotient}.
\end{proof}

The case $p=2$ of Corollary~\ref{cor:base graph with p} below (which is nothing more than a convenient way to restate Lemma~\ref{lem:base code}) corresponds to Lemma~\ref{lem:base in section}.

\begin{corollary}\label{cor:base graph with p}
For every $\delta\in (0,1)$ and $p\in (1,\infty)$ there exists $n_0^p(\d)\in \N$ and a sequence of regular graphs $\{H_n^p(\delta)\}_{n=n_0^p(\d)}^\infty$ such that for every every $n\ge n_0^p(\d)$ the graph $H_n^p(\delta)$ is regular and has $m_n$ vertices, with $m_n$ given in~\eqref{eq:def m_n}. The degree of $H_n^p(\d)$, denoted $d_n^p(\d)$, satisfies
\begin{equation}\label{eq:dnp bound}
d_n^p(\delta)\le e^{(\log m_n)^{1-\delta}}.
\end{equation}
Moreover, for every $K$-convex Banach space $(X,\|\cdot\|_X)$ we have $\gamma_+\left(H_n^p(\d),\|\cdot\|_X^p\right)<\infty$ for all integers $n\ge n_0^p(\d)$, and  there exists $\delta_0^p(X)\in (0,1)$ such that for every $0<\delta\le \delta_0^p(X)$ and every integer $n\ge n_0^p(\d)$ we have
\begin{equation}\label{eq:invert}
\gamma_+\left(H_n^p(\delta),\|\cdot\|_X^p\right)\le 9^{p+1}.
\end{equation}
\end{corollary}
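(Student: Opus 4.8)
Corollary~\ref{cor:base graph with p} is a reformulation of Lemma~\ref{lem:base code}, so the plan is to deduce it from that lemma by a change of variables in the parameter $\delta$ together with Pisier's $K$-convexity theorem applied to a fixed space $X$. First I would fix $p\in (1,\infty)$ and $\delta\in (0,1)$, and for each $K\in \N$ apply Lemma~\ref{lem:base code} with that $p$ and with this value of $K$; this produces an integer $n(K,p)$ and an exponent $\delta(K,p)\in (0,1)$, along with graphs $\{H_n(K,p)\}_{n\ge n(K,p)}$ of degree $d_n(K,p)\le e^{(\log m_n)^{1-\delta(K,p)}}$ and with $\gamma_+(H_n(K,p),\|\cdot\|_X^p)\le 9^{p+1}$ whenever $K(X)\le K$. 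The key point is that $\delta(K,p)$ may shrink as $K$ grows, which is precisely why the final statement quantifies over $\delta$: I would choose the index $K=K(\delta,p)$ so that $\delta(K,p)\ge\delta$ — that is, pick $K(\delta,p)$ to be the largest $K\in\N$ for which $\delta(K,p)\ge \delta$ (one should note $\delta(K,p)$ can be assumed non-increasing in $K$ without loss of generality, by replacing $\delta(K,p)$ with $\min_{j\le K}\delta(j,p)$ and enlarging $n(K,p)$ accordingly). Then set $n_0^p(\delta)\eqdef n(K(\delta,p),p)$ and $H_n^p(\delta)\eqdef H_n(K(\delta,p),p)$.

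With these definitions the degree bound~\eqref{eq:dnp bound} is immediate: since $K(\delta,p)$ was chosen with $\delta(K(\delta,p),p)\ge \delta$, we get $d_n^p(\delta)=d_n(K(\delta,p),p)\le e^{(\log m_n)^{1-\delta(K(\delta,p),p)}}\le e^{(\log m_n)^{1-\delta}}$ using $\log m_n\ge 1$. The number of vertices is $m_n$ by construction in Lemma~\ref{lem:base code}. For the Poincar\'e inequality: if $(X,\|\cdot\|_X)$ is $K$-convex, Pisier's $K$-convexity theorem (stated in Section~\ref{sec:heat}, from~\cite{Pisier-K-convex}) guarantees $K(X)<\infty$, hence $K(X)\le K$ for some $K\in\N$; applying Lemma~\ref{lem:base code} with this $K$ and with $\delta$ replaced by any $\delta'\le \delta(K,p)$ shows $\gamma_+(H_n^p(\delta'),\|\cdot\|_X^p)<\infty$ for all $n\ge n_0^p(\delta')$ — more precisely $\gamma_+(H_n^p(\delta'),\|\cdot\|_X^p)\le 9^{p+1}$ as long as $K(\delta',p)\ge K(X)$. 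To get~\eqref{eq:invert} I would define $\delta_0^p(X)$ to be the largest $\delta\in(0,1)$ with $K(\delta,p)\ge K(X)$; equivalently, recalling that $K(\delta,p)$ is non-increasing in $\delta$, take $\delta_0^p(X)\eqdef \sup\{\delta\in(0,1):\ \delta\le \delta(K(X),p)\}=\min\{\delta(K(X),p), 1-\text{(something)}\}$, so that for every $0<\delta\le \delta_0^p(X)$ one has $K(\delta,p)\ge K(X)$ and therefore $H_n^p(\delta)=H_n(K(\delta,p),p)$ satisfies $\gamma_+(H_n^p(\delta),\|\cdot\|_X^p)\le 9^{p+1}$ for all $n\ge n_0^p(\delta)$ by the last clause of Lemma~\ref{lem:base code}.

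The one genuine subtlety — and the place I expect to spend the most care — is the bookkeeping to ensure monotonicity of $\delta(\cdot,p)$ and $n(\cdot,p)$ in $K$, since Lemma~\ref{lem:base code} delivers these constants for each $K$ with no a priori relation between different values of $K$. This is harmless (replace $\delta(K,p)$ by $\min_{j\le K}\delta(j,p)$ and $n(K,p)$ by $\max_{j\le K}n(j,p)$, which only strengthens the hypotheses needed), but it must be said explicitly so that the definition of $K(\delta,p)$ as ``the largest admissible index'' is well posed and so that $\delta_0^p(X)$ is strictly positive. Everything else is a direct translation: connectedness of $H_n^p(\delta)$ is inherited from $H_n(K,p)$, which Lemma~\ref{lem:base code} states, and the finiteness of $\gamma_+$ for \emph{all} $K$-convex $X$ (not just the bounded-distortion case) follows because every $K$-convex $X$ has $K(X)\le K$ for \emph{some} finite $K$, and Lemma~\ref{lem:base code} asserts finiteness of $\gamma_+(H_n(K,p),\|\cdot\|_X^p)$ for all $n\ge n(K,p)$ under exactly that hypothesis. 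Specializing $p=2$ then recovers Lemma~\ref{lem:base in section} verbatim, with $m_n$ satisfying $2^{n/10}\le 2^{D_n}=m_n\le 2^n$ by~\eqref{eq:C assumptions}.
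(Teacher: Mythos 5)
Your reduction to Lemma~\ref{lem:base code} is the right idea and much of the bookkeeping is correct, but there is a genuine gap in the construction for large $\delta$. After you normalize $\delta(\cdot,p)$ to be non-increasing (and, which you should also state, tending to zero as $K\to\infty$, so that the supremum defining $K(\delta,p)$ is finite), its maximal value is $\delta(2,p)$, and nothing in Lemma~\ref{lem:base code} forces $\delta(2,p)$ to be close to $1$. For $\delta\in(\delta(2,p),1)$ the set $\{K:\delta(K,p)\ge\delta\}$ is empty, so your $K(\delta,p)$ --- and hence $H_n^p(\delta)$ --- is simply undefined, even though the corollary quantifies over all $\delta\in(0,1)$. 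You also cannot fall back on the smallest admissible $K$ in this range: that would give $d_n^p(\delta)=d_n(2,p)\le e^{(\log m_n)^{1-\delta(2,p)}}$, which exceeds the required $e^{(\log m_n)^{1-\delta}}$ precisely because $\delta>\delta(2,p)$.

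The paper resolves this with a separate, trivial construction on that range: for $\delta\in(\delta(2,p),1)$ it sets $H_n^p(\delta)=C_{m_n}^\circ$, the $m_n$-cycle with self loops, and takes $n_0^p(\delta)$ large enough that $(\log m_n)^{1-\delta}\ge\log 3$, so that the degree bound~\eqref{eq:dnp bound} holds with the constant degree $3$; finiteness of $\gamma_+(C_{m_n}^\circ,\|\cdot\|_X^p)$ is then immediate from Lemma~\ref{lem:general graph}. Since $\delta_0^p(X)$ necessarily lies in $(0,\delta(2,p)]$ --- your own definition places it there --- the quantitative bound~\eqref{eq:invert} is never asserted in the large-$\delta$ regime, so the cycle graph suffices. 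Once you add this case your argument matches the paper's; the one other cosmetic difference is that the paper defines $\delta_0^p(X)$ via $K_\delta^p\le 2K(X)$ rather than $K_\delta^p\le K(X)$, but the extra factor of $2$ is merely a safety margin and your version would work as well.
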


\begin{proof} We shall use here the notation of Lemma~\ref{lem:base code}. We may assume without loss of generality that $\d(K,p)$ decreases continuously with $K$ and that $\lim_{K\to \infty} \d(K,p)=0$. If $\d\in (\d(2,p),1)$ then let $n_0^p(\d)$ be the smallest integer such that $(\log m_n)^{1-\d}\ge \log 3$ and set $H_n^p(\d)=C_{m_n}^\circ$ be the $m_n$-cycle with self loops. Since in this case $d_n^p(\d)=3$, the desired degree bound~\eqref{eq:dnp bound} holds true by design. Moreover, in this case the finiteness of $\gamma_+(H_n^p(\d),\|\cdot\|_X^p)$  is a consequence of Lemma~\ref{lem:general graph}. For $\d\in (0,\d(2,p)]$ we can define $
K_\delta^p= \sup\left\{K\in [2,\infty):\ \delta(K,p)\ge \delta\right\}$.
Set $n_0^p(\d)=n(K_\d^p,p)$ and for every integer $n\ge n_0^p(\d)$ define $
H_n^p(\delta)= H_n(K_\delta^p,p)$. Thus $d_n^p(\d)=d_n(K_\d^p,p)$ and~\eqref{eq:dnp bound} follows from~\eqref{e:def d_n}. Finally, setting $\delta_0^p(X)=\inf\{\d\in (0,\delta(2,p)]: K_\d^p\le 2K(X)\}$, it follows that for every $\d\in (0,\d_0^p(X)]$ we have $K_\d^p\ge 2K(X)$, so that~\eqref{eq:invert} follows from~\eqref{eq:gamma+ base grapg bound for large index}.
\end{proof}

\begin{remark}\label{rem:vanilla KN}
In Remark~\ref{rem:better laplace bound?} we asked whether Theorem~\ref{thm:lower laplace} can be improved so as to yield the estimate
\begin{equation}\label{eq:repeat conjecture}
\|\Delta f\|_{L_p(\F_2^n,X)}\gtrsim_{X,p} k\|f\|_{L_p(\F_2^n,X)}
\end{equation}
 for every $f\in L_p^{\ge k}(\F_2^n,X)$. Here $(X,\|\cdot\|_X)$ is a $K$-convex Banach space and the implied constant is allowed to depend only on $p\in (1,\infty)$ and the $K$-convexity constant $K(X)$. If true, this would yield the following simpler proof of Lemma~\ref{lem:base code}, with better degree bounds. Continuing to use the notation of Lemma~\ref{lem:base code}, we would consider instead the ``vanilla" quotient graph $G$ on $\F_2^n/V_n^\perp$, i.e., the graph in which the number of edges joining two cosets $x+V_n^\perp, y+V_n^\perp$ equals the number of standard hypercube edges joining these two sets divided by $|V_n^\perp|$. The degree of this graph is $n\asymp \log m_n$.  Given a mean-zero $f:\F_2^n/V_n^\perp\to X$ we think of $f$ as being a $V_n^\perp$-invariant function defined on $\F_2^n$, in which case by~\cite[Lem.~3.3]{KN06} we have $f\in L_p^{\ge k_n}(\F_2^n,X)$, where $k_n\asymp n$ is given in~\eqref{eq:C assumptions}. Assuming the validity of~\eqref{eq:repeat conjecture},
\begin{multline}\label{eq:triangle laplace}
 n\|f\|_{L_p(\F_2^n,X)}\lesssim k_n\|f\|_{L_p(\F_2^n,X)}\lesssim_{X,p} \|\Delta f\|_{L_p(\F_2^n,X)}\\=\left\|\sum_{i=1}^n \partial_i f\right\|_{L_p(\F_2^n,X)}\le \sum_{i=1}^n \|\partial_i f\|_{L_p(\F_2^n,X)}\le n^{1-1/p}\left(\sum_{i=1}^n \|\partial_i f\|_{L_p(\F_2^n,X)}^p\right)^{1/p}.
\end{multline}
It follows that
\begin{equation}\label{eq:raise to p in remark}
\frac{1}{2^{2n}}\sum_{(x,y)\in \F_2^n\times \F_2^n}\|f(x)-f(y)\|_X^p\le 2^p\|f\|_{L_p(\F_2^n,X)}^p\stackrel{\eqref{eq:triangle laplace}}{\lesssim_{X,p}} \frac{1}{n}\sum_{i=1}^n \|\partial_i f\|_{L_p(\F_2^n,X)}^p.
\end{equation}
By the definition of the quotient graph $G$, it follows from~\eqref{eq:raise to p in remark} that $\gamma(G,X)\lesssim_{p,X} 1$.  Using Lemma~\ref{lem:half size} we conclude that there exists a regular graph $G'$ with $m_n/2=2^{D_n-1}$ vertices and degree at most a constant multiple of $\log m_n$ such that $\gamma_+(G',X)\lesssim_{p,X} 1$.
\end{remark}

\section{Graph products}\label{sec:products}

The purpose of this section is to recall the definitions of the various graph products that were mentioned in the introduction, and to prove Theorem~\ref{thm:products intro}.

\subsection{Sub-multiplicativity for tensor products}\label{sec:tensor}

The case of tensor products, i.e., part~\eqref{item:product} of Theorem~\ref{thm:products intro}, is very simple, and should mainly serve as warmup for the other parts of Theorem~\ref{thm:products intro}.

\begin{proposition}[Sub-multiplicativity for tensor products]\label{prop:tensor}
Fix $m,n\in \N$. Let $A=(a_{ij})$ be an $m\times m$ symmetric stochastic matrix and let $B=(b_{ij})$ be an $n\times n$ symmetric stochastic matrix. Then every kernel $K:X\times X\to [0,\infty)$ satisfies
\begin{equation}\label{eq:tensor sub equation}
\gamma_+(A\otimes B,K)\le \gamma_+(A,K)\gamma_+(B,K).
\end{equation}
\end{proposition}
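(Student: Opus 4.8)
The plan is to prove \eqref{eq:tensor sub equation} by a two-step averaging argument: first apply the Poincaré inequality for $B$ in the second coordinate (holding the first coordinate fixed), then apply the Poincaré inequality for $A$ in the first coordinate (holding the second coordinate fixed). The point is that $\gamma_+$ is defined via an inequality that allows the two families of points to be arbitrary functions of the index set, and this flexibility is exactly what lets us "slice" the tensor product one coordinate at a time.

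First I would set up notation. Write $\gamma_A = \gamma_+(A,K)$ and $\gamma_B = \gamma_+(B,K)$, assuming both are finite (otherwise there is nothing to prove). Fix functions $x,y : \{1,\ldots,m\}\times\{1,\ldots,n\} \to X$; we must show
\[
\frac{1}{(mn)^2}\sum_{\substack{(i,k)}}\sum_{\substack{(j,\ell)}} K\big(x(i,k),y(j,\ell)\big) \le \frac{\gamma_A\gamma_B}{mn}\sum_{\substack{(i,k)}}\sum_{\substack{(j,\ell)}} (a_{ij}b_{k\ell})\, K\big(x(i,k),y(j,\ell)\big).
\]
The first step: for each \emph{fixed} pair $(i,j)\in\{1,\ldots,m\}^2$, apply the definition of $\gamma_+(B,K)$ to the functions $k\mapsto x(i,k)$ and $\ell \mapsto y(j,\ell)$, obtaining
\[
\frac{1}{n^2}\sum_{k=1}^n\sum_{\ell=1}^n K\big(x(i,k),y(j,\ell)\big) \le \frac{\gamma_B}{n}\sum_{k=1}^n\sum_{\ell=1}^n b_{k\ell}\, K\big(x(i,k),y(j,\ell)\big).
\]
Now average this over $(i,j)$ against $a_{ij}$, or rather: sum the left-hand side freely over $(i,j)$ and the right-hand side against $a_{ij}$ — this requires an intermediate application of $\gamma_+(A,\cdot)$, which is the second step below.

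The second step is to handle the first coordinate. Define, for each fixed $k,\ell$, the inner Poincaré-type inequality for $A$ applied to $i\mapsto x(i,k)$ and $j\mapsto y(j,\ell)$:
\[
\frac{1}{m^2}\sum_{i=1}^m\sum_{j=1}^m K\big(x(i,k),y(j,\ell)\big) \le \frac{\gamma_A}{m}\sum_{i=1}^m\sum_{j=1}^m a_{ij}\, K\big(x(i,k),y(j,\ell)\big).
\]
Summing this over all $(k,\ell)$ gives control of the full quadruple sum $\frac{1}{m^2}\sum K$ by $\frac{\gamma_A}{m}\sum a_{ij} K$. Then, viewing the resulting right-hand side as a sum over $(i,j)$ of the quantity $\frac1n\sum_{k,\ell} b_{k\ell}\cdot(\text{something})$ is not quite the clean chaining one wants; the cleanest route is: apply step one to bound $\frac{1}{n^2}\sum_{k,\ell}K(x(i,k),y(j,\ell))$ for each $(i,j)$, sum the result against $\frac{1}{m^2}$ over $(i,j)$ (free, no matrix), giving $\frac{1}{(mn)^2}\sum K \le \frac{\gamma_B}{m^2 n}\sum_{i,j}\sum_{k,\ell} b_{k\ell}K$; then apply step two (the $A$-inequality) to each of the $n^2$ functions indexed by fixed $(k,\ell)$, but only to those terms weighted by $b_{k\ell}$ — and here one uses that $b_{k\ell}\ge 0$ so the inequality is preserved under this nonnegative weighting. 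This yields $\frac{\gamma_B}{m^2n}\sum_{i,j,k,\ell}b_{k\ell}K \le \frac{\gamma_A\gamma_B}{mn}\sum_{i,j,k,\ell}a_{ij}b_{k\ell}K$, which is the claim after recalling $(A\otimes B)_{(i,k),(j,\ell)} = a_{ij}b_{k\ell}$ and that $A\otimes B$ is $mn\times mn$, symmetric, and stochastic.

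The only subtle point — and the one I would be most careful about — is the bookkeeping in the chaining: one must apply the $B$-inequality \emph{before} introducing any $a_{ij}$ weights (so that the first coordinate is genuinely "free"), and then apply the $A$-inequality to the weighted-by-$b_{k\ell}$ expression, using nonnegativity of $b_{k\ell}$ to sum the $A$-inequalities. There is no real analytic obstacle here — this is purely an exercise in the order of summation — but it is worth writing out the two displays explicitly to make sure no spurious factors of $m$ or $n$ creep in. The verification that $A\otimes B$ is symmetric and stochastic is immediate from the corresponding properties of $A$ and $B$.
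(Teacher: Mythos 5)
Your argument is correct and is essentially the paper's proof, merely with the two slicing steps in the opposite order: you apply the $B$-inequality in the second coordinate first and then the $A$-inequality (weighted by the nonnegative $b_{k\ell}$), whereas the paper applies the $A$-inequality in the first coordinate first and then the $B$-inequality (weighted by $a_{ij}$); this is an immaterial relabeling. The exposition meanders a bit — the "or rather" and "is not quite the clean chaining one wants" false starts could be cut, since the final chaining you describe is already the clean route — but the mathematics is sound.
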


\begin{proof}
Fix $f,g:\m\times \n\to X$. Then for every fixed $s,t\in \n$,
\begin{equation}\label{eq:fixed st}
\frac{1}{m^2}\sum_{i=1}^m\sum_{j=1}^mK\left(f(i,s),g(j,t)\right)\le \frac{\gamma_+(A,K)}{m}\sum_{i=1}^m\sum_{j=1}^m a_{ij}K\left(f(i,s),g(j,t)\right).
\end{equation}
Also, for every fixed $i,j\in \m$ we have
\begin{equation}\label{eq:fixed ij}
\frac{1}{n^2}\sum_{s=1}^m\sum_{t=1}^mK\left(f(i,s),g(j,t)\right)\le \frac{\gamma_+(B,K)}{n}\sum_{s=1}^n\sum_{t=1}^n b_{st}K\left(f(i,s),g(j,t)\right).
\end{equation}
Consequently,
\begin{eqnarray}\label{eq:do tensor sub}
&&\nonumber\!\!\!\!\!\!\!\!\!\!\!\!\!\!\!\!\!\!\!\!\!\!\!\!\!\!\!\!\!\!\frac{1}{m^2n^2}\sum_{i=1}^m\sum_{j=1}^m\sum_{s=1}^n\sum_{t=1}^n K\left(f(i,s),g(j,t)\right)=\frac{1}{n^2}\sum_{s=1}^n\sum_{t=1}^n\frac{1}{m^2}\sum_{i=1}^m\sum_{j=1}^mK\left(f(i,s),g(j,t)\right)\\
&\stackrel{\eqref{eq:fixed st}}{\le}& \nonumber\frac{1}{n^2}\sum_{s=1}^n\sum_{t=1}^n\frac{\gamma_+(A,K)}{m}\sum_{i=1}^m\sum_{j=1}^m a_{ij}K\left(f(i,s),g(j,t)\right)\\
&=& \nonumber\frac{\gamma_+(A,K)}{m}\sum_{i=1}^m\sum_{j=1}^m a_{ij}\frac{1}{n^2}\sum_{s=1}^m\sum_{t=1}^mK\left(f(i,s),g(j,t)\right)\\
&\stackrel{\eqref{eq:fixed ij}}{\le}& \nonumber\frac{\gamma_+(A,K)}{m}\sum_{i=1}^m\sum_{j=1}^m a_{ij}\frac{\gamma_+(B,K)}{n}\sum_{s=1}^n\sum_{t=1}^n b_{st}K\left(f(i,s),g(j,t)\right)\\
&=& \frac{\gamma_+(A,K)\gamma_+(B,K)}{mn} \sum_{i=1}^m\sum_{j=1}^m\sum_{s=1}^n\sum_{t=1}^n(A\otimes B)_{ijst}K\left(f(i,s),g(j,t)\right).
\end{eqnarray}
Since~\eqref{eq:do tensor sub} holds for every $f,g:\n\times \m\to X$, \eqref{eq:tensor sub equation} follows.
\end{proof}

This concludes the proof of part~\eqref{item:product} of Theorem~\ref{thm:products intro}. Nevertheless, when the kernel in question is the $p$th power of a norm whose modulus of convexity has power type $p$ it is possible improve Proposition~\ref{prop:tensor} as follows.

\begin{lemma}\label{lem:tesor sup uniformly convex}
Fix $m,n\in \N$ and $p\in [2,\infty)$. Let $A=(a_{ij})$ be an $m\times m$ symmetric stochastic matrix and let $B=(b_{ij})$ be an $n\times n$ symmetric stochastic matrix. Suppose that $(X,\|\cdot\|_X)$ is a Banach space that satisfies the $p$-uniform convexity inequality~\eqref{eq:two point convex}. Then
\begin{equation}\label{eq:improved tensor}
\gamma_+\left(A\otimes B,\|\cdot\|_X^p\right)\le 2^{p-1}\max\left\{\gamma_+\left(A,\|\cdot\|_X^p\right),\left(2^{p-1}-1\right)K_p(X)^p\gamma_+\left(B,\|\cdot\|_X^p\right)\right\}.
\end{equation}
\end{lemma}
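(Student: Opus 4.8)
The idea is to mimic the two-step averaging in the proof of Proposition~\ref{prop:tensor}, but to replace the first use of the Poincaré inequality for $A$ by a sharper argument based on Lemma~\ref{lem:from Ball}, exactly as in the proof of Lemma~\ref{poincare to norm in uniformly convex}. Fix $f,g:\m\times\n\to X$. For each fixed pair $s,t\in\n$, the matrix $A$ acts on the ``slices'' $i\mapsto f(i,s)$ and $j\mapsto g(j,t)$; the point is that one should not merely bound $\frac1{m^2}\sum_{i,j}K(f(i,s),g(j,t))$ by $\frac{\gamma_+(A,\|\cdot\|_X^p)}{m}\sum_{i,j}a_{ij}K(\cdot,\cdot)$, but instead keep track of a ``variance'' term, which after the second averaging over $s,t$ (via $B$) can be absorbed. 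Concretely, I would follow the bookkeeping of Lemma~\ref{poincare to norm in uniformly convex}: for fixed $s,t$ apply Lemma~\ref{lem:from Ball} to the random vectors $U_i$ taking value $f(i,s)$ — wait; more precisely, one wants to compare $\|f(i,s)-g(j,t)\|_X^p$ averaged over the full pair $(i,j)$ with the same averaged over $a_{ij}$, gaining a term of the form $\sum_{i}a_{ij}\|f(i,s)-\overline{f(\cdot,s)}_{j}\|_X^p$ where $\overline{f(\cdot,s)}_j=\sum_k a_{jk}f(k,s)$. This is exactly the computation in \eqref{eq:sum i}--\eqref{eq:use def bp}.

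\textbf{Key steps.} First, record the pointwise consequence of $p$-uniform convexity: for any probability vector $(a_{ij})_j$ and any $x\in X$,
\[
\Big\|x-\sum_j a_{ij}g(j)\Big\|_X^p\le \sum_j a_{ij}\|x-g(j)\|_X^p-\frac{1}{(2^{p-1}-1)K_p(X)^p}\sum_j a_{ij}\Big\|g(j)-\sum_k a_{ik}g(k)\Big\|_X^p,
\]
which is the content of Lemma~\ref{lem:from Ball} applied to $U\in X$ with $\Pr[U=g(j)]=a_{ij}$ (shifting by $x$). Second, use this for both coordinates of $A\otimes B$: apply it once with the $A$-slice in the first variable of $K=\|\cdot\|_X^p$, deriving that $\frac1{m^2}\sum_{i,j}\|f(i,s)-g(j,t)\|_X^p$ is controlled by $\frac{\gamma_+(A,\|\cdot\|_X^p)}{m}\sum_{i,j}a_{ij}\|f(i,s)-g(j,t)\|_X^p$ minus a nonnegative ``defect'' involving the $A$-variance of the $f$-slices and $g$-slices. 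Third, average this inequality over $s,t$ using the $B$-Poincaré inequality on the slices, which will produce the factor $\gamma_+(B,\|\cdot\|_X^p)$ times the $B$-variance terms; the scalar $(2^{p-1}-1)K_p(X)^p$ enters because the defect term one discards (or recycles) carries that constant, and the leading $2^{p-1}$ comes from a convexity/triangle step of the type $\|x-z\|_X^p\le 2^{p-1}(\|x-y\|_X^p+\|y-z\|_X^p)$ used to pass between $f(i,s)-g(j,t)$ and slice-means. Fourth, assemble: the maximum on the right of \eqref{eq:improved tensor} reflects the two competing regimes (the $A$-term dominates, or the $B$-term weighted by $(2^{p-1}-1)K_p(X)^p$ dominates), and the overall $2^{p-1}$ prefactor is the cost of the convexity split.

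\textbf{Main obstacle.} The delicate point is the order and choice of which averaging to do first and exactly which variance term to absorb into which Poincaré inequality, so that the $\max$ (rather than a product) emerges. In Proposition~\ref{prop:tensor} the two inequalities are applied independently and multiply; here one instead wants the gain from the $A$-side uniform convexity defect to ``pay for'' the deficiency of the $B$-side and vice versa, so the two contributions add up inside a single estimate and only the larger one survives (up to the constant $2^{p-1}$). Getting the constants to line up as $2^{p-1}\max\{\gamma_+(A,\|\cdot\|_X^p),(2^{p-1}-1)K_p(X)^p\gamma_+(B,\|\cdot\|_X^p)\}$ will require carefully following the argument of Lemma~\ref{poincare to norm in uniformly convex} in the tensor setting, treating the $f$-slices and $g$-slices symmetrically (as in the $f_0\oplus g_0$ trick used there and in Lemma~\ref{lem:norm to poincare}), and I expect the only real work is this bookkeeping rather than any new inequality.
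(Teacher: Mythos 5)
Your proposal identifies the right ingredients and essentially the same strategy as the paper's proof: use Lemma~\ref{lem:from Ball} to form a barycentric auxiliary function, split the quantity $\|f(i,s)-g(j,t)\|_X^p$ via the triangle inequality (cost $2^{p-1}$) through that auxiliary point, bound the two resulting pieces by the $A$- and $B$-Poincar\'e inequalities separately, take the $\max$ of the two constants (with the $(2^{p-1}-1)K_p(X)^p$ factor attached to the piece governed by the Ball defect), and then invoke Lemma~\ref{lem:from Ball} once more to recombine the two weighted pieces into the tensor edge sum $\sum a_{ij}b_{st}\|f(i,s)-g(j,t)\|_X^p$. Two small remarks. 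First, your sketch applies Ball's lemma with the $A$-probabilities (forming the barycenter $\sum_k a_{jk}f(k,s)$), which, carried through, produces the inequality with $A$ and $B$ swapped in the right-hand side of~\eqref{eq:improved tensor}; the paper instead forms $h(j,s)=\sum_t b_{st}g(j,t)$ so that the $K_p$-penalty lands on the $B$-term as stated. This is only a labeling choice, as the two versions are equivalent by the $A\leftrightarrow B$ symmetry of the tensor product, but you should pick the convention that matches the target. Second, the phrasing ``$\frac1{m^2}\sum_{i,j}\|f(i,s)-g(j,t)\|_X^p$ is controlled by $\frac{\gamma_+(A)}{m}\sum_{ij}a_{ij}\|\cdot\|_X^p$ minus a nonnegative defect'' is not quite what Lemma~\ref{lem:from Ball} supplies: the lemma sharpens the edge average from below (it is a converse to the triangle inequality, not a sharpening of the Poincar\'e inequality), and in the paper it is used precisely to recombine the two Poincar\'e bounds rather than to subtract a defect from one of them. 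With those corrections, the bookkeeping you anticipate goes through exactly as the paper carries it out.
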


\begin{proof} For simplicity of notation write
\begin{equation*}\label{eq:tensor def c}
c\eqdef \frac{1}{\left(2^{p-1}-1\right)K_p(X)^p},
\end{equation*}
and
\begin{equation}\label{eq:def Gamma tensor}
\Gamma\eqdef 2^{p-1}\max\left\{\gamma_+\left(A,\|\cdot\|_X^p\right),\frac{1}{c}\gamma_+\left(B,\|\cdot\|_X^p\right)\right\}.
\end{equation}
Fix $f,g:\m\times \n\to X$. For every $i,j\in \m$ and $s\in \n$ consider the $X$-valued random variable $U_{ij}^s$ which, for every $t\in \m$, takes the value $f(i,s)-g(j,t)$ with probability $b_{st}$. An application of Lemma~\ref{lem:from Ball} with $U=U_{ij}^s$ shows that if for every $j\in \m$ and $s\in \n$ we define
$$
h(j,s)\eqdef \sum_{t=1}^n b_{st}g(j,t),
$$
then for every $i,j\in \m$ and $s\in \n$ we have
\begin{equation}\label{eq:tensor use RV ineq}
\left\|f(i,s)-h(j,s)\right\|_X^p+c\sum_{t=1}^nb_{st}\left\|h(j,s)-g(j,t)\right\|_X^p\le \sum_{t=1}^nb_{st}\left\|f(i,s)-g(j,t)\right\|_X^p.
\end{equation}

By the definition of $\gamma_+\left(A,\|\cdot\|_X^p\right)$,  for every fixed $s\in \n$ we have
\begin{equation}\label{eq:tesor fh}
\frac{1}{m^2}\sum_{i=1}^m\sum_{j=1}^m \left\|f(i,s)-h(j,s)\right\|_X^p\le \frac{\gamma_+\left(A,\|\cdot\|_X^p\right)}{m}\sum_{i=1}^m\sum_{j=1}^m a_{ij}\left\|f(i,s)-h(j,s)\right\|_X^p,
\end{equation}
Similarly, for every fixed $j\in \m$ we have
\begin{equation}\label{eq:tesor hg}
\frac{1}{n^2}\sum_{s=1}^n\sum_{t=1}^n \left\|h(j,s)-g(j,t)\right\|_X^p\le \frac{\gamma_+\left(B,\|\cdot\|_X^p\right)}{n}\sum_{s=1}^n\sum_{t=1}^n b_{st}\left\|h(j,s)-g(j,t)\right\|_X^p.
\end{equation}
By the triangle inequality, for every fixed $i,j\in \m$ and $s\in \n$ we have
\begin{equation}\label{eq:tensor triangle ineq}
\frac{1}{n}\sum_{t=1}^n \left\|f(i,s)-g(j,t)\right\|_X^p\le 2^{p-1}\left\|f(i,s)-h(j,s)\right\|_X^p+\frac{2^{p-1}}{n}\sum_{t=1}^n \left\|h(j,s)-g(j,t)\right\|_X^p.
\end{equation}
By averaging~\eqref{eq:tensor triangle ineq} over $i,j\in \m$ and $s\in \n$ we deduce that
\begin{multline}\label{eq:tesor sum triangle}
\frac{1}{m^2n^2}\sum_{i=1}^m\sum_{j=1}^m\sum_{s=1}^n\sum_{t=1}^n\left\|f(i,s)-g(j,t)\right\|_X^p\\
\le \frac{2^{p-1}}{n}\sum_{s=1}^n\frac{1}{m^2}\sum_{i=1}^m\sum_{j=1}^m\left\|f(i,s)-h(j,s)\right\|_X^p+\frac{2^{p-1}}{m}\sum_{j=1}^m\frac{1}{n^2}\sum_{s=1}^n\sum_{t=1}^n \left\|h(j,s)-g(j,t)\right\|_X^p.
\end{multline}
By substituting~\eqref{eq:tesor fh} and~\eqref{eq:tesor hg} into~\eqref{eq:tesor sum triangle} we obtain the estimate
\begin{eqnarray}\label{eq:all together tensor}
&&\nonumber\!\!\!\!\!\!\!\!\!\!\!\!\!\!\!\!\!\!\!\!\!\!\!\!\frac{1}{m^2n^2}\sum_{i=1}^m\sum_{j=1}^m\sum_{s=1}^n\sum_{t=1}^n\left\|f(i,s)-g(j,t)\right\|_X^p\\ \nonumber&\le& \frac{2^{p-1}\gamma_+\left(A,\|\cdot\|_X^p\right)}{mn}\sum_{i=1}^m\sum_{j=1}^m  \sum_{s=1}^n a_{ij}\left\|f(i,s)-h(j,s)\right\|_X^p\\&&+\nonumber\frac{2^{p-1}\gamma_+\left(B,\|\cdot\|_X^p\right)}{mn}\sum_{s=1}^n\sum_{t=1}^n \sum_{j=1}^m b_{st}\left\|h(j,s)-g(j,t)\right\|_X^p\\\nonumber
&\stackrel{\eqref{eq:def Gamma tensor}}{\le}&\frac{\Gamma}{mn}\sum_{i=1}^m\sum_{j=1}^n a_{ij}\sum_{s=1}^n\left(\left\|f(i,s)-h(j,s)\right\|_X^p+c\sum_{t=1}^n b_{st}\left\|h(j,s)-g(j,t)\right\|_X^p\right)\\
&\stackrel{\eqref{eq:tensor use RV ineq}}{\le}& \frac{\Gamma}{mn}\sum_{i=1}^m\sum_{j=1}^n \sum_{s=1}^n\sum_{t=1}^n a_{ij}b_{st} \left\|f(i,s)-g(j,t)\right\|_X^p.
\end{eqnarray}
Since~\eqref{eq:all together tensor} holds for every $f,g:\m\times \n\to X$, \eqref{eq:improved tensor} follows.
\end{proof}

\subsection{Sub-multiplicativity for the zigzag product}\label{sec:zigzag proof} Here we prove Theorem~\ref{thm:sub}. Before doing so, we need to recall the definition of the zigzag product of Reingold, Vadhan and Wigderson~\cite{RVW}. The notation used below, which lends itself well to the ensuing proof of Theorem~\ref{thm:sub}, was suggested to us by K. Ball.

Fix $n_1,d_1,d_2\in \N$. Suppose that $G_1=(V_1,E_1)$ is an $n_1$-vertex graph which is $d_1$-regular and that $G_2=(V_2,E_2)$ is a $d_1$-vertex graph which is $d_2$-regular. Since the number of vertices in $G_2$ is the same as the degree of $G_1$, we can identify $V_2$ with the edges emanating from a given vertex $u\in V_1$. Formally, we fix for every $u\in V_1$ a bijection
\begin{equation}\label{eq:pi label}
\pi_u:\{(u,v)\in \{u\}\times V_1: \; (u,v)\in E_1\} \to V_2.
\end{equation}
Moreover, we fix for every $a\in V_2$ a bijection between $\{1,\ldots,d_2\}$ and the multiset of the vertices adjacent to $a$ in $G_2$, i.e.,
\begin{equation}\label{eq:kappa label}
\kappa_a:\{1,\ldots,d_2\} \to \{b\in V_2:\; (a,b)\in E_2\}.
\end{equation}

The zigzag product $G_1\oz G_2$ is the graph whose vertices are $V_1\times V_2$ and the ordered pair $((u,a),(v,b))\in V_1\times V_2$ is added to $E(G_1\oz G_2)$  whenever there exist $i,j\in \{1,\ldots,d_2\}$ satisfying
\begin{equation}\label{eq:def zigzag edges} (u,v)\in E_1\quad
\text{and}\quad a=\kappa_{\pi_u(u,v)}(i)
\quad\text{and} \quad b=\kappa_{\pi_v(v,u)}(j) .\end{equation}
Thus,
$$
E(G_1\oz G_2)((u,a),(v,b))\eqdef \sum_{i=1}^{d_2}\sum_{j=1}^{d_2} E_1(u,v)\cdot \1_{\{a=\kappa_{\pi_u(u,v)}(i)\}}\cdot \1_{\{b=\kappa_{\pi_v(u,v)}(j)\}}.
$$

The schematic description of this construction is as follows. Think of the vertex set of $G_1\oz G_2$ as a disjoint union of ``clouds" which are copies of $V_2=\{1,\ldots,d_1\}$ indexed by $V_1$. Thus $(u,a)$ is the point indexed by $a$ in the cloud labeled by $u$. Every edge $((u,a),(v,b))$ of $G_1\oz G_2$ is the result of a three step walk: a ``zig" step in $G_2$ from $a$ to $\pi_u(u,v)$  in $u$'s cloud, a ``zag" step in $G_1$ from $u$'s cloud to $v$'s cloud along the edge $(u,v)$ and a final ``zig" step in $G_2$ from $\pi_v(u,v)$ to $b$ in $v$'s cloud.  The  zigzag product is illustrated in Figure~\ref{fig:zigzag-op}. The number of vertices of $G_1\oz G_2$ is $n_1d_1$ and its degree is $d_2^2$.
\begin{figure}[ht]
\begin{center}
\includegraphics{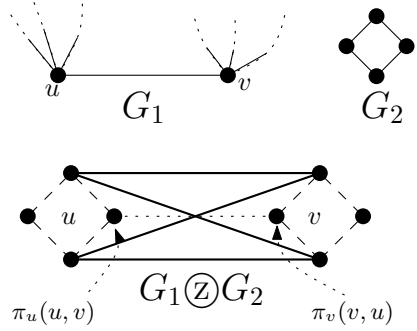}%
\caption{
A schematic illustration of the zigzag product. The upper part of the figure depicts \emph{part} of a 4-regular graph  $G_1$,
and a 4-vertex cycle $G_2$. The bottom part of the figure depicts the edges of  the zigzag product 
between $u$'s cloud and $v$'s cloud.
The original edges of $G_1$ and $G_2$ are drawn as dotted and dashed lines, respectively.}
\label{fig:zigzag-op}
\end{center}
\end{figure}
The zigzag product depends on the choice of labels $\{\pi_u\}_{u\in V_1}$, and in fact different labels of the same graphs can produce non-isomorphic products\footnote{The labels $\{\kappa_a\}_{a\in V_2}$ do not affect the structure of the zigzag product but they are useful in the subsequent analysis.}. However,
 the estimates below will be independent of the actual choice of the labeling, so while our notation should formally depend on the labeling, we will drop its explicit mention for the sake of simplicity.

 \begin{proof}[Proof of Theorem~\ref{thm:sub}]
Fix $f,g:V_1\times V_2\to X$. The definition of $\gamma_+(G_1,K)$ implies that for all $a,b\in V_2$ we have
\begin{equation}\label{zigzag1-b}
\frac{1}{n_1^2}\sum_{(u,v)\in V_1\times V_1}K\left(f(u,a),g(v,b)\right)
 \le \frac{\gamma_+(G_1,K)}{n_1d_1}\sum_{(u,v)\in E_1} K\left(f(u,a),g\left(v,b\right)\right).
\end{equation}
Hence,
\begin{eqnarray}\label{zigzag2-b}
&&\!\!\!\!\!\!\!\!\!\!\!\!\!\!\!\!\!\!\!\!\!\!\!\!\!\!\!\!\!\!\!\!\!\!\!\!\!\!\!\!\!\!\!\!\!\!\!\!\!\!\!\!\!\!\!\nonumber\frac{1}{|V_1\times V_2|^2}\sum_{((u,a),(v,b))\in (V_1\times V_2)\times (V_1\times V_2)}K(f(u,a),g(v,b))
\\&=&\nonumber
\frac{1}{d_1^2}\sum_{(a,b)\in V_2\times V_2}\frac{1}{n_1^2}\sum_{(u,v)\in V_1\times V_1}K(f(u,a),g(v,b))\\
&\stackrel{\eqref{zigzag1-b}}{\le}&
\frac{\gamma_+(G_1,K)}{n_1d_1^3}\sum_{(a,b)\in V_2\times V_2}\sum_{(u,v)\in  E_1}K\left(f(u,a),g\left(v,b\right)\right).
\end{eqnarray}
Next,  fix $u\in V_1$ and $b\in V_2$, and define $\phi_{b}^u:V_2\to X$ as follows. Recalling~\eqref{eq:pi label}, for $c\in V_2$ write $\pi_u^{-1}(c)=(u,v)\in E_1$ for some $v\in V_1$, and define $\phi_b^u(c)=g(v,b)$. The definition of $\gamma_+(G_2,K)$ implies that
\begin{multline}\label{zigzag3-b}
\frac{1}{d_1^2}\sum_{a\in V_2} \sum_{\substack{v\in V_1\\ (u,v)\in E_1}} K\left(f(u,a),g\left(v,b\right)\right)=\frac{1}{d_1^2}\sum_{a\in V_2}\sum_{c\in V_2} K\left(f(u,a),\phi_b^u(c)\right)
\\ \le
\frac{\gamma_+(G_2,K)}{d_1d_2}
\sum_{\substack{v\in V_1\\ (u,v)\in E_1}}\sum_{i=1 }^{d_2} K\left(f\left(u,\kappa_{\pi_u(u,v)}(i)\right),g\left(v,b\right)\right),
\end{multline}

Summing~\eqref{zigzag3-b} over $u\in V_1$ and $b\in V_2$ and substituting the  resulting expression into~\eqref{zigzag2-b} yields the bound
\begin{multline}\label{zigzag4-b}
\frac{1}{|V_1\times V_2|^2}\sum_{((u,a),(v,b))\in (V_1\times V_2)\times (V_1\times V_2)}K(f(u,a),g(v,b))\\\le \frac{\gamma_+(G_1,K)\gamma_+(G_2,K)}{n_1d_1^2d_2}\sum_{v\in V_1}\sum_{i=1}^{d_2}\sum_{\substack{u\in V_1\\ (u,v)\in E_1}} \sum_{b\in V_2}
K\left(f\left(u,\kappa_{\pi_u(u,v)}(i)\right),g\left(v,b\right)\right).
\end{multline}
Fix $i\in \{1,\ldots,d_2\} $  and $v\in V_1$, and define $\psi_i^v:V_2\to X$ as follows. For $c\in V_2$ write $\pi_v^{-1}(c)=(v,u)$ for some $u\in V_1$ such that $(v,u)\in E_1$ (equivalently, $(u,v)\in E_1$), and set $\psi_i^v(c)=f\left(u,\kappa_{\pi_u(u,v)}(i)\right)$. Another application of the definition of $\gamma_+(G_2,K)$ implies that
\begin{multline}\label{zigzag5-b}
\frac{1}{d_1^2}\sum_{\substack{u\in V_1\\ (u,v)\in E_1}} \sum_{b\in V_2}
K\left(f\left(u,\kappa_{\pi_u(u,v)}(i)\right),g\left(v,b\right)\right)=\frac{1}{d_1^2} \sum_{c\in V_2}\sum_{b\in V_2} K\left(\psi_i^v(c),g(v,b)\right)
\\
\le
 \frac{\gamma_+(G_2,K)}{d_1d_2}
\sum_{\substack{u\in V_1\\ (u,v)\in E_1}} \sum_{j=1}^{d_2}
K\left(f\left(u,\kappa_{\pi_u(u,v)}(i)\right),g\left(v,\kappa_{\pi_v(v,u)}(j)\right)\right).
\end{multline}
Summing~\eqref{zigzag5-b} over $v\in V_1$ and $i\in \{1,\ldots,d_2\}$,  and combining the resulting inequality with~\eqref{zigzag4-b}, yields the bound
\begin{eqnarray} \label{zigzag6-b}
&&\nonumber\!\!\!\!\!\!\!\!\!\!\!\!\!\!\!\!\!\frac{1}{|V_1\times V_2|^2}\sum_{((u,a),(v,b))\in (V_1\times V_2)\times (V_1\times V_2)}K(f(u,a),g(v,b))\\
&{\le}&
\frac{\gamma_+(G_1,K)\gamma_+(G_2,K)^2}{n_1d_1d_2^2}
\sum_{(u,v)\in  E_1} \sum_{i=1}^{d_2}\sum_{j=1}^{d_2}
              K\left(f\left(u,\kappa_{\pi_u(u,v)}(i)\right),g
              \left(v,\kappa_{\pi_v(v,u)}(j)\right)\right)
\nonumber\\ &\stackrel{\eqref{eq:def zigzag edges}}{=}&
\frac{\gamma_+(G_1,K)\gamma_+(G_2,K)^2}{n_1d_1d_2^2}
{\!\!\!\!\!\!\!\!\!}
\sum_{((u,a),(v,b)) E\left(G_1\scriptsize{\textcircled z} G_2\right)}
{\!\!\!\!\!\!\!}
K\left(f\left(u,a\right),g\left(v,b\right)\right).
\end{eqnarray}
  Since~\eqref{zigzag6-b} holds for every $f,g:V_1\times V_2\to X$, the proof of Theorem~\ref{thm:sub} is complete.
 \end{proof}

\subsection{Sub-multiplicativity for replacement products}\label{sec:replacement} Here we continue to use the notation of Section~\ref{sec:zigzag proof}. Specifically, we fix   $n_1,d_1,d_2\in \N$ and suppose that $G_1=(V_1,E_1)$ is an $n_1$-vertex graph which is $d_1$-regular and that $G_2=(V_2,E_2)$ is a $d_1$-vertex graph which is $d_2$-regular. We also identify $V_1=\{1,\ldots,n_1\}$ and $V_2=\{1,\ldots,d_1\}$, and for every $u\in V_1$ and $a\in V_2$ we fix a bijections $\pi_u$ and $\kappa_a$ as in~\eqref{eq:pi label} and~\eqref{eq:kappa label}, respectively. The {\em replacement product}~\cite{Gromov-filling,RVW} of $G_1$ and $G_2$, denoted $G_1\circr G_2$, is the graph with vertex set $\{1,\ldots,n_1\}\times \{1,\ldots,d_1\}$ in which the ordered pair $((u,i), (v,j))\in \{1,\ldots,n_1\}\times \{1,\ldots,d_1\}$ is added to $E(G_1\circr G_2)$ if and only if either $u=v$ and $(i,j)\in E_2$ or $(u,v)\in E_1$ and $i=\pi_u(u,v)$ and $j=\pi_v(v,u)$. Thus,
$$
E(G_1\circr G_2)((u,i), (v,j))\eqdef  E_2(i,j)\cdot \1_{\{u=v\}}+E_1(u,v)\cdot \1_{\{i=\pi_u(u,v)\}}\cdot \1_{\{j=\pi_v(v,u)\}}.
$$
This definition makes $G_1\circr G_2$ be a $(d_2+1)$-regular graph.

The following lemma shows that the ``discrete gradient" associated to $G_1\oz G_2$ is dominated by $3^{p-1}(d_2+1)$ times the ``discrete gradient" associated to $G_1\circr G_2$.

\begin{lemma}\label{lem:zigzag gradient bigger than replacement}
Fix $p\in [1,\infty)$, a metric space $(X,d_X)$ and $n_1,d_1,d_2\in \N$. Suppose that $G_1=(V_1,E_1)$ is an $n_1$-vertex graph which is $d_1$-regular and that $G_2=(V_2,E_2)$ is a $d_1$-vertex graph which is $d_2$-regular. Then every $f,g:V_1\times V_2\to X$ satisfy
\begin{multline}\label{eq:gradient domination replacement}
\frac{1}{\left|E\left(G_1\scriptsize{\textcircled z} G_2\right)\right|}
\sum_{((u,a),(v,b))\in  E\left(G_1\scriptsize{\textcircled z} G_2\right)}
d_X\left(f\left(u,a\right),g\left(v,b\right)\right)^p\\\le
\frac{3^{p-1}(d_2+1)}{\left|E\left(G_1\scriptsize{\textcircled r} G_2\right)\right|}
\sum_{((u,a),(v,b))\in E\left(G_1\scriptsize{\textcircled r} G_2\right)}
d_X\left(f\left(u,a\right),g\left(v,b\right)\right)^p.
\end{multline}
\end{lemma}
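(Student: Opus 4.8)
\textbf{Proof plan for Lemma~\ref{lem:zigzag gradient bigger than replacement}.}

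The plan is to compare the two gradients edge-by-edge by exhibiting, for every edge $((u,a),(v,b))$ of the zigzag product $G_1\oz G_2$, a path of length $3$ in the replacement product $G_1\circr G_2$ joining $(u,a)$ to $(v,b)$, and then to average over all edges. Recall from the definition of the zigzag product that an edge $((u,a),(v,b))\in E(G_1\oz G_2)$ arises precisely when there exist $i,j\in\{1,\ldots,d_2\}$ with $(u,v)\in E_1$, $a=\kappa_{\pi_u(u,v)}(i)$ and $b=\kappa_{\pi_v(v,u)}(j)$. Writing $a'=\pi_u(u,v)\in V_2$ and $b'=\pi_v(v,u)\in V_2$, the three steps $(u,a)\to(u,a')\to(v,b')\to(v,b)$ are, respectively, an edge of $G_1\circr G_2$ of the ``$G_2$-type'' inside $u$'s cloud (since $(a,a')\in E_2$, as $a=\kappa_{a'}(i)$), an edge of the ``$G_1$-type'' joining $u$'s cloud to $v$'s cloud (since $(u,v)\in E_1$, $a'=\pi_u(u,v)$ and $b'=\pi_v(v,u)$), and an edge of the ``$G_2$-type'' inside $v$'s cloud (since $(b,b')\in E_2$). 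Hence by the triangle inequality followed by H\"older's inequality (exactly as in the proof of Lemma~\ref{lem:general graph}, with $m_{x,y}=3$),
\[
d_X(f(u,a),g(v,b))^p\le 3^{p-1}\Bigl(d_X(f(u,a),\ast(u,a'))^p+d_X(\ast(u,a'),\ast(v,b'))^p+d_X(\ast(v,b'),g(v,b))^p\Bigr),
\]
where each ``$\ast$'' is whichever of $f$ or $g$ makes the middle term an honest $G_1\circr G_2$-edge contribution — here one must be slightly careful that $f$ sits on one endpoint and $g$ on the other in the gradient of the product graph; since the gradient in~\eqref{eq:gradient domination replacement} is symmetric in the sense that it runs over \emph{ordered} pairs in $E(G_1\circr G_2)$ and $G_1\circr G_2$ is non-oriented, both $((x),( y))$ and $((y),(x))$ appear, so each of the three terms above is bounded by the corresponding summand of the $G_1\circr G_2$-gradient with the correct $f/g$ assignment.

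Next I would sum over all edges $((u,a),(v,b))\in E(G_1\oz G_2)$. The key combinatorial point is a bounded-multiplicity / counting statement: when we expand the right-hand side, each of the three types of term produces a sum over certain edges of $G_1\circr G_2$, and one checks that every edge of $G_1\circr G_2$ is hit a controlled number of times. Concretely, for a fixed $G_1$-type edge $((u,a'),(v,b'))$ of $G_1\circr G_2$ (where necessarily $a'=\pi_u(u,v)$, $b'=\pi_v(v,u)$), the number of zigzag edges $((u,a),(v,b))$ whose associated path uses this middle edge is exactly $d_2^2$ (choices of $i,j$), and the outer ``zig'' terms then range over $a=\kappa_{a'}(i)$, $b=\kappa_{b'}(j)$; a fixed $G_2$-type edge of $G_1\circr G_2$ inside a cloud, say $(a,a')$ with $a=\kappa_{a'}(i)$, is used by those zigzag edges whose first step is $(u,a)\to(u,a')$, and summing the remaining free indices contributes a factor proportional to $d_2$. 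Carefully collecting these multiplicities, together with the normalizations $|E(G_1\oz G_2)|=n_1d_1d_2^2$ and $|E(G_1\circr G_2)|=n_1d_1(d_2+1)$, yields precisely the claimed constant $3^{p-1}(d_2+1)$, the three terms of the triangle inequality accounting for the ``$+1$'' (the two $G_2$-type steps) together with the $d_2$-type step. I expect the cleanest bookkeeping is to rewrite $\frac{1}{|E(G_1\oz G_2)|}\sum_{E(G_1\oz G_2)}(\cdots)$ as an average over $(u,v)\in E_1$, $i,j\in\{1,\ldots,d_2\}$ using~\eqref{eq:def zigzag edges}, push the triangle inequality through, and then recognize each of the resulting three averages as a sub-average of $\frac{1}{|E(G_1\circr G_2)|}\sum_{E(G_1\circr G_2)}(\cdots)$.

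The main obstacle I anticipate is purely the accounting in this last step: making sure that the multiplicities line up so that the \emph{average} (not merely the sum) over $E(G_1\oz G_2)$ is bounded by $3^{p-1}(d_2+1)$ times the \emph{average} over $E(G_1\circr G_2)$, i.e.\ that no hidden factor of $d_1$, $d_2$, or $n_1$ creeps in. A safe way to do this is to fix $f,g$, parametrize zigzag edges by $((u,v),i,j)\in E_1\times\{1,\ldots,d_2\}^2$, apply the pointwise three-term triangle/H\"older bound above, and then observe: (i) the first-type term $\frac{1}{n_1d_1d_2^2}\sum_{(u,v)\in E_1}\sum_{i,j}d_X(f(u,\kappa_{\pi_u(u,v)}(i)),g'(u,\pi_u(u,v)))^p$ — where $g'$ denotes the value carried by the ``zig'' endpoint — equals $\frac{1}{n_1d_1d_2}\sum_{(u,v)\in E_1}\sum_i d_X(\cdots)^p$ after summing out the free index $j$, and this is exactly $\frac{1}{|E(G_1\circr G_2)|}$ times the restriction of the $G_1\circr G_2$-gradient sum to $G_2$-type edges in $u$-clouds, up to the benign replacement of $d_2$ by $d_2+1$ in the normalization (which only helps, since $d_2/(d_2+1)<1$); (ii) symmetrically for the third-type term with $v$-clouds; and (iii) the middle term, after summing out $i$ and $j$, is $\frac{1}{n_1d_1}\sum_{(u,v)\in E_1}d_X(\cdots)^p$, which equals $\frac{d_2+1}{|E(G_1\circr G_2)|}$ times the restriction of the $G_1\circr G_2$-gradient to $G_1$-type edges. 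Summing (i)+(ii)+(iii) with the prefactor $3^{p-1}$ and bounding each restricted sum by the full $G_1\circr G_2$-gradient sum gives~\eqref{eq:gradient domination replacement}. Since~\eqref{eq:gradient domination replacement} must hold for all $f,g$, this completes the proof.
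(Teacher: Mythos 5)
Your overall strategy is the same as the paper's: decompose each zigzag edge $((u,a),(v,b))$ into the three-step path $(u,a)\to(u,\pi_u(u,v))\to(v,\pi_v(v,u))\to(v,b)$ in $G_1\circr G_2$, apply the triangle inequality and H\"older to get a factor $3^{p-1}$, and then sum over zigzag edges and do the multiplicity bookkeeping. The paper carries this out by defining the three partial sums $S_1,S_2,S_3$ corresponding to the three terms of the triangle inequality and showing $S_1+S_2+S_3\le d_2^2\sum_{E(G_1\circr G_2)}(\cdots)$, which after dividing by $|E(G_1\oz G_2)|=n_1d_1d_2^2$ and using $|E(G_1\circr G_2)|=n_1d_1(d_2+1)$ gives the claimed constant.

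There is, however, one incorrect assertion in your bookkeeping that you should fix. You claim that your term (i), which after summing out $j$ equals $\frac{1}{n_1d_1d_2}\sum_{(u,v)\in E_1}\sum_i d_X(\cdots)^p$, is ``exactly $\frac{1}{|E(G_1\circr G_2)|}$ times the restriction of the $G_1\circr G_2$-gradient sum to $G_2$-type edges \dots\ up to the benign replacement of $d_2$ by $d_2+1$ in the normalization (which only helps, since $d_2/(d_2+1)<1$).'' The direction here is backwards. After the reparametrization $c=\pi_u(u,v)$, your term (i) equals $\frac{1}{n_1d_1d_2}\sum_u\sum_{(i,j)\in E_2}d_X(f(u,i),g(u,j))^p$, whereas the restriction of the $G_1\circr G_2$-average to $G_2$-type edges is $\frac{1}{n_1d_1(d_2+1)}\sum_u\sum_{(i,j)\in E_2}d_X(f(u,i),g(u,j))^p$. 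Since $\frac{1}{n_1d_1d_2}>\frac{1}{n_1d_1(d_2+1)}$, term (i) is $\frac{d_2+1}{d_2}>1$ times the restricted average, i.e.\ it is \emph{larger}, not smaller. The same applies to term (ii). Consequently, (i)$+$(ii) is $\frac{2(d_2+1)}{d_2}$ times the restricted $G_2$-average, and to conclude you need $\frac{2(d_2+1)}{d_2}\le d_2+1$, i.e.\ $d_2\ge 2$, so that (i)$+$(ii)$+$(iii)$\le (d_2+1)\cdot\frac{1}{|E(G_1\circr G_2)|}\sum_{E(G_1\circr G_2)}(\cdots)$. This is exactly the point where the paper's own bookkeeping also implicitly uses $2d_2\le d_2^2$, so your argument lands on the same constant; the claim ``which only helps'' is simply wrong as stated and should be replaced by the honest comparison $\frac{2(d_2+1)}{d_2}\le d_2+1$ (valid for $d_2\ge 2$).
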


Before proving Lemma~\ref{lem:zigzag gradient bigger than replacement} we record two of its immediate (yet useful) consequences.
\begin{corollary}\label{cor:poincare domination replacement}
Under the assumptions of Lemma~\ref{lem:zigzag gradient bigger than replacement} we have
$$
\gamma_+\left(G_1\circr G_2,d_X^p\right)\le 3^{p-1}(d_2+1)\cdot \gamma_+\left(G_1\oz G_2,d_X^p\right).
$$
\end{corollary}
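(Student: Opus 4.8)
The statement to be proved is Corollary~\ref{cor:poincare domination replacement}, which asserts that $\gamma_+(G_1\circr G_2,d_X^p)\le 3^{p-1}(d_2+1)\cdot\gamma_+(G_1\oz G_2,d_X^p)$. This is an immediate consequence of Lemma~\ref{lem:zigzag gradient bigger than replacement}, so the entire task reduces to unwinding the definition of $\gamma_+(\cdot,\cdot)$. First I would fix $f,g:V_1\times V_2\to X$. Applying the definition of $\gamma_+(G_1\oz G_2,d_X^p)$ to this pair gives
$$
\frac{1}{|V_1\times V_2|^2}\sum_{((u,a),(v,b))}d_X(f(u,a),g(v,b))^p\le \frac{\gamma_+(G_1\oz G_2,d_X^p)}{|V_1\times V_2|\cdot d_2^2}\sum_{((u,a),(v,b))\in E(G_1\oz G_2)}d_X(f(u,a),g(v,b))^p,
$$
where I have used that $G_1\oz G_2$ has $n_1d_1=|V_1\times V_2|$ vertices and degree $d_2^2$, so $|E(G_1\oz G_2)|=|V_1\times V_2|\cdot d_2^2$.

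Next I would feed the right-hand side of this inequality into Lemma~\ref{lem:zigzag gradient bigger than replacement}. That lemma bounds the normalized zigzag gradient by $3^{p-1}(d_2+1)$ times the normalized replacement-product gradient. Rewriting the lemma's conclusion in terms of unnormalized sums (using $|E(G_1\oz G_2)|=|V_1\times V_2|\cdot d_2^2$ and $|E(G_1\circr G_2)|=|V_1\times V_2|\cdot(d_2+1)$, since $G_1\circr G_2$ is $(d_2+1)$-regular on $n_1d_1$ vertices), I obtain
$$
\frac{1}{d_2^2}\sum_{((u,a),(v,b))\in E(G_1\oz G_2)}d_X(f(u,a),g(v,b))^p\le \frac{3^{p-1}(d_2+1)}{d_2+1}\sum_{((u,a),(v,b))\in E(G_1\circr G_2)}d_X(f(u,a),g(v,b))^p=3^{p-1}\sum_{((u,a),(v,b))\in E(G_1\circr G_2)}d_X(f(u,a),g(v,b))^p.
$$
Combining the two displayed inequalities and dividing through appropriately yields
$$
\frac{1}{|V_1\times V_2|^2}\sum_{((u,a),(v,b))}d_X(f(u,a),g(v,b))^p\le \frac{3^{p-1}(d_2+1)\gamma_+(G_1\oz G_2,d_X^p)}{|V_1\times V_2|\cdot(d_2+1)}\sum_{((u,a),(v,b))\in E(G_1\circr G_2)}d_X(f(u,a),g(v,b))^p,
$$
which is precisely the Poincaré inequality defining $\gamma_+(G_1\circr G_2,d_X^p)$ with constant $3^{p-1}(d_2+1)\gamma_+(G_1\oz G_2,d_X^p)$, since $|E(G_1\circr G_2)|=|V_1\times V_2|\cdot(d_2+1)$.

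Since this holds for every $f,g:V_1\times V_2\to X$, taking the infimum over valid constants gives the claim. The only genuine content is Lemma~\ref{lem:zigzag gradient bigger than replacement} itself, which I am entitled to assume; there is no real obstacle in the corollary beyond carefully tracking the edge-count normalizations and making sure the factors of $d_2+1$ cancel correctly. I would present the argument as a three-line chain of inequalities, being explicit about the regularities $d_2^2$ and $d_2+1$ and the common vertex count $n_1d_1$ so that the normalizations are transparent.
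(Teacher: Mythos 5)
Your proposal is correct and matches the paper exactly: the paper itself records Corollary~\ref{cor:poincare domination replacement} as an ``immediate consequence'' of Lemma~\ref{lem:zigzag gradient bigger than replacement} and gives no further proof, and your unwinding of the Poincar\'e definition together with the edge-count normalizations $|E(G_1\oz G_2)|=n_1d_1d_2^2$ and $|E(G_1\circr G_2)|=n_1d_1(d_2+1)$ is precisely that immediate deduction. The arithmetic cancelling the factors of $d_2+1$ is right, so there is nothing to add.
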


Now, part~\eqref{item:replacement product} of Theorem~\ref{thm:products intro} corresponds to the case $p=2$ of the following combination of Theorem~\ref{thm:sub} and Corollary~\ref{cor:poincare domination replacement}.

\begin{corollary}\label{cor:replacement}
Under the assumptions of Lemma~\ref{lem:zigzag gradient bigger than replacement} we have
$$
\gamma_+\left(G_1\circr G_2,d_X^p\right)\le 3^{p-1}(d_2+1)\cdot\gamma_+\left(G_1,d_X^p\right)\cdot \gamma_+\left(G_2,d_X^p\right)^2.
$$
\end{corollary}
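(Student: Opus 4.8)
\textbf{Proof plan for Corollary~\ref{cor:replacement}.}
The statement is an immediate combination of two results that are (or will be) available by the point it is stated: Theorem~\ref{thm:sub}, which gives the zigzag sub-multiplicativity bound
$$
\gamma_+\left(G_1\oz G_2,d_X^p\right)\le \gamma_+\left(G_1,d_X^p\right)\cdot \gamma_+\left(G_2,d_X^p\right)^2,
$$
and Corollary~\ref{cor:poincare domination replacement}, which gives
$$
\gamma_+\left(G_1\circr G_2,d_X^p\right)\le 3^{p-1}(d_2+1)\cdot \gamma_+\left(G_1\oz G_2,d_X^p\right).
$$
So the entire proof is: apply Corollary~\ref{cor:poincare domination replacement}, then plug in the bound from Theorem~\ref{thm:sub}. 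That is, first I would write $\gamma_+(G_1\circr G_2,d_X^p)\le 3^{p-1}(d_2+1)\gamma_+(G_1\oz G_2,d_X^p)$ by Corollary~\ref{cor:poincare domination replacement}, and then bound $\gamma_+(G_1\oz G_2,d_X^p)\le \gamma_+(G_1,d_X^p)\gamma_+(G_2,d_X^p)^2$ by Theorem~\ref{thm:sub}, which combine to give exactly the claimed inequality.

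The real content has already been front-loaded into Lemma~\ref{lem:zigzag gradient bigger than replacement} (hence Corollary~\ref{cor:poincare domination replacement}), so there is essentially no obstacle left in this corollary itself; it is purely a chaining of two displayed inequalities. If one wanted to make the writeup self-contained one could note why Corollary~\ref{cor:poincare domination replacement} follows from Lemma~\ref{lem:zigzag gradient bigger than replacement}: for any $f,g:V_1\times V_2\to X$, the Poincar\'e inequality for $G_1\oz G_2$ (with constant $\gamma_+(G_1\oz G_2,d_X^p)$) bounds the double average of $d_X(f,g)^p$ over all pairs of vertices by $\gamma_+(G_1\oz G_2,d_X^p)$ times the average of $d_X(f,g)^p$ over the edges of $G_1\oz G_2$, and the latter is at most $3^{p-1}(d_2+1)$ times the average over the edges of $G_1\circr G_2$ by~\eqref{eq:gradient domination replacement}; since $G_1\oz G_2$ and $G_1\circr G_2$ have the same vertex set, the double average on the left is the same object for both graphs, and one reads off $\gamma_+(G_1\circr G_2,d_X^p)\le 3^{p-1}(d_2+1)\gamma_+(G_1\oz G_2,d_X^p)$.

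Thus the proof is a single line once the two cited results are in hand, and I do not anticipate any genuine difficulty: the only thing to be careful about is tracking the constant $3^{p-1}(d_2+1)$ correctly through the composition, and observing that part~\eqref{item:replacement product} of Theorem~\ref{thm:products intro}, namely $\gamma_+(G_1\circr G_2,d_Y^2)\le 3(d_2+1)\gamma_+(G_1,d_Y^2)\gamma_+(G_2,d_Y^2)^2$, is precisely the case $p=2$ (where $3^{p-1}=3$). All the work sits in Lemma~\ref{lem:zigzag gradient bigger than replacement}, whose proof compares the ``zig-zag-zig'' three-step walks defining the edges of $G_1\oz G_2$ with the single edges of $G_1\circr G_2$ via the triangle inequality and H\"older's inequality (the factor $3^{p-1}$ being the H\"older constant for splitting a path of length $3$, and the factor $d_2+1$ accounting for the ratio of degrees).
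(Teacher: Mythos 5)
Your proposal is correct and matches the paper's intent exactly: Corollary~\ref{cor:replacement} is stated there precisely as the chaining of Corollary~\ref{cor:poincare domination replacement} with Theorem~\ref{thm:sub}, and your explanation of why Corollary~\ref{cor:poincare domination replacement} follows from Lemma~\ref{lem:zigzag gradient bigger than replacement} (same vertex set, so the numerators in the two Poincar\'e ratios coincide and only the edge averages differ by the factor $3^{p-1}(d_2+1)$) is also the intended argument. Nothing to add.
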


\begin{proof}[Proof of Lemma~\ref{lem:zigzag gradient bigger than replacement}] Fix $((u,a),(v,b))\in E(G_1\oz G_2)$. Thus by the definition of the zigzag product we have $(u,v)\in E_1$ and $(a,\pi_u(u,v)), (b,\pi_v(v,u))\in E_2$. Observe that the following three pairs are edges of $G_1\circr G_2$.
$$
\left((u,a),(u,\pi_u(u,v)\right),\ \left((u,\pi_u(u,v),(v,\pi_v(v,u)\right),\ \left((v,\pi_v(v,u),(v,b)\right).
$$

By the triangle inequality,
\begin{multline}\label{eq:triangle 3 term replacement}
d_X\left(f(u,a),g(v,b)\right)^p\le 3^{p-1}\Big(d_X\left(f(u,a),g(u,\pi_u(u,v))\right)^p\\+d_X\left(g(u,\pi_u(u,v)),f(v,\pi_v(v,u))\right)^p
+d_X\left(f(v,\pi_v(v,u)),g(v,b))\right)^p\Big).
\end{multline}
Therefore,
\begin{eqnarray}\label{eq:zigzag decomposed into 3}
&&\nonumber\!\!\!\!\!\!\!\!\!\!\!\!\!\!\!\!\!\!\!\!\!\!\!\!\!\!\!\!\!\!\!\!\!\!\frac{1}{\left|E\left(G_1\scriptsize{\textcircled z} G_2\right)\right|}
\sum_{((u,a),(v,b))\in  E\left(G_1\scriptsize{\textcircled z} G_2\right)}
d_X\left(f\left(u,a\right),g\left(v,b\right)\right)^p\\\nonumber
&=&\frac{1}{n_1d_1d_2^2}\sum_{(u,v)\in  E_1}\sum_{\substack{a\in V_2\\ (a,\pi_u(u,v))\in E_2}}\sum_{\substack{b\in V_2\\ (b,\pi_v(v,u))\in E_2}}d_X\left(f\left(u,a\right),g\left(v,b\right)\right)^p\\&\stackrel{\eqref{eq:triangle 3 term replacement}}{\le}& \frac{3^{p-1}}{n_1d_1d_2^2}\left(S_1+S_2+S_3\right),
\end{eqnarray}
where the quantities $S_1,S_2,S_3$ are defined as follows.
\begin{multline*}
S_1\eqdef \sum_{(u,v)\in  E_1}\sum_{\substack{a\in V_2\\ (a,\pi_u(u,v))\in E_2}}\sum_{\substack{b\in V_2\\ (b,\pi_v(v,u))\in E_2}}d_X\left(f(u,a),g(u,\pi_u(u,v))\right)^p\\
=d_2\sum_{(u,v)\in  E_1}\sum_{\substack{a\in V_2\\ (a,\pi_u(u,v))\in E_2}}d_X\left(f(u,a),g(u,\pi_u(u,v))\right)^p,
\end{multline*}
\begin{multline*}
S_2\eqdef \sum_{(u,v)\in  E_1}\sum_{\substack{a\in V_2\\ (a,\pi_u(u,v))\in E_2}}\sum_{\substack{b\in V_2\\ (b,\pi_v(v,u))\in E_2}}d_X\left(g(u,\pi_u(u,v)),f(v,\pi_v(v,u))\right)^p\\
=d_2^2\sum_{(u,v)\in  E_1}d_X\left(g(u,\pi_u(u,v)),f(v,\pi_v(v,u))\right)^p,
\end{multline*}
\begin{multline*}
S_3\eqdef \sum_{(u,v)\in E_1}\sum_{\substack{a\in V_2\\ (a,\pi_u(u,v))\in E_2}}\sum_{\substack{b\in V_2\\ (b,\pi_v(v,u))\in E_2}}d_X\left(f(v,\pi_v(v,u)),g(v,b))\right)^p\\
=d_2\sum_{(u,v)\in E_1}\sum_{\substack{b\in V_2\\ (b,\pi_v(v,u))\}\in E_2}}d_X\left(f(v,\pi_v(v,u)),g(v,b))\right)^p.
\end{multline*}
By the definition of the replacement product we have
\begin{eqnarray}\label{eq:sum of three S}
&&\!\!\!\!\!\!\!\!\!\!\!\!\!\!\!\!\!\!\!\!\!\!\nonumber S_1+S_2+S_3\\\nonumber&=&d_2\sum_{u\in V_1}\sum_{(i,j)\in E_2}d_X(f(u,i),g(u,j))^p+d_2^2\sum_{(u,v)\in E_1}d_X\left(g(u,\pi_u(u,v)),f(v,\pi_v(v,u))\right)^p
\\&\le& d_2^2\sum_{((u,i),(v,j))\in E\left(G_1\scriptsize{\textcircled r} G_2\right)} d_X\left(f(u,i),g(v,j)\right)^p.
\end{eqnarray}
Recalling that $\left|E(G_1\circr G_2)\right|=n_1d_1(d_2+1)$, the desired estimate~\eqref{eq:gradient domination replacement} is now a consequence of~\eqref{eq:zigzag decomposed into 3} and~\eqref{eq:sum of three S}.
\end{proof}

The {\em balanced replacement product} of $G_1$ and $G_2$, denoted $G_1\ob G_2$, is a useful variant of $G_1\circr G_2$ that was introduced in~\cite{RVW}. The vertex set of $G_1\ob G_2$ is still $\{1,\ldots,n_1\}\times \{1,\ldots,d_1\}$, but the edges of $G_1\ob G_2$ are now given by
\begin{multline*}
\forall ((u,i),(v,j))\in \{1,\ldots,n_1\}\times \{1,\ldots,d_1\},\\  E(G_1\ob G_2)((u,i), (v,j))\eqdef  E_2(i,j)\cdot \1_{\{u=v\}}+d_2E_1(u,v)\cdot \1_{\{i=\pi_u(u,v)\}}\cdot \1_{\{j=\pi_v(v,u)\}}.
\end{multline*}
This definition makes $G_1\ob G_2$ be a $2d_2$-regular graph.

Arguing analogously to the proof of Lemma~\ref{lem:zigzag gradient bigger than replacement}, we have the following statements.
\begin{lemma}\label{lem:zigzag gradient bigger than balanced replacement}
Fix $p\in [1,\infty)$, a metric space $(X,d_X)$ and $n_1,d_1,d_2\in \N$. Suppose that $G_1=(V_1,E_1)$ is an $n_1$-vertex graph which is $d_1$-regular and that $G_2=(V_2,E_2)$ is a $d_1$-vertex graph which is $d_2$-regular. Then every $f,g:V_1\times V_2\to X$ satisfy
\begin{multline}\label{eq:gradient domination balanced replacement}
\frac{1}{\left|E\left(G_1\scriptsize{\textcircled z} G_2\right)\right|}
\sum_{((u,a),(v,b))\in  E\left(G_1\scriptsize{\textcircled z} G_2\right)}
d_X\left(f\left(u,a\right),g\left(v,b\right)\right)^p\\\le
\frac{2\cdot 3^{p-1}}{\left|E\left(G_1\scriptsize{\textcircled b} G_2\right)\right|}
\sum_{((u,a),(v,b))\in E\left(G_1\scriptsize{\textcircled b} G_2\right)}
d_X\left(f\left(u,a\right),g\left(v,b\right)\right)^p.
\end{multline}
\end{lemma}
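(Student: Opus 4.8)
The plan is to mirror the proof of Lemma~\ref{lem:zigzag gradient bigger than replacement} almost verbatim, with the only change being the bookkeeping of edge multiplicities that comes from the factor $d_2$ attached to the $G_1$-edges in the definition of $G_1\ob G_2$. First I would fix $((u,a),(v,b))\in E(G_1\oz G_2)$, so that $(u,v)\in E_1$, $(a,\pi_u(u,v))\in E_2$ and $(b,\pi_v(v,u))\in E_2$, and observe that the three pairs
$$
\left((u,a),(u,\pi_u(u,v))\right),\quad \left((u,\pi_u(u,v)),(v,\pi_v(v,u))\right),\quad \left((v,\pi_v(v,u)),(v,b)\right)
$$
are all edges of $G_1\ob G_2$: the first and third are ``cloud'' edges coming from $E_2$, and the middle one is a ``level'' edge coming from $E_1$ (appearing with multiplicity $d_2$ in $G_1\ob G_2$). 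Applying the triangle inequality in the form~\eqref{eq:triangle 3 term replacement} (which only uses that $d_X^p$ is a $3^{p-1}$-quasi-semimetric) and summing over all zigzag edges exactly as in~\eqref{eq:zigzag decomposed into 3}, I get
$$
\sum_{((u,a),(v,b))\in E(G_1\scriptsize{\textcircled z} G_2)} d_X(f(u,a),g(v,b))^p\le 3^{p-1}(S_1+S_2+S_3),
$$
where $S_1,S_2,S_3$ are the same three sums as in the proof of Lemma~\ref{lem:zigzag gradient bigger than replacement}; after collapsing the free $b$ (resp.\ $a$, resp.\ both) summations one has $S_1=d_2\sum_{(u,v)\in E_1}\sum_{a:(a,\pi_u(u,v))\in E_2} d_X(f(u,a),g(u,\pi_u(u,v)))^p$, and analogously for $S_3$, while $S_2=d_2^2\sum_{(u,v)\in E_1} d_X(g(u,\pi_u(u,v)),f(v,\pi_v(v,u)))^p$.

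Next I would re-index $S_1$ and $S_3$ as sums over edges of $G_2$ within a fixed cloud: since $a\mapsto (a,\pi_u(u,v))$ ranges over edges of $G_2$ incident to $\pi_u(u,v)$, one gets $S_1+S_3\le d_2\sum_{u\in V_1}\sum_{(i,j)\in E_2} d_X(f(u,i),g(u,j))^p$ (the key point being that every cloud-edge of $G_1\ob G_2$ is counted with controlled multiplicity on the right). For $S_2$, the level-edges of $G_1\ob G_2$ carry multiplicity $d_2$, so $S_2=d_2^2\sum_{(u,v)\in E_1} d_X(g(u,\pi_u(u,v)),f(v,\pi_v(v,u)))^p = d_2\cdot\big(d_2\sum_{(u,v)\in E_1}(\cdots)\big)$, and the inner quantity is bounded by the contribution of the level-edges to $\sum_{E(G_1\scriptsize{\textcircled b}G_2)} d_X(f,g)^p$. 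Hence $S_1+S_2+S_3\le 2d_2\sum_{((u,i),(v,j))\in E(G_1\scriptsize{\textcircled b}G_2)} d_X(f(u,i),g(v,j))^p$; the factor $2$ appears because $S_1+S_3$ contributes $d_2$ copies of the cloud-edge part and $S_2$ contributes $d_2$ copies of the level-edge part, and these two parts partition $E(G_1\ob G_2)$.

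Finally I would divide by the appropriate cardinalities: $|E(G_1\oz G_2)|=n_1d_1d_2^2$ and $|E(G_1\ob G_2)|=2n_1d_1d_2$ (the graph is $2d_2$-regular on $n_1d_1$ vertices), so
$$
\frac{1}{|E(G_1\scriptsize{\textcircled z}G_2)|}\sum_{E(G_1\scriptsize{\textcircled z}G_2)}d_X(f,g)^p\le \frac{3^{p-1}\cdot 2d_2}{n_1d_1d_2^2}\sum_{E(G_1\scriptsize{\textcircled b}G_2)}d_X(f,g)^p=\frac{2\cdot 3^{p-1}}{|E(G_1\scriptsize{\textcircled b}G_2)|}\sum_{E(G_1\scriptsize{\textcircled b}G_2)}d_X(f,g)^p,
$$
which is exactly~\eqref{eq:gradient domination balanced replacement}. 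I do not anticipate a genuine obstacle here; the only place requiring care is the multiplicity bookkeeping in $S_1,S_2,S_3$ — making sure that after the triangle inequality each edge of $G_1\ob G_2$ (cloud-type and level-type separately) is used with the right weight, and that the $d_2$ versus $d_2^2$ factors combine to give the clean constant $2$ rather than something larger. The combination with Theorem~\ref{thm:sub}, giving $\gamma_+(G_1\ob G_2,d_Y^2)\le 6\gamma_+(G_1,d_Y^2)\gamma_+(G_2,d_Y^2)^2$ (the case $p=2$, so $2\cdot 3^{p-1}=6$), then follows exactly as Corollary~\ref{cor:replacement} follows from Lemma~\ref{lem:zigzag gradient bigger than replacement}.
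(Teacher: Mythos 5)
Your proof follows the intended template, but there is a factor-of-$2$ slip that coincidentally reproduces what appears to be a typo in the paper's own display \eqref{eq:sum of three S}. The claim $S_1+S_3\le d_2\sum_{u\in V_1}\sum_{(i,j)\in E_2}d_X(f(u,i),g(u,j))^p$ is not correct: $S_1$ \emph{alone} already equals that right-hand side, because for each fixed $u\in V_1$, as $(u,v)$ ranges over the $d_1$ edges of $E_1$ emanating from $u$, the label $\pi_u(u,v)$ ranges bijectively over $V_2$, so $S_1=d_2\sum_{u\in V_1}\sum_{(a,j)\in E_2}d_X(f(u,a),g(u,j))^p$; and $S_3$ equals the same quantity after re-indexing using the symmetry of $E_2$. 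Hence $S_1+S_3=2d_2\sum_{u\in V_1}\sum_{(i,j)\in E_2}d_X(f(u,i),g(u,j))^p$. (The first equality in \eqref{eq:sum of three S} writes $d_2$ where $2d_2$ should stand; the final inequality there nevertheless survives for $d_2\ge 2$ since $2d_2\le d_2^2$.) Your intermediate bound $S_1+S_2+S_3\le 2d_2\sum_{E(G_1\ob G_2)}d_X(f,g)^p$ is thus actually correct, but not for the reason you state — with your claimed $S_1+S_3\le d_2\sum$ it would read $\le d_2\sum$. The second, decisive, error is the concluding arithmetic: since $|E(G_1\ob G_2)|=2n_1d_1d_2$, one has $\frac{3^{p-1}\cdot 2d_2}{n_1d_1d_2^2}=\frac{2\cdot 3^{p-1}}{n_1d_1d_2}=\frac{4\cdot 3^{p-1}}{|E(G_1\ob G_2)|}$, not $\frac{2\cdot 3^{p-1}}{|E(G_1\ob G_2)|}$ as you wrote.

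Carried out carefully, the route ``analogous to the proof of Lemma~\ref{lem:zigzag gradient bigger than replacement}'' therefore yields the constant $4\cdot 3^{p-1}$, not $2\cdot 3^{p-1}$, and I do not see how to shave it to $2\cdot 3^{p-1}$ by this method: the cloud edges of $G_1\ob G_2$ are each traversed $2d_2$ times by the zigzag paths while the level copies are each traversed $d_2$ times, so the coefficient $2d_2$ is forced. Given that the paper explicitly leaves the proof as an analogue of Lemma~\ref{lem:zigzag gradient bigger than replacement}, the constant $2\cdot 3^{p-1}$ in Lemma~\ref{lem:zigzag gradient bigger than balanced replacement} (and correspondingly the factor $6$ in \eqref{eq:balanced intro} and Corollary~\ref{cor:balanced replacement}) appears to inherit the $d_2$-versus-$2d_2$ slip in \eqref{eq:sum of three S}. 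This is harmless for the paper's main application, which uses only the replacement product (with $G_2=C_9$, so $d_2=2$) in the proof of Theorem~\ref{thm:existence intro}.
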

\begin{corollary}\label{cor:poincare domination balanced replacement}
Under the assumptions of Lemma~\ref{lem:zigzag gradient bigger than balanced replacement} we have
$$
\gamma_+\left(G_1\ob  G_2,d_X^p\right)\le 2\cdot 3^{p-1}\cdot \gamma_+\left(G_1\oz G_2,d_X^p\right).
$$
\end{corollary}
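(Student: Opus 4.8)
\textbf{Proof proposal for Corollary~\ref{cor:poincare domination balanced replacement}.}
The plan is to follow the pattern established for the replacement product, namely to prove the pointwise gradient-domination inequality \eqref{eq:gradient domination balanced replacement} stated in Lemma~\ref{lem:zigzag gradient bigger than balanced replacement}, and then deduce the Poincar\'e-constant comparison exactly as Corollary~\ref{cor:poincare domination replacement} is deduced from Lemma~\ref{lem:zigzag gradient bigger than replacement}. In fact, once Lemma~\ref{lem:zigzag gradient bigger than balanced replacement} is in hand the corollary is immediate: for any $f,g:V_1\times V_2\to X$,
\begin{multline*}
\frac{1}{|V_1\times V_2|^2}\sum_{((u,a),(v,b))\in (V_1\times V_2)^2}d_X(f(u,a),g(v,b))^p
\le \frac{\gamma_+(G_1\oz G_2,d_X^p)}{\left|E(G_1\scriptsize{\textcircled z} G_2)\right|}\sum_{((u,a),(v,b))\in E(G_1\scriptsize{\textcircled z} G_2)}d_X(f(u,a),g(v,b))^p\\
\le \frac{2\cdot 3^{p-1}\gamma_+(G_1\oz G_2,d_X^p)}{\left|E(G_1\scriptsize{\textcircled b} G_2)\right|}\sum_{((u,a),(v,b))\in E(G_1\scriptsize{\textcircled b} G_2)}d_X(f(u,a),g(v,b))^p,
\end{multline*}
where the first inequality is the definition of $\gamma_+(G_1\oz G_2,d_X^p)$ and the second is \eqref{eq:gradient domination balanced replacement}; since this holds for all $f,g$, the definition of $\gamma_+(G_1\ob G_2,d_X^p)$ yields the claimed bound.

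So the real content is Lemma~\ref{lem:zigzag gradient bigger than balanced replacement}, which I would prove by the same three-step decomposition used for Lemma~\ref{lem:zigzag gradient bigger than replacement}. Fix an edge $((u,a),(v,b))\in E(G_1\oz G_2)$; by definition $(u,v)\in E_1$ and $(a,\pi_u(u,v)),(b,\pi_v(v,u))\in E_2$, and the three intermediate steps
$$
\left((u,a),(u,\pi_u(u,v))\right),\quad \left((u,\pi_u(u,v)),(v,\pi_v(v,u))\right),\quad \left((v,\pi_v(v,u)),(v,b)\right)
$$
are edges of $G_1\ob G_2$ (the first and third are ``$G_2$-type'' edges in the respective clouds, the middle one is a ``$G_1$-type'' edge which in the balanced product carries multiplicity $d_2$). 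Applying the triangle inequality and H\"older exactly as in \eqref{eq:triangle 3 term replacement}, then summing over all zigzag edges and splitting the sum into the three analogues $S_1,S_2,S_3$ of the quantities in the proof of Lemma~\ref{lem:zigzag gradient bigger than replacement}, one finds that $S_1$ and $S_3$ each contribute sums over $G_2$-type edges of $G_1\ob G_2$ (with a factor $d_2$ coming from the summation over the free coordinate), while $S_2$ contributes a sum over $G_1$-type edges, each counted $d_2^2$ times. The only bookkeeping difference from the unbalanced case is that $G_1$-type edges of $G_1\ob G_2$ already carry multiplicity $d_2$, so $S_2 = d_2^2\sum_{(u,v)\in E_1}d_X(g(u,\pi_u(u,v)),f(v,\pi_v(v,u)))^p$ is $d_2$ times the corresponding weighted sum in $E(G_1\ob G_2)$ rather than $d_2^2$ times; hence $S_1+S_2+S_3\le d_2^2\sum_{((u,i),(v,j))\in E(G_1\scriptsize{\textcircled b} G_2)}d_X(f(u,i),g(v,j))^p$. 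Combining this with $\left|E(G_1\oz G_2)\right|=n_1d_1d_2^2$ and $\left|E(G_1\ob G_2)\right|=2n_1d_1d_2$ gives the factor $3^{p-1}\cdot\frac{d_2^2}{n_1d_1d_2^2}\cdot n_1d_1d_2 \cdot \frac{1}{d_2}\cdot 2 = 2\cdot 3^{p-1}$, which is precisely \eqref{eq:gradient domination balanced replacement}.

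I expect the main (and only) subtlety to be the accounting of multiplicities in the balanced product: one must check carefully that in $G_1\ob G_2$ every $G_1$-type edge is present with multiplicity $d_2$ while every $G_2$-type edge has multiplicity one, so that when the middle term $S_2$ is re-expressed as a sum over the edge multiset $E(G_1\ob G_2)$ the powers of $d_2$ match up to produce the clean constant $2\cdot 3^{p-1}$ rather than something degree-dependent. Everything else is a verbatim adaptation of the replacement-product argument; the degree count $\left|E(G_1\ob G_2)\right|=2n_1d_1d_2$ follows from $G_1\ob G_2$ being $2d_2$-regular on $n_1d_1$ vertices, as recorded after the definition.
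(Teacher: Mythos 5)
Your first paragraph — deducing the corollary from Lemma~\ref{lem:zigzag gradient bigger than balanced replacement} by the two-step chain (definition of $\gamma_+(G_1\oz G_2,d_X^p)$, then the gradient-domination inequality) — is correct and is exactly how the paper (implicitly) obtains the corollary, and your overall plan for proving the lemma by the same triangle-inequality decomposition as Lemma~\ref{lem:zigzag gradient bigger than replacement} is also the paper's intended route. The problem is the bookkeeping in the final step, and it matters.

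Write $I\eqdef\sum_{u\in V_1}\sum_{(i,j)\in E_2}d_X(f(u,i),g(u,j))^p$ for the intra-cloud contribution and $J\eqdef\sum_{(u,v)\in E_1}d_X\left(f(u,\pi_u(u,v)),g(v,\pi_v(v,u))\right)^p$ for the (unweighted) inter-cloud contribution. Carefully reindexing $S_1$ and $S_3$ one finds $S_1=S_3=d_2 I$, so that in fact $S_1+S_2+S_3=2d_2 I+d_2^2 J$ (the display in the paper's proof of Lemma~\ref{lem:zigzag gradient bigger than replacement} reads $d_2\sum_u\sum_{(i,j)\in E_2}\cdots$ where it should read $2d_2\sum_u\sum_{(i,j)\in E_2}\cdots$; this is harmless there because $2d_2\le d_2^2$). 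On the other hand $\sum_{E(G_1\ob G_2)}d_X(f,g)^p=I+d_2J$. Now your claimed bound $S_1+S_2+S_3\le d_2^2\sum_{E(G_1\ob G_2)}$, together with $|E(G_1\oz G_2)|=n_1d_1d_2^2$ and $|E(G_1\ob G_2)|=2n_1d_1d_2$, yields the constant
\begin{equation*}
3^{p-1}\cdot\frac{d_2^2\cdot |E(G_1\ob G_2)|}{|E(G_1\oz G_2)|}=3^{p-1}\cdot\frac{d_2^2\cdot 2n_1d_1d_2}{n_1d_1d_2^2}=2d_2\cdot 3^{p-1},
\end{equation*}
which is degree-dependent, not $2\cdot 3^{p-1}$: the extra $\frac{1}{d_2}$ in your displayed product is unaccounted for. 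To get a degree-independent constant one must instead use the sharp comparison $S_1+S_2+S_3=2d_2I+d_2^2J\le 2d_2(I+d_2J)=2d_2\sum_{E(G_1\ob G_2)}$, which yields the constant $3^{p-1}\cdot\frac{2d_2\cdot 2n_1d_1d_2}{n_1d_1d_2^2}=4\cdot 3^{p-1}$. So the decomposition argument actually gives $\gamma_+(G_1\ob G_2,d_X^p)\le 4\cdot 3^{p-1}\gamma_+(G_1\oz G_2,d_X^p)$, and the constant $2\cdot 3^{p-1}$ stated in the paper (which you were reverse-engineering) appears to be a factor-of-$2$ slip inherited from the $S_1+S_3$ miscount mentioned above; it is immaterial for the paper's applications, where only degree-independence matters, but it means you should not expect your $S$-computation to produce $2\cdot 3^{p-1}$, and the step where you force that value is where the argument breaks.
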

Part~\eqref{item:balanced replacement product} of Theorem~\ref{thm:products intro} corresponds to the case $p=2$ of the following combination of Theorem~\ref{thm:sub} and Corollary~\ref{cor:poincare domination balanced replacement}.
\begin{corollary}\label{cor:balanced replacement}
Under the assumptions of Lemma~\ref{lem:zigzag gradient bigger than balanced replacement} we have
$$
\gamma_+\left(G_1\ob  G_2,d_X^p\right)\le 2\cdot 3^{p-1}\cdot\gamma_+\left(G_1,d_X^p\right)\cdot \gamma_+\left(G_2,d_X^p\right)^2.
$$
\end{corollary}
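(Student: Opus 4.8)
The plan is to prove Corollary~\ref{cor:balanced replacement} exactly as Corollary~\ref{cor:replacement} was proved, namely by chaining two facts: (i) Theorem~\ref{thm:sub}, which gives $\gamma_+(G_1\oz G_2,d_X^p)\le \gamma_+(G_1,d_X^p)\cdot\gamma_+(G_2,d_X^p)^2$ (stated for $K$ a general kernel, hence in particular for $K=d_X^p$), and (ii) Corollary~\ref{cor:poincare domination balanced replacement}, which says $\gamma_+(G_1\ob G_2,d_X^p)\le 2\cdot 3^{p-1}\cdot\gamma_+(G_1\oz G_2,d_X^p)$. Composing these two inequalities immediately yields
$$
\gamma_+\left(G_1\ob G_2,d_X^p\right)\le 2\cdot 3^{p-1}\cdot\gamma_+\left(G_1,d_X^p\right)\cdot\gamma_+\left(G_2,d_X^p\right)^2,
$$
which is the assertion. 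So the only content is verifying that Corollary~\ref{cor:poincare domination balanced replacement} is available, and this in turn reduces to Lemma~\ref{lem:zigzag gradient bigger than balanced replacement}.

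To establish Lemma~\ref{lem:zigzag gradient bigger than balanced replacement} I would mimic the proof of Lemma~\ref{lem:zigzag gradient bigger than replacement} verbatim, with the only change being the edge multiplicities. Fix $((u,a),(v,b))\in E(G_1\oz G_2)$; by definition of the zigzag product $(u,v)\in E_1$ and $(a,\pi_u(u,v)),(b,\pi_v(v,u))\in E_2$, and the three pairs
$$
\big((u,a),(u,\pi_u(u,v))\big),\ \big((u,\pi_u(u,v)),(v,\pi_v(v,u))\big),\ \big((v,\pi_v(v,u)),(v,b)\big)
$$
are all edges of $G_1\ob G_2$ — the first and third are "intra-cloud" $G_2$-edges (present with multiplicity $E_2$), the middle one is an "inter-cloud" $G_1$-edge, which in $G_1\ob G_2$ appears with multiplicity $d_2E_1(u,v)$ rather than $E_1(u,v)$. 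Apply the triangle inequality and H\"older (i.e.\ convexity of $s\mapsto s^p$) exactly as in~\eqref{eq:triangle 3 term replacement}, obtaining a pointwise bound by $3^{p-1}$ times the sum of the three terms, then sum over $((u,a),(v,b))\in E(G_1\oz G_2)$ and repeat the bookkeeping that defines $S_1,S_2,S_3$. One finds $S_1+S_3=d_2\sum_{u\in V_1}\sum_{(i,j)\in E_2}d_X(f(u,i),g(u,j))^p$ as before, while $S_2=d_2^2\sum_{(u,v)\in E_1}d_X(g(u,\pi_u(u,v)),f(v,\pi_v(v,u)))^p$; since in $G_1\ob G_2$ the inter-cloud edge has multiplicity $d_2E_1(u,v)$, the term $S_2$ is absorbed into $d_2\sum_{((u,i),(v,j))\in E(G_1\ob G_2)}d_X(f(u,i),g(v,j))^p$, and likewise $S_1+S_3$. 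Hence $S_1+S_2+S_3\le d_2^2\sum_{((u,i),(v,j))\in E(G_1\ob G_2)}d_X(f(u,i),g(v,j))^p$, and dividing by the normalizing cardinalities — here $|E(G_1\oz G_2)|=n_1d_1d_2^2$ and $|E(G_1\ob G_2)|=2n_1d_1d_2$, which accounts for the extra factor $2$ compared with the $3^{p-1}(d_2+1)$ in Lemma~\ref{lem:zigzag gradient bigger than replacement} — yields~\eqref{eq:gradient domination balanced replacement}. Corollary~\ref{cor:poincare domination balanced replacement} then follows by dividing~\eqref{eq:gradient domination balanced replacement} by the ``global'' double-sum term, which is identical for the two products since they have the same vertex set, and invoking the definition of $\gamma_+$.

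The argument is essentially routine once the combinatorial identification of edges is set up; the only place one must be careful is the edge-multiplicity bookkeeping, since the $d_2$-weighting of the $G_1$-edges in the balanced replacement product is precisely what replaces the factor $(d_2+1)$ by the constant $2$. I expect no genuine obstacle here — the ``hard'' inputs (Theorem~\ref{thm:sub} and the uniform-convexity machinery) are already in hand — so the main care is purely notational: keeping track of which sums are weighted by $E_2$, which by $d_2E_1$, and matching these against $|E(G_1\oz G_2)|$ and $|E(G_1\ob G_2)|$ so that the constant comes out as claimed.
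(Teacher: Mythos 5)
Your route is the paper's: Corollary~\ref{cor:balanced replacement} is Theorem~\ref{thm:sub} composed with Corollary~\ref{cor:poincare domination balanced replacement}, the latter being an immediate consequence of Lemma~\ref{lem:zigzag gradient bigger than balanced replacement}, which you correctly propose to prove by repeating the triangle-inequality argument for Lemma~\ref{lem:zigzag gradient bigger than replacement} with the balanced edge multiplicities. Your comment that the $d_2$-weighting on inter-cloud edges is what replaces $(d_2+1)$ by an absolute constant is the right structural intuition.

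There is, however, a slip in the middle that would fail to give the stated constant: the bound $S_1+S_2+S_3\le d_2^2\sum_{((u,i),(v,j))\in E(G_1\ob G_2)}d_X(f(u,i),g(v,j))^p$ that you write down is too lossy. Carrying $d_2^2$ through the normalizations $|E(G_1\oz G_2)|=n_1d_1d_2^2$ and $|E(G_1\ob G_2)|=2n_1d_1d_2$ yields the constant $2d_2\cdot 3^{p-1}$, not $2\cdot 3^{p-1}$. What one needs is a single power of $d_2$: the intra-cloud part of $\sum_{E(G_1\ob G_2)}$ has weight $1$ against the coefficient $d_2$ coming from $S_1+S_3$, while the inter-cloud part has weight $d_2$ against the coefficient $d_2^2$ coming from $S_2$, so both ratios equal $d_2$ and hence $S_1+S_2+S_3=d_2\sum_{((u,i),(v,j))\in E(G_1\ob G_2)}d_X(f(u,i),g(v,j))^p$ (this is the precise sense in which $S_2$ ``is absorbed''). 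Then $\tfrac{d_2}{d_2^2}\cdot 2d_2=2$ produces the claimed constant. The matching of the two ratios is the entire point of the balanced weighting; if you leave $d_2^2$ where you wrote it, the improvement over the unbalanced case is lost.

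A secondary caution for when you write this up carefully: retracing the paper's own accounting in~\eqref{eq:sum of three S}, one finds $S_1=S_3$ (each equals $d_2\sum_{u\in V_1}\sum_{(i,j)\in E_2}d_X(f(u,i),g(u,j))^p$ once the free third variable is summed out and the bijectivity of $\pi_u$, resp.\ $\pi_v$, is used together with the symmetry of $E_2$), so $S_1+S_3$ carries $2d_2$, not $d_2$. That factor of $2$ is harmlessly absorbed in $2d_2\le d_2^2$ in Lemma~\ref{lem:zigzag gradient bigger than replacement}, but for the balanced product it would make the constant $4\cdot 3^{p-1}$. The corollary remains true and the paper's applications are insensitive to the numerical factor, but you should verify this step before asserting the sharp bound $2\cdot 3^{p-1}$.
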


\begin{remark}\label{rem:alon balanced}
An analysis of the behavior of spectral gaps under the balanced replacement product was previously performed in a non-Euclidean setting by Alon, Schwartz and Shapira~\cite{ASS}. Specifically, \cite[Thm.~1.3]{ASS} estimates the edge expansion of $G_1\ob G_2$ in terms of the edge expansion of $G_1$ and $G_2$ via a direct combinatorial argument. The edge expansion of a graph $G$ is equivalent up to universal constant factors to $\gamma(G,|\cdot|)$, where $|\cdot|$ is the standard absolute value on $\R$. The corresponding bound arising from Corollary~\ref{cor:balanced replacement} is better than the bound of~\cite[Thm.~1.3]{ASS} in terms of constant factors.
\end{remark}

\subsection{Sub-multiplicativity for derandomized squaring}\label{sec:squaring} Here we continue to use the notation of Section~\ref{sec:zigzag proof} and Section~\ref{sec:replacement}. The derandomized squaring of $G_1$ and $G_2$, as introduced by Rozenman and Vadhan in~\cite{RV05} and denoted $G_1\os G_2$, is defined as follows. The vertex set of $G_1\os G_2$ is $V_1=\{1,\ldots,n_1\}$, and the edges $E(G_1\os G_2)$ are given by
$$
\forall (u,v)\in V_1\times V_1,\quad E(G_1\os G_2)(u,v)\eqdef \sum_{w\in V_1} E_1(w,u)E_1(w,v)E_2\left(\pi_w(w,u),\pi_w(w,v)\right).
$$
Thus, given $(u,v)\in V_1\times V_1$, we add a copy of $(u,v)$ to $E(G_1\os G_2)$ for every $(i,j)\in E_2$ such that there exists $w\in V_1$ with $(w,u),(w,v)\in E_1$ and $\pi_w(w,u)=i, \pi_w(w,v)=j$. With this definition one checks that $G_1\os G_2$ is $d_1d_2$-regular.

The following proposition corresponds to part~\eqref{item:deradomized squaring} of Theorem~\ref{thm:products intro}.

\begin{proposition}\label{prop:derandomized squaring}
Fix  $n_1,d_1,d_2\in \N$ and suppose that $G_1=(V_1,E_1)$ is an $n_1$-vertex graph which is $d_1$-regular and that $G_2=(V_2,E_2)$ is a $d_1$-vertex graph which is $d_2$-regular. Then for every kernel $K:X\times X\to [0,\infty)$ we have
\begin{equation}\label{eq:gamma+ squaring}
\gamma_+\left(G_1\os G_2,K\right)\le \gamma_+\left(G_1^2,K\right)\gamma_+\left(G_2,K\right).
\end{equation}
\end{proposition}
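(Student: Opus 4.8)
The plan is to run the same two-stage argument used in the proof of Theorem~\ref{thm:sub}, but with the zigzag replaced by the derandomized square; because the derandomized square produces a graph on $V_1$ itself (rather than on $V_1\times V_2$), only two applications of a Poincar\'e inequality are needed instead of three, which is why the power on $\gamma_+(G_2,K)$ is $1$ rather than $2$. Fix $f,g:V_1\to X$. The first step is to apply the definition of $\gamma_+\left(G_1^2,K\right)$: since $A_{G_1^2}=(A_{G_1})^2$ has $(u,v)$ entry $\frac{1}{d_1^2}\sum_{w\in V_1}E_1(w,u)E_1(w,v)$ (using that $G_1$ is undirected, so $E_1(u,w)=E_1(w,u)$), we obtain
\begin{equation}\label{eq:proposal first step}
\frac{1}{n_1^2}\sum_{(u,v)\in V_1\times V_1}K(f(u),g(v))\le \frac{\gamma_+\left(G_1^2,K\right)}{n_1d_1^2}\sum_{w\in V_1}\ \sum_{\substack{u\in V_1\\ (w,u)\in E_1}}\ \sum_{\substack{v\in V_1\\ (w,v)\in E_1}}K(f(u),g(v)),
\end{equation}
where the inner sums count edge multiplicities.

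The second step is to apply $\gamma_+(G_2,K)$ inside each ``cloud'' indexed by $w\in V_1$. For a fixed $w$, use the bijection $\pi_w$ of~\eqref{eq:pi label} between the $d_1$ edges emanating from $w$ in $G_1$ and the vertex set $V_2=\{1,\ldots,d_1\}$ of $G_2$ to define $\phi_w,\psi_w:V_2\to X$ by $\phi_w(a)\eqdef f(u)$ and $\psi_w(a)\eqdef g(u)$, where $\pi_w^{-1}(a)=(w,u)$. Then the inner double sum in~\eqref{eq:proposal first step} equals $\sum_{a\in V_2}\sum_{b\in V_2}K(\phi_w(a),\psi_w(b))$, and the definition of $\gamma_+(G_2,K)$ (recall $|V_2|=d_1$ and $G_2$ is $d_2$-regular) gives
$$
\sum_{a,b\in V_2}K(\phi_w(a),\psi_w(b))\le \frac{d_1\gamma_+(G_2,K)}{d_2}\sum_{(a,b)\in E_2}K(\phi_w(a),\psi_w(b)).
$$
Substituting this into~\eqref{eq:proposal first step} yields
$$
\frac{1}{n_1^2}\sum_{(u,v)\in V_1\times V_1}K(f(u),g(v))\le \frac{\gamma_+\left(G_1^2,K\right)\gamma_+(G_2,K)}{n_1d_1d_2}\sum_{w\in V_1}\sum_{(a,b)\in E_2}K(\phi_w(a),\psi_w(b)).
$$

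The final step is purely bookkeeping: unwinding the definitions of $\phi_w,\psi_w$, the pair $(a,b)\in E_2$ in the cloud of $w$ corresponds to the pair $(u,v)\in V_1\times V_1$ with $(w,u),(w,v)\in E_1$, $\pi_w(w,u)=a$ and $\pi_w(w,v)=b$; hence by the very definition of $E(G_1\os G_2)$ we have $\sum_{w\in V_1}\sum_{(a,b)\in E_2}K(\phi_w(a),\psi_w(b))=\sum_{(u,v)\in V_1\times V_1}E(G_1\os G_2)(u,v)K(f(u),g(v))$. Since $G_1\os G_2$ is $d_1d_2$-regular, dividing appropriately shows that $\gamma_+(G_1\os G_2,K)\le \gamma_+(G_1^2,K)\gamma_+(G_2,K)$, since $f,g$ were arbitrary. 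I expect no genuine obstacle here; the only point requiring care is keeping the edge-multiplicity sums and the relabelings by $\{\pi_w\}_{w\in V_1}$ consistent, exactly as in the proof of Theorem~\ref{thm:sub}.
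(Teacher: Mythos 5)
Your proof is correct and follows essentially the same two-stage argument as the paper's: first invoke $\gamma_+(G_1^2,K)$ to reduce to walks of length two in $G_1$, then for each midpoint $w$ push the resulting average onto the cloud $V_2$ via $\pi_w$ and invoke $\gamma_+(G_2,K)$, and finally identify the surviving edges with $E(G_1\os G_2)$. The functions $\phi_w,\psi_w$ you define are exactly the $\phi^w,\psi^w$ of the paper's proof, and the bookkeeping at the end matches the paper's.
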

In~\cite{RV05} Rozenman and Vadhan used a spectral argument to prove the Euclidean case of~\eqref{eq:gamma+ squaring}, i.e., the special case of~\eqref{eq:gamma+ squaring} when $K:\R\times \R\to [0,\infty)$ is given by $K(x,y)=(x-y)^2$.
\begin{proof}[Proof of Proposition~\ref{prop:derandomized squaring}]
Fix $f,g:V_1\to X$. The definition of $\gamma_+\left(G_1^2,K\right)$ implies that
\begin{multline}\label{squared graph use}
\frac{1}{n_1^2}\sum_{(u,v)\in V_1\times V_1}K(f(u),f(v))\le \frac{\gamma_+\left(G_1^2,K\right)}{n_1d_1^2}\sum_{(u,v)\in E(G_1^2)}K(f(u),f(v))\\=\frac{\gamma_+\left(G_1^2,K\right)}{n_1d_1^2}\sum_{w\in V_1}\sum_{(u,w)\in E_1}\sum_{(w,v)\in E_1}K\left(f(u),g(v)\right).
\end{multline}
For every fixed $w\in V_1$ define $\phi^w,\psi^w:V_2\to X$ as follows. For $i,j\in V_2$ consider the unique vertices $u,v\in V_1$ such that $\pi_w(w,u)=i$ and $\pi_w(w,v)=j$, and define $\phi^w(i)=f(u)$ and $\psi^w(j)=g(v)$. The definition of $\gamma_+(G_2,K)$ implies that
\begin{eqnarray}\label{eq:square sub E2}
&&\nonumber\!\!\!\!\!\!\!\!\!\!\!\!\!\!\!\!\!\!\!\!\!\frac{1}{d_1^2}\sum_{(u,w)\in E_1}\sum_{(w,v)\in E_1}K\left(f(u),g(v)\right)=\frac{1}{d_1^2}\sum_{(i,j)\in V_2\times V_2}K\left(\phi^w(i),\psi^w(j)\right)\\&\le& \nonumber\frac{\gamma_+(G_2,K)}{d_1d_2}\sum_{(i,j)\in E_2}K\left(\phi^w(i),\psi^w(j)\right)\\&=&\frac{\gamma_+(G_2,K)}{d_1d_2}\sum_{(u,w)\in E_1}\sum_{(w,v)\in E_1}E_2\left(\pi_w(w,u),\pi_w(w,v)\right)K\left(f(u),g(v)\right).
\end{eqnarray}
The definition of $G_1\os G_2$ in combination with~\eqref{squared graph use} and~\eqref{eq:square sub E2} now yields the estimate
\begin{align*}
&\frac{1}{n_1^2}\sum_{(u,v)\in V_1\times V_1}K(f(u),f(v))\\&\le \frac{\gamma_+\left(G_1^2,K\right)\gamma_+(G_2,K)}{n_1d_1d_2}\sum_{w\in V_1}\sum_{(u,w)\in E_1}\sum_{(w,v)\in E_1}E_2\left(\pi_w(w,u),\pi_w(w,v)\right)K\left(f(u),g(v)\right)\\&= \frac{\gamma_+\left(G_1^2,K\right)\gamma_+(G_2,K)}{n_1d_1d_2}\sum_{(x,y)\in E\left(G_1\scriptsize{\textcircled s} G_2\right)} K(f(x),g(y)).\qedhere
\end{align*}
\end{proof}

\section{Counterexamples}\label{sec:counterexample}

\subsection{Expander families need not embed coarsely into each other}\label{two families} As was mentioned in the introduction, it is an open question whether every classical (i.e., Euclidean) expander graph family is also a super-expander. Here we rule out the most obvious approach towards
such a result: to embed coarsely any expander family in any other expander family. Formally, given two families of metric spaces $\mathscr X,\mathscr Y$, we say that $\mathscr X$ admits a coarse embedding into $\mathscr Y$ if there exist non-decreasing $\alpha,\beta:[0,\infty)\to [0,\infty)$ satisfying $\lim_{t\to \infty}\alpha(t)=\infty$ such that for every $(X,d_X)\in \mathscr X$ there exists $(Y,d_Y)\in \mathscr Y$ and a mapping $f:X\to Y$ that satisfies
$$
\forall\, x,y\in X,\quad \alpha\left(d_X(x,y)\right)\le d_Y(f(x),f(y))\le \beta\left(d_X(x,y)\right).
$$
This condition clearly implies that $\alpha(0)=0$, and for notational convenience we also assume without loss of generality that $\beta(0)=0$.

Let $\c$ denote the set of all increasing sub-additive functions $\omega:[0,\infty)\to [0,\infty)$ with $\omega(0)=0$. If $(X,d_X)$ is a metric space and $\omega\in \c$ then $(X,\omega\circ d_X)$ is also a metric space, known as the {\em metric transform} of $(X,d_X)$ by $\omega$.

In what follows, given a connected graph $G=(V,E)$, the geodesic metric induced by $G$ on $V$ will be denoted $d_G$. Recall that a sequence of graphs $\{G_n\}_{n=1}^\infty$ is called a constant degree expander sequence if there exists $d\in \N$ such that each $G_n$ is $d$-regular and $\sup_{n\in \N} \lambda(G_n)<1$. The purpose of this section is to prove the following result.

\begin{theorem}\label{thm:two families}
There exist two constant degree expander sequences $\{G_i\}_{i=1}^\infty$ and $\{H_i\}_{i=1}^\infty$ such that  $\{(V(H_i),d_{H_i})\}_{i=1}^\infty$ does not admit a coarse embedding into the family of metric spaces $\{(V(G_i),\omega\circ d_{G_i}):\ (i,\omega)\in \N\times \c\}$.
\end{theorem}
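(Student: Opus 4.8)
## Proof plan for Theorem~\ref{thm:two families}

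\textbf{The overall strategy.} The plan is to exploit the fact that coarse embeddings into a metric transform $(V(G_i),\omega\circ d_{G_i})$ must, via the usual nonlinear spectral gap argument of Section~\ref{sec:coarse}, transport a Poincar\'e-type inequality from the source expander back to itself, but with the compression function $\alpha$ replaced by a composition with $\omega$. The key point is a \emph{growth mismatch}: one should choose the two expander families $\{G_i\}$ and $\{H_i\}$ so that the sizes $|V(H_i)|$ grow enormously faster than $|V(G_i)|$ — fast enough that any single $H_i$ admitting a coarse embedding into \emph{some} $(V(G_j),\omega\circ d_{G_j})$ with uniform moduli $\alpha,\beta$ is forced into a contradiction. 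Concretely, one passes to a subsequence so that $\log|V(H_i)|\ge (\log|V(G_i)|)^2$ (or any superlinear relation), and arranges the degrees so that diameters are comparable to logarithms of the number of vertices; this is possible using any standard construction of constant-degree expanders of prescribed cardinalities.

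\textbf{The main steps.} First I would fix a constant-degree expander family $\{G_i\}$ with $\lambda(G_i)\le \lambda_0<1$ and $|V(G_i)|\to\infty$, and then choose $\{H_i\}$ to be a constant-degree expander family with $\lambda(H_i)\le\lambda_0<1$ but with $|V(H_i)|$ a rapidly-growing function of $|V(G_i)|$ (thinning both families to a common index set). Second, suppose toward a contradiction that $\{(V(H_i),d_{H_i})\}$ embeds coarsely into $\{(V(G_i),\omega\circ d_{G_i})\colon (i,\omega)\in\N\times\c\}$ with moduli $\alpha,\beta\in\c$ and, for each $i$, a map $f_i\colon V(H_i)\to V(G_{n(i)})$ and a function $\omega_i\in\c$ with $\alpha(d_{H_i}(x,y))\le \omega_i(d_{G_{n(i)}}(f_i(x),f_i(y)))\le \beta(d_{H_i}(x,y))$. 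Third, apply the real-valued (hence $L_2$-valued, hence — after the trivial comparison — $d_Y^p$-valued for the metric $\omega_i\circ d_{G_{n(i)}}$ raised to a power) Poincar\'e inequality for the expander $G_{n(i)}$ in the style of~\eqref{eq:graph poincare}: crucially, since $\omega_i\in\c$ is sub-additive, the metric $\omega_i\circ d_{G_{n(i)}}$ on $V(G_{n(i)})$ still satisfies a Poincar\'e inequality with a uniformly bounded constant (one runs the Matou\v sek/Gromov argument directly with the metric transform, or notes that $\gamma(G_{n(i)},(\omega_i\circ d_{G_{n(i)}})^2)$ is bounded using Lemma~\ref{lem:general graph}-type reasoning together with sub-additivity and $\lambda(G_{n(i)})\le\lambda_0$). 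Fourth, combine this with the coarse embedding bounds exactly as in Section~\ref{sec:coarse}: the left side picks up $\alpha(d_{H_i}(x,y))^2$ averaged over pairs, the right side is bounded by (constant)$\cdot\beta(1)^2$ (since edges of $H_i$ map to pairs of $d_{H_i}$-distance $1$). Since $H_i$ is a bounded-degree graph, a constant fraction of pairs of vertices are at distance $\gtrsim \log|V(H_i)|$, giving $\alpha(c\log|V(H_i)|)^2\lesssim \beta(1)^2$, contradicting $\lim_{t\to\infty}\alpha(t)=\infty$ once $|V(H_i)|\to\infty$.

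\textbf{The subtle point and where the real work is.} The naive version of the argument above does not quite use the asymmetry between $\{G_i\}$ and $\{H_i\}$ — it would seem to show that \emph{no} bounded-degree expander coarsely embeds into metric transforms of any other, which is false (every graph coarsely embeds into metric transforms of itself by taking $\omega=\mathrm{id}$, and one must be careful that the target family is allowed to range over \emph{all} $\omega$). The resolution, and the main obstacle, is that the Poincar\'e constant $\gamma(G_{n(i)},(\omega_i\circ d_{G_{n(i)}})^2)$ need not be bounded independently of $\omega_i$: a highly concave $\omega_i$ (like $\omega_i(t)=\log(1+t)$) collapses all large distances in $G_{n(i)}$ to a bounded range, so $\omega_i\circ d_{G_{n(i)}}$ has diameter $O(\log\log|V(G_{n(i)})|)$ and the expander inequality degrades. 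The fix is quantitative: one tracks how the Poincar\'e constant of the metric transform depends on $\omega_i$ — it is controlled by something like $\bigl(\sup_{1\le s\le \diam(G_{n(i)})}\omega_i(s)/\omega_i(1)\bigr)^2$ divided by $(1-\lambda_0)$, i.e.\ by the ``aspect ratio'' of $\omega_i$ on the scale of $G_{n(i)}$'s diameter. Plugging this into the chain of inequalities, one gets $\alpha(c\log|V(H_i)|)^2 \lesssim \frac{\beta(1)^2}{1-\lambda_0}\cdot\bigl(\omega_i(\diam G_{n(i)})/\omega_i(1)\bigr)^2$, and then one must also feed in the \emph{lower} bound $\omega_i(\diam G_{n(i)})\le \beta(\diam H_i)\lesssim \beta(c'\log|V(H_i)|)$ coming from the embedding of $H_i$'s diameter pair, together with $\omega_i(1)\ge \alpha(1)>0$. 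This yields $\alpha(c\log|V(H_i)|)^2\lesssim \beta(c'\log|V(H_i)|)^2$, which is \emph{not} automatically a contradiction — so the final ingredient is to choose $\{G_i\},\{H_i\}$ and exploit that $G_{n(i)}$ itself, being an expander, forces $\alpha,\beta$ to satisfy on the scale $\log|V(G_{n(i)})|$ a second relation (by embedding $G_{n(i)}$ into $H_i$-side spaces is not available, but one uses that $\{H_i\}$ being an expander gives distortion $\Omega(\log|V(H_i)|)$ in any uniformly convex space, a fact not available for general metric transforms). The cleanest route, which I would pursue, is instead to pick $\{G_i\}$ and $\{H_i\}$ of the \emph{same} cardinalities but verify that the iterated-logarithm collapse caused by \emph{any} $\omega$ is still insufficient: that is, show $\diam(V(G_i),\omega\circ d_{G_i})$ can be made $\le\log^{(2)}|V(G_i)|$ while $\mathrm{diam}(H_i,d_{H_i})\gtrsim\log|V(H_i)|$, and choose $|V(H_i)|=|V(G_i)|$ so that the unavoidable diameter of a coarse image, $\ge\alpha(\Omega(\log|V(H_i)|))\to\infty$, exceeds the bounded-in-terms-of-$i$ diameter of every metric transform $(V(G_i),\omega\circ d_{G_i})$ that still satisfies a Poincar\'e inequality with constant $\le$ a prescribed growing threshold — the counting here is where the careful bookkeeping lives, and is the step I expect to occupy most of the proof.
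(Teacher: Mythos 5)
Your proposal correctly identifies the central difficulty — that a highly concave $\omega$ (e.g.\ $\omega(t)=\log(1+t)$) can collapse the diameter of $(V(G_j),\omega\circ d_{G_j})$ to something like $\log\log|V(G_j)|$, so that $\gamma\big(G_j,(\omega\circ d_{G_j})^2\big)$ need not be bounded and the Poincar\'e/coarse-embedding argument of Section~\ref{sec:coarse} breaks down. However, you do not actually get past this obstacle: the first route dead-ends at $\alpha(c\log|V(H_i)|)^2\lesssim \beta(c'\log|V(H_i)|)^2$ with no contradiction (as you observe), and the final ``cleanest route'' about making $|V(H_i)|=|V(G_i)|$ and comparing diameters of metric transforms is too vague to assess — you don't explain how the coarse image is forced to have large diameter in $(V(G_i),\omega\circ d_{G_i})$ once $\omega$ is allowed to be arbitrary, which is exactly the same difficulty you flagged. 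So there is a genuine missing idea.

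The paper's resolution is a \emph{geometric/topological} obstruction rather than a Poincar\'e one, and it works because metric transforms preserve it. One takes $\{G_i\}$ to be constant-degree expanders of \emph{high girth} (girth $\ge 4c\log n_i$), so that every ball of radius $\le c\log n_i$ in $G_i$ is a tree — and this tree structure is preserved under any $\omega\in\c$, since $\omega$ is increasing and the tree is determined by the combinatorics, not by the specific distances. The family $\{H_i\}$ is then built by perturbing $\{G_i\}$: starting from a high-girth expander, one adds a small number of carefully placed chords so that $H_i$ remains a constant-degree expander with $\diam(H_i)\gtrsim\log n_i$, but now contains an \emph{isometrically embedded} cycle of every length $h\in[3,c\log n_i]$. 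Given a putative coarse embedding $f_i\colon V(H_i)\to V(G_{j(i)})$ with moduli $\alpha,\beta$ and functions $\omega_i$, subadditivity of $\omega_i$ together with the standard expander diameter bound $\diam(G_j)\lesssim\log n_j$ is used to define a scale $h_i\to\infty$ (this is where your step 3's manipulations with $\omega_i$ and diameters \emph{do} enter) such that the $3h_i$-cycle $C\subseteq H_i$ is forced by the upper bound $\beta$ to land inside a ball of radius $\le c\log n_{j(i)}$ in $G_{j(i)}$, i.e.\ inside a tree. Then the Rabinovich--Raz median argument kills it: taking three points $x_1,x_{h_i+1},x_{2h_i+1}$ on $C$ at mutual distance $h_i$, their images in a tree must pass (after linear interpolation along edges) through a common median point $p$, producing two of the three points at $H_i$-distance $\ge h_i$ whose images are within one edge-step of each other, giving $\alpha(h_i)\le\beta(1)$ and contradicting $h_i\to\infty$. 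The crucial insight you are missing is thus twofold: (i) high girth makes the target \emph{locally a tree in a way that is $\omega$-invariant}, and (ii) one engineers $H_i$ to contain the specific metric objects (cycles) that obstruct coarse embeddings into trees.
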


\begin{proof}
It is well known (see e.g.~\cite{LPS,Mar88}) that there exists $c\in (0,\infty)$, an integer $d\ge 3$, and a sequence of $d$-regular expanders $\{G_i\}_{i=1}^\infty$ such that if we set $n_i= |V(G_i)|$ then $\{n_i\}_{i=1}^\infty$ is strictly increasing and each $G_i$ has girth at least $4c\log n_i$. By adjusting $c$ to be a smaller constant if necessary (as we may), we assume below that
\begin{equation}\label{eq: ell small assumption}
 c\log n_i < \frac{n_i}{2(d+1)^{2c\log n_i}}.
\end{equation}
We also assume throughout the ensuing argument that $c\log n_i>7$ for all $i\in \N$.

The desired expander sequence $\{H_i\}_{i=1}^\infty$  will be constructed by modifying $\{G_i\}_{i=1}^\infty$ so as to contain sufficiently many short cycles. Specifically, fix $i\in \N$ and write $G_i=(V_i,E_i)$. We will construct $H_i=(V_i,F_i)$ with $F_i\supsetneq E_i$, i.e., $H_i$ will be a graph with the same vertices as $G_i$ but with additional edges. The construction will ensure that
\begin{equation}\label{eq:diam Hi}
\diam(H_i)\ge \frac{c}{2}\log n_i.
\end{equation}
(Here, and in what follows, diameters of graphs are always understood to be with respect to their shortest-path metric.) We will also ensure that for every integer $h\in [3, c\log n_i]$ the graph $H_i$ contains a cycle of length $h$ which is embedded isometrically into $(H_i,d_{H_i})$, i.e., there exist $x_1,\ldots,x_h\in V_i$ such that $d_{H_i}(x_a,x_b)=\min\{|a-b|,h-|a-b|\}$ for every $a,b\in \{1,\ldots,h\}$, and $\{x_1,x_2\},\{x_2,x_3\},\ldots,\{x_{h-1},x_h\},\{x_h,x_1\}\in F_i$.

Set
\begin{equation}\label{eq:def ell two seq}
\ell \eqdef \left\lfloor c\log n_i\right\rfloor.
\end{equation}
We will  define inductively sets of edges $E=F^0\subsetneq F^1\subsetneq\ldots\subsetneq F^\ell$ with $|F_j\setminus F_{j-1}|=1$ for all $j\in \{1,\ldots,\ell\}$. Fix $j\in \{0,\ldots,\ell-1\}$ and assume inductively that $F^j$ has already been defined so that the graph $$G_i^j\eqdef (V_i,F^j)$$ has maximal degree at most $d+1$. Write
$$
M_j\eqdef \left\{u\in V_i:\ \exists\, e\in F^j\setminus E,\ u\in e\right\} =\bigcup_{e\in F^j\setminus E} e.
$$
Thus $|M_j|\le 2j$. Hence, if we set
$$
D_j\eqdef\left\{u\in V_i:\ d_{G_i^j}(u,M_j)\le 2c\log n_i\right\},
$$
then
$$
|D_j|\le 2j (d+1)^{2c\log n_i}\le 2\ell (d+1)^{2c\log n_i}\stackrel{\eqref{eq: ell small assumption}\wedge \eqref{eq:def ell two seq}}{<}n_i.
$$
Therefore $V\setminus D_j\neq \emptyset$. Choose an arbitrary vertex $x\in V\setminus D_j$. Since $G_i$ has girth at least $4c\log n_i$ and $j\le \ell$, there exists $y\in V$ with $d_{G_i}(x,y)=j+2$. Define $F^{j+1}=F^j\cup\{\{x,y\}\}$. This creates a new cycle of length $j+3$.

By construction, the graph $G_i^{j+1}\eqdef (V_i,F^{j+1})$ contains a cycle $C_h$ of length $h$ for every $h\in \{3,\ldots,j+3\}$. Moreover, we claim that these cycles are embedded isometrically into the metric space $(V_i,d_{G_i^{j+1}})$. Indeed, due to the choice of $x$, if $h\in \{3,\ldots,j+2\}$ then  $$d_{G_i^{j}}(C_h,\{x,y\})> 2c\log n_i-(j+2),
$$
which is at least $h/2$ (the diameter of $C_h$) because $c\log n_i>7$. Thus the new edge $\{x,y\}$ does not change the isometric embeddability of $C_h$. The new cycle $C_{j+3}$ is isometrically embedded into $(V_i,d_{G_i})$ since the girth of $G_i$ is at least $4c\log n_i>2(j+2)$. Since $$d_{G_i^j}(M_j,C_{j+3})> 2c\log n_i-(j+2)>\frac{j+3}{2},$$
 The cycle $C_{j+3}$ remains isometrically embedded into $(V_i,d_{G_i^{j+1}})$. Note also that by construction the new edge $\{x,y\}$ is not incident to any vertex in $M_j$. Therefore the maximum degree of $(V_i, F^{j+1})$ remains $d+1$. This completes the inductive construction.

The degree of every vertex of $G_i^{\ell+1}$ is either $d$ or $d+1$. Add to every vertex of degree $d$ a self loop so as to obtain a $d+1$ regular graph $H_i=(V_i,F_i)$ without changing the induced shortest path metric. Note that~\eqref{eq:diam Hi} holds true because $D_\ell\neq V_i$.

 It follows from Lemma~\ref{lem:graphs containing} that for every kernel $K:X\times X\to [0,\infty)$,
$$
\gamma(H_i,K)\le \frac{d+1}{d}\gamma(G_i,K)\quad\mathrm{and}\quad \gamma_+(H_i,K)\le \frac{d+1}{d}\gamma_+(G_i,K).
$$
In particular, since $\{G_i\}_{i=1}^\infty$ is an expander sequence also $\{H_i\}_{i=1}^\infty$ is an expander sequence.

Assume for the sake of obtaining a contradiction that $\{(V_i,d_{H_i}\}_{i=1}^\infty$ admits a coarse embedding into $\{(V_i,\omega\circ d_{G_i}):\ (i,\omega)\in \N\times \c\}$. Then there exist $\{\omega_i\}_{i=1}^\infty\subseteq \c$ and nondecreasing moduli $\alpha,\beta:[0,\infty)\to [0,\infty)$ with
\begin{equation}\label{eq:alpha assumption}
\lim_{t\to \infty}\alpha(t)=\infty,
\end{equation}
and for every $i\in \N$ there exists $j(i)\in \N$ and $f_i:V_i\to V_{j(i)}$ satisfying
\begin{equation}\label{eq:fi contrapositive}
\forall\, u,v\in V(H_i),\quad \alpha\left(d_{H_i}(u,v)\right)\le \omega_i\left(d_{G_{j(i)}}(f_i(u),f_i(v))\right)\le \beta\left(d_{H_i}(u,v)\right).
\end{equation}
Note that only the values of $\beta$ on $\N\cup \{0\}$ matter here, and that since $\beta(\cdot)$ serves only as an upper bound in~\eqref{eq:fi contrapositive} we may assume without loss of generality that the sequence $\{\beta(n)\}_{n=0}^\infty$ is strictly increasing.

Define
\begin{equation}\label{eq:hi corrected}
h_i\eqdef \left\lfloor\frac13\min\left\{\beta^{-1}\left(\left\lfloor \omega_i\left(c\log n_{j(i)}\right)\right\rfloor\right),c\log n_i\right\}\right\rfloor.
\end{equation}
We claim that
\begin{equation}\label{eq:hi to infinity}
\lim_{i\to \infty} h_i=\infty.
\end{equation}
Indeed, since $\{G_j\}_{j=1}^\infty$ is an expander sequence,
$$
\lambda\eqdef \sup_{j\in \N}\lambda(G_j)<1.
$$
We therefore have the following bound on the diameter of $G_i$ (see~\cite{Chu89}):
\begin{equation}\label{eq:diameter of G_j cung}
\diam(G_j)\le \frac{2\log n_j}{\log(1/\lambda)}.
\end{equation}
Observe that since $G_j$ has girth at least $4c\log n_j$, it follows from~\eqref{eq:diameter of G_j cung} that
$c\log(1/\lambda)\le 1$. It now follows from~\eqref{eq:diam Hi}, \eqref{eq:fi contrapositive} and~\eqref{eq:diameter of G_j cung} that
\begin{equation}\label{eq:omegai big}
\alpha\left(\frac{c}{2}\log n_i\right)\le \omega_i\left(\frac{2\log n_{j(i)}}{\log(1/\lambda)}\right)\le \frac{4}{c\log(1/\lambda)}\omega_i\left(c\log n_{j(i)}\right),
\end{equation}
where in the rightmost inequality of~\eqref{eq:omegai big} we used the fact that $\omega_i$ is increasing and sub-additive. Due to~\eqref{eq:alpha assumption} and~\eqref{eq:hi corrected}, we indeed have~\eqref{eq:hi to infinity} as a consequence of~\eqref{eq:omegai big}.

Our construction ensures that $H_i$ contains a cycle $C\eqdef \{x_1,\ldots,x_{3h_i}\}$ of length $3h_i$ which is embedded isometrically into $(H_i,d_{H_i})$. Then
\begin{equation}\label{eq:cycle went into tree}
f_i(C)\stackrel{\eqref{eq:fi contrapositive}}{\subseteq} B_{G_{j(i)}}\left(f_i(x_1),\omega_i^{-1}\left(\beta(3h_i)\right)\right)\stackrel{\eqref{eq:hi corrected}}{\subseteq} B_{G_{j(i)}}\left(f_i(x_1),c\log n_{j(i)}\right).
\end{equation}
Since $c\log n_{j(i)}$ is smaller than half the girth of $G_{j(i)}$, the ball  $B_{G_{j(i)}}\left(f_i(x_1),c\log n_{j(i)}\right)$ is isometric to a tree. We will now proceed to show that combined with the inclusion~\eqref{eq:cycle went into tree} this leads to a contraction, using a coarse version of an argument of Rabinovich and Raz~\cite{RazR}.

Let $\overline C$ denote the one dimensional simplicial complex induced by $C$, i.e., in $\overline C$, which is isometric to the circle $\frac{3h_i}{2\pi}S^1$, all the edges of $C$ are present as intervals of length $1$. Similarly, denote by $\overline T$ the one dimensional simplicial complex induced by $B_{G_{j(i)}}\left(f_i(x_1),c\log n_{j(i)}\right)$ (thus $\overline T$ is isometric to a metric tree). Let $\overline f_i:\overline C\to \overline T$ be the linear interpolation of $f_i$, i.e., the extension of $f_i$ to $\overline C$ such that for every $u,v\in C$ with $\{u,v\}\in F_i$ the segment $[u,v]$ is mapped onto the unique geodesic $[f_i(u),f_i(v)]\subseteq \overline T$  with constant speed (see e.g. the discussion preceding Theorem 2 of~\cite{NS10}). It follows from~\eqref{eq:fi contrapositive} that $$d_{G_{j(i)}}(f_i(u),f_i(v))\le \omega_i^{-1}(\beta(1))$$ whenever $\{u,v\}$ is an edge of $H_i$. Hence $f_i$ is $\omega_i^{-1}(\beta(1))$-Lipschitz. Therefore $\overline f_i$ is also $\omega_i^{-1}(\beta(1))$-Lipschitz.

Consider the three paths $$\overline{f_i}([x_1,x_{h_{i}+1}]), \overline{f_i}([x_{h_i+1},x_{2h_i+1}]),\overline{f_i}([x_{2h_i+1},x_{1}])\subseteq \overline T.$$ Arguing as in~\cite{RazR}, since $\overline T$ is a metric tree, there must exist a common point
$$
p\in \overline{f_i}([x_1,x_{h_{i}+1}])\bigcap \overline{f_i}([x_{h_i+1},x_{2h_i+1}])\bigcap \overline{f_i}([x_{2h_i+1},x_{1}]).
$$
We can therefore find $$\left(\overline{a},\overline{b},\overline{c}\right)\in [x_1,x_{h_{i}+1}]\times [x_{h_i+1},x_{2h_i+1}]\times [x_{2h_i+1},x_{1}]$$ such that $$f_i\left(\overline{a}\right)=f_i\left(\overline{b}\right)=f_i\left(\overline{c}\right)=p.$$
By considering the closest points to $\overline{a},\overline{b},\overline{c}$ in $C$, there exist $a,b,c\in C$ such that
$$
\max\left\{d_{\overline C} \left(a,\overline a\right),d_{\overline C} \left(b,\overline b\right),d_{\overline C} \left(c,\overline c\right)\right\}\le \frac12,
$$
and
$$
\max\left\{d_{H_i}(a,b),d_{H_i}(a,c),d_{H_i}(b,c)\right\}\ge h_i.
$$
Without loss of generality we may assume that $d_{H_i}(a,b)=d_{\overline C}(a,b)\ge h_i$.

Since $\overline f_i$ is $\omega_i^{-1}(\beta(1))$-Lipschitz and $f\left(\overline{a}\right)=f\left(\overline{b}\right)$,
\begin{multline}\label{eq:to contrast alpha}
\alpha(h_i)\stackrel{\eqref{eq:fi contrapositive}}{\le} \omega_i\left(d_{G_{j(i)}}\left(f_i(a),f_i(b)\right)\right)\le \omega_i\left(d_{G_{j(i)}}\left(f\left(a\right),f\left(\overline a\right)\right)+ d_{G_{j(i)}}\left(f\left(b\right),f\left(\overline b\right)\right)\right)\\\le \omega_i\left( 2\omega_i^{-1}(\beta\left(1\right))\frac12\right)=\beta(1).
\end{multline}
The desired contradiction now follows by contrasting~\eqref{eq:alpha assumption} and~\eqref{eq:hi to infinity} with~\eqref{eq:to contrast alpha}.
\end{proof}

\subsection{A metric space failing calculus for nonlinear spectral
gaps}\label{sec:no-decay}

Let $(X,d_X)$ be a metric space and $p\in (0,\infty)$. Observe that
if $A=(a_{ij})$ is an $n\times n$ symmetric stochastic matrix then,
provided $X$ contains at least two points, the fact that
$\bpconst(A,d_X^p)<\infty$ implies that $A$ is ergodic, and
therefore
\begin{equation}\label{eq:ergodic}
\lim_{t\to \infty }\bpconst\left(A^t,d_X^p\right)=\lim_{t\to \infty
}\bpconst\left(\A_t(A),d_X^p\right)=1.
\end{equation}
Thus, we always have asymptotic decay of the Poincar\'e constants of
$A^t$ and $\A_t(A)$ as $t\to\infty$, but for the iterative
construction presented in this paper we need a quantitative variant
of~\eqref{eq:ergodic}. At the very least, we need  $(X,d_X^p)$ to
admit the following type of {\em uniform decay} of the Poincar\'e
constant.
\begin{definition}[Spaces admitting uniform decay of Poincar\'e
constants] Let $X$ be a set and $K:X\times X\to [0,\infty)$ a
kernel. Say that $(X,K)$ has the {\em uniform decay property} if for
every $M\in (1,\infty)$ there exists $t\in \N$ and $\Gamma\in
[1,\infty)$ such that for every $n\in \N$ and every $n\times n$
symmetric stochastic matrix $A$,
$$
\gamma_+(A,K)\ge \Gamma\implies \gamma_+(\A_t(A),K)\le
\frac{\gamma_+(A,K)}{M}.
$$
\end{definition}

We now show that there exists a  metric space $(X,d_X)$ such that
$(X,d_X^2)$ does not have the uniform decay property.

\begin{proposition}\label{prop:example}
There exist a metric space $(X,\rho)$ and a universal constant
$\eta\in (0,\infty)$ with the following property. For every $n\in
\N$ there is an $n$-vertex regular graph $G_n=(V_n,E_n)$ such that
$\lim_{n\to \infty}\bpconst(G_n,\rho^2)=\infty$, yet for every $t\in
\mathbb N$ there exists $n_0\in \N$ such
$$
n\ge n_0\implies  \bpconst(\A_t(G_n),\rho^2)\ge \eta\cdot
\bpconst(G_n,\rho^2).
$$
\end{proposition}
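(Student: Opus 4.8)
The plan is to build the metric space $(X,\rho)$ and the graphs $G_n$ hand in hand, designing $G_n$ so that it already carries a large nonlinear spectral gap reciprocal with respect to $X$ which is ``diffuse'' — produced by an obstruction living at many scales at once — in such a way that forming a Ces\`aro average of any \emph{fixed} order $t$ cannot appreciably dissolve it. The first point to keep in mind is a negative one: for Hilbertian, and more generally super-reflexive, targets one always has the decay $\gamma_+(\A_m(A),d_X^2)\lesssim_X \gamma_+(A,d_X^2)/m^{\e(X)}$ of Corollary~\ref{coro:decay for super-reflexive intro} (and in the Euclidean case exactly $\gamma_+(\A_m(A),|\cdot|^2)=1/(1-\lambda(\A_m(A)))$, where Ces\`aro averaging speeds up the slowest spectral mode by a factor of order $m$). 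Hence $X$ must be chosen outside the super-reflexive world, and the slow ``direction'' responsible for $\gamma_+(G_n,\rho^2)$ being large must be a genuinely nonlinear, non-spectral phenomenon; otherwise the desired non-decay is impossible. So the construction is forced to exploit non-reflexive geometry in an essential way.

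The intended shape of the argument is then as follows. Starting from an auxiliary sequence of constant-degree graphs (expanders of increasing girth, cycles with self-loops, or — more plausibly — recursively/hierarchically clustered graphs), one forms $G_n$ as a suitable regular graph and, simultaneously, one builds $X$ (a hierarchically separated / tree-like metric space, or an iterated $\ell_\infty$- or $\ell_2$-direct sum of finite metric building blocks indexed by scale) together with test maps $f_n,g_n\colon V(G_n)\to X$ with the following two properties. \emph{(a) Divergence:} the pair $(f_n,g_n)$ places every edge of $G_n$, and in fact every endpoint of every short walk in $G_n$, at small $\rho$-distance from its starting point, while $G_n$ as a whole is $\rho$-spread, so that the ratio of its total $\rho^2$-energy to its $G_n$-edge $\rho^2$-energy is at least some $\Gamma_n\to\infty$; this gives $\gamma_+(G_n,\rho^2)\ge\Gamma_n\to\infty$. \emph{(b) Robustness:} because the ``clusters'' of $G_n$ at every scale are large compared with the reach of any fixed number $t$ of steps, the endpoints of $t$-step walks in $G_n$ still sit at $\rho$-distance comparable (up to a universal constant) to that of single edges, so the $\A_t(G_n)$-edge $\rho^2$-energy of $(f_n,g_n)$ is at most $C$ times its $G_n$-edge $\rho^2$-energy, with $C$ \emph{independent of $t$ and $n$}. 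Combining (a) and (b), for $n$ large (depending on $t$) one gets
$$
\gamma_+(\A_t(G_n),\rho^2)\ \ge\ \frac{\text{total }\rho^2\text{-energy of }(f_n,g_n)}{\A_t(G_n)\text{-edge }\rho^2\text{-energy of }(f_n,g_n)}\ \ge\ \frac1C\cdot\gamma_+(G_n,\rho^2),
$$
which is the assertion of the Proposition with $\eta=1/C$. (One should check along the way that $\{G_n\}$ can be arranged to be regular — e.g.\ by an edge completion in the sense of Lemma~\ref{lem:general graph} and the surrounding material — and that $|V(G_n)|$ can be taken to be exactly $n$ after reindexing.)

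The main obstacle is precisely to realize (a) and (b) at the same time, with a \emph{single} space $X$ working for all $n$: requirement (a) pushes $X$ to be rich enough to spread $G_n$ with an ever-growing defect, whereas requirement (b) demands that this spreading be \emph{coarse} — unchanged, up to a universal factor, by $t$ steps of the walk — and the Euclidean computation above shows that no linear or quadratic kernel can have both features, so $X$ and $G_n$ have to be engineered jointly and nontrivially (the clusters of $G_n$ must remain large at every scale, and the levels of the hierarchy in $X$ must be separated enough to force the defect $\Gamma_n\to\infty$ while leaving the comparison constant $C$ in (b) uniform). I expect this joint construction, together with the combinatorial bookkeeping needed to keep $C$ genuinely independent of $t$ and of $n$, to be the heart of the matter; once it is in place, the two-line estimate displayed above, and the observation that ergodicity already yields the qualitative limit in \eqref{eq:ergodic}, finish the proof.
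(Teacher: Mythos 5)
Your proposal identifies the correct general shape of the phenomenon — a ``coarse,'' scale-compressed geometry outside the super-reflexive world, against which a fixed number of walk steps cannot buy much — but it has a substantive logical gap, and it also leaves the entire construction unspecified, which for this result is exactly where all the difficulty lives. Let me point to the gap first.

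Your displayed inequality chain
$$
\gamma_+(\A_t(G_n),\rho^2)\ \ge\ \frac{\text{total energy of }(f_n,g_n)}{\A_t(G_n)\text{-edge energy of }(f_n,g_n)}\ \ge\ \frac1C\cdot\gamma_+(G_n,\rho^2)
$$
does not follow from your properties (a) and (b). Property (a) gives a \emph{lower} bound $\gamma_+(G_n,\rho^2)\ge\Gamma_n$ via the chosen test maps $(f_n,g_n)$. Property (b) gives $\A_t$-edge energy $\le C\cdot G_n$-edge energy for those same test maps, hence $\text{total}/\A_t\text{-edge}\ge(1/C)\cdot\text{total}/G_n\text{-edge}$. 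But $\text{total}/G_n\text{-edge}$ is, by definition of $\gamma_+$ as a supremum, \emph{at most} $\gamma_+(G_n,\rho^2)$, not at least. So the final step is reversed. To make your scheme work you would need to choose $(f_n,g_n)$ to (nearly) extremize $\gamma_+(G_n,\rho^2)$ and then verify (b) for this uncontrolled extremizer — which you have no handle on — or, what amounts to the same thing, prove a matching \emph{upper} bound on $\gamma_+(G_n,\rho^2)$ by an independent argument. That missing upper bound is in fact a substantial and delicate part of the actual proof and cannot be folded into the two-line estimate at the end.

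There is a second, quantitative, worry with (b) as stated. If $(f_n,g_n)$ is anything like a shortest-path isometric embedding of $G_n$ into $X$, then a single edge contributes $\rho(1)^2$ while a $t$-step walk in a good expander typically ends at graph distance of order $t$, contributing about $\rho(t)^2$; with a logarithmic metric transform this gives a factor of order $(\log t)^2$ between the two edge energies, which is not a universal constant as your argument requires. So even the ``robustness'' half of the plan needs a different test function for $\A_t(G_n)$ than for $G_n$, which again undercuts the clean one-test-function comparison.

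For contrast, the paper's proof takes $X=\ell_\infty\cap\Z^{\aleph_0}$ with $\rho(x,y)=\log(1+\|x-y\|_\infty)$ and lets $G_n$ be any constant-degree expander sequence. It then proves \emph{two independent matching estimates}: (i) an \emph{upper} bound $\gamma_+(G_n,\rho^2)\lesssim(\log(1+\log n))^2$, obtained by pushing the image sets through Bourgain's embedding theorem and then through a logarithmic snowflake embedding of $\ell_2$ into $\ell_2$, reducing to the Euclidean Poincar\'e inequality for expanders; and (ii) a \emph{lower} bound $\gamma_+(\A_t(G_n),\rho^2)\gtrsim(\log(1+\log n))^2$, obtained by plugging an isometric embedding of $\A_t(G_n)$'s own shortest-path metric into $(\Z^{\aleph_0},\|\cdot\|_\infty)$ and noting that at least half the pairs are at shortest-path distance $\gtrsim(\log n)/(t\log d)$ in $\A_t(G_n)$, which after the logarithm is still $\gtrsim\log\log n$ once $n$ is large compared with $t$. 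The desired inequality follows by comparing (i) and (ii). This is structurally different from what you proposed: no single test map does double duty, and the real work is in the independent upper bound that your sketch does not address.
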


\begin{proof}
Define
$$
X\eqdef\ell_\infty \cap \mathbb Z^{\aleph_0},
$$
i.e., $X$ is the set of all integer-valued bounded sequences.
Consider the following metric $\rho:X\times X\to [0,\infty)$.
\begin{equation}\label{eq:def rho}
\rho(x,y)\eqdef \log\left(1+\|x-y\|_\infty\right).
\end{equation}
Note that $\rho$ is indeed
 a metric since the mapping $T:[0,\infty)\to [0,\infty)$ given by
 $$
 T(s)\eqdef \log(1+s)
$$
  is concave, increasing and $T(0)=0$.

Let $G_n=(V_n,E_n)$ be an arbitrary sequence of constant degree
expanders, i.e., $G_n$ is an $n$-vertex graph of degree $d$ (say
$d=4$) satisfying
$$
C\eqdef \sup_{n\in \N} \bpconst(G_n,\|\cdot\|_2^2)<\infty.
$$
We claim that
\begin{equation}\label{eq:log log goal}
\bpconst(G_n,\rho^2)\lesssim (\log(1+\log n))^2.
\end{equation}
The goal is to prove
 that every $f,g:G_n\to X$ satisfy
 \begin{equation*}
  \frac1{n^2} \sum_{(u,v)\in V_n\times V_n} \rho(f(u),g(v))^2
  \lesssim \frac{\left(\log (1+\log n)\right)^2}{nd}\sum_{(u,v)\in E_n} \rho(f(u),g(v))^2 .
  \end{equation*}
To this end write $$ S_n\eqdef f(V_n)\cup g(V_n)\subseteq\mathbb
Z^{\aleph_0}.
$$
By Bourgain's embedding theorem~\cite{Bourgain-embed}, applied to
the metric space $(S_n,\ell_\infty)$, there exists $\beta:S_n\to
\ell_2$ satisfying
 \begin{equation} \label{eq:limitations-1}
\forall\, u,v\in V_n,\quad   \|f(u)-g(v)\|_\infty \le
\|\beta(f(u))-\beta(g(v))\|_2
  \le c(1+\log n)  \|f(u)-g(v)\|_\infty,
  \end{equation}
where $c\in (1,\infty)$ is a universal constant. For every $u,v\in
V_n$ we have
\begin{multline}\label{eq:use bourgain}
\rho(f(u),g(v))\stackrel{\eqref{eq:def rho}\wedge
\eqref{eq:limitations-1}}{\le}
\log\left(1+\|\beta(f(u))-\beta(g(v))\|_2\right)\\
\stackrel{\eqref{eq:limitations-1}}{\le}\log\left(1+c(1+\log
n)\|f(u)-g(v)\|_\infty\right)\stackrel{\eqref{eq:def rho}}{\lesssim}
\log(1+\log n)\cdot\rho(f(u),g(v)),
\end{multline}
where in the last step of~\eqref{eq:use bourgain} we used the fact
that if $f(u)\neq g(v)$ then $\|f(u)-g(v)\|_\infty\ge 1$.

  As shown in~\cite[Remark~5.4]{MN-quotients}, there exists a universal
  constant $\kappa>1$ and a mapping $\phi:\ell_2\to \ell_2$ such that
\begin{equation}\label{eq:back to ell_}
\forall\, x,y\in \ell_2,\quad T\left(\|x-y\|_2\right)\le
  \|\phi(x)-\phi(y)\|_2\le \kappa T\left(\|x-y\|_2\right).
\end{equation}
A combination of~\eqref{eq:limitations-1}, \eqref{eq:use bourgain} and~\eqref{eq:back to
ell_} implies that
  the mapping $\psi=\phi\circ \beta:S_n\to \ell_2$ satisfies
  \begin{equation*}
  \forall\, u,v\in V_n \quad \rho(f(u),g(v)) \le \|\psi(f(u))-\psi(g(v))\|_2
   \lesssim \log(1+\log n) \cdot \rho(f(u),g(v)) ,
   \end{equation*}
Since $\gamma_+(G_n,\|\cdot\|_2^2)\le C$, we conclude that
  \begin{multline*}
  \frac1{n^2}  \sum_{(u,v)\in V_n\times V_n} \rho(f(u),g(v))^2
    \le
  \frac1{n^2} \sum_{(u,v)\in V_n\times V_n} \|\psi(f(u))-\psi(g(v))\|_2^2 \\
   \le
   \frac{C}{nd}\sum_{(u,v)\in E_n } \|\psi(f(u))-\psi(g(v))\|_2^2
   \lesssim \frac{(\log(1+\log n))^2}{nd}\sum_{(u,v)\in  E_n}  \rho(f(u),g(v))^2.
  \end{multline*}
This completes the proof of~\eqref{eq:log log goal}.

We will now bound $\bpconst(\A_t(G_n),\rho^2)$ from below. For this
purpose it is sufficient to examine a specific embedding of the
graph $\A_t(G_n)$ into $X$. Let $\f:V_n\to \mathbb Z^{\aleph_0}$ be
an isometric embedding of the shortest path metric on $\A_t(G_n)$
into $(\mathbb Z^{\aleph_0},\|\cdot\|_\infty)$. If $\{u,v\}\in
E(\A_t(G_n))$ then
$\rho(\f(u),\f(v))=T(\|\f(u)-\f(v)\|_\infty)=T(1)=1$. On the other
hand, since the degree of $\A_t(G)$ is $td^t$, at least half of the
pairs in $V_n\times V_n$ are at distance $\gtrsim \frac{\log n}{t
\log d}$ in the shortest path metric metric on $\A_t(G)$. Hence for
at least half of the pairs $(u,v)\in V_n\times V_n$ we have
$$
\rho(\f(u),\f(v))\ge \log\left(1+\xi \frac{\log n}{t\log d}\right),
$$
where $\xi\in (0,\infty)$ is a universal constant. If
$$
n\ge e^{(t\log d)^2}
$$
then we deduce that
\[
\bpconst(\A_t(G_n),\rho^2)\ge \frac{\frac{1}{n^2}\sum_{(u,v)\in
V_n\times V_n}\rho(\f(u),\f(v))^2}{\frac{1}{ntd^t}\sum_{(u,v)\in
E(\A_t(G_n))}\rho(\f(u),\f(v))^2}\gtrsim (\log(1+\log
n))^2\stackrel{\eqref{eq:log log goal}}{\gtrsim}
\bpconst(G_n,\rho^2),
\]
thus completing the proof of Proposition~\ref{prop:example}.
\end{proof}

\begin{remark}\label{matousek}
Using Matou\v{s}ek's $L_p$-variant of the Poincar\'e inequality for
expanders~\cite{Mat97}, the proof of Proposition~\ref{prop:example}
extends mutatis mutandis to show that $(X,d_X^p)$ fails to have the
uniform decay property for any $p\in (0,\infty)$.
\end{remark}

\begin{remark}\label{rem:banach?}
We do not know if there exists a normed space which does not have the uniform decay property, though we conjecture that such spaces do exist, and that this even holds for $\ell_\infty$. Note that despite the fact that all separable metric spaces embed into $\ell_\infty$, we cannot formally deduce from Proposition~\ref{prop:example} that $\ell_\infty$ satisfies the same conclusion since the uniform decay property of the Poincar\'e constant is not necessarily monotone when passing to subsets of metric spaces. We suspect that $(\ell_1,\|\cdot\|_1^2)$ does have the uniform decay property despite the fact that  $\ell_1$ does not admit an equivalent uniformly convex norm.
\end{remark}

\bigskip

\noindent{\bf Acknowledgments}
Michael Langberg was involved in early discussions on the analysis of the zigzag product.
Keith Ball helped in simplifying this analysis. We thank Steven Heilman, Michel Ledoux, Mikhail Ostrovskii and Gideon Schechtman for helpful suggestions. We are also grateful to two anonymous referees for their careful reading of this paper and many helpful comments. An extended abstract
     announcing parts of this work, and titled ``Towards a calculus for nonlinear spectral gaps", appeared in
     Proceedings of the Twenty-First Annual ACM-SIAM Symposium
               on Discrete Algorithms (SODA 2010). M. M. was partially supported by ISF grants 221/07 and  93/11,
BSF grants 2006009 and 2010021, and a gift from Cisco Research Center. Part of this work was completed while M.M. was a member of the Institute for Advanced Study at Princeton NJ., USA. A. N.
was partially supported by NSF grant CCF-0832795, BSF grants
2006009 and 2010021, the Packard Foundation and the Simons Foundation. Part of this work was completed while A. N. was visiting Universit\'e Pierre et Marie Curie, Paris, France.

\bibliographystyle{abbrv}
\bibliography{zigzag}

 \end{document}